\newcommand{\Z}{\mathbb{Z}}
\newcommand{\C}{\mathbb{C}}
\newcommand{\F}{\mathbb{F}}
\newcommand{\R}{\mathbb{R}}
\newcommand{\T}{\mathbb{T}}
\newcommand{\ET}{\widetilde{R}}
\newcommand{\MT}{\mathrm{MT}}
\newtheorem{thm}{Theorem}
\numberwithin{thm}{section}
\newtheorem*{xthm}{Theorem}
\newtheorem{cor}[thm]{Corollary}
\newtheorem{lem}[thm]{Lemma}
\newtheorem{prop}[thm]{Proposition}
\theoremstyle{definition}
\newtheorem{defn}[thm]{Definition}
\newtheorem{rem}[thm]{Remark}
\newtheorem*{rep@theorem}{\rep@title}
\newcommand{\newreptheorem}[2]{%
\newenvironment{rep#1}[1]{%
 \def\rep@title{#2 \ref{##1}}%
 \begin{rep@theorem}}%
 {\end{rep@theorem}}}
\begin{document}

\title[S. Das, H. Lang, H. Wan, and N. Xu]{The Distribution of $k$-Free Effective Divisors and the Summatory Totient Function in Function Fields}

\author[S.~Das]{Sanjana Das}
\address{Department of Mathematics \\ 
Massachusetts Institute of Technology \\ 
77 Massachusetts Avenue \\
Cambridge, MA 02139}
\email{\href{mailto:sanjanad@mit.edu}{sanjanad@mit.edu}}

\author[H.~Lang]{Hannah Lang}
\address{Department of Mathematics \\
Harvard University \\
1 Oxford Street \\
Cambridge, MA 02138}
\email{\href{mailto:hlang@college.harvard.edu}{hlang@college.harvard.edu}}

\author[H.~Wan]{Hamilton Wan}
\address{Department of Mathematics \\
Yale University\\
10 Hillhouse Avenue\\
New Haven, CT 06520}
\email{\href{mailto:hamilton.wan@yale.edu}{hamilton.wan@yale.edu}}

\author[N.~Xu]{Nancy Xu}
\address{Department of Mathematics \\
Princeton University \\
304 Washington Road \\
Princeton, NJ 08544}
\email{\href{mailto:nancyx@princeton.edu}{nancyx@princeton.edu}}

\maketitle

\begin{abstract}
    Motivated by the study of the summatory $k$-free indicator and totient functions in the classical setting, we investigate their function field analogues. First, we derive an expression for the error terms of the summatory functions in terms of the zeros of the associated zeta function. Under the Linear Independence hypothesis, we explicitly construct the limiting distributions of these error terms and compute the frequency with which they occur in an interval $[-\beta, \beta]$ for a real $\beta > 0$. We also show that these error terms are unbiased, that is, they are positive and negative equally often. Finally, we examine the average behavior of these error terms across families of hyperelliptic curves of fixed genus. We obtain these results by following a general framework initiated by Cha and Humphries. 
\end{abstract}

\section{Introduction}\label{sec:intro}
\subsection{Distribution of \texorpdfstring{$k$}{k}-Free Numbers in the Classical Setting}

For positive integers $n$, consider the $k$-free indicator function \[\mu_k(n) := \begin{cases} 1 &  \text{if $n$ is not divisible by $p^k$ for any prime $p$} \\ 0 & \text{otherwise}.\end{cases} \] One can study the distribution of $k$-free numbers by examining the summatory function \[Q_k^*(x) := \sum_{n \leq x} \mu_k(n).\] Classical arguments show that \[Q_k^*(x) = \frac{x}{\zeta(k)} + R^*_k(x),\] where $\zeta(s)$ is the Riemann zeta function and $R_k^*(x)$ is the error term. The problem of understanding the limiting behavior of $R^*_k(x)$ as $x$ tends to infinity is quite well-studied. Unconditionally, Walfisz \cite{Wal63} and Evelyn and Linfoot \cite{EL31} have produced upper and lower bounds on $R^*_k(x)$, respectively. However, many results of this flavor are conditional, obtained by writing $R_k^*(x)$ as a sum in terms of zeros of the Riemann zeta function and making plausible assumptions about the behavior of these zeros. For example, by assuming the Riemann hypothesis, Montgomery and Vaughan \cite{MV81} derived an upper bound of \[R^*_k(x) = O_\varepsilon(x^{1/(k+1)+\varepsilon}).\] Moreover, Meng \cite{Men17} showed that $x^{-\frac{1}{2k}}R^*_k(x)$ admits a limiting logarithmic distribution under the Riemann hypothesis and several other assumptions; by assuming the linear independence of the ordinates of zeros of $\zeta(s)$, he also explicitly computed the Fourier transform of the limiting distribution and studied the order of growth of the tails of this distribution. On the other hand, recent work of Mossinghoff, Oliviera e Silva, and Trudgian \cite{MOT21} shows that one cannot do better than $O(x^{1/2k})$. However, the conjectured bound \[R^*_k(x) = O_\varepsilon(x^{1/2k + \varepsilon}) \] remains unproven \cite{Men17}, even under the Riemann hypothesis. 

\subsection{Distribution of \texorpdfstring{$k$}{k}-Free Divisors in Function Fields}

Motivated by the study of $k$-free numbers in the classical setting, we study the analogues of these problems in function fields. The function field setting provides a natural variant of these problems, with the advantage that many results in the function field setting are unconditional since the Riemann hypothesis has been proven. Our results and techniques closely follow the work of Humphries \cite{Hum12, Hum13} and Cha \cite{Cha17}, who studied the function field analogue of the summatory M\"obius function and its limiting distribution. To set up the function field analogue of the $k$-free indicator function, fix $q = p^n$ for some prime number $p$ and positive integer $n$, and let $C$ be a nonsingular projective curve over the finite field $\F_q$ of genus $g$ with associated function field $C/\F_q$. Define the $k$-free indicator function $\mu_k$ on the effective divisors of $C/\F_q$ by \[\mu_k(D) := \begin{cases} 1 & \text{if $D- kP$ is not an effective divisor for any prime divisor $P$} \\ 0 & \text{if $D - kP$ is an effective divisor for some prime divisor $P$.} \end{cases}\] 
As in the classical case, we study the limiting behavior of the summatory function \[Q_k(X) := \sum_{\deg(D) < X} \mu_k(D)\] as $X$ approaches infinity. Note that $Q_k(X)$ counts the $k$-free effective divisors with degree less than $X$. 

Before proceeding to the statement of our results, we establish some preliminaries on the \emph{zeta function} associated to $C/\F_q$, which forms the central object of study in this article. If we let $\mathcal{N}D$ denote the norm of any effective divisor $D$, then the zeta function is defined as \[\zeta_{C/\F_q}(s) := \sum_{D \geq 0} \frac{1}{\mathcal{N}D^s} = \prod_{P \text{ prime}} \left(1 - \frac{1}{\mathcal{N}P^s}\right)^{-1},\] where the sum and product converge absolutely when $\Re(s) > 1$. Note that each $\mathcal{N}D$ is a power of $q$, so we can alternatively express $\zeta_{C/\F_q}(s)$ as a function in the variable $u := q^{-s}$, which we denote by $Z_{C/\F_q}(u) := \zeta_{C/\F_q}(s)$. Moreover, one can show that $Z_{C/\F_q}(u)$ has the form \[Z_{C/\F_q}(u) := \frac{L_{C/\F_q}(u)}{(1-qu)(1-u)},\] where $L_{C/\F_q}(u) \in \Z[u]$ is a polynomial of degree $2g$ such that $L_{C/\mathbb{F}_q}(0) = 1$,  $L_{C/\mathbb{F}_q}(1) = h_{C/\mathbb{F}_q}$ (here $h_{C/\mathbb{F}_q}$ denotes the class number of $C/\mathbb{F}_q$), and $L_{C/\mathbb{F}_q}(q^{-1}u^{-1}) = q^{-g}u^{-2g}L_{C/\mathbb{F}_q}(u)$ \cite{Ros02}. This expression gives $Z_{C/\F_q}(u)$ an analytic continuation to $\C$ with simple poles at $u = 1$ and $u = q^{-1}$. Moreover, after factoring \[L_{C/\F_q}(u) := \prod_{j=1}^{2g} (1 - \gamma_ju),\] we refer to the complex numbers $\gamma_j$ as the \emph{inverse zeros} of $Z_{C/\F_q}(u)$, and use $\theta(\gamma_j)$ to denote the argument of $\gamma_j$. These inverse zeros come in conjugate pairs, and we index them so that $\gamma_{j+g} = \overline{\gamma_j}$ for $j= 1,\ldots,g$, and $0 \leq \theta(\gamma_j) \leq \pi$ for $j = 1, \ldots, g$. All of the results in this article will rely on expressing summatory functions in terms of these inverse zeros. The function field case differs crucially from the classical setting in that the zeta function has only finitely many zeros (in particular, $2g$). In addition, the Riemann hypothesis for function fields has been proven, so the absolute values of these inverse zeros are known to be $\sqrt{q}$.

\begin{xthm}[Riemann hypothesis for Function Fields]
    The zeros of $\zeta_{C/\F_q}(s)$ lie on the line $\Re(s) = 1/2$. Equivalently, the inverse zeros $\gamma_j$ all satisfy $|\gamma_j| = \sqrt{q}$. 
\end{xthm}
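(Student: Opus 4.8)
The plan is to deduce the Riemann hypothesis from a purely arithmetic estimate — the Hasse--Weil bound on the number of rational points — and then to establish that estimate, following the classical work of Weil and its elementary reworking by Stepanov and Bombieri. The first step is the standard dictionary between inverse zeros and point counts: taking the logarithmic derivative of $Z_{C/\F_q}(u) = L_{C/\F_q}(u)/((1-qu)(1-u))$ and comparing with the Euler product yields, for every $m \ge 1$,
\[
  N_m := \#C(\F_{q^m}) = q^m + 1 - \sum_{j=1}^{2g}\gamma_j^{\,m}.
\]
Hence $|\gamma_j| = \sqrt q$ for all $j$ is \emph{equivalent} to the Hasse--Weil bound $|N_m - q^m - 1| \le 2g\,q^{m/2}$ for all $m$: one implication is the triangle inequality, and conversely the bound forces the power series $\sum_{m\ge1}\bigl(q^m + 1 - N_m\bigr)u^m = \sum_j \gamma_j u/(1-\gamma_j u)$ to converge on $|u| < q^{-1/2}$, so each pole $u = 1/\gamma_j$ satisfies $|\gamma_j| \le \sqrt q$; the functional equation $L_{C/\F_q}(q^{-1}u^{-1}) = q^{-g}u^{-2g}L_{C/\F_q}(u)$ gives $\prod_j\gamma_j = q^g$, and $\prod_j|\gamma_j| = q^g$ together with $|\gamma_j|\le\sqrt q$ then forces every $|\gamma_j| = \sqrt q$. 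Finally, $N_m$ for $C/\F_q$ equals $N_1$ for the constant-field extension $C\times_{\F_q}\F_{q^m}$, which is again smooth, projective, and of genus $g$; so it suffices to prove the single inequality $|N_1 - q - 1| \le 2g\sqrt q$ for every smooth projective curve of genus $g$ over every finite field.

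For the point-count bound I would use the elementary method of Stepanov and Bombieri. Since $|\gamma_j^{2m}| = q^m$ already implies $|\gamma_j| = \sqrt q$, one may first replace $\F_q$ by a large even extension and thus assume $q$ is a perfect square and as large as desired in terms of $g$. One then fixes a nonconstant function on the curve and uses the Riemann--Roch theorem to build a nonzero element $\psi$ of the function field — a polynomial combination of a chosen function $y$ and its $q$-power Frobenius twist $y^{(q)}$ — that vanishes to order at least $\ell$ at every $\F_q$-rational point outside a bounded exceptional set while having polar divisor of controlled degree; comparing the numbers of zeros and poles through $\sum_P \mathrm{ord}_P(\psi) = 0$ gives $\ell\,N_1 \le \deg(\psi)_\infty$, and the optimal choice of $\ell$ yields the one-sided bound $N_1 \le q + 1 + (2g+1)\sqrt q$. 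A dual auxiliary-function construction (or Weil's geometric argument below) supplies the matching lower bound. The alternative, and historically first, route is Weil's: one works on the surface $S = \overline C \times \overline C$, identifies $N_1$ with the intersection number $\Gamma\cdot\Delta$ of the graph of Frobenius with the diagonal (the intersection is transverse because $d(\mathrm{Frob}) = 0$), and applies the Castelnuovo--Severi inequality — a consequence of the Hodge index theorem on $S$, equivalently of the positivity of the Rosati involution on the Jacobian — to the classes $\Gamma$, of bidegree $(q,1)$, and $\Delta$, of bidegree $(1,1)$; this gives $|\Gamma\cdot\Delta - q - 1| \le 2g\sqrt q$, i.e. the two-sided bound at once.

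The substance lies entirely in the point-count estimate; the passage back to $|\gamma_j| = \sqrt q$ is the short generating-function argument above. In the Stepanov--Bombieri approach the crux is the explicit construction of $\psi$: one must force a large prescribed order of vanishing at all rational points while keeping the pole order small — a delicate Riemann--Roch bookkeeping problem — and then separately verify $\psi \ne 0$, which typically requires a separability argument to exclude that the candidate is an inseparable $p$-th power. In Weil's approach the crux is instead the positivity input (Hodge index / Rosati), which is the one step that genuinely uses the geometry of the Jacobian and has no short elementary replacement. Either way, once $|N_1 - q - 1| \le 2g\sqrt q$ is established for all curves of genus $g$ over all finite fields, the theorem follows.
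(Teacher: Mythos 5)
The paper does not prove this statement at all: it is quoted as background (Weil's theorem, with the surrounding facts about $L_{C/\F_q}$ referenced to \cite{Ros02}), so there is no internal proof to compare against. Judged on its own terms, your reduction is correct and complete as far as it goes. The identity $N_m = q^m + 1 - \sum_j \gamma_j^m$ from the logarithmic derivative, the equivalence with the Hasse--Weil bound via convergence of $\sum_j \gamma_j u/(1-\gamma_j u)$ on $|u|<q^{-1/2}$ combined with $\prod_j|\gamma_j| = q^g$ from the functional equation, and the reduction to the single inequality $|N_1 - q - 1|\le 2g\sqrt q$ over all constant-field extensions are all standard and correctly argued.

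The gap is that the entire substance of the theorem --- the point-count bound itself --- is only named, not proved. In the Stepanov--Bombieri route you describe the shape of the auxiliary function $\psi$ but do not carry out the Riemann--Roch dimension count that guarantees its existence, the separability argument that guarantees $\psi\neq 0$, or the passage from the one-sided bound $N_1 \le q+1+(2g+1)\sqrt q$ to the two-sided bound with constant $2g$ (the ``dual construction'' is asserted, and note that an upper bound on $N_m$ alone gives only a one-sided bound on the power sums $\sum_j\gamma_j^m$, so the convergence argument does not yet apply). In the Weil route the Castelnuovo--Severi inequality, which you correctly identify as the crux, is simply invoked. So what you have is an accurate and well-organized roadmap of the standard proof, with all the elementary reductions done, but the theorem's core estimate remains a black box --- which is also exactly how the paper treats it.
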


We say that $C/\F_q$ satisfies the \emph{Linear Independence hypothesis} if the set \[\left\{\theta(\gamma_1),\ldots, \theta(\gamma_{g})\right\} \cup \{\pi\}\] is linearly independent over $\mathbb{Q}$.

The assumption of Linear Independence will play an important role when we compute limiting distributions of error terms and study the global behavior of these error terms over families of hyperelliptic curves. Having established a few basic properties of the zeta function, we return to the problem of finding the distribution of $k$-free effective divisors in the function field $C/\F_q$.

Say that the zeta function $Z_{C/\F_q}(u)$ has inverse zeros $\gamma_1,\ldots,\gamma_{2g}$. For each $j = 1,\ldots,2g$ and $\ell = 0,\ldots,k-1$, let $\gamma_{j, \ell} := q^{1/2k}e^{i(\theta(\gamma_j) + 2\pi \ell)/k}$. Under the assumption that the zeros of $Z_{C/\F_q}(u)$ are simple, we show that the explicit expression for the normalized error term \[\ET_k(X) := \frac{Q_k(X) - \MT_k(X)}{q^{X/2k}}\] is given by 
\begin{align} \label{eqn:k_free_eq}
    \ET_k(X) = -\sum_{j=1}^{2g}\sum_{\ell=0}^{k-1} \frac{Z_{C/\F_q}(\gamma_{j,\ell}^{-1})}{k\gamma_{j,\ell}^{1-k}Z'_{C/\F_q}(\gamma_j^{-1})}\frac{\gamma_{j,\ell}}{\gamma_{j,\ell}-1}e^{iX(\theta(\gamma_j) + 2\pi i\ell)/k} + O_{q,g}\left(q^{-X/2k}\right), 
\end{align} where the main term $\MT_k(X)$ is
\[
\MT_k(X) := \frac{q^{1-g}h_{C/\F_q}}{\zeta_{C/\F_q}(k)(q-1)^2}q^X,
\] and $h_{C/\F_q}$ is the class number of $C/\F_q$. In particular, note that the normalized error term $\ET_k(X)$ is $O(1)$ as $X$ approaches infinity, since all the summands have fixed absolute value. 

When the zeta function $Z_{C/\F_q}(u)$ has non-simple zeros, rather than providing an explicit formula, we instead study the order of growth of $R_k(X) := Q_k(X) - \MT_k(X) $ by proving 
\[R_k(X) = O(X^{r-1}q^{X/2}) \quad \textnormal{and}\quad\limsup_{X \to \infty}\frac{R_k(X)}{X^{r-1}q^{X/2}} > 0,\] where $r$ is the maximal order of a zero of $Z_{C/\F_q}(u)$. 

Assuming the Linear Independence hypothesis, we use (\ref{eqn:k_free_eq}) to quantify the frequency with which $\ET_k(X)$ occurs an interval $[-\beta, \beta]$ for a real $\beta > 0$ by finding a limiting distribution for $\ET_k(X)$. In the classical setting, Meng \cite{Men17} proved the existence of the limiting logarithmic distribution of the error term for $k \geq 2$, i.e., there exists a probability measure $\nu_k^*$ defined on the Borel subsets of $\R$ such that
\[
\lim_{Y \to \infty} \frac{1}{Y} \int_{0}^{Y} f(e^{-y/2k} R^*_k(e^y)) \, dy = \int_{\mathbb{R}} f(x) \, d \nu_k^* (x)
\]
for all bounded Lipschitz continuous functions $f:\mathbb{R} \rightarrow \mathbb{R}$. Analogously, we will prove that the function field error term $\ET_k(X)$ also has a limiting distribution $\nu_k$: 
\[\lim_{r \to \infty} \frac{1}{r}\sum_{X=1}^r f(\ET_k(X)) = \int_\R f(x) \, d\nu_k(x)\] for all bounded continuous functions $f: \mathbb{R} \rightarrow \mathbb{R}$. We note the fact that the limiting \textit{logarithmic} distribution is studied in the classical case, in contrast to our result in the function field setting. Meng \cite{Men17} proved the existence of limiting distributions by showing that $e^{-y/2k} R_k^*(e^y)$ is a $B^2$-almost periodic function, a class of functions that are known to possess limiting distributions \cite{ANS14}. The approach to proving the existence of a limiting distribution in the function field setting is far simpler --- zeta functions of function fields have only finitely many zeros, and accordingly, the normalized error term $\ET_k(X)$ is expressed as only a finite sum (up to a negligibly small error term), allowing for the straightforward application of tools from probability theory, as done in \cite{Cha08} and \cite{Hum12}. Using this limiting distribution, we can compute the natural density of the set \[\mathcal{S}_k(\beta) := \{X \in \Z^+ \ |\ |\ET_k(X)| \leq \beta\}\] to quantify how often $\ET_k(X)$ is bounded by any $\beta > 0$. For a Borel set $B \subset \mathbb{R}$ and a Borel-measurable function $f: [0, 2 \pi)^g \rightarrow \mathbb{R}$, we write $m(f(\theta_1, \hdots, \theta_g) \in B)$ for $$m(\{(\theta_1, \hdots, \theta_g) \in [0, 2 \pi)^g \ |\ f(\theta_1, \hdots, \theta_g) \in B\}),$$ where $m$ is the Lebesgue measure on $[0, 2 \pi)^g$.

\begin{thm}\label{intro_thm:kfree_limdist}
Let $C/\mathbb{F}_q$ be a function field with genus $g \geq 1$, and suppose that $C$ satisfies the Linear Independence hypothesis. For a real $\beta > 0$, the natural density $$\delta(\mathcal{S}_k(\beta)) = \lim_{Y \to \infty} \frac{1}{Y} \# \{1 \leq X \leq Y \ |\ |\ET_k(X)| \leq \beta\}$$ exists and is given by 
\[\delta(\mathcal{S}_k(\beta)) = \sum_{t=0}^{k-1} \frac{1}{k} \cdot m\left(\sum_{j=1}^g 2\left|\sigma_{t,j}^{(k)}\right|\cos(\theta_{j}) \in [-\beta,\beta]\right),\] where \[\sigma_{t,j}^{(k)} := \sum_{\ell=0}^{k-1}\frac{Z(\gamma_{j,\ell}^{-1})}{k\gamma_{j,\ell}^{1-k}Z'(\gamma_j^{-1})}\frac{\gamma_{j,\ell}}{\gamma_{j,\ell}-1}e^{2\pi i\ell t/k}\] for each $j = 1,\ldots,g$ and $t =0,1,\ldots,k-1$. 
\end{thm}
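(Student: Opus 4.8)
The plan is to read off the distribution of $\ET_k(X)$ directly from the explicit formula \eqref{eqn:k_free_eq} by passing to residue classes modulo $k$ and then invoking Weyl's equidistribution theorem, whose hypothesis is exactly what Linear Independence provides. The interval $\{1,\dots,Y\}$ splits into the $k$ residue classes modulo $k$, each of density $1/k$, so it suffices to compute, for each fixed $t\in\{0,\dots,k-1\}$, the density of $\{a\ge 0:\ |\ET_k(ka+t)|\le\beta\}$ and then average these with weights $1/k$. For $X=ka+t$ one has $e^{2\pi i a\ell}=1$, so the exponential $e^{iX(\theta(\gamma_j)+2\pi\ell)/k}$ in \eqref{eqn:k_free_eq} factors as $e^{ia\theta(\gamma_j)}\cdot e^{it\theta(\gamma_j)/k}\cdot e^{2\pi i\ell t/k}$, and \eqref{eqn:k_free_eq} becomes
\[
\ET_k(ka+t)\;=\;-\sum_{j=1}^{2g}c_{t,j}\,e^{ia\theta(\gamma_j)}+O_{q,g}\!\left(q^{-(ka+t)/2k}\right),\qquad c_{t,j}:=e^{it\theta(\gamma_j)/k}\,\sigma^{(k)}_{t,j}\ \text{ for }1\le j\le g,
\]
with $c_{t,j}$ for $g<j\le 2g$ obtained from $\gamma_{j+g}=\overline{\gamma_j}$; in particular $|c_{t,j}|=\bigl|\sigma^{(k)}_{t,j}\bigr|$ for $1\le j\le g$.

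Since $\ET_k$ is real-valued and $\theta(\gamma_{j+g})=-\theta(\gamma_j)$, the terms with $g<j\le 2g$ are the complex conjugates of those with $1\le j\le g$ — this is the conjugate symmetry already used to derive \eqref{eqn:k_free_eq} — so, writing $c_{t,j}=\bigl|\sigma^{(k)}_{t,j}\bigr|e^{i\psi_{t,j}}$, we obtain
\[
\ET_k(ka+t)\;=\;-2\sum_{j=1}^{g}\bigl|\sigma^{(k)}_{t,j}\bigr|\cos\!\bigl(a\,\theta(\gamma_j)+\psi_{t,j}\bigr)+O_{q,g}\!\left(q^{-(ka+t)/2k}\right)\;=\;G_t\!\bigl(a\theta(\gamma_1),\dots,a\theta(\gamma_g)\bigr)+o(1),
\]
where $G_t\colon[0,2\pi)^g\to\R$ is the real-analytic function $G_t(\phi_1,\dots,\phi_g):=-2\sum_{j=1}^g\bigl|\sigma^{(k)}_{t,j}\bigr|\cos(\phi_j+\psi_{t,j})$. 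By Linear Independence the set $\{\theta(\gamma_1),\dots,\theta(\gamma_g),\pi\}$ — hence also $\{\theta(\gamma_1),\dots,\theta(\gamma_g),2\pi\}$ — is $\Q$-linearly independent, so Weyl's theorem shows that $\bigl(a\theta(\gamma_1),\dots,a\theta(\gamma_g)\bigr)_{a\ge 0}$ is equidistributed in $[0,2\pi)^g$ with respect to the normalized measure $m$. A standard $\varepsilon$-sandwich then absorbs the $o(1)$ error: for $a$ large the error term is $<\varepsilon$, so $\{|G_t|\le\beta-\varepsilon\}\subseteq\{|\ET_k(ka+t)|\le\beta\}\subseteq\{|G_t|\le\beta+\varepsilon\}$, and letting $\varepsilon\downarrow 0$ yields
\[
\lim_{A\to\infty}\frac1A\#\{0\le a<A:\ |\ET_k(ka+t)|\le\beta\}\;=\;m\bigl(|G_t|\le\beta\bigr)\;=\;m\!\left(\sum_{j=1}^g 2\bigl|\sigma^{(k)}_{t,j}\bigr|\cos(\theta_j)\in[-\beta,\beta]\right),
\]
the last equality because $\theta_j\mapsto\phi_j+\psi_{t,j}+\pi$ is a measure-preserving translation of the torus. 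Averaging over $t\in\{0,\dots,k-1\}$ with weight $1/k$ gives the stated formula for $\delta(\mathcal{S}_k(\beta))$, and in particular shows the defining limit exists.

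The step requiring genuine care is the $\varepsilon$-sandwich, which needs (i) that the boundary $G_t^{-1}(\{\pm\beta\})$ of $\{|G_t|\le\beta\}$ be $m$-null and (ii) continuity of $\beta'\mapsto m(|G_t|\le\beta')$ at $\beta'=\beta$. Both hold because $G_t$ is real-analytic and not identically constant: a proper real-analytic subset of the torus is Lebesgue-null, and the pushforward of $m$ under a non-constant $G_t$ is atomless. The only exception is the degenerate case in which every $\sigma^{(k)}_{t,j}$ ($1\le j\le g$) vanishes; then $G_t\equiv 0$, and since $\beta>0$ both sets in the last display equal the full torus, so that residue class simply contributes $1/k$, in agreement with the formula. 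The remaining ingredients — the residue decomposition, the manipulation of \eqref{eqn:k_free_eq}, the conjugate symmetry, and Weyl's theorem — are routine; should the preceding section already realize the limiting distribution $\nu_k$ of $\ET_k$ as a pushforward of Haar measure on a torus, this argument reduces to the evaluation $\delta(\mathcal{S}_k(\beta))=\nu_k([-\beta,\beta])$ together with the identification of $\nu_k$ on each residue class.
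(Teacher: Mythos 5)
Your argument is correct, and it reaches the same formula by the same underlying ideas (equidistribution of $(X\theta(\gamma_1)/k,\dots,X\theta(\gamma_g)/k)$ restricted to residue classes modulo $k$, plus translation invariance to absorb the phases $\arg\sigma_{t,j}^{(k)}$), but the technical packaging is genuinely different from the paper's. The paper first constructs the limiting distribution $\nu_k$ of $\ET_k(X)$ as the pushforward $(\alpha_k)_*\mu_G$ of Haar measure on the closure $G$ of the image of $\Z$ in $\T^{kg}$ (Proposition \ref{prop:kfree_main_error_distribution}, via Kronecker--Weyl), transfers it from the leading sum $E_{M,k}$ to $\ET_k$ by a tightness/Prohorov/Portmanteau argument (Proposition \ref{prop:kfree_limiting_distribution}), and only then identifies $G$ as a disjoint union of $k$ copies of $\T^g$ indexed by $X\bmod k$ (Lemma \ref{lem:kfree_torus}); the theorem follows by evaluating $\nu_k([-\beta,\beta])$. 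Your proof collapses this: the explicit mod-$k$ splitting replaces Lemma \ref{lem:kfree_torus}, the $\varepsilon$-sandwich with null level sets of the real-analytic $G_t$ replaces the Prohorov/Portmanteau machinery, and one-dimensional-orbit Weyl equidistribution on $\T^g$ replaces Kronecker--Weyl on $\T^{kg}$. What your route buys is brevity and a self-contained proof of the density statement --- and you even treat the degenerate case $\sigma_{t,1}^{(k)}=\dots=\sigma_{t,g}^{(k)}=0$, which the paper's claim that the level sets are null silently skips over. What the paper's route buys is the measure $\nu_k$ itself as an object, valid for all bounded continuous test functions and (for Propositions \ref{prop:kfree_main_error_distribution}--\ref{prop:kfree_limiting_distribution}) under the weaker hypothesis that the zeros are merely simple; this is reused later for Corollary \ref{intro_cor:kfree_no_bias} and the Fourier transform in Proposition \ref{prop:fourier_kfree}, none of which your direct density computation produces. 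Your closing remark correctly identifies this: granted the paper's $\nu_k$, your argument reduces to the paper's final step.
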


The above theorem is analogous to \cite[Theorem 1.6]{Hum12} in that both expressions for the natural density are given in terms of the zeros of the associated zeta function. However, the application of the Kronecker--Weyl theorem in the $k$-free indicator function setting is further complicated by the presence of multiple $k$th roots whose arguments are not linearly independent.

By exploiting the translation invariance of the Lebesgue measure, we find that the normalized error term $\ET_k(X)$ is unbiased, i.e., it is positive and negative equally often.

\begin{cor}\label{intro_cor:kfree_no_bias}
    Let $C/\mathbb{F}_q$ be a function field with genus $g \geq 1$, and suppose that $C$ satisfies the Linear Independence hypothesis. Then the natural densities of the sets \[S_k^+ = \left\{X \in \Z^+ \ |\ \ET_k(X) > 0 \right\} \quad \text{and} \quad S_k^- = \left\{X \in \Z^+ \ |\ \ET_k(X) < 0 \right\}\] exist and are given by \[\delta(S_k^+) = \delta(S_k^-) = \frac12.\]
\end{cor}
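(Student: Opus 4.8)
The plan is to read off both densities from the limiting distribution $\nu_k$ of $\ET_k(X)$ and to exploit its reflection symmetry about $0$. From the Kronecker--Weyl argument behind Theorem~\ref{intro_thm:kfree_limdist}, together with \eqref{eqn:k_free_eq}, $\nu_k$ is the pushforward of the uniform probability measure on $\{0,1,\dots,k-1\}\times[0,2\pi)^g$ under $(t,\theta_1,\dots,\theta_g)\mapsto-2\sum_{j=1}^g\re\big(\sigma_{t,j}^{(k)}e^{i\theta_j}\big)$; absorbing the argument of each $\sigma_{t,j}^{(k)}$ into the uniform variable $\theta_j$, one may equivalently write $\nu_k=\frac1k\sum_{t=0}^{k-1}\mathrm{Law}\big(2\sum_{j=1}^g|\sigma_{t,j}^{(k)}|\cos\theta_j\big)$, which is the measure whose mass on $[-\beta,\beta]$ is computed in Theorem~\ref{intro_thm:kfree_limdist}.

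First I would record the symmetry: the translation $(\theta_1,\dots,\theta_g)\mapsto(\theta_1+\pi,\dots,\theta_g+\pi)$ preserves Lebesgue measure on $[0,2\pi)^g$ and, since $\cos(\,\cdot\,+\pi)=-\cos(\,\cdot\,)$, negates the map defining $\nu_k$; hence $\nu_k(B)=\nu_k(-B)$ for every Borel set $B\subseteq\R$, and in particular $\nu_k((0,\infty))=\nu_k((-\infty,0))$. Approximating $\indic{(0,\infty)}$ from inside and outside by bounded continuous functions and applying the limiting-distribution identity gives $\nu_k((0,\infty))\le\liminf_{Y\to\infty}\frac1Y\#\{1\le X\le Y:\ET_k(X)>0\}\le\limsup_{Y\to\infty}\frac1Y\#\{1\le X\le Y:\ET_k(X)>0\}\le\nu_k([0,\infty))$, and similarly for $S_k^-$. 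Provided $\nu_k(\{0\})=0$, both natural densities exist, coincide by the symmetry just noted, and sum to $\nu_k(\R\setminus\{0\})=1$, so each equals $\frac12$.

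Thus the substance of the corollary is the assertion $\nu_k(\{0\})=0$, equivalently that for every residue $t$ at least one of $\sigma_{t,1}^{(k)},\dots,\sigma_{t,g}^{(k)}$ is nonzero. For fixed $t$, the real-analytic function $\theta\mapsto\sum_{j=1}^g|\sigma_{t,j}^{(k)}|\cos\theta_j$ on $[0,2\pi)^g$ vanishes on a Lebesgue-null set unless it vanishes identically --- which happens exactly when all $\sigma_{t,j}^{(k)}$ vanish --- as one sees by Fubini in a coordinate $\theta_{j_0}$ with $\sigma_{t,j_0}^{(k)}\ne0$, since $\theta_{j_0}\mapsto a\cos\theta_{j_0}+c$ with $a\ne0$ has at most two zeros. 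Hence $\nu_k(\{0\})=\frac1k\,\#\{\,t:\sigma_{t,1}^{(k)}=\dots=\sigma_{t,g}^{(k)}=0\,\}$, and the corollary reduces to showing this index set is empty.

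This nonvanishing is the step I expect to be the main obstacle. When $k\ge2$ the individual summands $c_{j,\ell}:=\frac{Z(\gamma_{j,\ell}^{-1})}{k\gamma_{j,\ell}^{1-k}Z'(\gamma_j^{-1})}\cdot\frac{\gamma_{j,\ell}}{\gamma_{j,\ell}-1}$ of $\sigma_{t,j}^{(k)}=\sum_{\ell=0}^{k-1}c_{j,\ell}e^{2\pi i\ell t/k}$ are each nonzero: $Z'(\gamma_j^{-1})\ne0$ by simplicity of the zeros, $\gamma_{j,\ell}\ne1$ since $|\gamma_{j,\ell}|=q^{1/2k}>1$, and $Z(\gamma_{j,\ell}^{-1})\ne0$ since by the Riemann hypothesis the zeros of $Z$ lie on $|u|=q^{-1/2}$ while $|\gamma_{j,\ell}^{-1}|=q^{-1/2k}$, circles that meet only when $k=1$. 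The remaining danger is that the $\ell$-sum defining $\sigma_{t,j}^{(k)}$ cancels for all $j$ at once; to rule this out I would clear denominators so that $\sigma_{t,j}^{(k)}=0$ becomes a polynomial identity in $q$ and $\gamma_1,\dots,\gamma_{2g}$, and then use the relations $L(\gamma_i^{-1})=0$, the functional equation (under which $\gamma_i\mapsto q/\gamma_i$ permutes the inverse zeros), and the Riemann hypothesis $|\gamma_i|=\sqrt q$ to reduce it to a relation incompatible with the Linear Independence hypothesis --- one forcing $\theta(\gamma_j)$ to be a rational multiple of $\pi$, or to satisfy a nontrivial $\mathbb Q$-linear relation with the remaining $\theta(\gamma_i)$ and $\pi$. (In genus one this is quick: modulo $L(\gamma_1^{-1})=0$ the identity $\sigma_{t,1}^{(k)}=0$ becomes an affine relation in $\gamma_1^{-1}$ with rational coefficients and nonzero linear part, forcing $\gamma_1^{-1}\in\mathbb Q$, which contradicts that $\gamma_1$ is non-real under Linear Independence.) Carrying out this elimination uniformly in $t$ and $g\ge1$ is the delicate part.
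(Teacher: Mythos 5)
Your main argument is the same as the paper's: the paper also obtains $\delta(S_k^-)=\delta(S_k^+)$ by applying the measure-preserving substitution $\theta_j\mapsto\pi-\theta_j$ (equivalently your shift $\theta_j\mapsto\theta_j+\pi$) to the expression $\nu_k(B)=\sum_{t=0}^{k-1}\frac1k\,m\bigl(\sum_{j=1}^g 2|\sigma_{t,j}^{(k)}|\cos\theta_j\in B\bigr)$ with $B=(-\infty,0)$ and $(0,\infty)$, and then uses $\nu_k(\{0\})=0$ to conclude that the two densities exist and sum to $1$. Your handling of the passage from weak convergence to the densities of the open half-lines via Portmanteau is also the route the paper takes (it invokes the weak convergence of the measures $\lambda_n$ to $\nu_k$ together with the boundary-measure-zero condition).

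The one place your write-up differs is that you isolate, and attempt to prove, the assertion $\nu_k(\{0\})=0$, which you correctly reduce to: for \emph{every} residue $t$, some $\sigma_{t,j}^{(k)}$ is nonzero. You do not close this step, and your own diagnosis is accurate: the observation that each individual coefficient $c_{j,\ell}$ is nonzero only shows (by inverting the discrete Fourier transform in $\ell$) that for each $j$ there exists \emph{some} $t$ with $\sigma_{t,j}^{(k)}\neq 0$, which is the wrong quantifier order --- a nonzero vector can have a vanishing DFT coefficient at a particular $t$. The proposed elimination argument for general $t$ and $g$ is only a sketch (and the genus-one reduction to ``an affine relation in $\gamma_1^{-1}$'' is not obviously right, since $\sigma_{t,1}^{(k)}$ is built from $k$-th roots of $\gamma_1$ rather than from $\gamma_1$ itself). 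To be fair, the paper disposes of exactly this point with the one-line assertion, in the proof of Theorem~\ref{intro_thm:kfree_limdist}, that $\sum_{j=1}^g2|\sigma_{t,j}^{(k)}|\cos\theta_j$ ``is not uniformly constant''; that assertion is harmless for the level sets $\{\pm\beta\}$ with $\beta>0$ needed there (if the function were identically zero those level sets are empty), but it is precisely what is needed, and not justified, for the level set $\{0\}$ used in this corollary. So you have located a real thin spot in the argument rather than introduced a new one; as a self-contained proof, however, your proposal is incomplete at that step.
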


This result contrasts the distribution of the error term in Chebyshev's bias for prime races; under the assumption of the generalized Riemann hypothesis and the generalized Linear Independence hypothesis for $L$-functions in the classical case, and just the assumption of the generalized Linear Independence hypothesis in the function field case, the primes are biased towards quadratic non-residues \cite{RS94, Cha08}. Our result also contrasts the distribution of the error term in the function field analogue of P\'olya's conjecture \cite{Hum12}.

Continuing to assume the Linear Independence hypothesis, we find an explicit expression for the error bound \[B_k(C/\F_q) := \limsup_{X \to \infty} |\ET_k(X)|\] and study how it behaves on average over the class of function fields corresponding to the family of hyperelliptic curves in $\mathcal{H}_{2g + 1, q^n}$, where $\mathcal{H}_{2g + 1, q^n}$ denotes the set of curves given by the affine model $y^2 = f(x)$ for a polynomial $f \in \mathbb{F}_{q^n}[x]$ of degree $2g + 1$.  Inspired by the work of Cha \cite{Cha17} and Humphries \cite{Hum13}, we apply tools from random matrix theory, which require us to study the renormalized bound \[\widetilde{B}_k(C/\F_q) := \limsup_{X \to \infty} \frac{|\ET_k(X)|}{q^{g-g/k-1/2}}.\] We then deduce the following theorem on the size of the error term in the limit of large $q$. 
\begin{thm}\label{intro_thm:kfree_global}
    For any fixed $\beta > 0$, the quantity \[\lim_{n \to \infty} \frac{\#\{C \in \mathcal{H}_{2g + 1, q^n} \mid \widetilde{B}_k(C/\mathbb{F}_{q^n}) \leq \beta\}}{\#\mathcal{H}_{2g + 1, q^n}}\] is $0$ if $0 < \beta \leq 1$, and strictly between $0$ and $1$ if $\beta > 1$. 
\end{thm}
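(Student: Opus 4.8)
The plan is to reduce Theorem~\ref{intro_thm:kfree_global} to a statement about Haar-random matrices in $\mathrm{USp}(2g)$, in three steps: (i)~determine the large-$q$ asymptotics of $\widetilde{B}_k(C/\F_q)$ as a function of the normalized Frobenius conjugacy class $\Theta_C$ of $C$ (the class of $\mathrm{USp}(2g)$ with eigenvalues $\gamma_j/\sqrt q = e^{i\theta_j}$); (ii)~apply Deligne's equidistribution theorem to the hyperelliptic family $\mathcal{H}_{2g+1,q^n}$ as $n\to\infty$; and (iii)~resolve the resulting random-matrix question using a Lagrange interpolation identity. For step~(i), set $D_j := \prod_{m\neq j}(1 - e^{i(\theta_m - \theta_j)})$, so that $|D_j| = |P'(e^{i\theta_j})|$ where $P(T) := \prod_{m=1}^{2g}(T - e^{i\theta_m})$ is the characteristic polynomial of $\Theta_C$, and analyze the coefficient $c_{j,\ell} := \tfrac{Z_{C/\F_q}(\gamma_{j,\ell}^{-1})}{k\gamma_{j,\ell}^{1-k}Z'_{C/\F_q}(\gamma_j^{-1})}\cdot\tfrac{\gamma_{j,\ell}}{\gamma_{j,\ell}-1}$ from \eqref{eqn:k_free_eq}. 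Using $Z_{C/\F_q}(u) = L_{C/\F_q}(u)/((1-qu)(1-u))$, $L_{C/\F_q}(u) = \prod_m(1-\gamma_m u)$, and $\prod_m \gamma_m = q^g$, one finds --- uniformly on the locus where the $e^{i\theta_j}$ are pairwise distinct --- that $Z_{C/\F_q}(\gamma_{j,\ell}^{-1}) \sim -q^{g-1}\gamma_{j,\ell}^{1-2g}$ and $Z'_{C/\F_q}(\gamma_j^{-1}) \sim e^{2i\theta_j}D_j$, whence $c_{j,\ell} \sim -\tfrac{q^{\,g-g/k-1/2}}{k\,D_j}\,e^{i\theta_j(k-2g)/k-2i\theta_j}\,e^{-4\pi i g\ell/k}$. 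Substituting into \eqref{eqn:k_free_eq} and summing the geometric progression in $\ell$ (the sum equals $k$ when $X\equiv 2g\pmod k$ and $0$ otherwise), the leading part of $\ET_k(X)$ vanishes unless $X\equiv 2g\pmod k$, in which case it equals $-\sum_{j=1}^{2g}A_j e^{iX\theta_j/k}$ with $|A_j| = q^{\,g-g/k-1/2}/|D_j|$ and $A_{j+g} = \overline{A_j}$; writing $X = 2g + kY$ and invoking the Kronecker--Weyl theorem along $Y\in\Z$ under the Linear Independence hypothesis (this is exactly where the $\Q$-linear independence of $\{\theta_1,\dots,\theta_g,\pi\}$ enters) gives
\begin{gather*}
\widetilde{B}_k(C/\F_q) = \psi(\Theta_C) + o(1) \qquad (q\to\infty), \\
\text{where}\qquad \psi(M) := 2\sum_{j=1}^{g}\frac{1}{|D_j(M)|} = \sum_{m=1}^{2g}\frac{1}{|P_M'(e^{i\theta_m})|}
\end{gather*}
is a class function on $\mathrm{USp}(2g)$, with the error uniform away from the \emph{bad locus} $\mathcal{B}$ of classes having a repeated eigenvalue (on $\mathcal{B}$, $\psi = +\infty$). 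Near $\mathcal{B}$ some $|D_j|\to 0$, and a crude mean-square bound --- the frequencies $(\pm\theta_j + 2\pi\ell)/k$ are pairwise distinct mod $2\pi$ off $\mathcal{B}$, so $\limsup_X|\ET_k(X)|^2 \gtrsim \tfrac 2k q^{\,2g-2g/k-1}\sum_j|D_j|^{-2}$ --- forces $\widetilde{B}_k\to\infty$ there as well.

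For step~(ii), the geometric monodromy group of the hyperelliptic family $\mathcal{H}_{2g+1,q^n}$ is $\mathrm{Sp}_{2g}$, so by Deligne's equidistribution theorem the Frobenius classes $\Theta_C$, $C\in\mathcal{H}_{2g+1,q^n}$, equidistribute in the space of conjugacy classes of $\mathrm{USp}(2g)$ with respect to Haar measure as $n\to\infty$ (compare \cite{Cha17, Hum13}). Combining this with step~(i): for each $\varepsilon > 0$ and all large $n$, at the level of Frobenius classes one has $\{\psi \le (1-\varepsilon)\beta\} \subseteq \{\widetilde{B}_k(C/\F_{q^n})\le\beta\} \subseteq \{\psi\le(1+\varepsilon)\beta\}$ up to a proportion of $\mathcal{H}_{2g+1,q^n}$ bounded by the Haar measure of a neighborhood of $\mathcal{B}$, which may be taken arbitrarily small; the curves discarded here --- those with $\Theta_C$ near $\mathcal{B}$ (where $\widetilde{B}_k$ is large by the crude bound above) and those violating the Linear Independence hypothesis --- form a negligible proportion in the limit. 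Since $\psi$ is real-analytic and nonconstant on the (connected, full-measure, open) locus of regular classes, every level set $\{\psi = \beta\}$ is Haar-null; letting $n\to\infty$ and then $\varepsilon\to 0$ therefore yields
\[
\lim_{n\to\infty}\frac{\#\{C\in\mathcal{H}_{2g+1,q^n} : \widetilde{B}_k(C/\F_{q^n})\le\beta\}}{\#\mathcal{H}_{2g+1,q^n}} = \mu_{\mathrm{Haar}}\{M\in\mathrm{USp}(2g) : \psi(M)\le\beta\}.
\]

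For step~(iii), the crucial fact is $\psi(M)\ge 1$ for all $M$. Indeed, for a regular $M$ with eigenvalues $\alpha_1,\dots,\alpha_{2g}$ on the unit circle, Lagrange interpolation for $P_M$ and the monomial $T^{2g-1}$ gives $\sum_{m=1}^{2g}\alpha_m^{2g-1}/P_M'(\alpha_m) = 1$, and since $|\alpha_m| = 1$ the triangle inequality yields $\psi(M) = \sum_m|P_M'(\alpha_m)|^{-1}\ge 1$, with equality precisely when the $2g\ge 2$ complex numbers $\alpha_m^{2g-1}/P_M'(\alpha_m)$ are positive multiples of one another --- a proper real-analytic subvariety, hence Haar-null (for $g = 1$, $\psi(M) = 1/|\sin\theta_1|$, equal to $1$ only at $\theta_1 = \pi/2$; in general, equality holds e.g.\ when $P_M(T) = T^{2g}+1$); and $\psi = +\infty$ on $\mathcal{B}$. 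Consequently $\{\psi\le\beta\} = \varnothing$ for $\beta < 1$, and $\{\psi\le 1\} = \{\psi = 1\}$ is Haar-null, so the limit above is $0$ for $\beta\le 1$. For $\beta > 1$: since $\inf\psi = 1 < \beta$ and $\psi$ is continuous (and finite) at points where it is close to its infimum, $\{\psi < \beta\}$ contains a nonempty open set, so the limit is $> 0$; and since $\psi$ is unbounded near $\mathcal{B}$, $\{\psi > \beta\}$ contains a nonempty open set, so the limit is $< 1$.

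I expect step~(i) to be the main obstacle: extracting the leading term of the coefficients $c_{j,\ell}$ (equivalently, of the $\sigma_{t,j}^{(k)}$ in Theorem~\ref{intro_thm:kfree_limdist}) as $q\to\infty$ with an error uniform enough to interchange $\limsup_X$ with $q\to\infty$, correctly identifying that only the residue class $X\equiv 2g\pmod k$ contributes at top order, and checking that the curves excluded along the way --- those with Frobenius near the repeated-eigenvalue locus $\mathcal{B}$, and those failing Linear Independence --- account for a vanishing proportion of $\mathcal{H}_{2g+1,q^n}$. Granting step~(i), steps~(ii) and~(iii) are comparatively short.
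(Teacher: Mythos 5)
Your proposal is correct and follows essentially the same route as the paper: you reduce $\widetilde{B}_k$ in the large-$q$ limit to the class function $\psi(M)=\sum_m |P_M'(e^{i\theta_m})|^{-1}$ (the paper's $\varphi$), correctly identify that the geometric sum over $\ell$ singles out the residue class $X\equiv 2g\pmod k$ as dominant (the paper's Proposition \ref{Bk to varphi}), pass to Haar measure on $\mathrm{USp}_{2g}(\mathbb{C})$ via Deligne equidistribution after discarding the negligible set of curves failing Linear Independence (the paper's Lemma \ref{Bk and haar}), and conclude from $\psi\ge 1$ with a measure-zero equality locus and unboundedness near the repeated-eigenvalue locus (the paper's Proposition \ref{varphi ineq}, cited from Humphries, for which your Lagrange-interpolation argument is the standard proof). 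The only cosmetic difference is your explicit mean-square treatment of the bad locus $\mathcal{B}$, which the paper sidesteps by working under Linear Independence (which already forces distinct angles) and by an $\varepsilon$-neighborhood argument around the level set $\{\varphi=\beta\}$.
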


Theorem \ref{intro_thm:kfree_global} parallels Humphries' results that most hyperelliptic curves $C \in \mathcal{H}_{2g+1, q^n}$ do not satisfy the Mertens conjecture for $C/\F_{q^n}$ in the large $n$ limit, but a positive proportion of hyperelliptic curves satisfy the $\beta$-Mertens conjecture when $\beta > 1$ \cite{Hum13}. Meanwhile, for $k$-free numbers in the classical setting, Mossinghoff, Oliveira e Silva, and Trudigan \cite{MOT21} showed that \[\liminf_{x \to \infty} \frac{R_k^*(x)}{x^{1/2k}} < -0.74969 \quad \text{and} \quad \limsup_{x \to \infty} \frac{R_k^*(x)}{x^{1/2k}} > 0.74969\] for $k$ sufficiently large, and if the Linear Independence hypothesis in the classical setting is true, then the oscillations in $R_k^*(x)/x^{1/2k}$ are in fact unbounded. On the other hand, in the function field setting, the oscillations in $\ET_k(X)$ are bounded for any \emph{fixed} function field (if all zeros of the zeta function are simple), but Theorem \ref{intro_thm:kfree_global} implies that for any $\beta > 0$, a nontrivial proportion of hyperelliptic function fields in the large $n$ limit have $\widetilde{B}(C/\mathbb{F}_{q^n}) > \beta$. 

In the limit of large $q$, we additionally find that the maximal values of $\ET_k(X)$ are smaller for certain residue classes of $X$ modulo $k$. We are therefore motivated to study the global behavior of this bound across different residue classes modulo $k$. For each $0 \leq b \leq k-1$, we define \[B_k(C/\mathbb{F}_q, b+2g) := \limsup_{Y \to \infty} |\ET_k(b + 2g + kY)|.\] To apply the techniques we used in proving Theorem \ref{intro_thm:kfree_global}, we renormalize $B_k(C/\mathbb{F}_q, b+2g)$ to account for the aforementioned differences across residue classes modulo $k$ and define accordingly \begin{align*} \widetilde{B}_k(C/\mathbb{F}_q, b+2g) := \limsup_{Y \to \infty} \frac{|\ET_k(b + 2g + kY)|}{(b+1)q^{g - g/k - 1/2 - b/2k}}.\end{align*} Our results on the behavior of $\widetilde{B}_k(C/\mathbb{F}_q,b+2g)$ are summarized in the following theorem, which can be compared with Theorem \ref{intro_thm:kfree_global}. 

\begin{thm}\label{intro_thm:kfree_xmodk}
    For any $0 \leq b \leq k - 2$, the quantity \[\lim_{n \to \infty} \frac{\#\{C \in \mathcal{H}_{2g + 1, q^n} \mid \widetilde{B}_k(C/\mathbb{F}_{q^n}, b + 2g) \leq \beta\}}{\#\mathcal{H}_{2g + 1, q^n}}\] is $0$ if $0 < \beta \leq 1$, and strictly between $0$ and $1$ if $\beta > 1$. 
\end{thm}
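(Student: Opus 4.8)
The plan is to run the argument of Theorem~\ref{intro_thm:kfree_global} inside a single residue class, the only genuinely new computation being the large-$q$ asymptotics of the Fourier coefficients $\sigma^{(k)}_{b+2g,j}$ under the modified normalization; in particular, once Step~2 below is in place, Step~3 is inherited essentially verbatim from Theorem~\ref{intro_thm:kfree_global}.

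\textbf{Step 1: Kronecker--Weyl on an arithmetic progression.} Specializing $X=b+2g+kY$ in~(\ref{eqn:k_free_eq}), the congruence $X\equiv b+2g\pmod k$ collapses the inner sum over $\ell$; pairing the $j$-th and $(j+g)$-th terms and using $\gamma_{j+g}=\overline{\gamma_j}$ yields
\[
\ET_k(b+2g+kY)=-\sum_{j=1}^{g}2\bigl|\sigma^{(k)}_{b+2g,j}\bigr|\cos\!\Bigl(\tfrac{(b+2g+kY)\theta(\gamma_j)}{k}+\arg\sigma^{(k)}_{b+2g,j}\Bigr)+O_{q,g}\!\bigl(q^{-(b+2g+kY)/2k}\bigr).
\]
Under the Linear Independence hypothesis $\theta(\gamma_1),\dots,\theta(\gamma_g)$ and $2\pi/k$ are $\Q$-linearly independent (since $2\pi/k\in\pi\Q$ and $\theta(\gamma_1),\dots,\theta(\gamma_g),\pi$ are), so $(Y\theta(\gamma_1),\dots,Y\theta(\gamma_g))$ equidistributes in $(\R/2\pi\Z)^g$; the phases can be driven simultaneously to $0$, giving $B_k(C/\F_q,b+2g)=2\sum_{j=1}^{g}|\sigma^{(k)}_{b+2g,j}|$, the one-residue-class analogue of the formula underlying Theorem~\ref{intro_thm:kfree_global}. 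The reduction to curves satisfying Linear Independence (or its appropriate weakening) is handled as there.

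\textbf{Step 2: large-$q$ asymptotics.} Write $c_{j,\ell}=\frac{Z_{C/\F_q}(\gamma_{j,\ell}^{-1})}{k\gamma_{j,\ell}^{1-k}Z'_{C/\F_q}(\gamma_j^{-1})}\cdot\frac{\gamma_{j,\ell}}{\gamma_{j,\ell}-1}$, so $\sigma^{(k)}_{b+2g,j}=\sum_{\ell=0}^{k-1}c_{j,\ell}e^{2\pi i(b+2g)\ell/k}$ extracts the part of $\ell\mapsto c_{j,\ell}$ of frequency $\equiv-(b+2g)\pmod k$. Since $|\gamma_{j,\ell}^{-1}|=q^{-1/2k}\to0$ and $|\gamma_j|=\sqrt q$, expand the factors: $L_{C/\F_q}(\gamma_{j,\ell}^{-1})=\gamma_{j,\ell}^{-2g}\prod_i\gamma_i\cdot\prod_i(1-\gamma_{j,\ell}\gamma_i^{-1})$, $(1-q\gamma_{j,\ell}^{-1})^{-1}=-\sum_{m\ge1}q^{-m}\gamma_{j,\ell}^{m}$, and $(1-\gamma_{j,\ell}^{-1})^{-1}=\sum_{m\ge0}\gamma_{j,\ell}^{-m}$ (this last series occurring twice). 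The leading term of $c_{j,\ell}$ has frequency $\equiv-2g\pmod k$ and magnitude $q^{g-g/k-1/2}/\prod_{i\ne j}|1-\gamma_i\gamma_j^{-1}|$; powers drawn from the two geometric series $(1-\gamma_{j,\ell}^{-1})^{-1}$ lower the frequency at cost $q^{-1/2k}$ per unit, while those from $\prod_i(1-\gamma_{j,\ell}\gamma_i^{-1})$ or from $(1-q\gamma_{j,\ell}^{-1})^{-1}$ raise it at the strictly larger costs $q^{-(k-1)/2k}$, resp.\ $q^{-(2k-1)/2k}$, per unit. Hence for $0\le b\le k-2$ the unique dominant way to reach frequency $-(b+2g)$ is a total downward shift $b=m_1+m_2$ split between the two geometric series, which happens in exactly $b+1$ ways; carrying out the $\ell$-sum (and using $\gamma_{j,\ell}^{k}=\gamma_j$) gives
\[
\sigma^{(k)}_{b+2g,j}=-(b+1)\,\frac{\gamma_j\,q^{-(2g+b)/2k}\,e^{-i(2g+b)\theta(\gamma_j)/k}\prod_{i=1}^{2g}\gamma_i}{q\,Z'_{C/\F_q}(\gamma_j^{-1})}\,\bigl(1+o(1)\bigr).
\]
Since $|Z'_{C/\F_q}(\gamma_j^{-1})|=\prod_{i\ne j}|1-\gamma_i\gamma_j^{-1}|\,(1+o(1))$ and $\gamma_i=\sqrt q\,e^{i\theta(\gamma_i)}$, this yields
\[
\widetilde B_k(C/\F_q,b+2g)=\frac{2\sum_{j=1}^{g}|\sigma^{(k)}_{b+2g,j}|}{(b+1)\,q^{g-g/k-1/2-b/2k}}=2\sum_{j=1}^{g}\frac{1}{\prod_{i\ne j}\bigl|1-e^{i(\theta(\gamma_i)-\theta(\gamma_j))}\bigr|}\,\bigl(1+o(1)\bigr),
\]
with error uniform over $\mathcal{H}_{2g+1,q^n}$ and, crucially, the main term independent of $b$. (The hypothesis $b\le k-2$ enters exactly here: when $b=k-1$ the downward shift $-b$ is $\equiv+1\pmod k$, so the linear term of $\prod_i(1-\gamma_{j,\ell}\gamma_i^{-1})$ contributes a second term of the same order and the clean formula breaks down — which is why this case is excluded.) This step, together with confirming there is no accidental cancellation in the $\ell$-sum, is the main obstacle; everything else is bookkeeping.

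\textbf{Step 3: equidistribution and the limiting random variable.} As $n\to\infty$ the normalized Frobenius conjugacy classes of $\mathcal{H}_{2g+1,q^n}$ equidistribute in $\mathrm{USp}(2g)$ with respect to Haar measure (the same input as for Theorem~\ref{intro_thm:kfree_global}); since the displayed main term is a continuous, Haar-a.e.\ finite function of the eigenangles, the continuous mapping theorem shows $\widetilde B_k(C/\F_{q^n},b+2g)$ converges in distribution to $V:=2\sum_{j=1}^{g}\prod_{i\ne j}|1-e^{i(\Theta_i-\Theta_j)}|^{-1}$, where $e^{\pm i\Theta_1},\dots,e^{\pm i\Theta_g}$ are the eigenvalues of a Haar-random element of $\mathrm{USp}(2g)$ — precisely the random variable analyzed for Theorem~\ref{intro_thm:kfree_global}. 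One then quotes its properties: evaluating $1/L_{C/\F_q}(u)=\sum_j\frac{1/\prod_{i\ne j}(1-\gamma_i\gamma_j^{-1})}{1-\gamma_j u}$ at $u=0$ gives $\sum_{j=1}^{2g}\prod_{i\ne j}(1-\gamma_i\gamma_j^{-1})^{-1}=1$, hence $2\re\sum_{j=1}^{g}\prod_{i\ne j}(1-\gamma_i\gamma_j^{-1})^{-1}=1$ by conjugate symmetry, so $V\ge1$ with equality only on a Haar-null set, giving limiting density $0$ for $0<\beta\le1$; for $\beta>1$ the density is positive because the essential infimum of $V$ is $1$, and less than $1$ because $V\to\infty$ as two eigenangles coalesce; and since $V$ is real-analytic and nonconstant off the coalescence locus it has no atoms, so every $\beta$ is a continuity point of its law and the stated limit equals $\mathbb{P}(V\le\beta)$. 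This establishes all three cases.
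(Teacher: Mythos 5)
Your proposal is correct and follows essentially the same route as the paper: your Step 2 is the paper's Proposition \ref{X mod k bound} (the $(b+1)$ arising from the double geometric series $(1-\gamma_{j,\ell}^{-1})^{-2}$, with the $b\le k-2$ hypothesis entering exactly where you say it does), and Steps 1 and 3 reproduce Lemma \ref{lem:kfree_modk_dense}, Proposition \ref{prop:kfree_bcfq}, Lemma \ref{Bk and haar}, and the $\varphi(U)\ge 1$ argument of Proposition \ref{varphi ineq}. The only cosmetic difference is that you reprove the bound $\varphi(U)\ge 1$ via the partial-fraction identity $\sum_j\prod_{i\ne j}(1-\gamma_i\gamma_j^{-1})^{-1}=1$ rather than citing Humphries.
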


Interestingly, the fluctuations in the size of $\widetilde{R}_k(X)$ do not appear to have a counterpart in the classical setting, and in Section \ref{subsec:global_kfree}, we discuss the origin for this phenomenon in the function field setting. In short, these fluctuations occur when summing over different powers of the roots of unity in (\ref{eqn:k_free_eq}).

The case $b = k - 1$ (which is not covered by Theorem \ref{intro_thm:kfree_xmodk}) is more difficult. When expressing $\widetilde{B}_k(C/\mathbb{F}_{q^n}, k - 1 + 2g)$ in terms of the zeros of the associated zeta function, the leading term is significantly more complicated, as the sum over the roots of unity causes the simpler terms (which appeared as the leading term for other values of $b$) to cancel out. This difficulty is discussed in more detail at the end of Subsection \ref{subsec:global_kfree}.

\subsection{Summatory Function of the Totient Function}

The same techniques can be used to study the function field analogues of the summatory function of the Euler totient function. In the classical setting, the Euler totient function is defined as
\[\phi(n) := n \prod_{\substack{p\mid n \\ p \text{ prime}}} \left(1 - \frac{1}{p}\right),\] 
and the associated summatory function is given by \[F^*_\phi(x) := \sum_{n \leq x} \phi(n).\]
Studying the limiting behavior of the classical error term $R_\phi^*(x)$ has been a standing goal of many authors --- in the classical setting, similar to the case of $k$-free numbers, one can show that 
\[F^*_\phi(x) = \frac{3}{\pi^2}x^2 + R_\phi^*(x).\] The best unconditional result in this direction is given by Walfisz \cite{Wal63}, who showed that \[R_\phi^*(x) = O(x(\log x)^{2/3}(\log \log x)^{4/3}).\] 
Unlike in the case of $k$-free numbers, however, the classical error term $R_\phi^*(x)$ is split into an \emph{analytic} error term, which is written in terms of zeros of the Riemann zeta function, and an \emph{arithmetic} error term \cite{KW10}, which is unexpected because it does not appear in the study of similar summatory functions, such as in the study of $k$-free numbers. In fact, Montgomery \cite{montgomery1987} showed that the overall error term $R_\phi^*(x)$ is quite large, with \[R_\phi^*(x) = \Omega_{\pm}(x\sqrt{\log\log x}),\] but Kaczorowski and Wiertelak \cite{KW10} proved that the arithmetic term is the main contributor to this lower bound, with \[R_\phi^{\mathrm{AR}}(x) = \Omega_{\pm}(x\sqrt{\log\log x}).\] On the other hand, under the assumption of the Riemann hypothesis, they showed that the analytic error term satisfies the more modest upper bound of \[R_\phi^{\mathrm{AN}}(x) = O_\varepsilon(x^{1/2 + \varepsilon}).\]

In the function field setting, however, we show that the arithmetic error term does not appear, i.e., the error term for the summatory totient function can be written solely as a sum across zeros of the associated zeta function.

We define the function field analogue of the totient function by 
\[
\Phi(D) := \mathcal{N}D \prod_{\substack{P\mid D \\ P \text{ prime}}}\left(1 - \frac{1}{\mathcal{N}P}\right)
\] for all effective divisors $D$ and its associated summatory function as
\[
F_{\Phi}(X) := \sum_{\deg(D) < X} \Phi(D).
\]
Using the same techniques as in the $k$-free divisor case (inspired by the work of Cha \cite{Cha17} and Humphries \cite{Hum12, Hum13}), we show that when the zeta function $Z_{C/\F_q}(u)$ has simple zeros at $u = \gamma_1^{-1}, \dots, \gamma_{2g}^{-1}$, the explicit expression for the normalized error term \[\widetilde{R}_\Phi(X) := \frac{F_\Phi(X) - \MT_\Phi(X)}{q^{X/2}}\] is given by 
\begin{align}
\ET_\Phi(X) = - \sum_{j=1}^{2g} \frac{Z_{C/\F_q}(\overline{\gamma_j})}{Z_{C/\F_q}'(\gamma_j^{-1})} \frac{\gamma_j}{\gamma_j - 1} e^{i \theta(\gamma_j)X} + O_{q,g}\left( \frac{1}{q^{X/2}} \right), \label{eqn:totient_eq}
\end{align}
where the main term is \[\MT_\Phi(X) =\frac{q^{1-g}h_{C/\F_q}}{\zeta_{C/\F_q}(2)(1-q^{-1})(q^2-1)}q^{2X}\] and $h_{C/\F_q}$ denotes the class number of $C/\mathbb{F}_q$. As in the case of $k$-free numbers (\ref{eqn:k_free_eq}), note that the normalized error term $\ET_k(X)$ is $O(1)$ as $X$ approaches infinity. Moreover, in contrast to the classical case, observe that an analogue of the arithmetic error term does \textit{not} appear in the expression for the function field error term. 

When $Z_{C/\F_q}(u)$ has zeros of multiple order, we proceed to show that \[R_\Phi(X) = O(X^{r-1}q^{X/2}) \quad \text{and} \quad \limsup_{X \to \infty} \frac{|R_\Phi(X)|}{X^{r - 1}q^{X/2}} > 0,\] where $r$ is the maximal order of a zero of $Z_{C/\F_q}(u)$. 

As in the case of $k$-free divisors, we use (\ref{eqn:totient_eq}) to show that the normalized error term $\ET_\Phi(X)$ possesses a limiting distribution. With this limiting distribution, we compute the natural density of the set \[\mathcal{S}_{\Phi}(\beta) := \{X \in \Z^+\ |\ |\ET_{\Phi}(X)| \leq \beta\}.\]

\begin{thm}\label{intro_thm:totient_lim_dist}
Let $C/\F_q$ be a function field with genus $g \geq 1$, and suppose that $C$ satisfies the Linear Independence hypothesis. For a real $\beta > 0$, the natural density \[\delta(\mathcal{S}_\Phi(\beta)) = \lim_{Y \to \infty} \frac{1}{Y} \# \{1 \leq X \leq Y \ |\ |\ET_\Phi(X)| \leq \beta \}\] exists and is given by
\[\delta(\mathcal{S}_\Phi(\beta)) = m\left(\sum_{j=1}^g 2\left|\frac{Z(\overline{\gamma_j})}{Z'(\gamma_j^{-1})}\frac{\gamma_j}{\gamma_j-1}\right|\cos(\theta_j) \in [-\beta,\beta]\right).\] 
\end{thm}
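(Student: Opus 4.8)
The heavy lifting is done by the explicit formula~(\ref{eqn:totient_eq}), so my plan is to convert it into a real-valued trigonometric sum, apply the Kronecker--Weyl theorem, and then read off the density; this mirrors the proof of Theorem~\ref{intro_thm:kfree_limdist} but is cleaner, since no roots of unity enter. First I note that under the Linear Independence hypothesis all $2g$ zeros of $Z_{C/\F_q}$ are simple: if two of the $\theta(\gamma_j)$ with $1\le j\le g$ coincided, or if some $\theta(\gamma_j)$ equalled $0$ or $\pi$, then $\{\theta(\gamma_1),\dots,\theta(\gamma_g)\}\cup\{\pi\}$ would be $\Q$-linearly dependent; hence~(\ref{eqn:totient_eq}) is available. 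I then pair the index $j$ ($1\le j\le g$) with $j+g$, where $\gamma_{j+g}=\overline{\gamma_j}$. Since $L_{C/\F_q}$ has integer coefficients we have $Z_{C/\F_q}(\overline w)=\overline{Z_{C/\F_q}(w)}$ and $Z_{C/\F_q}'(\overline w)=\overline{Z_{C/\F_q}'(w)}$, while $e^{i\theta(\gamma_{j+g})X}=e^{-i\theta(\gamma_j)X}$ for integral $X$, so the $(j+g)$-th summand in~(\ref{eqn:totient_eq}) is the complex conjugate of the $j$-th. Writing $\frac{Z(\overline{\gamma_j})}{Z'(\gamma_j^{-1})}\frac{\gamma_j}{\gamma_j-1}=\rho_j e^{i\varphi_j}$, this yields
\[
\ET_\Phi(X) = -\sum_{j=1}^{g} 2\rho_j \cos\bigl(\theta(\gamma_j)X + \varphi_j\bigr) + O_{q,g}\bigl(q^{-X/2}\bigr),
\]
and each $\rho_j$ is strictly positive: $Z'(\gamma_j^{-1})\ne 0$ by simplicity, $Z(\overline{\gamma_j})\ne 0$ because $|\overline{\gamma_j}|=\sqrt q$ is neither $q^{-1/2}$ (the modulus of a zero of $Z$) nor the modulus of a pole, and $\gamma_j\ne 0$.

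Next I apply the Kronecker--Weyl equidistribution theorem. The Linear Independence hypothesis is equivalent to the $\Q$-linear independence of $\{1,\theta(\gamma_1)/2\pi,\dots,\theta(\gamma_g)/2\pi\}$, which is exactly Weyl's criterion guaranteeing that the points $\bigl(\theta(\gamma_1)X,\dots,\theta(\gamma_g)X\bigr)\bmod 2\pi$, for $X=1,2,\dots$, equidistribute in $[0,2\pi)^g$ against the (normalized) Lebesgue measure $m$. Setting $S(X):=-\sum_{j=1}^g 2\rho_j\cos(\theta(\gamma_j)X+\varphi_j)$, the quantities $\ET_\Phi(X)$ and $S(X)$ lie in a common compact interval and differ by $O(q^{-X/2})$, so uniform continuity of a bounded continuous $h:\R\to\R$ on that interval lets me replace $\ET_\Phi(X)$ by $S(X)$ inside a Cesàro average, and equidistribution then gives
\[
\lim_{Y\to\infty}\frac1Y\sum_{X=1}^Y h\bigl(\ET_\Phi(X)\bigr) = \int_{[0,2\pi)^g} h\Bigl(-\sum_{j=1}^g 2\rho_j\cos(\theta_j+\varphi_j)\Bigr)\, dm.
\]
By translation invariance of $m$ I delete the phases $\varphi_j$, and since $-\cos\theta_j=\cos(\theta_j+\pi)$ I also delete the overall minus sign; the right-hand side becomes the pushforward of $m$ under $(\theta_1,\dots,\theta_g)\mapsto\sum_{j=1}^g 2\rho_j\cos\theta_j$. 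In particular $\ET_\Phi(X)$ acquires the limiting distribution $\nu_\Phi$ promised above the theorem.

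To finish, I take $h=\indic{[-\beta,\beta]}$ by approximating it from above and below by continuous functions and letting them converge to it; the resulting squeeze identifies $\delta(\mathcal{S}_\Phi(\beta))$ with $\nu_\Phi([-\beta,\beta])$ provided $\nu_\Phi(\{-\beta,\beta\})=0$. This holds because $\nu_\Phi$ is absolutely continuous: it is the convolution over $j=1,\dots,g$ of the laws of $2\rho_j\cos\theta_j$ with $\theta_j$ uniform, and each such law, being a nondegenerate scaled arcsine distribution since $\rho_j>0$, is absolutely continuous --- indeed one such factor already suffices, which is where $g\ge 1$ is used. Hence $\delta(\mathcal{S}_\Phi(\beta))=m\bigl(\sum_{j=1}^g 2\rho_j\cos\theta_j\in[-\beta,\beta]\bigr)$, which is the asserted formula once $\rho_j$ is written out as $\bigl|\frac{Z(\overline{\gamma_j})}{Z'(\gamma_j^{-1})}\frac{\gamma_j}{\gamma_j-1}\bigr|$. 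Given~(\ref{eqn:totient_eq}) the steps are all standard; the only points that call for care are the bookkeeping in the conjugate pairing (keeping the phases and the real part straight) and verifying that the Linear Independence hypothesis delivers exactly the irrationality Kronecker--Weyl requires, while the absolute-continuity argument legitimizing the discontinuous test function $\indic{[-\beta,\beta]}$ is routine.
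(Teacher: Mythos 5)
Your argument is correct and follows the same backbone as the paper's: the explicit formula for $\ET_\Phi(X)$, conjugate pairing to get a real cosine sum, Kronecker--Weyl equidistribution on the full torus $\T^g$ (which is where Linear Independence enters), translation invariance of $m$ to absorb the phases and the sign, and a boundary-measure-zero argument to pass from continuous test functions to $\indic{[-\beta,\beta]}$. You differ in two technical steps, both legitimately and both in the direction of greater economy. First, to transfer the limiting distribution from the main term to $\ET_\Phi(X)$ the paper runs a tightness/Prohorov/Portmanteau argument (Propositions \ref{prop:totient_main_error_distribution} and \ref{prop:totient_limiting_distribution}, modelled on \ref{prop:kfree_limiting_distribution}), whereas you exploit directly that the two sequences live in a common compact interval and differ by $O(q^{-X/2})$, so uniform continuity gives the same Ces\`aro limit; this is essentially the Lipschitz squeeze hidden inside the paper's Prohorov argument, made explicit and with the compactness machinery stripped out. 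Second, to justify $\nu_\Phi(\{-\beta,\beta\})=0$ the paper argues that $\sum_j 2\rho_j\cos\theta_j$ is real analytic and non-constant so its level sets are Lebesgue-null, while you observe that $\nu_\Phi$ is a convolution of nondegenerate arcsine laws and hence absolutely continuous; your route requires the check $\rho_j>0$ (which you supply, and which the paper's non-constancy claim also tacitly needs), and it yields the stronger conclusion that $\nu_\Phi$ has a density. Your explicit remarks that Linear Independence forces the zeros to be simple, and that it is exactly the irrationality condition in Weyl's criterion, are correct and fill in details the paper leaves implicit.
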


Note that the above result is an analogue to \cite[Theorem 1.6]{Hum12}. 

Similar to Corollary \ref{intro_cor:kfree_no_bias} in the setting of $k$-free divisors, we use the translation invariance of the Lebesgue measure to show that $\ET_\Phi(X)$ is unbiased. 

\begin{cor}\label{intro_cor:totient_no_bias}
    Let $C/\mathbb{F}_q$ be a function field with genus $g \geq 1$, and suppose that $C$ satisfies the Linear Independence hypothesis. The natural densities of the sets \[S_\Phi^+ = \left\{X\in \Z^+ \ |\ \ET_\Phi(X) > 0 \right\} \quad \text{and}\quad S_\Phi^- = \left\{X \in \Z^+ \ |\ \ET_\Phi(X) < 0 \right\}\] exist and are given by \[\delta(S_\Phi^+) = \delta(S_\Phi^-) = \frac12.\]
\end{cor}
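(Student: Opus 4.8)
The plan is to deduce the claim from the structure of the limiting distribution $\nu_\Phi$ obtained for Theorem~\ref{intro_thm:totient_lim_dist}, following the template of Corollary~\ref{intro_cor:kfree_no_bias}: realize $\nu_\Phi$ as the pushforward of Haar measure on a torus under an \emph{odd} function, verify that it has no atom at $0$, and then invoke translation invariance. First I would rewrite (\ref{eqn:totient_eq}) in manifestly real form. Since $Z = Z_{C/\F_q}$ has nonnegative integer Taylor coefficients in $u$, one has $Z(\overline{w}) = \overline{Z(w)}$ and $Z'(\overline{w}) = \overline{Z'(w)}$; together with $\gamma_{j+g} = \overline{\gamma_j}$ and $\theta(\gamma_{j+g}) = -\theta(\gamma_j)$, this makes the $(j+g)$th summand of (\ref{eqn:totient_eq}) the complex conjugate of the $j$th, so that, writing $c_j := \frac{Z(\overline{\gamma_j})}{Z'(\gamma_j^{-1})}\cdot\frac{\gamma_j}{\gamma_j-1} = |c_j|e^{i\psi_j}$,
\[
\ET_\Phi(X) = -\sum_{j=1}^{g} 2|c_j|\cos\!\bigl(\theta(\gamma_j)X + \psi_j\bigr) + O_{q,g}\!\bigl(q^{-X/2}\bigr).
\]
I would then note that each $c_j$ is finite and nonzero: by the Riemann hypothesis for function fields $|\gamma_j| = \sqrt q$, so $\gamma_j \notin \{0,1\}$ and none of $\gamma_j^{-1}$, $\overline{\gamma_j}$ equals a pole ($1$ or $q^{-1}$) of $Z$; since the zeros of $Z$ are assumed simple, $Z'(\gamma_j^{-1}) \ne 0$, and $\overline{\gamma_j}$, having modulus $\sqrt q \ne q^{-1/2}$, is not among the zeros $\gamma_1^{-1},\dots,\gamma_{2g}^{-1}$ of $Z$, so $Z(\overline{\gamma_j}) \ne 0$.

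Next I would run the Kronecker--Weyl argument underlying Theorem~\ref{intro_thm:totient_lim_dist}. Under the Linear Independence hypothesis the numbers $1, \theta(\gamma_1)/2\pi, \dots, \theta(\gamma_g)/2\pi$ are $\Q$-linearly independent, so $\{(X\theta(\gamma_1), \dots, X\theta(\gamma_g)) : X \in \Z^+\}$ equidistributes in the torus $(\R/2\pi\Z)^g$ against normalized Haar measure $m$. Setting $F(\theta_1, \dots, \theta_g) := -\sum_{j=1}^g 2|c_j|\cos(\theta_j + \psi_j)$, the display above reads $\ET_\Phi(X) = F(X\theta(\gamma_1), \dots, X\theta(\gamma_g)) + O_{q,g}(q^{-X/2})$. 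Since $g \ge 1$ and each $c_j \ne 0$, $F$ is a nonconstant real-analytic function on the torus, so every level set $\{F = c\}$ has $m$-measure zero and every superlevel set $\{F > c\}$ is an $m$-continuity set. Given $\varepsilon > 0$, all but finitely many $X$ satisfy $|\ET_\Phi(X) - F(X\theta(\gamma_1),\dots,X\theta(\gamma_g))| < \varepsilon$, whence
\[
m(\{F>\varepsilon\}) \le \liminf_{Y\to\infty} \tfrac{1}{Y}\#\{X\le Y : \ET_\Phi(X) > 0\} \le \limsup_{Y\to\infty} \tfrac{1}{Y}\#\{X\le Y : \ET_\Phi(X) > 0\} \le m(\{F > -\varepsilon\});
\]
letting $\varepsilon \downarrow 0$ and using $m(\{F=0\}) = 0$ shows that $\delta(S_\Phi^+)$ exists and equals $m(\{F>0\})$, and likewise $\delta(S_\Phi^-) = m(\{F<0\})$. (Equivalently: $\nu_\Phi = F_*m$ has no atom at $0$, so one may apply the portmanteau theorem to the half-lines $(0,\infty)$ and $(-\infty,0)$.)

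Finally I would invoke translation invariance: the measure-preserving involution $T\colon(\theta_1,\dots,\theta_g)\mapsto(\theta_1+\pi,\dots,\theta_g+\pi)$ of $(\R/2\pi\Z)^g$ satisfies $F\circ T = -F$, so $m(\{F>0\}) = m(\{F<0\})$, and together with $m(\{F=0\})=0$ this forces both to equal $\tfrac12$, giving $\delta(S_\Phi^+) = \delta(S_\Phi^-) = \tfrac12$. The steps I expect to need genuine care are the non-degeneracy $c_j \ne 0$ --- which is what rules out an atom of $\nu_\Phi$ at the origin, and which uses the simplicity of the zeros of $Z$ together with the Riemann hypothesis to keep the modulus $\sqrt q$ of the inverse zeros away from the moduli $1$ and $q^{-1}$ of the two poles of $Z$ --- and the passage from bounded continuous test functions to the indicators of the half-lines, handled by the $\varepsilon$-sandwich above; the symmetry argument itself is immediate.
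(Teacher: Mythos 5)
Your proof is correct and follows essentially the same route as the paper: both realize the limiting distribution of $\ET_\Phi(X)$ as the pushforward of Haar measure on $\T^g$ under a real trigonometric polynomial via Kronecker--Weyl, observe that its level sets (in particular the zero set) have measure zero, and conclude by a measure-preserving shift of the torus that negates the function (you use $\theta_j\mapsto\theta_j+\pi$, the paper uses $\theta_j\mapsto\pi-\theta_j$ after absorbing the phases). The only difference is presentational --- you inline the equidistribution and $\varepsilon$-sandwich arguments rather than citing the already-established weak convergence and Portmanteau step, and you explicitly verify the nondegeneracy $c_j\neq 0$, which the paper leaves implicit.
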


As before, contrast this result to the analogous error terms in the study of Chebyshev's bias \cite{RS94, Cha08} and the P\'olya conjecture in function fields \cite{Hum12}. 

Finally, we analyze the average behavior of \[B_\Phi(C/\F_q) = \limsup_{X \to \infty} |\ET_\Phi(X)| \] over function fields corresponding to $\mathcal{H}_{2g + 1, q^n}$ in the limit of large $n$. As in the case of $k$-free numbers, we study the renormalized bound \[\widetilde{B}_{\Phi}(C/\F_q) = \limsup_{X \to \infty} \frac{|\ET_\Phi(X)|}{q^{2g-2}}\] and deduce the following theorem.

\begin{thm}\label{intro_thm:totient_global}
    For any fixed $\beta > 0$, the quantity \[\lim_{n \to \infty} \frac{\#\{C \in \mathcal{H}_{2g + 1, q^n} \mid \widetilde{B}_\Phi(C/\mathbb{F}_{q^n}) \leq \beta\}}{\# \mathcal{H}_{2g+1,q^n}}\] is $0$ if $0 < \beta \leq 1$, and strictly between $0$ and $1$ if $\beta > 1$. 
\end{thm}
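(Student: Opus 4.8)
\textbf{Proof proposal for Theorem \ref{intro_thm:totient_global}.}

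The plan is to reduce $\widetilde B_\Phi(C/\F_{q^n})$ to a quantity depending only on the unitarized Frobenius conjugacy class $\Theta_C\in\mathrm{USp}(2g)$, and then invoke the equidistribution of these classes as $n\to\infty$. Starting from (\ref{eqn:totient_eq}) and pairing the inverse zeros $\gamma_{j+g}=\overline{\gamma_j}$ (using that $Z_{C/\F_{q^n}}$ has real coefficients), one rewrites, for $C$ with simple zeros,
\[
\ET_\Phi(X)=-\sum_{j=1}^{g}2\,\re\!\left(c_j\,e^{i\theta(\gamma_j)X}\right)+O_{q,g}\!\left(q^{-X/2}\right),\qquad c_j:=\frac{Z_{C/\F_{q^n}}(\overline{\gamma_j})}{Z'_{C/\F_{q^n}}(\gamma_j^{-1})}\cdot\frac{\gamma_j}{\gamma_j-1}.
\]
The triangle inequality gives $\widetilde B_\Phi(C/\F_{q^n})\le\frac{2}{(q^n)^{2g-2}}\sum_{j=1}^g|c_j|$, while comparing $\limsup_X|\ET_\Phi(X)|^2$ with its Cesàro average --- which equals $2\sum_j|c_j|^2$, since for simple zeros the angles $\theta(\gamma_j)$ are distinct and lie in $(0,\pi)$, so the frequencies $\pm\theta(\gamma_j)$ are distinct mod $2\pi$ and orthogonality collapses the average --- yields the matching lower bound $\widetilde B_\Phi(C/\F_{q^n})\ge\frac{1}{(q^n)^{2g-2}}\bigl(2\sum_j|c_j|^2\bigr)^{1/2}$. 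If moreover $C$ satisfies the Linear Independence hypothesis, the Kronecker--Weyl theorem (as in Theorem \ref{intro_thm:totient_lim_dist}) pins down $\widetilde B_\Phi(C/\F_{q^n})=\frac{2}{(q^n)^{2g-2}}\sum_j|c_j|$; and for $C$ with a repeated zero, $\widetilde B_\Phi(C/\F_{q^n})=\infty$ by the $\Omega$-bound $R_\Phi(X)=\Omega(X^{r-1}q^{X/2})$ with $r\ge2$.

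Next I would pass to the large-$q^n$ limit. Write $\gamma_k=\sqrt{q^n}\,x_k$ where $x_1,\dots,x_{2g}$ are the eigenvalues of $\Theta_C$ (so $x_{j+g}=\overline{x_j}$, $x_j=e^{i\theta(\gamma_j)}$ for $1\le j\le g$, and $L_{C/\F_{q^n}}(u)=\det(1-u\sqrt{q^n}\,\Theta_C)$). Using $\prod_k\gamma_k=(q^n)^g$, $\overline{\gamma_j}=q^n\gamma_j^{-1}$, and the factor $1-\gamma_j\overline{\gamma_j}=1-q^n$ inside $L_{C/\F_{q^n}}(\overline{\gamma_j})$, a careful bookkeeping of the powers of $q^n$ should give $c_j/(q^n)^{2g-2}\to F_j(\Theta_C)$ with $|F_j(\Theta)|=\prod_{i\ne j}|x_j-x_i|^{-1}$, uniformly for $\Theta_C$ in a compact subset of the regular locus $\mathrm{USp}(2g)^{\mathrm{reg}}$ (distinct eigenvalues). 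Hence both bounds above converge, uniformly on such compacta, to the continuous functions
\[
U_\infty(\Theta):=2\sum_{j=1}^g\frac{1}{\prod_{i\ne j}|x_j-x_i|},\qquad L_\infty(\Theta):=\left(2\sum_{j=1}^g\frac{1}{\prod_{i\ne j}|x_j-x_i|^2}\right)^{1/2},
\]
so $\widetilde B_\Phi(C/\F_{q^n})$ lies between $L_\infty(\Theta_C)+o(1)$ and $U_\infty(\Theta_C)+o(1)$ (and equals $U_\infty(\Theta_C)+o(1)$ when $C$ satisfies Linear Independence). The key facts about these limiting functions: setting $a_j:=x_j^{2g-1}/\prod_{i\ne j}(x_j-x_i)$, so $|a_j|=|F_j(\Theta)|$, the Lagrange interpolation identity $\sum_{k=1}^{2g}a_k=1$ together with conjugate pairing $a_{j+g}=\overline{a_j}$ gives $2\sum_{j=1}^g\re(a_j)=1$, whence $U_\infty(\Theta)=2\sum_j|a_j|\ge1$, with equality precisely on the Haar-null set where every $a_j$ is a nonnegative real; the value $1$ is actually attained (e.g.\ when the eigenvalues of $\Theta$ are the $2g$ roots of $-1$, where $a_j=1/(2g)$ for all $j$); and $U_\infty,L_\infty\to\infty$ along the discriminant locus of repeated eigenvalues. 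Finally $U_\infty,L_\infty$ are real-analytic and non-constant on the connected regular locus, so each level set $\{U_\infty=\beta\}$, $\{L_\infty=\beta\}$ is Haar-null.

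To conclude, I would invoke the equidistribution of the conjugacy classes $\Theta_C$, $C\in\mathcal{H}_{2g+1,q^n}$, in $\mathrm{USp}(2g)$ against Haar measure as $n\to\infty$ (Deligne's equidistribution theorem, the hyperelliptic family having full symplectic geometric monodromy --- this is the random-matrix input underlying the framework of Cha and Humphries). Combining it with the uniform convergence on compacta and absorbing the Haar-null (hence density-zero) locus of curves with repeated zeros into an error term, one obtains: for $0<\beta\le1$, the proportion is at most $\mu_{\mathrm{Haar}}(\{U_\infty\le\beta+\varepsilon\}\cap K)+\mu_{\mathrm{Haar}}(\mathrm{USp}(2g)\setminus K)+o(1)$ for any compact $K\subset\mathrm{USp}(2g)^{\mathrm{reg}}$ and any $\varepsilon>0$, which forces the limit to be $0$ since $U_\infty\ge1$ and $\{U_\infty\le1\}$ is Haar-null; for $\beta>1$, using $\widetilde B_\Phi\le U_n$ near the roots-of-$-1$ configuration (where $U_\infty<\beta$) gives $\liminf>0$, and using $\widetilde B_\Phi\ge L_n$ on a compact subset of the region $\{L_\infty>\beta\}$ near the discriminant gives $\liminf$ of the complementary proportion $>0$, hence $\limsup<1$.

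The main obstacle is twofold. The computational core is the asymptotic expansion $c_j/(q^n)^{2g-2}\to F_j(\Theta_C)$: one must track the powers of $q^n$ accurately enough to see that the correct normalization is exactly $q^{2g-2}$, and prove uniformity on compacta of the regular locus while the coefficients $c_j$ blow up as $\Theta_C$ approaches the discriminant. The conceptual delicacy is that the clean closed form $\widetilde B_\Phi=\frac{2}{(q^n)^{2g-2}}\sum|c_j|$ requires the Linear Independence hypothesis, which cannot be assumed curve-by-curve across the family; one must therefore either run the whole argument through the unconditional sandwich $L_\infty\le\widetilde B_\Phi\le U_\infty$ (which suffices for the qualitative dichotomy once $\inf U_\infty=1$ and $L_\infty\to\infty$ at the discriminant are known) or else show that the curves in $\mathcal{H}_{2g+1,q^n}$ failing Linear Independence have density tending to $0$ --- a statement about a Haar-null but non-closed locus that needs an effective form of equidistribution rather than the soft version.
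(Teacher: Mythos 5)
Your overall strategy --- reduce $\widetilde{B}_\Phi$ to the central function $\varphi(\vartheta(C/\mathbb{F}_{q^n}))=\sum_j 1/|\mathcal{Z}'(\theta_j)|$ (your $U_\infty$, since $|\mathcal{Z}'(\theta_j)|=\prod_{i\neq j}|x_j-x_i|$), apply Deligne equidistribution via the Portmanteau theorem, and use $\varphi\geq 1$ with a Haar-null equality locus --- is exactly the paper's route (Proposition \ref{prop:totient_bcfq}, Lemma \ref{Bk to varphi totient}, Proposition \ref{varphi ineq}, and the totient analogue of Lemma \ref{Bk and haar}); your Lagrange-interpolation argument $\sum_k|a_k|\geq|\sum_k a_k|=1$ is the standard proof of Proposition \ref{varphi ineq}. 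The one genuine gap is the point you flag yourself at the end, and your proposed fallback does not close it. The unconditional sandwich $L_\infty\leq\widetilde{B}_\Phi\leq U_\infty$ cannot prove the case $0<\beta\leq 1$: by Cauchy--Schwarz the Ces\`aro lower bound satisfies only $L_\infty(\Theta)=\bigl(\sum_{k=1}^{2g}|a_k|^2\bigr)^{1/2}\geq \tfrac{1}{\sqrt{2g}}\sum_k|a_k|\geq\tfrac{1}{\sqrt{2g}}$, and indeed $L_\infty=1/\sqrt{2g}<1$ at the roots-of-$-1$ configuration where every $|a_k|=1/(2g)$. Hence $\{L_\infty\leq\beta\}$ has positive Haar measure for $\beta$ near $1$, and bounding the proportion of curves with $\widetilde{B}_\Phi\leq\beta$ by $\mu_{\mathrm{Haar}}(L_\infty\leq\beta+\varepsilon)$ yields a positive limit, not $0$. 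To run your bound by $\mu_{\mathrm{Haar}}(U_\infty\leq\beta+\varepsilon)$ you need the matching lower bound $\widetilde{B}_\Phi\geq U_\infty-o(1)$, which requires Linear Independence curve by curve.

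The missing input is not an effective form of equidistribution but a theorem of Chavdarov and Kowalski, stated as Proposition \ref{LI usually holds}: the proportion of $C\in\mathcal{H}_{2g+1,q^n}$ satisfying Linear Independence tends to $1$ as $n\to\infty$. This rests on big-monodromy and Galois-theoretic input about the splitting fields of the $L$-polynomials, independent of Deligne's theorem, and it is exactly how the paper disposes of the LI-failing locus (your observation that the repeated-eigenvalue locus is closed and Haar-null, so Portmanteau already handles the non-simple-zero curves, is also how one should treat that sublocus). With that citation supplied, your argument is complete and coincides with the paper's proof: the $q^{2g-2}$ normalization and the $1+O_g(q^{-1/2})$ comparison are Lemma \ref{Bk to varphi totient}, and the endgame for $\beta>1$ (a positive-measure neighborhood of the roots-of-$-1$ point where $\varphi<\beta$, and one near the discriminant where $\varphi>\beta$) is the proof of Theorem \ref{intro_thm:kfree_global} transplanted verbatim.
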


The structure of the paper is as follows. In Section \ref{sec:kfree}, we study the distribution of $k$-free divisors in $C/\F_q$. First, the explicit expression for the error term (\ref{eqn:k_free_eq}) is computed in Subsection \ref{subsec:error_term_kfree}. Subsection \ref{subsec:multiplezeros_kfree} deals with the order of growth of the error term when the zeta function has a zero of multiple order. The limiting distribution for the error term is computed in Subsection \ref{subsec:limiting_distribution_kfree}. In Subsection \ref{subsec:global_kfree}, we analyze the global behavior of this error term on families of hyperelliptic curves in the large $q$ limit.
Analogous results are given for the totient function in Section \ref{sec:totient}.

\subsection*{Acknowledgements} The authors would like to thank Peter Humphries for supervising this project and Ken Ono for his generous support. The authors were participants in the 2022 UVA REU in Number Theory. They are grateful for the support of grants from the National Science Foundation
(DMS-2002265, DMS-2055118, DMS-2147273), the National Security Agency (H98230-22-1-0020), and the Templeton World Charity Foundation.

\section{Distribution of \texorpdfstring{$k$}{k}-Free Divisors in Function Fields}\label{sec:kfree}

In this section, we study the distribution of $k$-free divisors in a given function field $C/\F_q$. More precisely, recalling the $k$-free indicator function $\mu_k$, we examine the summatory function \[Q_k(X) := \sum_{\substack{D \geq 0\\ 0 \leq \deg(D) < X}} \mu_k(D),\] whose value at each $X$ gives the number of $k$-free effective divisors with degree less than $X$. In Section \ref{subsec:error_term_kfree}, we find an expression for $Q_k(X)$ in terms of a \emph{main term} $\mathrm{MT}_k(X)$ and an \emph{error term} $R_k(X)$. When the zeros of the associated zeta function $\zeta_{C/\F_q}(s)$ are simple, we find an explicit expression for $R_k(X)$ in terms of these zeros. In Section \ref{subsec:multiplezeros_kfree}, we show that \[R_k(X) = O(X^{r-1}q^{X/2k}), \quad \limsup_{X \to \infty} \frac{|R_k(X)|}{O(X^{r-1}q^{X/2k})} > 0\] whenever the maximal order of a zero of $\zeta_{C/\F_q}(s)$ is $r$. In Section \ref{subsec:limiting_distribution_kfree}, we show that the \textit{normalized} error term $\ET_k(X)$ has a limiting distribution whenever the zeros of $\zeta_{C/\F_q}(s)$ are simple. Under the stronger assumption of the Linear Independence hypothesis, we can explicitly determine the natural density of the subsets \[\mathcal{S}_{k}(\beta) := \{X \in \Z^+ \ |\ |\ET_k(X)| \leq \beta\}\] for any $\beta \in \R^+$. Finally, in Section \ref{subsec:global_kfree}, we first assume the Linear Independence hypothesis to derive an explicit bound on the {normalized} error term $\ET_k(X)$, then use techniques from random matrix theory to study the behavior of this bound on function fields of hyperelliptic curves over $\F_{q^n}$ in the limit of large $n$. 
\subsection{Computation of the Error Term}\label{subsec:error_term_kfree}

To find an expression for the summatory function $Q_k(X)$, we study the Dirichlet series \[D_k(s) := \sum_{D \geq 0} \frac{\mu_k(D)}{\mathcal{N}D^s},\] which converges absolutely when $\Re(s) > 1$, and compare its coefficients to an explicit expression in terms of the zeta function $\zeta_{C/\F_q}(s)$. 

\begin{lem}\label{lem:kfree_dirichlet_zeta}
For a function field $C/\mathbb{F}_q$ with associated zeta function $\zeta_{C/\F_q}(s) = Z_{C/\F_q}(u)$, the Dirichlet series associated with $\mu_k$ is given by \[D_k(s) = \frac{\zeta_{C/\F_q}(s)}{\zeta_{C/\F_q}(ks)} = \frac{Z_{C/\F_q}(u)}{Z_{C/\F_q}(u^k)}.\] 
\end{lem}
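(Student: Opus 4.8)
The plan is to establish the identity by working with the Euler product representation of the Dirichlet series $D_k(s)$ and comparing it factor-by-factor with the Euler product of the zeta function. First I would observe that $\mu_k$ is multiplicative with respect to the decomposition of effective divisors as formal sums of prime divisors: since an effective divisor $D = \sum_P a_P P$ fails to be $k$-free precisely when some $a_P \geq k$, the indicator $\mu_k(D)$ equals the product over prime divisors $P$ of the local indicator $\indic{a_P \leq k-1}$. Consequently $D_k(s)$ factors as an Euler product
\[
D_k(s) = \prod_{P \text{ prime}} \left( \sum_{a=0}^{k-1} \frac{1}{\mathcal{N}P^{as}} \right),
\]
where the inner sum ranges only over exponents $0, 1, \ldots, k-1$ because higher powers contribute $\mu_k = 0$. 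This rearrangement is justified in the region $\Re(s) > 1$ by absolute convergence, which follows from the same estimate that gives absolute convergence of $\zeta_{C/\F_q}(s)$ there (indeed $|D_k(s)|$ is dominated termwise by $\zeta_{C/\F_q}(\Re(s))$).

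Next I would evaluate each local factor as a finite geometric series: for each prime $P$,
\[
\sum_{a=0}^{k-1} \frac{1}{\mathcal{N}P^{as}} = \frac{1 - \mathcal{N}P^{-ks}}{1 - \mathcal{N}P^{-s}} = \frac{\left(1 - \mathcal{N}P^{-s}\right)^{-1}}{\left(1 - \mathcal{N}P^{-ks}\right)^{-1}}.
\]
Taking the product over all primes $P$ and recognizing the numerator and denominator as the Euler products for $\zeta_{C/\F_q}(s)$ and $\zeta_{C/\F_q}(ks)$ respectively, I obtain $D_k(s) = \zeta_{C/\F_q}(s)/\zeta_{C/\F_q}(ks)$. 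The second equality in the statement is then purely a change of variables: writing $u = q^{-s}$ gives $Z_{C/\F_q}(u) = \zeta_{C/\F_q}(s)$, and since $q^{-ks} = (q^{-s})^k = u^k$, we have $\zeta_{C/\F_q}(ks) = Z_{C/\F_q}(u^k)$, whence $D_k(s) = Z_{C/\F_q}(u)/Z_{C/\F_q}(u^k)$.

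I expect the main (though modest) obstacle to be carefully verifying that $\mu_k(D)$ is genuinely multiplicative over the prime-divisor decomposition, i.e. that the condition ``$D - kP$ is not effective for any prime $P$'' translates exactly to ``every coefficient $a_P$ in $D = \sum a_P P$ satisfies $a_P < k$.'' This is immediate once one recalls that $D - kP \geq 0$ if and only if $a_P \geq k$, so there is no real difficulty, but it is the one place where the definition of $\mu_k$ given in the introduction must be unpacked. One should also note that the manipulation requires $\Re(ks) > 1$ for $\zeta_{C/\F_q}(ks)$ to be given by its Euler product, which is automatic when $\Re(s) > 1$ and $k \geq 1$; since the resulting identity is between rational functions of $u$ (after analytic continuation of both sides via the explicit form of $Z_{C/\F_q}$), it then holds wherever both sides are defined. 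Everything else is a routine geometric-series computation and a substitution, so the lemma follows quickly once the multiplicativity is in hand.
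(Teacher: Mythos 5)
Your proposal is correct and follows essentially the same route as the paper's proof: expand $D_k(s)$ as an Euler product using the multiplicativity of $\mu_k$, sum the finite geometric series in each local factor to get $(1-\mathcal{N}P^{-ks})/(1-\mathcal{N}P^{-s})$, and recognize the product as $\zeta_{C/\F_q}(s)/\zeta_{C/\F_q}(ks)$, with the substitution $u = q^{-s}$ giving the second equality. The extra care you take in unpacking why ``$D-kP$ is not effective for any $P$'' is equivalent to ``every coefficient of $D$ is less than $k$'' is a detail the paper leaves implicit, but it is the same argument.
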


\begin{proof}
By the uniqueness of analytic continuation, it suffices to prove this equality when $\Re(s) > 1$. Since $\mu_k$ is multiplicative, we have
\begin{align*}
    \sum_{D \geq 0} \frac{\mu_k(D)}{\mathcal{N}D^s} &= \prod_{P \text{ prime}} \sum_{n=0}^{k-1} \frac{1}{\mathcal{N}P^{ns}} 
    = \prod_{P \text{ prime}} \left(\frac{1 - \mathcal{N}P^{-ks}}{1 - \mathcal{N}P^{-s}}\right) 
    = \frac{\zeta_{C/\F_q}(s)}{\zeta_{C/\F_q}(ks)}. \qedhere
\end{align*}
\end{proof}

With this explicit expression, we can now proceed to examine the summatory function $Q_k(X)$. We will first separately handle the case where $C/\F_q$ has genus $g = 0$, i.e., $C/\F_q = \F_q(T)$.  In this setting, we let $|f| := q^{\deg f}$ for any $f \in \F_q[T]$. If $f$ is a monic irreducible polynomial, then the localization of $\F_q[t]$ at the prime ideal $(f)$ is a finite prime whose norm coincides with $|f|$. By convention, we study a modified version of the function $Q_{k}(X)$. Here, we define \[\mu_k(f):= \begin{cases} 1& \text{if $f$ is not divisible by $m^k$ for any monic irreducible $m \in \F_q[T]$} \\ 0 &\text{if $f$ is divisible by $m^k$ for some monic irreducible $m \in \F_q[T]$} \end{cases}\] 
and seek to study the summatory function \[Q_{k,0}(X) := \sum_{\substack{f ~ \text{monic} \\ \deg(f) < X}} \mu_k(f),\] where we sum over monic polynomials in the polynomial ring $\F_q[T]$. 

Since the monic irreducible polynomials in $\F_q[T]$ correspond to sums of the finite primes in the divisor group of $\F_q(T)$, our modified definition simply ignores the divisors in $\F_q(T)$ divisible by the prime at infinity. We observe \[Q_{k}(X) = Q_{k,0}(X) + Q_{k,0}(X-1) + \cdots + Q_{k,0}(X-k+1),\] so finding an expression for $Q_{k,0}(X)$ is equivalent to finding an expression for $Q_{k}(X)$. Similarly, we work with the zeta function associated to the polynomial ring $\mathbb{F}_q[T]$ rather than the function field $\mathbb{F}_q(T)$. The zeta function of $\F_q[T]$ is given by \[\zeta_{\F_q[T]}(s) = \sum_{\substack{f \in \F_q[T] \\ f ~ \text{monic}}} \frac{1}{|f|^s}.\] For brevity, we will denote this zeta function by $\zeta_0(s)$.

The zeta functions of the ring $\F_q[T]$ and the field $\F_q(T)$ are in fact closely related, with $\zeta_0(s) = \zeta_{\F_q(T)}(s)(1 - q^{-s})$. Since $|f|$ is a power of $q$ for all $f$, the function $\zeta_0(s)$ can be rewritten as a function in the variable $u := q^{-s}$. We will denote this function by $Z_0(u) = \zeta_0(s)$. We have the following relationship between the Dirichlet series \[D_{k,0}(s) := \sum_{f ~\text{monic}} \frac{\mu_k(f)}{|f|^s}\] and the zeta function $\zeta_0(s)$. The proof of the following lemma is very similar to the proof of Lemma \ref{lem:kfree_dirichlet_zeta}.

\begin{lem} \label{lem:kfree_genus_zero_zeta}
When $\Re(s) > 1$, we have the equality \[\sum_{f ~ \mathrm{monic}} \frac{\mu_k(f)}{|f|^s} = \frac{\zeta_0(s)}{\zeta_0(ks)} = \frac{Z_0(u)}{Z_0(u^k)}.\]
\end{lem}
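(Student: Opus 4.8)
The plan is to mirror the proof of Lemma~\ref{lem:kfree_dirichlet_zeta}, exploiting the multiplicativity of $\mu_k$ on monic polynomials together with unique factorization in $\F_q[T]$. First I would record that $D_{k,0}(s) = \sum_{f \text{ monic}} \mu_k(f)|f|^{-s}$ converges absolutely for $\Re(s) > 1$: since $0 \le \mu_k(f) \le 1$, the series is dominated termwise by $\zeta_0(s) = \sum_{f \text{ monic}} |f|^{-s}$, and this converges for $\Re(s) > 1$ because $Z_0(u) = \sum_{d \ge 0} q^d u^d = (1-qu)^{-1}$ has radius of convergence $q^{-1}$, i.e. the series in $s$ converges exactly when $|q^{-s}| < q^{-1}$. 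This absolute convergence is the only place the hypothesis $\Re(s) > 1$ is genuinely used, and it is what licenses the Euler-product manipulation below.

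Next, since $\F_q[T]$ is a UFD in which every nonzero polynomial is a unit times a unique product of monic irreducibles, and $\mu_k$ is multiplicative with respect to coprime factorizations, the Dirichlet series factors as an Euler product over monic irreducibles $m$:
\[
D_{k,0}(s) = \prod_{m \text{ monic irred.}} \left( \sum_{n=0}^{\infty} \frac{\mu_k(m^n)}{|m|^{ns}} \right).
\]
I would then compute each local factor directly from the definition of $\mu_k$: the polynomial $m^n$ is divisible by $m^k$ precisely when $n \ge k$, so $\mu_k(m^n) = 1$ for $0 \le n \le k-1$ and $\mu_k(m^n) = 0$ for $n \ge k$. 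Hence each local factor is the finite geometric sum $\sum_{n=0}^{k-1} |m|^{-ns} = (1 - |m|^{-ks})/(1 - |m|^{-s})$.

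Finally, I would reassemble the product, splitting numerator and denominator and recognizing each as an Euler product:
\[
D_{k,0}(s) = \prod_{m} \frac{1 - |m|^{-ks}}{1 - |m|^{-s}} = \frac{\prod_{m}(1 - |m|^{-s})^{-1}}{\prod_{m}(1 - |m|^{-ks})^{-1}} = \frac{\zeta_0(s)}{\zeta_0(ks)},
\]
using the Euler product $\zeta_0(s) = \prod_{m}(1 - |m|^{-s})^{-1}$, valid for $\Re(s) > 1$ and also at $ks$ since $\Re(ks) > 1$. For the last equality in the statement I would substitute $u = q^{-s}$: as $|m| = q^{\deg m}$ we have $|m|^{-s} = u^{\deg m}$ and $|m|^{-ks} = (u^k)^{\deg m}$, so $\zeta_0(s) = Z_0(u)$ and $\zeta_0(ks) = Z_0(u^k)$, yielding $Z_0(u)/Z_0(u^k)$. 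There is no serious obstacle here; the only step requiring any care is the justification of the Euler-product rearrangement, which is immediate from the absolute convergence established in the first step. Note that, unlike in Lemma~\ref{lem:kfree_dirichlet_zeta}, one need not invoke analytic continuation, since the identity is only asserted on $\Re(s) > 1$ (though the resulting rational identity $(1-qu^k)/(1-qu)$ of course persists on all of $\C$).
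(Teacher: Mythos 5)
Your proposal is correct and follows exactly the route the paper intends: the paper omits the proof, stating only that it is "very similar to the proof of Lemma \ref{lem:kfree_dirichlet_zeta}," and that proof is precisely the Euler-product factorization over primes (here, monic irreducibles) with each local factor equal to the finite geometric sum $\sum_{n=0}^{k-1}|m|^{-ns}$. Your additional remarks on absolute convergence and the substitution $u = q^{-s}$ are correct elaborations of the same argument.
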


A closed form expression for $Q_{k,0}(X)$ is immediate. 

\begin{prop}\label{prop:genus_zero_kfree}
\[Q_{k,0}(X) = \frac{q^{X}-1}{q-1} - \frac{q(q^{X - k }-1)}{q-1} \cdot \mathbf{1}_{X > k}.\]
\end{prop}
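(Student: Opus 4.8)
The plan is to read $Q_{k,0}(X)$ directly off the generating function provided by Lemma~\ref{lem:kfree_genus_zero_zeta}. The first step is to identify $Z_0(u)$ explicitly: since there are exactly $q^n$ monic polynomials of degree $n$ in $\F_q[T]$, we have $\zeta_0(s) = \sum_{n \geq 0} q^n u^n = (1-qu)^{-1}$, i.e.\ $Z_0(u) = (1-qu)^{-1}$ (equivalently, one can use $\zeta_0(s) = \zeta_{\F_q(T)}(s)(1-q^{-s})$ together with $Z_{\F_q(T)}(u) = \frac{1}{(1-qu)(1-u)}$). Plugging this into Lemma~\ref{lem:kfree_genus_zero_zeta} gives
\[ D_{k,0}(s) = \frac{Z_0(u)}{Z_0(u^k)} = \frac{1-qu^k}{1-qu}. \]

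Next I would expand the right-hand side as a power series in $u$. Writing $D_{k,0}(s) = \sum_{n \geq 0} a_n u^n$, so that $a_n = \sum_{\deg f = n} \mu_k(f)$ and hence $Q_{k,0}(X) = \sum_{n=0}^{X-1} a_n$ for a positive integer $X$, the factorization $\frac{1-qu^k}{1-qu} = (1-qu^k)\sum_{m \geq 0} q^m u^m$ yields $a_n = q^n$ for $0 \leq n \leq k-1$ and $a_n = q^n - q^{n-k+1}$ for $n \geq k$.

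It then remains to sum the geometric series. If $X \leq k$, every term in $\sum_{n=0}^{X-1} a_n$ equals $q^n$, so $Q_{k,0}(X) = \frac{q^X-1}{q-1}$, which agrees with the claim since $\mathbf{1}_{X>k} = 0$ throughout this range (including the boundary case $X = k$). If $X > k$, separating off the correction terms $-q^{n-k+1}$ for $k \leq n \leq X-1$ and reindexing gives $Q_{k,0}(X) = \frac{q^X-1}{q-1} - q\sum_{m=0}^{X-1-k} q^m = \frac{q^X-1}{q-1} - \frac{q(q^{X-k}-1)}{q-1}$, as desired.

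Because the statement is essentially immediate once $Z_0(u)$ is pinned down, there is no substantive obstacle; the only points requiring a little care are the boundary case $X = k$ (where the two pieces of the formula must be checked to agree) and keeping the index ranges of the two geometric sums straight.
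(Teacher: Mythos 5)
Your proof is correct and follows essentially the same route as the paper: identify $Z_0(u) = (1-qu)^{-1}$, rewrite $D_{k,0}$ as $\frac{1-qu^k}{1-qu}$ via Lemma \ref{lem:kfree_genus_zero_zeta}, expand the geometric series, compare coefficients, and sum. The coefficient extraction and the two geometric sums (including the boundary case $X=k$) all check out.
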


\begin{proof} By Lemma \ref{lem:kfree_genus_zero_zeta} and the closed form expression $Z_0(u) = \frac{1}{1 - qu}$, we can rewrite $D_{k,0}(s)$ as 
\[
\sum_{f ~ \text{monic}} \frac{\mu_k(f)}{|f|^s} = \frac{1 - qu^k}{ 1- qu} \text{ and } \sum_{f ~ \text{monic}} \frac{\mu_k(f)}{|f|^s} = \sum_{n=0}^\infty a_nu^n,
\]
where $a_n = \sum_{\deg(f) = n} \mu_k(f)$. By
expanding the geometric series $\frac{1 - qu^k}{1 - qu}$ and comparing coefficients, we find that
\[
Q_{k,0}(X) = \sum_{n < X} a_n = \sum_{n = 0}^{X - 1} q^n - \left(\sum_{m = 1}^{X - k} q^m\right) \cdot \mathbf{1}_{X > k} = \frac{q^X - 1}{q - 1} - \frac{q(q^{X - k} - 1)}{q - 1} \cdot \mathbf{1}_{X > k}. \qedhere
\] 
\end{proof}

Notice that since $Z_0(u)$ has no zeros, the error term $R_k(X) = -\frac{1}{q - 1} + \frac{q}{q - 1} \cdot \mathbf{1}_{X > k}$ does not oscillate once $X > k$, in contrast to the genus $g \geq 1$ case. 

We proceed to study the original summatory function $Q_{k}(X)$ for an arbitrary function field $C/\F_q$ with genus $g \geq 1$. In particular, our next task is to write $Q_k(X)$ in terms of a main term $\mathrm{MT}_k(X)$ and an error term $R_k(X)$ when $\zeta_{C/\F_q}(s)$ has simple zeros. The strategy for finding this explicit formula is to apply Cauchy's residue theorem to a contour integral motivated by Lemma \ref{lem:kfree_dirichlet_zeta}, following the techniques of \cite{Cha17} and \cite{Hum12} in studying the summatory function of the M\"obius function over function fields. Let $\gamma_1,\ldots,\gamma_{2g}$ denote the inverse zeros of the zeta function $Z_{C/\F_q}(u)$, indexed so that $\gamma_{j+g}= \overline{\gamma_j}$ for $j = 1,\ldots,g$. For the sake of convenience, we will henceforth drop subscripts and denote the zeta function of $C/\F_q$ by $\zeta(s) = Z(u)$. We prove the following result on the limiting behavior of $Q_k(X)$ when $Z(u)$ has simple zeros.

\begin{prop}\label{prop:kfree_simple_zeros}
Suppose $C/\F_q$ has genus $g \geq 1$ and that the zeros of $Z(u)$ are all simple. For $j= 1,\ldots,2g$ and $\ell =0,\ldots, k-1$, we let $\gamma_{j,\ell} := q^{1/2k}e^{i(\theta(\gamma_j) +2\pi\ell)/k}$, where $\theta(\gamma_j)$ denotes the argument of $\gamma_j$ for $j = 1,\ldots,2g$. Then, as $X$ tends to infinity, 
\begin{align*}
\ET_k(X) &:= \frac{Q_k(X) - \MT_k(X)}{q^{X/2k}} \\[10pt] &=  - \sum_{j=1}^{2g} \sum_{\ell=0}^{k-1} \frac{Z(\gamma_{j, \ell}^{-1})}{k\gamma_{j, \ell}^{1-k}Z'(\gamma_j^{-1}) } \frac{\gamma_{j, \ell}}{\gamma_{j, \ell} - 1} e^{iX(\theta(\gamma_j) + 2\pi \ell)/k} + O_{q,g} \left({q^{-X/2k}}\right),
\end{align*}
where 
\[
\MT_k(X) := \frac{q^{1-g}h}{\zeta(k)(q-1)^2}q^X,
\] and $h$ is the class number of $C/\F_q$. 
\end{prop}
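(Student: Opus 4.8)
The plan is to express $Q_k(X)$ as a power-series coefficient and extract it by contour integration, following \cite{Cha17} and \cite{Hum12}. Writing $D_k(s)=\sum_{n\ge0}a_nu^n$ in the variable $u=q^{-s}$, where $a_n=\sum_{\deg D=n}\mu_k(D)$, one has $Q_k(X)=\sum_{n<X}a_n=[u^{X-1}]\frac{D_k(u)}{1-u}$, so Lemma~\ref{lem:kfree_dirichlet_zeta} gives
\[
Q_k(X)=\frac{1}{2\pi i}\oint_{|u|=\rho}\frac{Z(u)}{(1-u)\,Z(u^k)}\,\frac{du}{u^{X}}
\]
for any $\rho<q^{-1}$; this guarantees both that $D_k(u)$ converges absolutely on $|u|=\rho$ and that $u=0$ is the only singularity of the integrand inside the contour. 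I would then enlarge the circle to radius $R\to\infty$, so that $Q_k(X)$ equals minus the sum of the residues of the integrand at all its poles with $|u|>\rho$, provided the integral over $|u|=R$ vanishes.

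That vanishing is immediate: since $\deg L=2g$ and $L(q^{-1}u^{-1})=q^{-g}u^{-2g}L(u)$, one gets $Z(u)/Z(u^k)=O_{q,g}\big(u^{2(g-1)(1-k)}\big)=O_{q,g}(1)$ as $u\to\infty$ because $g\ge1$, so the integrand is $O_{q,g}\big((R-1)^{-1}R^{-X}\big)$ on $|u|=R$. Next I would locate all the poles. As $Z(w)=L(w)/((1-w)(1-qw))$ has simple poles only at $w=1,q^{-1}$ and, by hypothesis, simple zeros only at $w=\gamma_1^{-1},\dots,\gamma_{2g}^{-1}$, the poles of the integrand with $|u|>\rho$ are: a simple pole at $u=q^{-1}$ from the pole of $Z(u)$; a simple pole at each of the $2gk$ points $u=\gamma_{j,\ell}^{-1}$ where $Z(u^k)$ vanishes; and a simple pole at $u=1$ contributed by $1/(1-u)$. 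The point $u=1$ needs attention: there $Z(u)$ and $Z(u^k)$ both have simple poles, which cancel in the quotient, so $Z(u)/Z(u^k)$ is holomorphic at $u=1$ with limiting value $k$ (compare residues, using $1-u^k=(1-u)(1+u+\cdots+u^{k-1})$). For the points $\gamma_{j,\ell}^{-1}$ one invokes the Riemann hypothesis $|\gamma_j|=\sqrt q$ to see $\gamma_{j,\ell}^{k}=q^{1/2}e^{i\theta(\gamma_j)}=\gamma_j$, so the $\gamma_{j,\ell}^{-1}$ are exactly the $k$th roots of $\gamma_j^{-1}$, all distinct and of modulus $q^{-1/2k}$. (One also checks that $q^{-k}$ is a regular nonzero point of $Z$, so the $\gamma_{j,\ell}^{-1}$ avoid $q^{-1}$, and that the other points the contour sweeps across --- the $k$th roots of $q^{-1}$ and the zeros $\gamma_j^{-1}$ of $Z$ --- are zeros, not poles, of the integrand.)

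Finally I would compute the three kinds of residue. At $u=q^{-1}$, using $\operatorname{Res}=\frac{1}{1-q^{-1}}q^{X}\frac{\operatorname{Res}_{w=q^{-1}}Z(w)}{Z(q^{-k})}$ together with $\operatorname{Res}_{w=q^{-1}}Z(w)=-L(q^{-1})/(q-1)$, the functional equation $L(q^{-1})=q^{-g}L(1)=q^{-g}h$, and $Z(q^{-k})=\zeta(k)$, the residue equals $-\MT_k(X)$. At $u=1$, the residue equals $-k$. At $u=\gamma_{j,\ell}^{-1}$, the chain rule gives $\frac{d}{du}Z(u^k)\big|_{u=\gamma_{j,\ell}^{-1}}=k\gamma_{j,\ell}^{1-k}Z'(\gamma_j^{-1})$ (this is where simplicity of the zeros of $Z$ enters), hence
\[
\operatorname{Res}_{u=\gamma_{j,\ell}^{-1}}\frac{Z(u)}{(1-u)Z(u^k)u^{X}}=\frac{Z(\gamma_{j,\ell}^{-1})}{k\gamma_{j,\ell}^{1-k}Z'(\gamma_j^{-1})}\cdot\frac{\gamma_{j,\ell}}{\gamma_{j,\ell}-1}\cdot\gamma_{j,\ell}^{X},
\]
and $\gamma_{j,\ell}^{X}=q^{X/2k}e^{iX(\theta(\gamma_j)+2\pi\ell)/k}$. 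Adding up the negatives of all residues gives $Q_k(X)=\MT_k(X)+k-q^{X/2k}\sum_{j,\ell}(\cdots)$, and dividing by $q^{X/2k}$ yields precisely the claimed formula, with the term $O_{q,g}(q^{-X/2k})$ equal to the explicit constant $k\,q^{-X/2k}$.

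The one genuinely delicate step is the residue at $u=\gamma_{j,\ell}^{-1}$: one must carry the chain-rule factor $\gamma_{j,\ell}^{1-k}$, simplify $1-\gamma_{j,\ell}^{-1}=(\gamma_{j,\ell}-1)/\gamma_{j,\ell}$ and $(\gamma_{j,\ell}^{-1})^{-X}=\gamma_{j,\ell}^{X}$, and confirm the non-coincidences (all $\gamma_{j,\ell}$ pairwise distinct and distinct from $0,1,q^{-1}$, none a zero of $Z$) that keep every pole simple and isolated. The main-term residue at $q^{-1}$, the identification of the constant at $u=1$, and the decay estimate at infinity are all routine once the functional equation and $\deg L=2g$ are in hand.
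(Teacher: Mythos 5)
Your proof is correct and follows essentially the same route as the paper: both apply Cauchy's residue theorem to the generating function $Z(u)/Z(u^k)$ from Lemma~\ref{lem:kfree_dirichlet_zeta}, extract the main term from the residue at $u=q^{-1}$ and the oscillating terms from the simple poles at $u=\gamma_{j,\ell}^{-1}$, and kill the contour at infinity using $\deg L_{C/\F_q}=2g$ and the functional equation. The one organizational difference is that you fold the partial summation into the integrand as a $1/(1-u)$ factor (producing the extra simple pole at $u=1$ with residue $-k$) instead of extracting each coefficient $a_N$ separately and then summing geometric series of residues as the paper does; this yields the correction as the exact constant $kq^{-X/2k}$ and sidesteps the paper's separate treatment of the $g=1$, $N=0$ case.
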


\begin{proof}
For $\rho > 1$, consider the contour $C_\rho = \{z \in \mathbb{C} : |z| = \rho\}$ oriented counterclockwise, and the corresponding contour integral 
\begin{align}\label{eqn:kfree_integral}
\frac{1}{2 \pi i} \oint_{C_\rho}\frac{1}{u^{N+1}}\frac{Z(u)}{Z(u^k)} \,du.
\end{align}
Rewrite $Z(u)/Z(u^k)$ as
\begin{align}
    \frac{Z(u)}{Z(u^k)} &= \frac{(1 - u^k)(1-qu^k)}{(1-u)(1-qu)}\frac{\prod_{j=1}^{2g} (1 - \gamma_j u )}{\prod_{j=1}^{2g} (1- \gamma_j u^k)}. \label{ID1_for_kfree}
\end{align} 
Note that the expression (\ref{ID1_for_kfree}) is valid whenever $u$ is not equal to $1$, $q^{-1}$, or $\gamma_{j, \ell}^{-1}$ for any $j = 1, \ldots, 2g$ and $\ell = 0, \ldots, k - 1$. We find an explicit expression for (\ref{eqn:kfree_integral}) by an application of Cauchy's residue theorem. The poles of the integrand of (\ref{eqn:kfree_integral}) inside $C_\rho$ occur at $u = 0$ with order $N+1$, at $u = q^{-1}$ with order 1, and at each $u = \gamma_{j,\ell}^{-1}$ with order 1.

Applying Cauchy's residue theorem along with  Lemma \ref{lem:kfree_dirichlet_zeta} and properties of $Z(u)$, we find that the integral (\ref{eqn:kfree_integral}) evaluates to
\[
    \sum_{\deg(D) = N} \mu_k(D) 
    + \sum_{j=1}^{2g}\sum_{\ell=0}^{k-1} \frac{Z(\gamma_{j, \ell}^{-1})}{k\gamma_{j, \ell}^{1-k} Z'(\gamma_j^{-1}) } \gamma_{j, \ell}^{N+1}
    - \frac{q^{N}}{\zeta(k)} \frac{q^{-g}h}{(1-q^{-1})}.
\]

Summing over all $0 \leq N \leq X - 1$ and evaluating the resulting geometric series yields
\begin{align}\label{eqn:qwerty}
    Q_k(X) 
    &= \frac{q^{1-g}h}{\zeta(k)(q-1)^2}q^X 
    - \sum_{j=1}^{2g}\sum_{\ell=0}^{k-1} \frac{Z(\gamma_{j, \ell}^{-1}) \gamma_{j, \ell}}{k\gamma_{j, \ell}^{1-k} Z'(\gamma_j^{-1}) }  \frac{\gamma_{j, \ell}^X}{\gamma_{j, \ell} - 1}
    + \varepsilon_{k}(X), 
\end{align}
where 
\[
\varepsilon_{k}(X) = -\frac{q^{1-g}h}{\zeta(k)(q-1)^2} + \sum_{j=1}^{2g} \sum_{\ell=0}^{k-1} \frac{Z(\gamma_{j, \ell}^{-1}) }{k\gamma_{j, \ell}^{1-k} Z'(\gamma_j^{-1}) } \frac{\gamma_{j, \ell}}{\gamma_{j, \ell}-1}
+ \frac{1}{2\pi i} \sum_{N=0}^{X-1}\oint_{C_\rho} \frac{1}{u^{N+1}}\frac{Z(u)}{Z(u^k)}\,du.
\]
Using (\ref{ID1_for_kfree}) to rewrite the integrand and the Riemann hypothesis for function fields, observe that
\begin{align}
    \left| \frac{1}{2\pi i} \oint_{C_\rho} \frac{1}{u^{N+1}} \frac{Z(u)}{Z(u^k)} \,du \right| &\leq \frac{1}{2 \pi} \oint_{C_\rho} \left|\frac{1}{u^{N+1}} \frac{Z(u)}{Z(u^k)}\right||du| \\[10pt] 
    &\leq \frac{(1 + \sqrt{q}\rho)^{2g} }{\rho^{N} (\rho-1)(q\rho-1)} \cdot \frac{(1+\rho^k)(1+q\rho^k)}{ (\sqrt{q}\rho^k - 1)^{2g}}.\label{eqn:kfree_inequality}
\end{align}
Note that we can make $\rho$ arbitrarily large without changing the validity of (\ref{eqn:qwerty}), since all singularities of the integrand of (\ref{eqn:kfree_integral}) are contained in a circle of radius $1$. A straightforward analysis of the right-hand side in (\ref{eqn:kfree_inequality}) shows that if $g > 1$, the integral (\ref{eqn:kfree_integral}) vanishes for all $N \geq 0$ as $\rho$ tends to infinity. On the other hand, if $g = 1$, the integral vanishes for all $N \geq 1$ as $\rho$ tends to infinity, which implies $\varepsilon_k(X)$ is a constant (not dependent on $X$), which can be calculated by setting $X = 1$ and using the fact that $Q_k(1) = 1$.
Dividing by $q^{X/2k}$ and using $\gamma_j = \sqrt{q}e^{i \theta(\gamma_j)}$ for each $j = 1,\ldots,2g$ gives the result.
\end{proof}

This concludes our analysis for the normalized error term $\ET_k(X)$ when $Z(u)$ has only simple zeros. In the case where $Z(u)$ has zeros of multiple order, we will show that $\ET_k(X)$, as we have defined it in this section, is not bounded as $X$ approaches infinity. More precisely, we show that \[R_k(X) := Q_k(X) - \MT_k(X)\] has order of growth $X^{r-1}q^{X/2k}$, where $r$ is the maximal order of a zero of $Z(u)$.

\subsection{Zeros of Multiple Order}\label{subsec:multiplezeros_kfree}

When all zeros of $Z(u)$ are simple, we found an explicit expression for $R_k(X)$ by computing
\[
\frac{1}{2\pi i} \oint_{C_\rho} \frac{D_k(u)}{u^{N+1}}\,du
\]
via Cauchy's residue theorem. Moving to the case of zeros of multiple order, the main term is unaffected since it comes from the residue at $u = q^{-1}$.  However, the expressions for the residues at $\gamma_{j, \ell}^{-1}$ are more complicated than in Proposition \ref{prop:kfree_simple_zeros}, since the poles are no longer assumed to be simple. To bound the growth of $R_{k}(X)$ when the maximal order of a zero of $Z(u)$ is $r$, the main difference is that we will instead need to calculate the residue at non-simple inverse zeros.

Additionally, we provide a lower bound on the growth of $R_k(X)$ by showing
\[
\limsup_{X \to \infty} \frac{|R_k(X)|}{X^{r-1}q^{X/2k}} > 0.
\]

The following lemma is needed for the first of these results.

\begin{lem}\label{lem:residue_calculation}
    Let $f$ be a function which is holomorphic in an open neighborhood of a fixed $\alpha^{-1}$. For a fixed positive integer $r$,
     \[\operatorname{Res}_{u = \alpha^{-1}}\left(\frac{f(u)}{(u - \alpha^{-1})^ru^{N + 1}}\right) = O(N^{r - 1}|\alpha|^N ).\] 
\end{lem}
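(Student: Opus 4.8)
The plan is to compute the residue directly via the Laurent/Taylor expansion of the integrand at $u = \alpha^{-1}$. Since $f$ is holomorphic near $\alpha^{-1}$, write its Taylor expansion $f(u) = \sum_{m \geq 0} c_m (u - \alpha^{-1})^m$, and likewise expand $u^{-(N+1)}$ in powers of $(u - \alpha^{-1})$; the residue is then the coefficient of $(u - \alpha^{-1})^{-1}$ in the product, which picks out a finite sum involving $c_0, \ldots, c_{r-1}$ and the first $r$ Taylor coefficients of $u \mapsto u^{-(N+1)}$ at $\alpha^{-1}$.

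More concretely, I would use the standard formula
\[
\operatorname{Res}_{u = \alpha^{-1}}\left(\frac{f(u)}{(u - \alpha^{-1})^r u^{N+1}}\right) = \frac{1}{(r-1)!}\left.\frac{d^{r-1}}{du^{r-1}}\left(\frac{f(u)}{u^{N+1}}\right)\right|_{u = \alpha^{-1}}.
\]
Then I would expand the $(r-1)$-st derivative by the Leibniz rule:
\[
\frac{d^{r-1}}{du^{r-1}}\left(\frac{f(u)}{u^{N+1}}\right) = \sum_{i=0}^{r-1} \binom{r-1}{i} f^{(i)}(u) \cdot \frac{d^{r-1-i}}{du^{r-1-i}}\left(u^{-(N+1)}\right).
\]
Each term $\frac{d^{r-1-i}}{du^{r-1-i}}\left(u^{-(N+1)}\right)$ equals $(-1)^{r-1-i}(N+1)(N+2)\cdots(N+r-1-i)\, u^{-(N+r-i)}$, which evaluated at $u = \alpha^{-1}$ has absolute value $O(N^{r-1-i}|\alpha|^{N+r-i}) = O(N^{r-1}|\alpha|^N)$ since $r$ is fixed and $i \geq 0$. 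Meanwhile $f^{(i)}(\alpha^{-1})$ is a fixed constant (depending on $f$, $\alpha$, $r$ but not $N$), and the binomial coefficients are fixed. Summing the $r$ terms, each of which is $O(N^{r-1}|\alpha|^N)$, gives the claimed bound.

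I do not expect a genuine obstacle here — this is essentially a bookkeeping argument with the Leibniz rule, and the key observation is simply that differentiating $u^{-(N+1)}$ a bounded number of times produces polynomial factors in $N$ of degree at most $r-1$ together with a fixed power of $u$ whose value at $\alpha^{-1}$ contributes the $|\alpha|^N$ growth. The only point requiring a modicum of care is ensuring the implied constant is allowed to depend on $f$, $\alpha$, and $r$ (which it is, since these are all fixed), so that the finitely many factors $f^{(i)}(\alpha^{-1})$ and the lower-order powers of $|\alpha|$ can be absorbed; I would state this explicitly when invoking the lemma later.
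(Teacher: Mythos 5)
Your proposal is correct and is essentially the paper's own proof: both apply the order-$r$ residue formula $\frac{1}{(r-1)!}\frac{d^{r-1}}{du^{r-1}}\big|_{u=\alpha^{-1}}\big(f(u)u^{-N-1}\big)$ and then expand by the product rule, observing that each of the boundedly many terms is a fixed constant times a polynomial in $N$ of degree at most $r-1$ times $\alpha^{N+O(1)}$. Your explicit Leibniz bookkeeping and the remark about the implied constant depending on $f$, $\alpha$, $r$ are just slightly more detailed versions of what the paper writes.
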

\begin{proof}
        This residue is \[\operatorname{Res}_{u = \alpha^{-1}}\left(\frac{f(u)}{(u - \alpha^{-1})^ru^{N + 1}}\right) = \frac{1}{(r-1)!}\cdot \frac{d^{r-1}}{du^{r-1}}\bigg|_{u = \alpha^{-1}} f(u)u^{-N - 1}.\]
        
        Differentiate $r-1$ times using the product rule. Each term in the resulting sum is the product of a derivative of $f$ of some order, some polynomial in $N$ of degree at most $r - 1$, and $u^{-k}$ for some $N + 1 \leq k \leq N + r$. Note that the number of terms in the sum does not depend on $N$. Thus each term is $O(N^{r - 1}|\alpha|^N)$, and the entire sum is as well. 
    \end{proof}
    
    With this residue calculation, the upper bound follows easily.

\begin{prop}\label{prop:kfree_bound_above} 
    If all zeros of $Z(u)$ have order at most $r$, then
    \[|R_k(X)| = O(X^{r - 1}q^{X/2k}).\] 
\end{prop}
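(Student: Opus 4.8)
The plan is to mimic the proof of Proposition~\ref{prop:kfree_simple_zeros}, replacing the simple-pole residue computation with a general one. Recall that in that proof we evaluated, via Cauchy's residue theorem, the integral $\frac{1}{2\pi i}\oint_{C_\rho} u^{-N-1} \frac{Z(u)}{Z(u^k)}\,du$; its value is $\sum_{\deg D = N}\mu_k(D)$ plus the residue at $u = q^{-1}$ (contributing the main term $\MT_k$ after summation over $N$) plus the sum of residues at the inverse zeros $\gamma_{j,\ell}^{-1}$. The contour-tail estimate~\eqref{eqn:kfree_inequality} did not use simplicity of the zeros anywhere, so it still shows that the integral itself contributes only a negligible amount (a constant when $g=1$, and nothing when $g>1$) after summing over $0 \le N \le X-1$. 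Hence $R_k(X)$ equals, up to an $O(1)$ constant, a sum over $j, \ell$ of the contribution coming from summing $\operatorname{Res}_{u = \gamma_{j,\ell}^{-1}}\!\big(u^{-N-1}\,Z(u)/Z(u^k)\big)$ over $0 \le N \le X-1$.

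So the key step is to bound each residue. First I would factor the integrand near a fixed inverse zero $\gamma_{j,\ell}^{-1}$: since $Z(u^k) = L(u^k)/\big((1-u^k)(1-qu^k)\big)$ and $L(u^k) = \prod_{i=1}^{2g}(1-\gamma_i u^k)$, a zero of $Z(u^k)$ at $u = \gamma_{j,\ell}^{-1}$ has order equal to the order $m \le r$ of $\gamma_j^{-1}$ as a zero of $L$ (the $k$th-root branch points are distinct, so no extra multiplicity is introduced, and $\gamma_{j,\ell}^{-1}$ is neither $1$ nor $q^{-1}$). Thus near $u = \gamma_{j,\ell}^{-1}$ we may write $\frac{Z(u)}{Z(u^k)} = \frac{f_{j,\ell}(u)}{(u - \gamma_{j,\ell}^{-1})^m}$ for a function $f_{j,\ell}$ holomorphic in a neighborhood of $\gamma_{j,\ell}^{-1}$ (absorbing the numerator $Z(u)$, the factors $(1-u^k)(1-qu^k)$, and the remaining nonvanishing factors of $L(u^k)$, all of which are holomorphic and nonzero there). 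Then Lemma~\ref{lem:residue_calculation} with $\alpha = \gamma_{j,\ell}$ gives
\[
\operatorname{Res}_{u = \gamma_{j,\ell}^{-1}}\!\left(\frac{1}{u^{N+1}}\,\frac{Z(u)}{Z(u^k)}\right) = O\!\big(N^{m-1}|\gamma_{j,\ell}|^{N}\big) = O\!\big(N^{r-1} q^{N/2k}\big),
\]
using $|\gamma_{j,\ell}| = q^{1/2k}$ (from the Riemann hypothesis $|\gamma_j| = \sqrt q$) and $m \le r$.

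Finally I would sum over $N$: since there are only $2gk$ inverse zeros $\gamma_{j,\ell}^{-1}$ (a number depending only on $g$ and $k$), and $\sum_{N=0}^{X-1} O(N^{r-1} q^{N/2k}) = O(X^{r-1} q^{X/2k})$ (the geometric factor $q^{N/2k}$ grows, so the last term dominates up to the polynomial factor $X^{r-1}$), we obtain $R_k(X) = O(X^{r-1} q^{X/2k}) + O(1) = O(X^{r-1} q^{X/2k})$, with implied constant depending on $q$, $g$, and $k$. I do not expect a serious obstacle here; the only point requiring care is the bookkeeping that the pole order of the integrand at $\gamma_{j,\ell}^{-1}$ is exactly the multiplicity $m$ of $\gamma_j$ as a zero of $L$ — in particular verifying that distinct $\gamma_i$ and distinct branches $\ell$ produce distinct points $\gamma_{i,\ell}^{-1}$ so that multiplicities do not accumulate, and that $\gamma_{j,\ell}^{-1} \notin \{1, q^{-1}\}$ so the "main-term pole" stays separate. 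Once that is in place the estimate is routine, and the matching lower bound $\limsup_{X\to\infty} |R_k(X)| / (X^{r-1} q^{X/2k}) > 0$ will be handled separately.
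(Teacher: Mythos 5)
Your proposal is correct and follows essentially the same route as the paper: both reduce to bounding the residue at each inverse zero $\gamma_{j,\ell}^{-1}$ of $Z(u^k)$ via Lemma \ref{lem:residue_calculation} (noting that the pole order is at most $r$), then sum the resulting $O(N^{r-1}q^{N/2k})$ bounds over $N < X$. Your extra bookkeeping about distinct branches not accumulating multiplicity and the separateness of the pole at $q^{-1}$ is correct but left implicit in the paper.
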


\begin{proof}
    If all zeros of $Z(u)$ have order at most $r$, then so do all zeros of $Z(u^k)$. If $\alpha$ is an inverse zero of $Z(u^k)$, then by Lemma \ref{lem:residue_calculation}, the residue at $\alpha^{-1}$ is $O(N^{r - 1}|\alpha|^N) = O(N^{r - 1}q^{N/2k})$, and the contribution to the sum is on the order of \[\sum_{N < X} N^{r - 1}q^{N/2k} \leq \sum_{N < X} X^{r - 1}q^{N/2k} = O(X^{r - 1}q^{X/2k}).\]  Summing over all $\alpha$ gives $|R_k(X)| = O(X^{r - 1}q^{X/2k})$. 
\end{proof}

We deal with the lower bound by following the general framework of \cite{Hum12}. We consider two cases depending on whether $\sqrt{q}$ is an inverse zero of $Z(u)$. The following lemmas will be helpful.

\begin{lem}[Vivanti--Pringsheim Theorem, {\cite[p. 235]{Rem91}}]\label{Vivanti--Pringsheim}
    Let $A(X)$ be a real-valued sequence, and suppose that there exists a positive integer $X_0$ such that $A(X)$ is of constant sign for all $X \geq X_0$. Furthermore, suppose that the supremum $v_c$ of the set of points $v \in [0, \infty)$ for which the sum 
    \[
    \sum_{X = X_0}^\infty A(X) v^{X-1}
    \]
    converges satisfies $v_c \leq 1$. Then the function
    \[
    H(u) = \sum_{X=1}^\infty A(X) u^{X-1}
    \]
    is holomorphic in the disk $|u| < v_c$ with a singularity at the point $v_c$.
\end{lem}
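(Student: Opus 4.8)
The plan is to deduce this from the classical Pringsheim argument after normalizing signs. First I would note that replacing $A(X)$ by $-A(X)$ replaces $H$ by $-H$ and changes nothing about which points are singular, so I may assume $A(X) \geq 0$ for all $X \geq X_0$. Splitting off the polynomial part, write $H(u) = P(u) + \widetilde{H}(u)$ with $P(u) = \sum_{X=1}^{X_0 - 1} A(X) u^{X-1}$ and $\widetilde{H}(u) = \sum_{X \geq X_0} A(X) u^{X-1}$. Since the set of $v \geq 0$ at which a real power series converges is an interval whose right endpoint is its radius of convergence, the radius of convergence of $\widetilde{H}$ (hence of $H$) is exactly $v_c$; in particular $H$ is holomorphic on $\{|u| < v_c\}$, and the content of the lemma is that $v_c$ is a singular point.

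I would then argue by contradiction. Suppose $H$ extends holomorphically across $v_c$, so that $H$ is holomorphic on $U := \{|u| < v_c\} \cup D(v_c, \delta)$ for some $\delta > 0$, which I may shrink so that $\delta < v_c$. A short estimate shows that the disk $D(r, \tfrac{3}{4}\delta)$ centered at $r := v_c - \tfrac{1}{4}\delta$ lies in $U$: a point $w$ in that disk with $|w| \geq v_c$ satisfies $|w - v_c| \leq |w - r| + \tfrac{1}{4}\delta < \delta$. Hence the Taylor expansion of $\widetilde{H} = H - P$ about $r$ converges on $D(r, \tfrac{3}{4}\delta)$, and in particular at the real point $r' := v_c + \tfrac{1}{4}\delta$, which lies strictly to the right of $v_c$.

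The key computation is to expand these Taylor coefficients. Term-by-term differentiation inside the disk of convergence (legitimate since $r < v_c$) gives $\widetilde{H}^{(m)}(r) = \sum_{X \geq X_0} A(X)\,(X-1)(X-2)\cdots(X-m)\,r^{X-1-m}$, which is $\geq 0$ because $A(X) \geq 0$ and $r > 0$. Therefore every term of the double series for $\widetilde{H}(r')$ is nonnegative, so Tonelli's theorem lets me interchange the sums over $m$ and $X$; the inner sum over $m$ collapses by the binomial theorem, since $\sum_{m} \binom{X-1}{m} r^{X-1-m}(r'-r)^m = (r')^{X-1}$. This yields $\widetilde{H}(r') = \sum_{X \geq X_0} A(X)\,(r')^{X-1} < \infty$, contradicting the fact that $r' > v_c$ lies outside the interval of convergence of the nonnegative series $\sum A(X) v^{X-1}$. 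Hence $H$ cannot be continued across $v_c$.

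I expect the only real obstacle to be the rearrangement of the double sum: this is the single point where the constant-sign hypothesis on $A$ is genuinely used (via Tonelli), while everything else is either the elementary geometry of choosing the re-expansion point $r$ or routine facts about power series. I would also remark once that the hypothesis $v_c \leq 1$ plays no role in the proof; it merely records the regime in which the lemma is invoked later.
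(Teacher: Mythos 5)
The paper does not prove this lemma at all: it is quoted as a classical result with a citation to Remmert, so there is no internal proof to compare against. Your argument is the standard Pringsheim proof (re-expansion about a nearby real point, nonnegativity of the Taylor coefficients there, and Tonelli to resum), and it is correct; the identification of $v_c$ with the radius of convergence via Abel's lemma and the observation that the hypothesis $v_c \leq 1$ is inert are both accurate. The only implicit assumption worth flagging is $v_c > 0$ (needed to choose $\delta < v_c$ and to make the conclusion non-vacuous), which holds in every application in the paper.
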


We also make use of the following identities.

\begin{lem}\label{cor2.9analogue}
    For $|u| < q^{-1}$, 
    \[
    \sum_{X=1}^\infty Q_k(X) u^{X-1} = \frac{Z(u)}{Z(u^k)(1-u)}.
    \]
\end{lem}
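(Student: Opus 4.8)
The plan is to evaluate the generating function $\sum_{X \geq 1} Q_k(X) u^{X-1}$ directly from the defining series, using the Cauchy-product structure that converts the partial-sum sequence $Q_k(X) = \sum_{N < X} a_N$ (where $a_N := \sum_{\deg D = N} \mu_k(D)$) into a product of power series. First I would recall that by Lemma~\ref{lem:kfree_dirichlet_zeta}, rewriting the Dirichlet series $D_k(s)$ in the variable $u = q^{-s}$ gives the power-series identity $\sum_{N=0}^\infty a_N u^N = Z(u)/Z(u^k)$, valid on a disk around the origin. Since $Z(u)/Z(u^k)$ is a rational function whose poles all have modulus at least $q^{-1/2} > q^{-1}$ (the inverse zeros of $Z(u^k)$ lie at the $k$-th roots of $\gamma_j^{-1}$, each of modulus $q^{-1/2k}$, and the pole of $Z(u)$ at $u = q^{-1}$ is cancelled by the corresponding pole of $1/Z(u^k)$ only if... — see below), the series $\sum a_N u^N$ converges absolutely for $|u| < q^{-1/2k}$, hence certainly for $|u| < q^{-1}$.

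Next I would invoke the elementary fact that for an absolutely convergent power series $\sum_{N \geq 0} a_N u^N$ and $|u| < 1$, one has $\sum_{X \geq 1}\bigl(\sum_{N=0}^{X-1} a_N\bigr) u^{X-1} = \frac{1}{1-u}\sum_{N \geq 0} a_N u^N$; this is just the observation that multiplication by $(1-u)^{-1} = \sum_{m \geq 0} u^m$ implements partial summation of coefficients, and the rearrangement is justified by absolute convergence on the common region of convergence $|u| < \min(1, q^{-1/2k}) $, which contains the disk $|u| < q^{-1}$. Combining this with the identity $\sum_{N \geq 0} a_N u^N = Z(u)/Z(u^k)$ yields
\[
\sum_{X=1}^\infty Q_k(X) u^{X-1} = \frac{1}{1-u}\cdot\frac{Z(u)}{Z(u^k)} = \frac{Z(u)}{Z(u^k)(1-u)}
\]
for $|u| < q^{-1}$, which is exactly the claim.

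The one point requiring a little care — and the only real obstacle — is verifying that the region of validity genuinely includes $\{|u| < q^{-1}\}$, i.e.\ that $Z(u)/Z(u^k)$ has no pole of modulus $\leq q^{-1}$. Writing $Z(u)/Z(u^k) = \frac{(1-u^k)(1-qu^k)}{(1-u)(1-qu)}\cdot\frac{\prod_j(1-\gamma_j u)}{\prod_j(1 - \gamma_j u^k)}$ as in~(\ref{ID1_for_kfree}), the candidate poles are $u = 1$, $u = q^{-1}$, and $u = \gamma_{j,\ell}^{-1}$ (modulus $q^{-1/2k}$). The factor $(1-qu)$ in the denominator is cancelled: $u = q^{-1}$ is a zero of the numerator factor $(1-qu^k)$ since $1 - q\cdot q^{-k} = 1 - q^{1-k} \neq 0$ for $k \geq 2$ — so in fact it is \emph{not} cancelled, but rather $u=q^{-1}$ genuinely is a pole only if $1 - q\cdot q^{-k}\neq 0$, which holds; wait, this shows $u = q^{-1}$ \emph{is} a pole, contradicting convergence on $|u|<q^{-1}$. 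The resolution is that this is precisely a pole \emph{on the boundary circle} $|u| = q^{-1}$, not strictly inside it, so it does not obstruct convergence on the open disk $|u| < q^{-1}$; one just notes that all poles have modulus $q^{-1/2k} > q^{-1}$ or exactly $q^{-1}$, and none lie strictly inside $|u| < q^{-1}$. I would state this modulus computation cleanly and cite the Riemann hypothesis for function fields (giving $|\gamma_j| = \sqrt q$, hence $|\gamma_{j,\ell}^{-1}| = q^{-1/2k} > q^{-1}$) to close the argument.
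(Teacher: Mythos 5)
Your proof is correct and is essentially the paper's argument run in the opposite direction: the paper multiplies $\sum_X Q_k(X)u^{X-1}$ by $(1-u)$ and telescopes (Abel summation), killing the boundary term with the bound $Q_k(Y) = O(q^Y)$, whereas you multiply the known series $\sum_N a_N u^N = Z(u)/Z(u^k)$ by $(1-u)^{-1}$ and read off partial sums from the Cauchy product; these are the same manipulation, and both ultimately rest on Lemma \ref{lem:kfree_dirichlet_zeta} plus the $O(q^N)$ growth of the coefficients. The one thing to delete is the intermediate claim that $\sum_N a_N u^N$ converges absolutely for $|u| < q^{-1/2k}$: this is false, precisely because, as you then observe, the pole of $Z(u)/Z(u^k)$ at $u = q^{-1}$ is \emph{not} cancelled (the numerator factor $1 - qu^k$ equals $1 - q^{1-k} \neq 0$ there for $k \geq 2$), and $q^{-1} < q^{-1/2k}$; the radius of convergence is exactly $q^{-1}$, which is why the lemma is stated on that disk. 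Your self-correction at the end does repair the argument, but the cleanest route avoids the pole bookkeeping entirely: Lemma \ref{lem:kfree_dirichlet_zeta} already asserts absolute convergence of the Dirichlet series for $\Re(s) > 1$, i.e.\ of $\sum_N a_N u^N$ for $|u| < q^{-1}$ (the coefficients $a_N$ are nonnegative), which is all the Cauchy-product step needs.
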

\begin{proof}
Consider the partial sum
\begin{align}\label{lem3.11 partial sum}
            \sum_{X = 1}^Y Q_k(X)u^{X - 1}(1 - u) &= \sum_{X = 0}^{Y - 1} (Q_k(X + 1) - Q_k(X))u^X - Q_k(Y)u^Y.
        \end{align}
        By definition, \[Q_k(X + 1) - Q_k(X) = \sum_{\deg D = X} \mu_k(D).\]  
        We have $Q_k(Y) = O(q^{Y})$ by the proof of Proposition \ref{prop:kfree_simple_zeros}, so $\lim_{Y \to \infty} |Q_k(Y)u^Y| = 0$. Thus, the series in (\ref{lem3.11 partial sum}) converges to \[\sum_{X = 1}^\infty Q_k(X)u^{X - 1}(1 - u) = D_k(u) = \frac{Z(u)}{Z(u^k)},\] as desired. 
\end{proof}

\begin{lem}[{\cite[Section 1.5]{Pet15}}]\label{combinatorial_lem}
    Let $|u| < 1$, and let $r$ be a positive integer. Then
    \[
    \sum_{X=1}^\infty X^{r-1}u^{X} = \frac{1}{(1-u)^r} \sum_{k=0}^{r-1} A(r-1,k) u^{k+1},
    \]
    where the coefficients 
    \[
    A(r-1,k) = \sum_{j=0}^k \binom{r}{j} (-1)^j (k+1-j)^{r-1}
    \]
    are the Eulerian numbers, satisfying the identity 
    $\sum_{k=1}^{r-1} A(r-1,k) =r!.$ 
\end{lem}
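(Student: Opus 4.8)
The real content of the lemma is the generating-function identity; the explicit formula for $A(r-1,k)$ and the summation over $k$ are then classical facts about Eulerian numbers, recorded for completeness (and available in \cite{Pet15}). The plan is to induct on $r$ using the Euler operator $\vartheta := u\,\frac{d}{du}$, which acts on a power series by $\vartheta\bigl(\sum_X c_X u^X\bigr)=\sum_X X\,c_X u^X$ and hence carries $\sum_{X\ge1}X^{m}u^X$ to $\sum_{X\ge1}X^{m+1}u^X$. The base case $r=1$ is the geometric series $\sum_{X\ge1}u^X=u/(1-u)$, which matches the right-hand side since the displayed formula gives $A(0,0)=\binom{1}{0}(-1)^0\,1^0=1$. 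For the inductive step I would apply $\vartheta$ to the identity at level $r$ and use
\[
\vartheta\!\left(\frac{u^{k+1}}{(1-u)^{r}}\right)=\frac{(k+1)u^{k+1}}{(1-u)^{r}}+\frac{r\,u^{k+2}}{(1-u)^{r+1}}=\frac{(k+1)u^{k+1}+(r-k-1)u^{k+2}}{(1-u)^{r+1}};
\]
multiplying by $A(r-1,k)$, summing over $k$, and shifting the index of the $u^{k+2}$ terms down by one shows that $\sum_{X\ge1}X^{r}u^X=(1-u)^{-(r+1)}\sum_k B(r,k)u^{k+1}$ with $B(r,k)=(k+1)A(r-1,k)+(r-k)A(r-1,k-1)$, i.e.\ the numerator coefficients satisfy the Eulerian recurrence.

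To close the induction it then suffices to check that the explicit expression $A(r-1,k)=\sum_{j=0}^{k}(-1)^j\binom{r}{j}(k+1-j)^{r-1}$ obeys this same recurrence together with the boundary values $A(r-1,0)=1$ and $A(r-1,k)=0$ for $k\ge r-1$; granting that, $B(r,k)=A(r,k)$ and the induction propagates. I expect this verification — a finite-difference identity proved by expanding $(k+1-j)^{r}$ with the binomial theorem and applying Pascal's rule $\binom{r+1}{j}=\binom{r}{j}+\binom{r}{j-1}$ to match the two sides index by index — to be the only place requiring a genuine (if entirely mechanical) calculation. One can instead bypass it by invoking Worpitzky's identity $X^{r-1}=\sum_k A(r-1,k)\binom{X+k}{r-1}$ and the negative-binomial expansion $\sum_{X\ge0}\binom{X+k}{r-1}u^X=u^{\,r-1-k}(1-u)^{-r}$: multiplying by $u^X$, summing over $X$, and using the symmetry $A(r-1,k)=A(r-1,r-2-k)$ to relabel exponents yields the stated form directly, at the cost of importing Worpitzky's identity.

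For the normalization, I would set $S_n:=\sum_k A(n,k)$ and sum the Eulerian recurrence over all $k\in\Z$ (only finitely many terms being nonzero): after reindexing the second sum, the coefficient of $A(n-1,k)$ in $S_n$ is $(k+1)+(n-1-k)=n$, so $S_n=n\,S_{n-1}$, and since $S_0=A(0,0)=1$ this telescopes to $S_n=n!$. Taking $n=r-1$ gives $\sum_k A(r-1,k)=(r-1)!$, which — using $A(r-1,0)=1$ and $A(r-1,r-1)=0$ for $r\ge2$ — is exactly the numerical input needed in Subsection~\ref{subsec:multiplezeros_kfree}. Since every ingredient above is standard, the cleanest course in the paper may simply be to cite \cite[Section~1.5]{Pet15}; the sketch records the argument in case a self-contained version is wanted.
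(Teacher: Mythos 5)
Your induction is correct, but there is nothing in the paper to compare it with: Lemma \ref{combinatorial_lem} is stated with a citation to \cite[Section 1.5]{Pet15} and no proof is given, so your self-contained argument is supplementary rather than parallel to anything in the text. The inductive step is right: applying $\vartheta = u\,\frac{d}{du}$ and rewriting $(1-u)^{-r}$ over $(1-u)^{-(r+1)}$ does produce the numerator recurrence $B(r,k)=(k+1)A(r-1,k)+(r-k)A(r-1,k-1)$, and the displayed alternating sum satisfies it; the verification reduces to the single binomial identity $(k+1)\binom{r}{j}-(r-k)\binom{r}{j-1}=(k+1-j)\binom{r+1}{j}$, which follows from Pascal's rule together with $j\binom{r+1}{j}=(r+1)\binom{r}{j}$ — you do not actually need to expand $(k+1-j)^{r}$ by the binomial theorem as you suggest. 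The Worpitzky route is also fine for $r\geq 2$ (for $r=1$ the $X=0$ term of $\sum_{X\ge0}\binom{X+k}{r-1}u^X$ does not vanish, but that case is the trivial geometric series anyway).

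The one substantive point concerns the normalization, and here you should be explicit rather than diplomatic. Your telescoping argument correctly gives $\sum_{k}A(r-1,k)=(r-1)!$ (for $r=3$: $A(2,0)=1$, $A(2,1)=1$, $A(2,2)=0$, summing to $2=2!$), whereas the lemma as printed asserts $\sum_{k=1}^{r-1}A(r-1,k)=r!$, which is false for the displayed $A(r-1,k)$ — at $r=3$ that sum is $1$, not $6$. So you have proved the correct identity, not the stated one, and your proof should say so as a correction rather than only remarking that $(r-1)!$ is ``the numerical input needed.'' The same off-by-a-factor-of-$r$ constant propagates into the proof of Proposition \ref{prop:kfree_sqrtqcase}, where the limit of the $c$-term should read $cq^{1/2k}(r-1)!$ rather than $cq^{1/2k}r!$; this rescales the explicit lower bound $c_0$ but does not affect the sign conclusions drawn there.
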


With these lemmas in place, we now consider the first case, where $\sqrt{q}$ is an inverse zero of $Z(u)$. Note that if $\sqrt{q}$ is an inverse zero of $Z(u)$, then it must have order $r \geq 2$ by the functional equation for $Z(u)$. 

\begin{prop}\label{prop:kfree_sqrtqcase}
    Let $g \geq 1$, and suppose that $\sqrt{q}$ is an inverse zero of $Z(u)$ with order $r \geq 2$. Then 
    \[
    \limsup_{X \to \infty} \frac{|R_k(X)|}{X^{r - 1}q^{X/2k}} > 0.
    \] 
\end{prop}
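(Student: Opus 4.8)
The plan is to analyze the generating function $H(u) = \sum_{X=1}^\infty R_k(X) u^{X-1}$ and apply the Vivanti--Pringsheim theorem (Lemma~\ref{Vivanti--Pringsheim}) to extract a lower bound on $R_k(X)$ from the order of the pole of $H$ at $u = q^{-1/2}$. Concretely, by Lemma~\ref{cor2.9analogue} we have $\sum_{X\ge1} Q_k(X) u^{X-1} = \frac{Z(u)}{Z(u^k)(1-u)}$, and subtracting off the main term (whose generating function is $\frac{q^{1-g}h}{\zeta(k)(q-1)^2}\cdot\frac{1}{1-qu}$, a rational function with its only singularity at $u=q^{-1}$) gives a closed form for $H(u)$ valid on $|u|<q^{-1}$. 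Since $Z(u^k)$ has a zero of order $r$ at each $k$-th root of $q^{-1/2}$, and in particular at $u = q^{-1/2}$ itself (using that $\gamma = \sqrt q$ is an inverse zero of order $r$), the function $Z(u)/Z(u^k)$ has a pole of order exactly $r$ at $u = q^{-1/2}$ — one must check the numerator $Z(u)$ does not vanish there, which holds because $q^{-1/2}$ is not an inverse zero of $Z$ (it is a genuine zero, i.e. $Z(q^{-1/2})=0$... wait, rather $Z$ has its zeros at $u = \gamma_j^{-1}$, and $q^{-1/2}$ equals $\gamma_j^{-1}$ only if $\gamma_j = \sqrt q$; since all $|\gamma_j| = \sqrt q$, we need $\gamma_j$ real positive, and $\sqrt q$ being an inverse zero means $Z(q^{-1/2}) = 0$). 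Let me restate: the relevant point is that the pole of $\frac{Z(u)}{Z(u^k)(1-u)}$ at $u = q^{-1/2}$ has order $r$, because $u\mapsto u^k$ maps $q^{-1/2}$ to $q^{-k/2} = (\sqrt q)^{-k}$, wait — we need $u^k = \gamma^{-1} = q^{-1/2}$, so $u = q^{-1/2k}$, not $q^{-1/2}$. So the genuine singularity of $H$ closest to the origin among the new poles is at $u = q^{-1/2k}$, which has modulus $q^{-1/2k} > q^{-1} $ only when... $q^{-1/2k} > q^{-1} \iff 1/2k < 1 \iff k \ge 1$, so indeed $q^{-1/2k}$ lies outside the disk $|u| < q^{-1}$ and is the dominant singularity.

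The key steps, in order, are as follows. First, write $H(u) = \frac{Z(u)}{Z(u^k)(1-u)} - \frac{q^{1-g}h}{\zeta(k)(q-1)^2}\cdot\frac{1}{1-qu}$ and observe its radius of convergence: the singularities of the first term are at $u=1$, $u=q^{-1}$, and the inverse $k$-th roots of the $\gamma_j$; the singularity of the second term is at $u = q^{-1}$. I would check that the $q^{-1}$ singularities cancel (this is exactly the subtraction of the main term, so it should; if a simple pole remains it only contributes $O(q^X)$ which is dominated, but in fact the construction of $\mathrm{MT}_k$ is designed so the residue at $q^{-1}$ is removed). Then $H$ has radius of convergence $q^{-1/2k}$, with a pole of order exactly $r$ at $u = q^{-1/2k}$: the order is $r$ because $Z(u^k)$ vanishes to order $r$ there (as $\sqrt q$ is an inverse zero of $Z$ of order $r$ and $u\mapsto u^k$ is locally a biholomorphism near the nonzero point $q^{-1/2k}$), while $Z(u)$ and $(1-u)$ are nonvanishing there. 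Second, suppose for contradiction that $\limsup_{X\to\infty} |R_k(X)|/(X^{r-1}q^{X/2k}) = 0$; equivalently $R_k(X) = o(X^{r-1}q^{X/2k})$. Third, consider the auxiliary sequence $A(X) := C X^{r-1} q^{X/2k} \pm R_k(X)$ for a suitable constant $C>0$; by Lemma~\ref{combinatorial_lem}, $\sum_X X^{r-1}q^{X/2k} u^{X}$ has a pole of order exactly $r$ at $u = q^{-1/2k}$ with a \emph{positive} leading Laurent coefficient (since the Eulerian numbers sum to $r! > 0$), and by comparing leading Laurent coefficients at $u=q^{-1/2k}$ one can choose $C$ and the sign $\pm$ so that $A(X)$ is eventually of one fixed sign — here I use that $o(X^{r-1}q^{X/2k})$ perturbations do not affect the leading-order Laurent behaviour, so the order-$r$ pole of $\sum A(X) u^{X-1}$ survives with the same leading coefficient. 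Fourth, apply Vivanti--Pringsheim: the generating function $\sum A(X) u^{X-1}$, having a real singularity at its radius of convergence $v_c = q^{-1/2k}$ and eventually-constant-sign coefficients, is consistent — but then the \emph{exact} order of the pole of $\sum A(X) u^{X-1}$ at $v_c$ must be $r$ (inherited from $\sum C X^{r-1}q^{X/2k}u^{X-1}$, since $\sum R_k(X)u^{X-1} = H(u)$ also has an order-$\le r$ pole there, but subtracting... ). Actually the cleanest route: compare $\sum A(X) u^{X-1}$ and conclude the pole orders must match, forcing a contradiction with the assumed $o$-estimate by a Tauberian-type comparison — or more simply, note that if $R_k(X) = o(X^{r-1}q^{X/2k})$ then $H(u)(u - q^{-1/2k})^{r-1}$ would extend continuously (even holomorphically after accounting for the $o$) to $u = q^{-1/2k}$, contradicting that $H$ has a genuine order-$r$ pole there. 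This last contradiction is the crux.

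The main obstacle I anticipate is making the Tauberian step rigorous: deducing from "$H$ has a pole of order exactly $r$ at the real point $q^{-1/2k}$" that "$\limsup |R_k(X)|/(X^{r-1}q^{X/2k}) > 0$". The clean way, matching the stated Lemmas, is the contrapositive via Vivanti--Pringsheim: assume the limsup is $0$, form $A(X) = R_k(X) + C X^{r-1} q^{X/2k}$ (WLOG fixing the sign so that the order-$r$ pole of $\sum A(X)u^{X-1}$ at $q^{-1/2k}$ has positive leading Laurent coefficient, which is possible precisely because the $R_k$-contribution is $o(\cdot)$ and hence cannot cancel the leading Laurent term of the $CX^{r-1}q^{X/2k}$-contribution). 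Then $A(X)$ is eventually positive, its generating function has radius of convergence $q^{-1/2k} < 1$, so Vivanti--Pringsheim applies and says the generating function is holomorphic in $|u| < q^{-1/2k}$ with a singularity at $q^{-1/2k}$ — which we already knew. To derive a contradiction I instead need the \emph{quantitative} fact that $R_k(X) = o(X^{r-1}q^{X/2k})$ would force $\lim_{u\to q^{-1/2k}^-}(q^{-1/2k}-u)^{r-1}|H(u)| < \infty$ by dominated/monotone convergence on the nonnegative series $\sum |R_k(X)|u^{X-1}$ — wait, $R_k$ is not sign-definite. So the actual argument must be: split into the two cases (whether $\sqrt q$ is an inverse zero), and in this case extract the dominant oscillatory/secular term from the residue computation directly, showing the coefficient of $X^{r-1}q^{X/2k}$ in $R_k(X)$ is a nonzero constant (coming from the residue at $u = q^{-1/2k}$, which by Lemma~\ref{lem:residue_calculation} contributes $\sim c\, X^{r-1} q^{X/2k}$ with $c \ne 0$ because the pole order is exactly $r$), and all other inverse-$k$th-roots-of-$\gamma_j$ contribute terms of the same magnitude but with oscillating arguments $e^{iX\theta}$ — so a Weyl-equidistribution or pigeonhole argument along the subsequence $X \equiv 0 \pmod k$ (where the $u = q^{-1/2k}$ term is purely positive of size $\sim cX^{r-1}q^{X/2k}$) shows the sum cannot be $o(X^{r-1}q^{X/2k})$. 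I would present it this way: isolate the $u = q^{-1/2k}$ residue as the "secular" term $\sim c X^{r-1}q^{X/2k}$, bound every other residue by $O(X^{r-1}q^{X/2k})$ with a bounded oscillating factor, and then use that along $X \to \infty$ the secular term has fixed sign while — after possibly passing to an arithmetic progression killing the other roots of unity of $q^{-1/2}$, or invoking that the full sum of the other terms, being a sum of $\le$ finitely many exponentials times $X^{r-1}q^{X/2k}$, has $\limsup$ of its ratio to $X^{r-1}q^{X/2k}$ equal to some finite value, so the total $\limsup$ is bounded below by $|c|$ minus nothing along a suitable subsequence. The delicate point is ensuring the other terms do not exactly cancel the secular term infinitely often; restricting to $X \equiv 0 \pmod{2k}$ forces $q^{-1/2k}$ and $-q^{-1/2k}$ (if the latter is also an inverse-$k$th root of an inverse zero) to contribute with definite sign, and the remaining genuinely-oscillating contributions can be controlled since their combined contribution, divided by $X^{r-1}q^{X/2k}$, is a fixed $B^2$-almost-periodic-type function of $X$ whose mean can be arranged $\ne$ the negative of $c$.
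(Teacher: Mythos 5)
You have the right objects in hand --- the generating function $\frac{Z(u)}{Z(u^k)(1-u)}$, the dominant singularity at $u=q^{-1/2k}$ of order exactly $r$, and even the auxiliary sequence $R_k(X)+cX^{r-1}q^{X/2k}$ --- but you abandon the Vivanti--Pringsheim route exactly where the paper completes it. The paper does not argue by contradiction from a pole-order count; instead it uses the already-proven upper bound $|R_k(X)|\le cX^{r-1}q^{X/2k}$ (Proposition \ref{prop:kfree_bound_above}) to make $R_k(X)+cX^{r-1}q^{X/2k}$ eventually nonnegative, applies Lemma \ref{Vivanti--Pringsheim} together with Lemmas \ref{cor2.9analogue} and \ref{combinatorial_lem} to identify the series with an explicit function on $|u|<q^{-1/2k}$, and then multiplies by $(1-q^{1/2k}u)^r$ and lets $u\to q^{-1/2k}$ along the reals. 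Nonnegativity of the coefficients forces the boundary limit $\frac{(-1)^rq^{r/2k}Z(q^{-1/2k})r!}{f^{(r)}(q^{-1/2k})(1-q^{-1/2k})}+cq^{1/2k}r!$ to be nonnegative, which (in the case where the first summand is negative) yields an explicit positive lower bound on every admissible $c$, hence $\liminf R_k(X)/(X^{r-1}q^{X/2k})\le -c_0<0$; the opposite sign case is symmetric. Your objection that ``$R_k$ is not sign-definite'' is precisely what the addition of $cX^{r-1}q^{X/2k}$ is designed to cure, and the quantitative content comes from the boundary limit, not from a Tauberian theorem.

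The alternative route you settle on (isolate the secular residue at $u=q^{-1/2k}$ and show the oscillatory terms cannot cancel it) could be made to work, but as written it has a genuine gap. Restricting to $X\equiv 0\pmod k$ or $X\equiv 0\pmod{2k}$ does \emph{not} control the cancellation: since $\sqrt q$ is an inverse zero of $Z(u)$ of order $r$, \emph{all} $k$ numbers $q^{1/2k}e^{2\pi i\ell/k}$ are inverse zeros of $Z(u^k)$ of order $r$, and along $X\equiv 0\pmod k$ each contributes a non-oscillating constant $c_\ell X^{r-1}q^{X/2k}$; the sum $\sum_{\ell}c_\ell$ can vanish (the paper's own discussion of the $b=k-1$ case in Subsection \ref{subsec:global_kfree} shows such root-of-unity cancellation actually occurs). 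The statement that the mean of the remaining terms ``can be arranged'' to avoid cancelling $c$ is not an argument. The correct repair is to average over \emph{all} $X$: the Ces\`aro mean of $R_k(X)/(X^{r-1}q^{X/2k})$ equals the single coefficient $c_0$ attached to $\alpha=q^{1/2k}$, because every other frequency $e^{iX\theta_\alpha}$ with $\theta_\alpha\not\equiv 0\pmod{2\pi}$ has mean zero, and $|c_0|>0$ bounds the limsup from below. Even then you would still owe a derivation of the explicit residue expansion of $R_k(X)$ with exact leading coefficients in the multiple-zero case, which Lemma \ref{lem:residue_calculation} (an upper bound only) does not supply and which the paper deliberately avoids.
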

\begin{proof} Since ${R}_k(X) = O(X^{r-1}q^{X/2k})$, there exists a constant $c$ and a positive integer $X_0$ such that \begin{align}\label{eqn:O_inequality} |R_k(X)| \leq cX^{r-1}q^{X/2k}\end{align} for all $X \geq X_0$. First suppose that 
\begin{align}\label{kfree:signcondition}
    \frac{(-1)^rZ(q^{-1/2k})}{f^{(r)}(q^{-1/2k})(1-q^{-1/2k})} < 0,
\end{align} where $f(u) := Z(u^k)$. Consider \[\sum_{X = 1}^\infty (Q_k(X) - \mathrm{MT}_k(X) + cX^{r - 1}q^{X/2k})u^{X - 1}.\]  
    Set $d:= \frac{q^{1-g}h}{\zeta(k)(q-1)^2}$, so that
    \[
    \sum_{X=1}^\infty \mathrm{MT}_k(X) u^{X-1} = \frac{dq}{1-qu}
    \]
    for all $|u| < q^{-1}$.
    Combining with Lemmas \ref{cor2.9analogue} and \ref{combinatorial_lem}, we have 
    \begin{align}\label{kfree:powerseries_sqrtqcase}
         & \sum_{X = 1}^\infty (Q_k(X) - \mathrm{MT}_k(X) + cX^{r - 1}q^{X/2k})u^{X - 1} \\ \notag &= \frac{Z(u)}{Z(u^k)(1 - u)} - \frac{dq}{1 - qu} + \frac{c}{u(1 - q^{1/2k}u)^r}\sum_{\ell = 0}^{r - 1}A(r - 1, \ell)q^{(\ell + 1)/2k}u^{\ell + 1}.
    \end{align}
    Observe that the singularity at $q^{-1}$ on the right-hand side is removable. Thus, the right-hand side of (\ref{kfree:powerseries_sqrtqcase}) is holomorphic on $|u| < q^{-1/2k}$. By the  Vivanti--Pringsheim theorem, the left-hand side converges to a holomorphic function $G(u)$ for all $|u| < q^{-1/2k}$.

    Now multiply (\ref{kfree:powerseries_sqrtqcase}) by $(1 - q^{1/2k}u)^r$, and take the limit as $u$ approaches $q^{-1/2k}$ from the left through the real values. If the limit were negative, then the left-hand side of (\ref{kfree:powerseries_sqrtqcase}) would tend to negative infinity as $u$ approaches $q^{-1/2k}$ from the left. However, this is a contradiction since the series on the left-hand side of (\ref{kfree:powerseries_sqrtqcase}) has at most finitely many negative terms from $X = 0$ to $X_0 - 1$, and the sum from $X = X_0$ to infinity is nonnegative. Thus, the limit must be nonnegative.
    
    Using Lemma \ref{combinatorial_lem} and the observation that $1 - q^{1/2k}u = -q^{1/2k}(u - q^{-1/2k})$, we find that the right-hand side of (\ref{kfree:powerseries_sqrtqcase}) multiplied through by $(1 - q^{1/2k}u)^r$ tends to
    \[\frac{(-1)^rq^{r/2k}Z(q^{-1/2k})r!}{f^{(r)}(q^{-1/2k})(1 - q^{-1/2k})} + cq^{1/2k}r! \geq 0.\]
    Since (\ref{kfree:signcondition}) is negative, we get a positive lower bound on $c$. With $$c_0 := \frac{(-1)^{r}q^{r/2k}Z(q^{-1/2k})}{f^{(r)}(q^{-1/2k})(q^{1/2k} - 1)},$$ we have
    \[
    \liminf_{X\to \infty} \frac{R_k(X)}{X^{r-1}q^{X/2k}} \leq -c_0 < 0.
    \]
    
    On the other hand, if the left-hand side of (\ref{kfree:signcondition}) is positive, then an analogous argument proves
    \[
    \limsup_{X\to \infty} \frac{R_k(X)}{X^{r-1}q^{X/2k}} \geq \frac{(-1)^{r}q^{r/2k}Z(q^{-1/2k})}{f^{(r)}(q^{-1/2k})(q^{1/2k} - 1)} > 0.\qedhere
    \]
\end{proof}

If $\sqrt{q}$ is not an inverse zero of $Z(u)$ of maximal order, we obtain a stronger result through a very similar argument with slightly more complicated calculations.

\begin{prop}\label{prop:kfree_rmax}
    Let $C/\mathbb{F}_q$ be a function field with genus $g \geq 1$. Suppose that $\gamma$ is an inverse zero of $Z(u)$ of order $r \geq 2$, and that the order of the inverse zero at $\sqrt{q}$ is strictly less than $r$ (it may be 0). Then 
    \[
    \limsup_{X \to \infty} \frac{|R_k(X)|}{X^{r - 1}q^{X/2k}} > 0.
    \] 
\end{prop}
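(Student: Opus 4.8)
The plan is to mimic the argument of Proposition \ref{prop:kfree_sqrtqcase}, but to isolate the contribution of the order-$r$ inverse zero $\gamma$ using a suitable linear combination of shifted generating functions rather than the plain generating function. Since $|\gamma| = \sqrt q$ by the Riemann hypothesis for function fields, write $\gamma = \sqrt q\, e^{i\theta}$ with $\theta = \theta(\gamma)$; the inverse zeros of $Z(u^k)$ lying over $\gamma^{-1}$ are the points $\gamma_{j,\ell}^{-1}$ with $q^{1/2k}$ times a $k$th root of $e^{i\theta}$, each again of order $r$. The first step is to record, exactly as in Lemma \ref{cor2.9analogue} together with Lemma \ref{combinatorial_lem}, that for $|u| < q^{-1}$
\[
\sum_{X=1}^\infty \bigl(Q_k(X) - \MT_k(X) + cX^{r-1}q^{X/2k}\bigr)u^{X-1}
= \frac{Z(u)}{Z(u^k)(1-u)} - \frac{dq}{1-qu} + \frac{c}{u(1-q^{1/2k}u)^r}\sum_{\ell=0}^{r-1}A(r-1,\ell)q^{(\ell+1)/2k}u^{\ell+1},
\]
where $c$ is the implied constant from Proposition \ref{prop:kfree_bound_above} and $d := q^{1-g}h/(\zeta(k)(q-1)^2)$; the pole at $q^{-1}$ is removable, so the right-hand side is holomorphic on $|u| < q^{-1/2k}$ (the error term's contribution pushes the radius of convergence out past $q^{-1/2k}$ only if $c$ is large enough, which is precisely what we will exploit).

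The key new idea is that $q^{-1/2k}$ is no longer a real singularity of $Z(u)/Z(u^k)$ — the relevant singularities are at the complex points $\gamma_{j,\ell}^{-1}$ — so I cannot take a limit along the real axis as before. Instead I will apply the Vivanti--Pringsheim theorem (Lemma \ref{Vivanti--Pringsheim}) to the real sequence $A(X) := R_k(X) + cX^{r-1}q^{X/2k}$, which is eventually nonnegative: if its generating function were holomorphic on a disk of radius strictly larger than $q^{-1/2k}$, then the right-hand side above would be holomorphic there too, forcing the principal parts at all the points $\gamma_{j,\ell}^{-1}$ (which have modulus $q^{-1/2k}$) to cancel against the principal part of $\frac{c}{u(1-q^{1/2k}u)^r}\sum_\ell A(r-1,\ell)q^{(\ell+1)/2k}u^{\ell+1}$ — but the latter has a pole only at $u = q^{-1/2k}$, so in particular the order-$r$ pole at $u = \gamma^{-1}$ (which is not real since $\theta \ne 0$, using $g \ge 1$ and the indexing conventions) would have to vanish, contradicting that $\gamma$ is an inverse zero of $Z(u^k)$ of order exactly $r$. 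Hence the radius of convergence of $\sum A(X)u^{X-1}$ is exactly $q^{-1/2k}$ for every admissible $c$; but then for $c$ smaller than the modulus of the residue contribution one gets a genuine singularity forcing $\sum A(X)u^{X-1}$ to diverge at $u = q^{-1/2k}$ — impossible for an eventually nonnegative sequence unless $\limsup_X A(X)q^{-X/2k}/X^{r-1} > 0$, i.e. $\limsup_X |R_k(X)|/(X^{r-1}q^{X/2k})$ cannot be made arbitrarily small, giving the claimed positive lower bound. To make the sign bookkeeping work cleanly I will, as in Proposition \ref{prop:kfree_sqrtqcase}, split into the two cases according to the sign of the real number obtained from the leading Laurent coefficient, and in one of them replace $A(X)$ by $cX^{r-1}q^{X/2k} - R_k(X)$.

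The main obstacle is the one just flagged: extracting a real sign condition from what is a priori a complex residue at $u = \gamma^{-1}$. I expect to handle this by multiplying the displayed identity by $\prod_{\ell=0}^{k-1}(1 - \gamma_{j,\ell}u/\!\sqrt q\cdot q^{1/2k})^r$ — equivalently by $(1 - (q^{1/2k}u)^k e^{-i\theta}\cdots)$-type factors — to clear all $k$ poles over $\gamma^{-1}$ simultaneously, and then taking $u \to q^{-1/2k}e^{i\theta/k}$; alternatively, since we only need a lower bound and not the optimal constant, I can sum $|R_k(X)|$ against a cleverly chosen test series that detects a single one of the $\gamma_{j,\ell}$, e.g. pair the generating function with $\overline{\gamma_{j,\ell}}^{\,X}$-weights. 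Either way the upshot is a positive quantity of the shape $(-1)^r Z(\gamma_{j,\ell}^{-1})\big/\big(f^{(r)}(\gamma_{j,\ell}^{-1})(1-\gamma_{j,\ell}^{-1})\big)$ (with $f(u) = Z(u^k)$), whose nonvanishing is immediate because $\gamma_{j,\ell}^{-1}$ is a genuine zero of $f$ of order exactly $r$ while $Z(\gamma_{j,\ell}^{-1}) \ne 0$ since $\gamma_{j,\ell}$ is not itself an inverse zero of $Z$ (it has modulus $q^{1/2k} \ne \sqrt q$ for $k \ge 2$, and for $k=1$ one is in the situation of Proposition \ref{prop:kfree_sqrtqcase} already). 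This gives the strict inequality $\limsup_{X\to\infty}|R_k(X)|/(X^{r-1}q^{X/2k}) > 0$.
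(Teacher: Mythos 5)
Your overall strategy (generating functions, Vivanti--Pringsheim, a sign condition read off from the leading Laurent coefficient) is the right one, and you correctly identify the central difficulty: the singularities that detect $\gamma$ sit at the complex points $\gamma_{j,\ell}^{-1}$ of modulus $q^{-1/2k}$, not on the positive real axis where the positivity argument lives. But neither of your proposed resolutions closes this gap. Taking $u \to q^{-1/2k}e^{i\theta/k}$ along a complex ray is incompatible with the positivity mechanism: eventual nonnegativity of the coefficients only forbids the sum from tending to $-\infty$ as $u$ approaches the radius of convergence through \emph{positive real} values, and gives no control on boundary behavior at a non-real point. Weighting by $\overline{\gamma_{j,\ell}}^{\,X}$ has the dual defect: it moves the singularity onto the real axis but destroys the nonnegativity of the coefficients, which is the whole engine of the argument. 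The paper's device is to multiply the coefficients by the \emph{real, nonnegative} weight $1-\cos(\phi-(X-1)\theta)$, where $\theta$ is the argument of a chosen $k$th root $\alpha$ of $\gamma$ and $\phi$ is chosen to rotate the leading Laurent coefficient at $\alpha^{-1}$ onto the negative real axis. Expanding the cosine into exponentials turns the weighted series into $G(u)+\tfrac{e^{i\phi}}{2}G(ue^{-i\theta})+\tfrac{e^{-i\phi}}{2}G(ue^{i\theta})$, which transports the order-$r$ poles at $e^{\mp i\theta}q^{-1/2k}$ to the real point $q^{-1/2k}$ while keeping the coefficients eventually nonnegative; multiplying by $(1-q^{1/2k}u)^r$ and letting $u\to q^{-1/2k}$ through real values then yields $cq^{1/2k}r! - \bigl|\alpha^r Z(\alpha^{-1})r!/(f^{(r)}(\alpha^{-1})(1-\alpha^{-1}))\bigr| \ge 0$, the desired positive lower bound on $c$. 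This trigonometric-weight trick is the one genuinely new ingredient beyond Proposition \ref{prop:kfree_sqrtqcase}, and it is the piece your proposal is missing.

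A secondary issue: your preliminary deduction that the radius of convergence of $\sum A(X)u^{X-1}$ equals $q^{-1/2k}$ carries no information. Once $c>0$, the term $cX^{r-1}q^{X/2k}$ alone pins the radius at $q^{-1/2k}$, and even for $c=0$ the genuine poles of $Z(u)/Z(u^k)$ at the points $\gamma_{j,\ell}^{-1}$ already do so; in neither case does this say anything about $\limsup_X|R_k(X)|/(X^{r-1}q^{X/2k})$. What produces the strict inequality is the quantitative comparison of the two order-$r$ principal parts at the single real point $q^{-1/2k}$ --- one proportional to $c$, one proportional to $|Z(\alpha^{-1})|\neq 0$ --- and that comparison is only available after the cosine weighting has moved the relevant singularity onto the positive real axis.
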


\begin{proof}
     Let $\alpha$ be any $k$th root of $\gamma$, and let $f(u) = Z(u^k)$, so that $\alpha$ is an inverse zero of $f$ with multiplicity $r$ as well. Moreover, let $\theta$ be the argument of $\alpha$. Since $R_k(X) = O(X^{r-1}q^{X/2k})$, there exists some $c \geq 0$ and a positive integer $X_0$ such that \[|R_k(X)| \leq cX^{r-1}q^{X/2k}\] for all $X \geq X_0$. Then, let $G(u)$ be the function to which (\ref{kfree:powerseries_sqrtqcase}) converges.  
     
     Consider the power series
     \begin{align}\label{kfree_big_powerseries}
     \sum_{X = 1}^\infty (Q_k(X) - MT_k(X) + cq^{X/2k}X^{r - 1})(1 - \cos(\phi - (X - 1)\theta))u^{X - 1}
     \end{align}
     where
     \[
        \phi = \pi - \arg\left( \frac{(-1)^r\alpha^r Z(\alpha^{-1})}{f^{(r)}(\alpha^{-1})(1 - \alpha^{-1})} \right).
     \]
    The reason for this choice of $\phi$ will become apparent later. Since
    \[
        \cos(\phi - (X - 1)\theta)u^{X - 1} = \frac{e^{i\phi}}{2}(e^{-i\theta}u)^{X - 1} + \frac{e^{-i\phi}}{2}(e^{i\theta}u)^{X - 1},
    \]
    for $|u| < q^{-1/2k}$, (\ref{kfree_big_powerseries}) is equal to \begin{align}\label{yuck}
    G(u) + \frac{e^{i\phi}}{2}G(ue^{-i\theta}) + \frac{e^{-i\phi}}{2}G(ue^{i\theta}).
    \end{align}
    We now multiply (\ref{yuck}) by $(1 - q^{1/2k}u)^r$ and analyze what happens as $u$ approaches $q^{-1/2k}$ from the left over real numbers. 
    
    Consider the limit of each term in (\ref{yuck}) individually. First observe that \[\lim_{\left(u \to q^{-1/2k}\right)^+} G(u)(1-q^{1/2k}u)^r = cq^{{1/2k}}r!\]
    
    Now consider $\frac{e^{i \phi}}{2} G(ue^{-i\theta})$. Let $v := u e^{-i\theta}$, so that $1 - q^{1/2k}u = -\alpha(v - \alpha^{-1})$. Then
    \begin{align} \label{kfree:definephi}
        \lim_{v \to \gamma^{-1}} \frac{Z(v)(-\alpha(v - \alpha^{-1}))^r}{f(v)(1-v)} = \frac{(-1)^{r}\alpha^r Z(\alpha^{-1})r!}{f^{(r)}(\alpha^{-1})(1 - \alpha^{-1})}.
    \end{align}
    Choosing $\phi$ to exactly cancel out the argument of (\ref{kfree:definephi}) and negate the resulting real number, 
    \[\lim_{u \to q^{-1/2k}} \frac{e^{i\phi}}{2}G(ue^{-i\theta})(1-q^{1/2k}u)^r =  -\frac{1}{2}\left|\frac{\alpha^r Z(\alpha^{-1})r!}{f^{(r)}(\alpha^{-1})(1 - \alpha^{-1})}\right|.\] Since $\overline{Z(u)} = Z(\overline{u})$ for all $u$, the third term in (\ref{yuck}) multiplied by $(1-q^{1/2k}u)^r$ tends to the same value in the limit. 
    
    Altogether, as $u$ approaches $q^{-1/2k}$ from the left through the real values, the product of $(1 - q^{1/2k}u)^r$ with (\ref{kfree_big_powerseries}) converges to
    \[
    cq^{1/2k} r! - \left|\frac{\alpha^r Z(\alpha^{-1})r!}{f^{(r)}(\alpha^{-1})(1 - \alpha^{-1})}\right|.
    \]
    By the same reasoning as in the proof of Proposition \ref{prop:kfree_sqrtqcase}, we deduce that the limit must be nonnegative, giving us the following lower bound on $c$:
    \[
    c \geq \left| \frac{\alpha^r Z(\alpha^{-1})}{q^{1/2k}f^{(r)}(\alpha^{-1})(1 - \alpha^{-1})}\right| > 0.
    \]

    An analogous argument shows
    \[
    \limsup_{X \to \infty} \frac{R_k(X)}{X^{r-1}q^{X/2}} \geq \left| \frac{\alpha^r Z(\alpha^{-1})}{q^{1/2k}f^{(r)}(\alpha^{-1})(1 - \alpha^{-1})}\right| > 0. \qedhere 
    \]
\end{proof}

\begin{rem}
Note that the Proposition \ref{prop:kfree_sqrtqcase} applies every time $\sqrt{q}$ is an inverse zero of $Z(u)$, but whenever $\sqrt{q}$ is not an inverse zero of maximal order, Proposition \ref{prop:kfree_rmax} is a stronger result. 
\end{rem}

\subsection{Limiting Distribution of the Error Term}\label{subsec:limiting_distribution_kfree}
In this subsection, we will compute the natural density in the positive integers of the set \[\mathcal{S}_k(\beta) = \{X \in \mathbb{Z}^+ \mid |\ET_k(X)| \leq \beta\}\] for a real $\beta > 0$ under the Linear Independence hypothesis. In particular, the natural density is given by
$$\delta(\mathcal{S}_k(\beta)) = \lim_{Y \to \infty} \frac{1}{Y} \# \{1 \leq X \leq Y \ |\ |\ET_k(X)| \leq \beta\}.$$

Recall that we let $\gamma_1,\ldots,\gamma_{2g}$ be the inverse zeros of $Z(u)$, ordered such that $\gamma_{j+g} = \overline{\gamma_j}$ for $j =1,\ldots, g$ and the argument of $\gamma_j$ lies in $[0,\pi]$ for $j= 1,\ldots,g$. Moreover, let $\gamma_{j,\ell} := q^{1/2k}e^{i(\theta(\gamma_j) +2\pi\ell)/k}$, where $\theta(\gamma_j)$ denotes the argument of $\gamma_j$ for $j = 1,\ldots,2g$. As seen in Section \ref{subsec:error_term_kfree}, we can write
\[
\ET_{k}(X) = E_{M,k}(X) + O_{q,g}(q^{-X/2k}),
\] 
where 
\[
E_{M,k}(X) := -\sum_{j=1}^{2g} \sum_{\ell=0}^{k-1} \frac{Z(\gamma_{j,\ell}^{-1})}{k\gamma_{j,\ell}^{1-k}Z'(\gamma_j^{-1})}\frac{\gamma_{j,\ell}}{\gamma_{j,\ell}-1}e^{iX(\theta(\gamma_j)+2\pi\ell)/k}.
\]  
In studying the limiting distribution of $\ET_k(X)$, we closely follow the methods employed by Humphries \cite{Hum12} while analyzing the Mertens conjecture in function fields, which are in turn modelled from the methods of Rubinstein and Sarnak \cite{RS94} in studying Chebyshev's bias. In particular, we will show that $\ET_k(X)$ has a limiting distribution $\nu_k$ as $X$ tends to infinity by first computing the limiting distribution of $E_{M, k}(X)$ using the Kronecker--Weyl theorem, and then using the Portmanteau theorem and Prohorov's theorem to relate it to the limiting distribution of $\ET_k(X)$. 
We also compute the Fourier transform of the probability measure $\nu_k$. The first few results will be stated under the condition that the inverse zeros of $Z(u)$ are simple. Later on, we will need to assume the stronger Linear Independence hypothesis.

 Consider the group homomorphism $\varphi: \Z \to \T^{kg}$ via 
 \begin{align}\label{eqn:uglyhom}
 \varphi(n) = (\exp(in(\theta(\gamma_j)+2\pi\ell)/k))_{\ell=0,j=1}^{k-1,g}.
 \end{align}
Let $G$ denote the topological closure of the subgroup $\varphi(\Z)$. Then by the Kronecker--Weyl theorem, $G$ is a closed subgroup of $\T^{kg}$. Our first step is to relate the limiting distribution of $E_{M, k}(n)$ to a limiting distribution on $G$  by constructing a pushforward measure. This is accomplished in the following proposition.

\begin{prop}\label{prop:kfree_main_error_distribution}
Let $C/\mathbb{F}_q$ be a function field of genus $g \geq 1$, and suppose that the zeros of $Z(u)$ are simple. Consider the function $\alpha_k: \T^{kg} \to \R$ defined by \[\alpha_k(z_{j,\ell})_{j=1,\ell=0}^{g,k-1} = -\sum_{j=1}^{g} \sum_{\ell=0}^{k-1} \left(\frac{Z(\gamma_{j,\ell}^{-1})}{k\gamma_{j,\ell}^{1-k}Z'(\gamma_j^{-1})}\frac{\gamma_{j,\ell}z_{j,\ell}}{\gamma_{j,\ell}-1} + \frac{Z\left(\overline{\gamma_{j,\ell}^{-1}}\right)}{k\overline{\gamma_{j,\ell}^{1-k}}Z'\left(\overline{\gamma_j^{-1}}\right)}\frac{\overline{\gamma_{j,\ell}z_{j,\ell}}}{\overline{\gamma_{j,\ell}}-1}\right)\] and the pushforward probability measure $\nu_k := (\alpha_k)_*\mu_G$, where $\mu_G$ is the Haar measure on $G$. For every continuous function $f: \R \to \R$, we have \[\lim_{N \to \infty} \frac{1}{N}\sum_{n=1}^N f(E_{M,k}(n)) = \int_\R f(x) \, d\nu_k(x),\] i.e., $\nu_k$ is the limiting distribution for $E_{M,k}(n)$. 
\end{prop}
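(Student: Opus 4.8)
The plan is to reduce the proposition to the Kronecker--Weyl equidistribution theorem applied to the homomorphism $\varphi$, after identifying $E_{M,k}(n)$ as $\alpha_k\circ\varphi$ evaluated at $n$.

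First I would record the algebraic identity $E_{M,k}(n) = \alpha_k(\varphi(n))$ for every $n\in\Z$. This is bookkeeping: split the defining sum over $j = 1,\ldots,2g$ into the ranges $1\le j\le g$ and $g+1\le j\le 2g$, write $j = j'+g$ in the second range, and use $\gamma_{j'+g} = \overline{\gamma_{j'}}$ together with $\overline{Z(u)} = Z(\overline{u})$ (valid since $L_{C/\F_q}\in\Z[u]$, so $Z$ has real coefficients). One checks that the inverse $k$th roots $\{\gamma_{j'+g,\ell}\}_\ell$ are exactly the conjugates $\{\overline{\gamma_{j',\ell}}\}_\ell$ as multisets, and that the summands pair up under a reindexing $\ell\mapsto\ell'$ (independent of the convention chosen for the argument of $\gamma_{j'+g}$) so that the $g+1\le j\le 2g$ block is the complex conjugate of the $1\le j\le g$ block. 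Hence $E_{M,k}(n)$ equals $2\re$ of the $1\le j\le g$ block, which, since $\varphi(n)_{j,\ell} = e^{in(\theta(\gamma_j)+2\pi\ell)/k}$, is precisely $\alpha_k(\varphi(n))$. Along the way I would note that $\alpha_k$ is well-defined and continuous on $\T^{kg}$: the denominators never vanish because $|\gamma_{j,\ell}| = q^{1/2k} > 1$ forces $\gamma_{j,\ell}\ne 1$, and $Z'(\gamma_j^{-1})\ne 0$ as the zeros of $Z$ are simple.

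Next I would invoke the Kronecker--Weyl theorem in the form: since $\varphi:\Z\to\T^{kg}$ is a homomorphism with $G = \overline{\varphi(\Z)}$, the sequence $(\varphi(n))_{n\ge 1}$ is equidistributed in $G$ with respect to normalized Haar measure $\mu_G$, i.e. $\frac1N\sum_{n=1}^N h(\varphi(n)) \to \int_G h\,d\mu_G$ for every continuous $h:G\to\C$. As usual this follows from Stone--Weierstrass, reducing to $h$ a character of $G$: a nontrivial character composed with $\varphi$ is a nontrivial additive character of $\Z$, whose Cesàro averages vanish, matching $\int_G h\,d\mu_G = 0$, while the trivial character gives $1 = \int_G h\,d\mu_G$.

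Finally, given continuous $f:\R\to\R$, I would apply this with $h = f\circ\alpha_k$ restricted to $G$, which is continuous. Combining with the identity of the first step and the change-of-variables formula for the pushforward $\nu_k = (\alpha_k)_*\mu_G$ yields
\[
\lim_{N\to\infty}\frac1N\sum_{n=1}^N f(E_{M,k}(n)) = \lim_{N\to\infty}\frac1N\sum_{n=1}^N (f\circ\alpha_k)(\varphi(n)) = \int_G f\circ\alpha_k\,d\mu_G = \int_\R f\,d\nu_k,
\]
with no integrability issue since $\alpha_k$ has compact image and hence $f\circ\alpha_k$ is bounded. The only mildly delicate point is the first step: unlike in Humphries' Mertens setting \cite{Hum12}, the coordinates of $\varphi(n)$ indexed by a fixed $j$ are not independent (they differ by powers of $e^{2\pi in/k}$), so $G$ is in general a proper, possibly disconnected, subgroup of $\T^{kg}$. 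This causes no trouble here, since we only integrate against Haar measure on whatever subgroup $G$ turns out to be; actually identifying $G$ is the harder task, deferred to the proof of Theorem \ref{intro_thm:kfree_limdist}.
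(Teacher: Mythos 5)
Your proof is correct and takes essentially the same route as the paper's: the paper likewise observes that $\alpha_k \circ \varphi(n) = E_{M,k}(n)$, applies the Kronecker--Weyl theorem to average $f \circ \alpha_k$ over $G$ against $\mu_G$, and concludes via the pushforward. The paper states these steps more tersely (citing Kronecker--Weyl rather than reproving it), while you additionally verify the conjugate-pairing identity and the continuity of $\alpha_k$, which is fine but not a different argument.
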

\begin{proof} Let $f: \R \to \R$ be a continuous mapping, and let $\mu_G$ be the Haar measure on $G$. Observe that $\alpha_k \circ \varphi(n) = E_{M,k}(n)$ for each positive integer $n$. By the Kronecker--Weyl theorem \cite{Bai21}, 
\begin{align*}
    \lim_{k \to \infty} \frac{1}{k} \sum_{n=1}^k f(E_{M, k}(n)) & = \lim_{k \to \infty} \frac{1}{k} \sum_{n=1}^k [f \circ \alpha_k](\varphi(n)) 
     = \int_{\R} [f \circ \alpha_k](x) \, d\mu_G(x) 
     = \int_{\R} f(x) \, d\nu_k(x), 
\end{align*}
and the desired result follows.
\end{proof}

We now compute the limiting distribution of the overall normalized error term $\ET_k(X)$. In particular, we will show that it is equal to the limiting distribution $\nu_k = (\alpha_k)_* \mu_G$ of $E_{M, k}(X)$. 

\begin{prop}\label{prop:kfree_limiting_distribution}
Let $C/\mathbb{F}_q$ be a function field of genus $g \geq 1$, and suppose that the zeros of $Z(u)$ are simple. The overall error term $\ET_{k}(X)$ has limiting distribution $\nu_k = (\alpha_k)_*\mu_G$. That is, for every continuous function $f: \R \to \R$, we have \[\lim_{N \to \infty} \frac{1}{N}\sum_{X=1}^N f(\ET_k(X)) = \int_\R f(x) \, d\nu_k(x).\]
\end{prop}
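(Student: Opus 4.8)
The plan is to deduce the limiting distribution of $\ET_k(X)$ from that of $E_{M,k}(X)$, which Proposition \ref{prop:kfree_main_error_distribution} already identifies as $\nu_k=(\alpha_k)_*\mu_G$, by showing that replacing $E_{M,k}(X)$ by $\ET_k(X)$ changes no Cesàro average of a test function. From Proposition \ref{prop:kfree_simple_zeros} we may write $\ET_k(X)=E_{M,k}(X)+\eta(X)$ with $|\eta(X)|\le C\,q^{-X/2k}$ for a constant $C=C(q,g)$ and all large $X$; in particular $\eta(X)\to 0$. Moreover each summand of $E_{M,k}(X)$ has fixed modulus, so $E_{M,k}(X)$ is uniformly bounded, and hence so is $\ET_k(X)$. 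Fix once and for all a compact interval $K\subset\R$ containing every value $\ET_k(X)$ and $E_{M,k}(X)$; it also contains the support of $\nu_k$, which is compact because $\alpha_k$ is continuous on the compact torus $\T^{kg}$.

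Now fix a continuous $f\colon\R\to\R$. Since $f$ is uniformly continuous on the compact set $K$, given $\varepsilon>0$ there is $\delta>0$ with $|f(x)-f(y)|<\varepsilon$ whenever $x,y\in K$ and $|x-y|<\delta$; choosing $X_1$ so that $C\,q^{-X/2k}<\delta$ for all $X\ge X_1$ yields $|f(\ET_k(X))-f(E_{M,k}(X))|<\varepsilon$ for such $X$. As $f$ is bounded on $K$, the finitely many terms with $X<X_1$ contribute $O(X_1/N)\to 0$ to the averaged difference, so
\[
\lim_{N\to\infty}\Bigl(\tfrac1N\sum_{X=1}^N f(\ET_k(X))-\tfrac1N\sum_{X=1}^N f(E_{M,k}(X))\Bigr)=0 .
\]
By Proposition \ref{prop:kfree_main_error_distribution} the second average converges to $\int_\R f\,d\nu_k$, hence so does the first, which is the assertion.

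Equivalently, one can run the argument through the machinery flagged in the text: the empirical measures $\mu_N:=\tfrac1N\sum_{X=1}^N\delta_{\ET_k(X)}$ are all supported in $K$, hence tight, so by Prohorov's theorem every subsequence has a weakly convergent further subsequence with some limit $\mu$; for bounded Lipschitz $f$ the crude estimate $|f(\ET_k(X))-f(E_{M,k}(X))|\le \lVert f\rVert_{\mathrm{Lip}}\,C\,q^{-X/2k}$ together with Proposition \ref{prop:kfree_main_error_distribution} forces $\int f\,d\mu=\int f\,d\nu_k$; since bounded Lipschitz functions are measure-determining, $\mu=\nu_k$, so $\mu_N$ converges weakly to $\nu_k$, and the Portmanteau theorem then upgrades this to all bounded continuous $f$ and, after modifying $f$ outside $K$, to all continuous $f$.

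The only real subtlety — the step I would be most careful about — is that the statement is for arbitrary continuous test functions, whereas the available error control $\eta(X)=O(q^{-X/2k})$ interacts cleanly only with Lipschitz (or uniformly continuous) functions. This is resolved by the observation that every sequence in play takes values in the fixed compact interval $K$, on which continuity automatically becomes uniform continuity (equivalently, on which one may apply the tightness/Portmanteau formalism); with that in hand the proof is a routine reduction to Proposition \ref{prop:kfree_main_error_distribution}.
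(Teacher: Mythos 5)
Your proposal is correct, and your primary argument takes a genuinely more elementary route than the paper's. The paper works at the level of measures: it forms the pushforwards $\lambda_n = (\ET_k)_*\widetilde{\nu}_n$ of the uniform measures on $\{1,\ldots,n\}$, observes tightness from the boundedness of $\ET_k$, extracts weakly convergent subsubsequences via Prohorov's theorem, identifies every subsequential limit with $\nu_k$ by testing against bounded Lipschitz functions (using the $O_{q,g}(q^{-X/2k})$ error exactly as in your sandwich inequalities), and then invokes the Portmanteau theorem twice. Your first argument bypasses all of this: since $\ET_k(X)$ and $E_{M,k}(X)$ take values in a fixed compact $K$, uniform continuity of $f$ on $K$ plus $\eta(X)\to 0$ shows directly that the two Ces\`aro averages have the same limit, and Proposition \ref{prop:kfree_main_error_distribution} (which is already stated for arbitrary continuous $f$) finishes the proof. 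This is cleaner and entirely sufficient here; what the paper's heavier machinery buys is a template that survives in settings where the error term is not uniformly bounded and one must genuinely work with tightness and Lipschitz test functions (as in the classical $B^2$-almost-periodic arguments the introduction alludes to). Your second paragraph is essentially a faithful reconstruction of the paper's proof, so you have in effect supplied both routes; the only cosmetic caveat is that the finitely many $X$ for which the $O$-bound on $\eta(X)$ has not yet kicked in must be absorbed into the $O(X_1/N)$ term, which you do.
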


\begin{proof}
For each positive integer $n$, let $\widetilde{\nu}_n$ denote the discrete uniform probability measure on the set $\{1,\ldots,n\}$. Then, define the pushforward measures $\lambda_n := (\ET_{k})_*\widetilde{\nu}_n$, so that for every continuous mapping $f: \R \to \R$ and each positive integer $n$, we have \[\frac{1}{n} \sum_{X=1}^n f(\ET_{k}(X)) = \int_{\{1,\ldots,n\}} [f\circ \ET_{k}](x) \, d\widetilde{\nu}_n(x) = \int_{\R} f(x)\, d\lambda_n(x).\] Since the function $\ET_{k}$ is bounded, every subsequence of the sequence $\{\widetilde{\nu}_n\}_{n=1}^\infty$ is tight. By Prohorov's theorem \cite[Theorem 5.1]{Bil99}, for each subsequence $\{\widetilde{\nu}_{n_\ell}\}_{\ell=1}^\infty$ of $\{\widetilde{\nu}_n\}_{n=1}^\infty$, there exists a subsubsequence $\{\widetilde{\nu}_{n_{\ell_t}}\}_{t=1}^\infty$ with weak limit $\xi$ for some probability measure $\xi$ on $\R$. We show that $ \xi = \nu_k$ for \textit{every} subsequence $\{\widetilde{\nu}_{n_\ell}\}_{\ell=1}^\infty$. By weak convergence and the Portmanteau theorem \cite[Theorem 2.1]{Bil99}, \begin{align}\label{eqn:kfree_subub_weak_convergence}\lim_{t \to \infty} \frac{1}{n_{\ell_t}} \sum_{X=1}^{n_{\ell_t}} f(\ET_{k}(u)) = \int_{\R} f(x) \, d\xi(x)\end{align} for every bounded Lipschitz continuous function $f: \mathbb{R} \rightarrow \mathbb{R}$. Thus, there exists a constant $c_f \geq 0$ such that the following inequalities hold: \begin{align}\label{eqn:lipschitz_ineq1}\frac{1}{n_{\ell_t}}\sum_{X=1}^{n_{\ell_t}} f(\ET_k(X)) \geq \frac{1}{n_{\ell_t}}\sum_{X=1}^{n_{\ell_t}} f(E_{M,k}(X)) - \frac{c_f}{n_{\ell_t}}\sum_{X=1}^{n_{\ell_t}} |O_{q,g}(q^{-X/2})| \\[10pt]\frac{1}{n_{\ell_t}}\sum_{X=1}^{n_{\ell_t}} f(\ET_k(X)) \leq \frac{1}{n_{\ell_t}}\sum_{X=1}^{n_{\ell_t}} f(E_{M,k}(X)) + \frac{c_f}{n_{\ell_t}}\sum_{X=1}^{n_{\ell_t}} |O_{q,g}(q^{-X/2})|\label{eqn:lipschitz_ineq2}.\end{align} 
By Proposition \ref{prop:kfree_main_error_distribution}, 
\[\lim_{t \to \infty} \frac{1}{n_{\ell_t}}\sum_{X=1}^{n_{\ell_t}} f(E_{M,k}(X)) = \int_{\R} f(x) \, d\nu_k(x),\] 
where $\nu_k = (\alpha_k)_*\mu_G$ is defined as in the statement of Proposition \ref{prop:kfree_main_error_distribution}. Notice that \[
\lim_{t \to \infty} \frac{c_f}{n_{\ell_t}}\sum_{X=1}^{n_{\ell_t}} |O_{q,g}(q^{-X/2})| = 0.\] Applying (\ref{eqn:kfree_subub_weak_convergence}) and taking the limit as $t\to\infty$ on both sides of (\ref{eqn:lipschitz_ineq1}) and (\ref{eqn:lipschitz_ineq2}), we have \[\lim_{t \to \infty} \frac{1}{n_{\ell_t}}\sum_{X=1}^{n_{\ell_t}} f(\ET_k(X)) = \int_{\R} f(x) \, d\xi(x) = \int_{\R} f(x) \, d\nu_k(x)\] for every Lipschitz continuous $f: \R \to \R$. Again by the Portmanteau theorem, we deduce that $\widetilde{\nu}_{n_{\ell_t}}$ converges weakly to $\nu_k$. Since this holds for every subsequence of $\widetilde{\nu}_n$, we conclude that $\nu_k$ is the weak limit of $\{\widetilde{\nu}_n\}_{n=1}^\infty$, and the desired result follows.
\end{proof}

Now that we have shown that $\nu_k$ is the limiting distribution of $\ET_k(X)$, we must find an explicit expression for $\nu_k$ to prove Theorem \ref{intro_thm:kfree_limdist}. The next lemma facilitates this.

\begin{lem}\label{lem:kfree_torus}
Assume that $C/\F_q$ satisfies the Linear Independence hypothesis. Let $G$ be the topological closure of $\varphi(\Z)$ in the $kg$-torus $\T^{kg}$, with $\varphi$ as defined in (\ref{eqn:uglyhom}). Then, $G$ is the union of $k$ disjoint $g$-tori $\T^g$.
\end{lem}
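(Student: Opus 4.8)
The plan is to split the index $n$ according to its residue modulo $k$ and recognize each residue class as a translate of one fixed subtorus of $\T^{kg}$. Writing $n = mk + t$ with $t \in \{0,1,\ldots,k-1\}$ and $m \in \Z$, a direct computation gives the $(j,\ell)$-coordinate of $\varphi(mk+t)$ as
\[
\exp\!\bigl(i(mk+t)(\theta(\gamma_j)+2\pi\ell)/k\bigr) = \exp(im\theta(\gamma_j))\cdot\exp\!\bigl(i(t\theta(\gamma_j)/k + 2\pi t\ell/k)\bigr),
\]
because the cross-term $2\pi m\ell$ is an integer multiple of $2\pi$ and drops out. Let $\psi \colon \Z \to \T^{kg}$ be the homomorphism $m \mapsto (\exp(im\theta(\gamma_j)))_{j=1,\ell=0}^{g,k-1}$ (whose $(j,\ell)$-coordinate does not depend on $\ell$), and let $w_t \in \T^{kg}$ be the fixed element with $(j,\ell)$-coordinate $\exp(i(t\theta(\gamma_j)/k + 2\pi t\ell/k))$. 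The identity above reads $\varphi(mk+t) = w_t\cdot\psi(m)$, so $\varphi(\Z) = \bigcup_{t=0}^{k-1} w_t\cdot\psi(\Z)$, and since this is a finite union, $G = \overline{\varphi(\Z)} = \bigcup_{t=0}^{k-1} w_t\cdot\overline{\psi(\Z)}$.

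The next step is to identify $\overline{\psi(\Z)}$. Since the $(j,\ell)$-coordinate of every $\psi(m)$ equals $\exp(im\theta(\gamma_j))$ independently of $\ell$, the set $\psi(\Z)$, hence its closure, is contained in the diagonal subtorus $\Delta := \{(z_{j,\ell})_{j,\ell} \in \T^{kg} : z_{j,\ell} = z_{j,\ell'} \text{ for all } j,\ell,\ell'\}$, which is closed and canonically homeomorphic to $\T^g$ via the projection $(z_{j,\ell})_{j,\ell}\mapsto (z_{j,0})_j$. Under this homeomorphism $\psi(\Z)$ corresponds to $\{(\exp(im\theta(\gamma_j)))_j : m \in \Z\}$, whose closure in $\T^g$ is all of $\T^g$ by the Kronecker--Weyl theorem; applying Kronecker--Weyl requires that $\theta(\gamma_1),\ldots,\theta(\gamma_g),2\pi$ be linearly independent over $\Q$, which follows at once from the Linear Independence hypothesis since $2\pi$ is a rational multiple of $\pi$. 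Therefore $\overline{\psi(\Z)} = \Delta \cong \T^g$, and $G = \bigcup_{t=0}^{k-1} w_t\Delta$ is a union of $k$ cosets of a $g$-torus.

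Finally I would check that these $k$ cosets are pairwise disjoint. Two of them coincide, $w_t\Delta = w_s\Delta$, exactly when $w_tw_s^{-1} \in \Delta$, i.e., when for each $j$ the quantity $\exp\!\bigl(i(t-s)(\theta(\gamma_j) + 2\pi\ell)/k\bigr)$ is independent of $\ell \in \{0,\ldots,k-1\}$; comparing $\ell = 0$ with $\ell = 1$ forces $\exp(2\pi i(t-s)/k) = 1$, hence $k \mid t-s$, hence $t = s$ because $t,s \in \{0,\ldots,k-1\}$. Since each $w_t\Delta$ is a homeomorphic copy of $\T^g$ and the $k$ translates are disjoint, this yields the claim.

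The only point that needs genuine care is the determination of $\overline{\psi(\Z)}$: the inclusion $\overline{\psi(\Z)}\subseteq \Delta$ is forced by the repeated-coordinate structure of $\psi$ and is immediate, while the reverse inclusion is exactly where Kronecker--Weyl and the Linear Independence hypothesis enter (after the harmless replacement of $\pi$ by $2\pi$ in the independence statement). The remaining steps — the decomposition of $\varphi(\Z)$ into cosets indexed by $n \bmod k$ and the short computation ruling out coincidences among them — are routine.
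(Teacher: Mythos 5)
Your proposal is correct and follows essentially the same route as the paper: both decompose $\varphi(\Z)$ according to the residue of $n$ modulo $k$, apply the Kronecker--Weyl theorem to the base angles $\theta(\gamma_1),\ldots,\theta(\gamma_g)$ (the paper likewise invokes the equivalence of independence over $\pi$ and over $2\pi$ implicitly), and rule out coincidences among the $k$ pieces by comparing the roots of unity at two values of $\ell$. The only difference is presentational: you package the argument as a coset decomposition $G=\bigcup_{t}w_t\overline{\psi(\Z)}$, using that the closure of a finite union is the union of closures and that translation is a homeomorphism, where the paper instead constructs explicit convergent sequences for one inclusion and passes to a subsequence with constant residue modulo $k$ for the other; your version is a little tidier but the substance is identical.
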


\begin{proof}
Fix a set of angles $\Theta := (\theta_1,\ldots,\theta_g)$ and fix some $t \in \{0,1,\ldots,k-1\}$. Thanks to the linear independence of $\{\theta(\gamma_1),\ldots,\theta(\gamma_j),\pi\}$ and the Kronecker--Weyl theorem, there exists an integer sequence $(X_s^{(\Theta)})_{s=1}^\infty$ such that \[\lim_{s \to \infty} \left(\exp\left(iX_s^{(\Theta)}\theta(\gamma_j)\right)\right)_{j=1}^g = (\exp(i\theta_j) - t\theta(\gamma_j)/k)_{j=1}^g.\] Thus, for every $\ell = 0,1,\ldots,k-1$, we have \begin{align*}\lim_{s \to \infty} \left(\exp\left(i\left(kX_s^{(\Theta)}+t\right)\left(\frac{\theta(\gamma_j) + 2\pi\ell}{k}\right)\right)\right)_{j=1}^g =\left(\exp\left(i(\theta_j + 2\pi [\ell t]/k)\right)\right)_{j=1}^g,\end{align*} where $[\ell t]  \equiv \ell t\pmod{k}$ with $[\ell t] \in \{0,1,\ldots,k-1\}$. Thus, we deduce that \begin{align}\label{eqn:lem2.15}\left(\left(\exp\left(i(\theta_j + 2\pi [\ell t]/k)\right)\right)_{j=1}^g\right)_{\ell=0}^{k-1}\end{align} belongs to $G$ for any $t \in \{0,1,\ldots,k-1\}$.  On the other hand, suppose that there exists an integer sequence $(X_s)_{s=1}^\infty$ such that \[\exp(iX_s(\theta(\gamma_j) + 2\pi \ell)/k)\] converges for all $j = 1,\ldots,g$ and $\ell = 0,1,\ldots,k-1$. By passing to a subsequence, we can assume that there exists $t \in \{0,1,\ldots,k-1\}$ such that $X_s \equiv t\pmod{k}$ for all $s$. Then, setting \[\theta_j := \lim_{s\to\infty}\exp(iX_s(\theta(\gamma_j)/k))\] for all $j = 1,\ldots,g$, it follows that \[\exp(iX_s(\theta(\gamma_j) + 2\pi \ell)/k) \to \exp(i(\theta_j + 2\pi [\ell t]/k)).\] In other words, every element of $G$ must take the form of (\ref{eqn:lem2.15}). Thus, we deduce that \[G = \bigcup_{t=0}^{k-1} \left\{\left(\left(\exp\left(i(\theta_j + 2\pi [\ell t]/k)\right)\right)_{j=1}^g\right)_{\ell=0}^{k-1}\ \bigg|\ (\theta_1,\ldots,\theta_g) \in [0,2\pi)^g\right\}.\] We claim that this union is disjoint. Suppose there existed $(\theta_1,\ldots,\theta_g)$ and $(\theta_1',\ldots,\theta_g')$ such that \[\left(\left(\exp\left(i(\theta_j + 2\pi [\ell t]/k)\right)\right)_{j=1}^g\right)_{\ell=0}^{k-1} = \left(\left(\exp\left(i(\theta_j' + 2\pi [\ell t']/k)\right)\right)_{j=1}^g\right)_{\ell=0}^{k-1}\] for some $t, t'$.  
This implies that \[\theta_j + 2\pi[\ell t]/k \equiv \theta'_j + 2\pi[\ell t']/k \pmod{2\pi}\] for every $j$ and every $\ell$. With $\ell = 0$, we see that $\theta_j' = \theta_j$ for each $j$ (as $\theta_j,\theta_j'$ are restricted to $[0,2\pi)$, we have $\theta_j = \theta_j'$). But then, for each $\ell$, it follows that \[2\pi[\ell t]/k \equiv 2\pi[\ell t']/k \pmod{2\pi} \implies \ell t \equiv \ell t' \pmod{k}.\] By taking $\ell$ to be relatively prime to $k$, we conclude the desired $t = t'$. For each fixed $t$, \[\left\{\left(\left(\exp\left(i(\theta_j + 2\pi [\ell t]/k)\right)\right)_{j=1}^g\right)_{\ell=0}^{k-1}\ \bigg|\ (\theta_1,\ldots,\theta_g) \in [0,2\pi)^g\right\}\] is a copy of $\T^g$, so we deduce that $G$ is the disjoint union of $k$ copies of $\T^g$. 
\end{proof}

Since the preceding lemma tells us that the subgroup $G$ is precisely the disjoint union of $k$ copies of the $g$-torus $\T^g$, the Haar measure $\mu_G$ is simply $1/k$ of the Lebesgue measure on each copy of $\T^g$. Denote the coordinates on the $t$-th copy of $\T^g$ by $\theta_{t,1},\ldots,\theta_{t,g}$. Henceforth, we will identify each copy of $\T^g$ with the direct product of $g$ copies of the interval $[0,2\pi)$. Under this identification, the function $\alpha_k$ from Proposition \ref{prop:kfree_main_error_distribution} is given by \begin{align*}\alpha^{(t)}_k(\theta_{t,j})_{j=1}^{g} = -\sum_{j=1}^g  2\Re\left(\sum_{\ell=0}^{k-1}\frac{Z(\gamma_{j,\ell}^{-1})}{k\gamma_{j,\ell}^{1-k}Z'(\gamma_j^{-1})}\frac{\gamma_{j,\ell}}{\gamma_{j,\ell}-1}e^{i(\theta_{t,j} + 2\pi[\ell t]/k)}\right)\end{align*} on the $t$-th copy of the torus. 

We are now ready to state and prove the main result of this section, computing the natural density of $\mathcal{S}_k(\beta)$ in the set of positive integers. Recall the probability measures $\lambda_n := (\ET_k)_\ast \widetilde{\nu}_n$ from the proof of Proposition \ref{prop:kfree_limiting_distribution}. Then for any Borel set $B$ and integer $n > 0$, 
\[
\lambda_n(B) = \widetilde{\nu}_n(\ET_k^{-1}(B)) = \frac{\#\left\{1 \leq r \leq n \mid \ET_{k}(r) \in B\right\}}{n}.
\] 
Observe that if the limit $\lim_{n \to \infty} \lambda_n([-\beta, \beta])$ exists, then it is precisely the natural density of $\mathcal{S}_k(\beta)$. Thus, to compute the natural density of $\mathcal{S}_k(\beta)$, it suffices to show that this limit exists and relate its value to the measure of subsets in $G$. 
For convenience, given a Borel set $B \subset \R$ and Borel-measurable function $f: [0, 2 \pi)^g \rightarrow \mathbb{R}$, we use the notation $m(f(\theta_1, \hdots, \theta_g) \in B)$ for $$m(\{(\theta_1, \hdots, \theta_g) \in [0, 2 \pi)^g ~ | ~ f(\theta_1, \hdots, \theta_g) \in B \})$$ where $m$ denotes the Lebesgue measure on $[0,2\pi)^g$.

{
\renewcommand{\thethm}{\ref{intro_thm:kfree_limdist}}
\begin{thm}
Let $C/\mathbb{F}_q$ be a function field of genus $g \geq 1$, and suppose that $C/\F_q$ satisfies the Linear Independence hypothesis. The natural density of the set $\mathcal{S}_k(\beta)$, denoted $\delta(\mathcal{S}_k(\beta))$, exists and \[\delta(\mathcal{S}_k(\beta)) = \sum_{t=0}^{k-1} \frac{1}{k} \cdot m\left(\ \sum_{j=1}^g 2\left|\sigma_{t,j}^{(k)}\right|\cos(\theta_{t,j}) \in [-\beta,\beta]\right),\] where \[\sigma_{t,j}^{(k)} := \sum_{\ell=0}^{k-1}\frac{Z(\gamma_{j,\ell}^{-1})}{k\gamma_{j,\ell}^{1-k}Z'(\gamma_j^{-1})}\frac{\gamma_{j,\ell}}{\gamma_{j,\ell}-1}e^{2\pi i[\ell t]/k}\] for each $j = 1,\ldots,g$ and $t =0,1,\ldots,k-1$.
\end{thm}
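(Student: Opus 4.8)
The plan is to combine the weak convergence $\lambda_n \Rightarrow \nu_k$ established in Proposition \ref{prop:kfree_limiting_distribution} with the explicit description of $G$ (hence of $\mu_G$ and $\nu_k$) furnished by Lemma \ref{lem:kfree_torus}, and then to simplify the resulting expression using the translation invariance of Lebesgue measure on the torus.

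\emph{Step 1: the density exists and equals $\nu_k([-\beta,\beta])$.} Since $\lambda_n = (\ET_k)_*\widetilde{\nu}_n$, the quantity $\lambda_n([-\beta,\beta])$ is precisely the proportion of $1 \leq X \leq n$ with $|\ET_k(X)| \leq \beta$, so it suffices to show that $\lambda_n([-\beta,\beta])$ converges and to identify the limit. By Proposition \ref{prop:kfree_limiting_distribution}, $\lambda_n$ converges weakly to $\nu_k$, so by the Portmanteau theorem it is enough to verify that $\nu_k(\{-\beta,\beta\}) = 0$. By Lemma \ref{lem:kfree_torus} the map $\alpha_k$ restricts on the $t$-th torus component of $G$ to the trigonometric polynomial $\alpha_k^{(t)}$ displayed above, which is real-analytic on $\T^g$; a non-constant real-analytic function on $\T^g$ has level sets of measure zero, while a constant one must equal $0$, as each of its summands has mean zero over $\T^g$. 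Since $\beta > 0$, in either case $(\alpha_k^{(t)})^{-1}(\{\pm\beta\})$ has Lebesgue measure zero, so $\nu_k(\{-\beta,\beta\}) = 0$, and therefore $\delta(\mathcal{S}_k(\beta))$ exists and equals $\nu_k([-\beta,\beta])$.

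\emph{Step 2: decompose and simplify.} By Lemma \ref{lem:kfree_torus}, $G$ is the disjoint union of $k$ copies of $\T^g$ and the Haar probability measure $\mu_G$ is $1/k$ times normalized Lebesgue measure on each copy, so
\[
\nu_k([-\beta,\beta]) = \sum_{t=0}^{k-1} \frac{1}{k}\, m\!\left((\alpha_k^{(t)})^{-1}([-\beta,\beta])\right),
\]
with $m$ the (normalized) Lebesgue measure on $[0,2\pi)^g$. In the formula for $\alpha_k^{(t)}$, factor $e^{i\theta_{t,j}}$ out of the inner sum over $\ell$; since $e^{2\pi i[\ell t]/k} = e^{2\pi i \ell t/k}$, that inner sum is exactly $\sigma_{t,j}^{(k)}$, and hence
\[
\alpha_k^{(t)}(\theta_{t,j})_{j=1}^{g} = -\sum_{j=1}^g 2\,\Re\!\left(\sigma_{t,j}^{(k)} e^{i\theta_{t,j}}\right) = -\sum_{j=1}^g 2\left|\sigma_{t,j}^{(k)}\right| \cos\!\left(\theta_{t,j} + \arg \sigma_{t,j}^{(k)}\right),
\]
with the convention that a term with $\sigma_{t,j}^{(k)} = 0$ simply vanishes. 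For each $t$, the coordinatewise shift $\theta_{t,j} \mapsto \theta_{t,j} - \arg\sigma_{t,j}^{(k)} - \pi$ (read modulo $2\pi$, taken to be the identity in any coordinate with $\sigma_{t,j}^{(k)} = 0$) preserves $m$ and carries $-2|\sigma_{t,j}^{(k)}|\cos(\theta_{t,j} + \arg\sigma_{t,j}^{(k)})$ to $2|\sigma_{t,j}^{(k)}|\cos\theta_{t,j}$. Thus
\[
m\!\left((\alpha_k^{(t)})^{-1}([-\beta,\beta])\right) = m\!\left(\sum_{j=1}^g 2\left|\sigma_{t,j}^{(k)}\right|\cos\theta_j \in [-\beta,\beta]\right),
\]
and substituting this into the decomposition above gives the claimed formula.

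\emph{Main obstacle.} The only genuinely delicate point is Step 1, namely passing from weak convergence of $\lambda_n$ to honest convergence of $\lambda_n([-\beta,\beta])$, which forces us to rule out atoms of $\nu_k$ at $\pm\beta$. This is exactly where we use that $\alpha_k^{(t)}$ is a real-analytic trigonometric polynomial, so that its level sets are null unless it is constant, and a constant value is forced to be $0 \neq \pm\beta$. The remaining steps are bookkeeping: recognizing $\sigma_{t,j}^{(k)}$ inside the expression for $\alpha_k^{(t)}$ and exploiting the translation invariance of Lebesgue measure on $[0,2\pi)^g$.
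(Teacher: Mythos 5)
Your proposal is correct and follows essentially the same route as the paper: weak convergence of $\lambda_n$ to $\nu_k$ plus the Portmanteau theorem to identify the density with $\nu_k([-\beta,\beta])$, the decomposition of $G$ into $k$ copies of $\T^g$ from Lemma \ref{lem:kfree_torus}, and a coordinatewise translation to reduce $\alpha_k^{(t)}$ to $\sum_j 2|\sigma_{t,j}^{(k)}|\cos\theta_j$. Your Step 1 is in fact slightly more careful than the paper's, which merely asserts that the relevant trigonometric polynomial ``is not uniformly constant,'' whereas you also dispose of the degenerate constant case by noting the constant would have to be $0 \neq \pm\beta$.
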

\addtocounter{thm}{-1}
}

\begin{proof}
By the translation invariance of the Lebesgue measure, we have the following equalities for any Borel set $B \subset \R$:

\begin{align}
    \notag\nu_k(B) &= \sum_{t = 0}^{k - 1} \frac{1}{k} \cdot m( \alpha_k^{(t)}(\theta_{t,1},\ldots,\theta_{t,g}) \in B)  \\[6pt] \notag &= 
    \sum_{t=0}^{k - 1} \frac{1}{k} \cdot m\left( -\sum_{j=1}^g  2\Re\left(\sum_{\ell=0}^{k-1}\frac{Z(\gamma_{j,\ell}^{-1})}{k\gamma_{j,\ell}^{1-k}Z'(\gamma_j^{-1})}\frac{\gamma_{j,\ell}}{\gamma_{j,\ell}-1}e^{i(\theta_{t,j}^* + 2\pi[\ell t]/k)}\right) \in B \right) \\[8pt]  &=
    \sum_{t = 0}^{k - 1} \frac{1}{k} \cdot  m\left( \sum_{j=1}^g 2|\sigma_{t, j}^{(k)}|\cos(\theta_{t,j}) \in B\right)\label{eqn:translation_invariance},
\end{align} 

where for each $j = 1,\ldots, g$ and each $t = 0,\ldots,k-1$, we let $\theta_{t,j}^* := \theta_{t,j} - (\arg(\sigma_{t, j}^{(k)})-\pi).$ The function $\sum_{j=1}^g 2|\sigma_{t, j}^{(k)}|\cos(\theta_j)$ is real analytic on $[0, 2 \pi)^g$ and is not uniformly constant, so its level sets have Lebesgue measure zero. In particular, $\nu_k(\{-\beta\} \cup \{\beta\}) = 0$, so by the Portmanteau theorem \cite[Theorem 2.1]{Bil99} and the fact that the measures $\lambda_n$ from the proof of Proposition \ref{prop:kfree_limiting_distribution} converge weakly to $\nu_k$, we have
\[
\delta(\mathcal{S}_k(\beta)) = \lim_{n \to \infty} \lambda_n([-\beta,\beta]) = \nu_k([-\beta,\beta]),
\] whence the desired result follows with the choice $B = [-\beta,\beta]$ in (\ref{eqn:translation_invariance}).
\end{proof}

A similar argument to the one we made in the above proof allows us to deduce that the normalized error term $\ET_k(X)$ is unbiased, i.e., it is positive and negative equally often.

{
\renewcommand{\thethm}{\ref{intro_cor:kfree_no_bias}}

\begin{cor}
    Let $C/\mathbb{F}_q$ be a function field of genus $g \geq 1$, and suppose that $C$ satisfies the Linear Independence hypothesis. Then the natural densities of the sets \[S_k^+ = \left\{X\in \Z^+ \ |\ \ET_k(X) > 0 \right\} \quad \text{and} \quad S_k^- = \left\{X \in \Z^+\ |\ \ET_k(X) < 0 \right\}\] exist and are given by \[\delta(S_k^+) = \delta(S_k^-) = \frac12.\]
\end{cor}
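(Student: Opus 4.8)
The plan is to mimic the proof of Theorem~\ref{intro_thm:kfree_limdist} almost verbatim, replacing the interval $[-\beta,\beta]$ by the half-lines $(0,\infty)$ and $(-\infty,0)$ and exploiting an extra symmetry of the measure $\nu_k$. Recall from (\ref{eqn:translation_invariance}) that, on the $t$-th copy of $\T^g$, the pushforward density is realized by the function $\alpha_k^{(t)}(\theta_{t,1},\ldots,\theta_{t,g}) = \sum_{j=1}^g 2|\sigma_{t,j}^{(k)}|\cos(\theta_{t,j})$ up to a coordinatewise translation (which does not affect Lebesgue measure). So it suffices to compute $\nu_k((0,\infty))$ and $\nu_k((-\infty,0))$, and the same real-analyticity argument as before shows $\nu_k(\{0\}) = 0$, so these two values sum to $1$.

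The key step is a parity symmetry: the map $(\theta_{t,1},\ldots,\theta_{t,g}) \mapsto (\theta_{t,1}+\pi,\ldots,\theta_{t,g}+\pi)$ is a measure-preserving involution of $[0,2\pi)^g$ (translation invariance of $m$, computed modulo $2\pi$), and it sends $\sum_j 2|\sigma_{t,j}^{(k)}|\cos(\theta_{t,j})$ to its negative since $\cos(\theta+\pi) = -\cos\theta$. Hence for each $t$, $m(\alpha_k^{(t)} \in (0,\infty)) = m(\alpha_k^{(t)} \in (-\infty,0))$, and summing the weights $1/k$ over $t$ gives $\nu_k((0,\infty)) = \nu_k((-\infty,0))$. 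Combined with $\nu_k(\{0\})=0$ and $\nu_k(\R)=1$, this forces both to equal $\tfrac12$.

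Finally, to pass from $\nu_k$ to the natural densities of $S_k^\pm$, I would invoke, exactly as in the proof of Theorem~\ref{intro_thm:kfree_limdist}, that the measures $\lambda_n = (\ET_k)_*\widetilde\nu_n$ converge weakly to $\nu_k$ (Proposition~\ref{prop:kfree_limiting_distribution}) together with the Portmanteau theorem \cite[Theorem 2.1]{Bil99}: since $\nu_k(\partial(0,\infty)) = \nu_k(\{0\}) = 0$, we get $\delta(S_k^+) = \lim_{n\to\infty}\lambda_n((0,\infty)) = \nu_k((0,\infty)) = \tfrac12$, and likewise $\delta(S_k^-) = \nu_k((-\infty,0)) = \tfrac12$. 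The only place requiring a small amount of care — and the main obstacle, such as it is — is justifying $\nu_k(\{0\}) = 0$: one needs that for each $t$ the real-analytic function $\sum_{j=1}^g 2|\sigma_{t,j}^{(k)}|\cos(\theta_{t,j})$ is not identically zero on $[0,2\pi)^g$, so that its zero set has measure zero; this holds because at least one coefficient $|\sigma_{t,j}^{(k)}|$ is nonzero (as already used in the proof of Theorem~\ref{intro_thm:kfree_limdist}), and then the single-variable real-analytic function in that coordinate is nonconstant, so its level set, and a fortiori the level set of the full sum, is Lebesgue-null.
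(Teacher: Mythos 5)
Your proposal is correct and follows essentially the same route as the paper: reduce to the pushforward measure $\nu_k$ via the weak convergence of $\lambda_n$ and the Portmanteau theorem, then use a measure-preserving symmetry of $[0,2\pi)^g$ that negates $\sum_j 2|\sigma_{t,j}^{(k)}|\cos(\theta_{t,j})$ to equate $\nu_k((0,\infty))$ and $\nu_k((-\infty,0))$, with $\nu_k(\{0\})=0$ supplied by real-analyticity. The only cosmetic difference is that you use the translation $\theta_j \mapsto \theta_j+\pi$ where the paper uses $\theta_j \mapsto \pi-\theta_j$; both negate the cosine and preserve Lebesgue measure, and your explicit justification of $\nu_k(\{0\})=0$ is slightly more careful than the paper's.
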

\addtocounter{thm}{-1}
}

\begin{proof}
From the proof of Theorem \ref{intro_thm:kfree_limdist}, we know
\begin{align*}
    \delta(\{X \in \Z^+\ |\ \ET_k(X) \in B\}) = \nu_k(B) = \sum_{t = 0}^{k - 1} \frac{1}{k} \cdot  m\left(\ \sum_{j=1}^g 2|\sigma_{t, j}^{(k)}|\cos(\theta_j) \in B\right)
\end{align*} for any Borel subset $B \subset \R$ with boundary measure zero (in particular, the sets $B = (-\infty,0)$ and $(0,\infty)$). By the translation and reflection invariance of the Lebesgue measure, it follows that \begin{align*}
    \delta(S_k^-) &= \sum_{t = 0}^{k - 1} \frac{1}{k} \cdot  m\left(\ \sum_{j=1}^g 2|\sigma_{t, j}^{(k)}|\cos(\pi - \theta_j) \in (-\infty,0)\right) \\[10pt] &= \sum_{t = 0}^{k - 1} \frac{1}{k} \cdot  m\left(\ -\sum_{j=1}^g 2|\sigma_{t, j}^{(k)}|\cos(\theta_j) \in (-\infty,0)\right) \\[10pt] &= \sum_{t = 0}^{k - 1} \frac{1}{k} \cdot  m\left(\ \sum_{j=1}^g 2|\sigma_{t, j}^{(k)}|\cos(\theta_j) \in (0,\infty)\right) = \delta(S_k^+),
\end{align*} as desired.
\end{proof}

To complete our analysis of the error term, we compute the Fourier transform of the limiting distribution $\nu_k$. In fact, the explicit expression for the Fourier transform will give us a different proof of Corollary \ref{intro_cor:kfree_no_bias} when $g \geq 3$. 

\begin{prop} \label{prop:fourier_kfree}
Assume that $C/\F_q$ satisfies the Linear Independence hypothesis. The Fourier transform of $\nu_k$ exists and is given by \[
\widehat{\mu}_k(y) = \frac{1}{k} \sum_{t=0}^{k-1} \prod_{j=1}^g J_0\left(2\left|\sum_{\ell=0}^{k-1}\frac{Z(\gamma_{j,\ell}^{-1})}{k\gamma_{j,\ell}^{1-k}Z'(\gamma_j^{-1})}\frac{\gamma_{j,\ell}}{\gamma_{j,\ell}-1} e^{2\pi i[\ell t]/k}\right|y\right),
\] 
where 
\[
J_0(z) = \sum_{m=0}^\infty \frac{(-1)^m(z/2)^{2m}}{(m!)^2}
\]
is the Bessel function of the first kind.
\end{prop}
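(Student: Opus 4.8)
The plan is to compute $\widehat{\nu}_k(y) = \int_{\R} e^{ixy}\,d\nu_k(x)$ directly from the explicit description of $\nu_k$ obtained in the proof of Theorem~\ref{intro_thm:kfree_limdist}, and then to recognize the resulting one-dimensional integrals as values of the Bessel function $J_0$. Existence of the Fourier transform is immediate: $\nu_k$ is a probability measure, so $x \mapsto e^{ixy}$ is bounded and measurable and the integral converges absolutely for every $y \in \R$.

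First I would recall, from the proof of Theorem~\ref{intro_thm:kfree_limdist} and in particular from equation (\ref{eqn:translation_invariance}), that under the Linear Independence hypothesis and the identification of $G$ with $k$ disjoint copies of $[0,2\pi)^g$ each carrying $1/k$ times normalized Lebesgue measure, the pushforward $\nu_k$ is the law of $\sum_{j=1}^g 2\,|\sigma_{t,j}^{(k)}|\cos(\theta_{t,j})$, where $t$ is uniform on $\{0,\ldots,k-1\}$ and, given $t$, the angles $\theta_{t,1},\ldots,\theta_{t,g}$ are independent and uniform on $[0,2\pi)$. The reduction of $\alpha_k^{(t)}$ to this shape uses $2\Re(w e^{i\theta}) = 2|w|\cos(\theta+\arg w)$ together with the translation invariance of Lebesgue measure on each torus factor, exactly as in (\ref{eqn:translation_invariance}). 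Consequently
\[
\widehat{\nu}_k(y) = \frac{1}{k}\sum_{t=0}^{k-1} \int_{[0,2\pi)^g} \exp\!\Big(iy\sum_{j=1}^g 2\,|\sigma_{t,j}^{(k)}|\cos\theta_j\Big)\,\frac{d\theta_1\cdots d\theta_g}{(2\pi)^g}.
\]
Since the integrand factors as a product over $j$ and the coordinates are independent, Fubini's theorem gives
\[
\widehat{\nu}_k(y) = \frac{1}{k}\sum_{t=0}^{k-1}\prod_{j=1}^g \frac{1}{2\pi}\int_0^{2\pi} e^{\,2iy\,|\sigma_{t,j}^{(k)}|\cos\theta}\,d\theta .
\]

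It then remains to identify the inner integral. Expanding $e^{iz\cos\theta}$ as a power series and integrating term by term, using $\frac{1}{2\pi}\int_0^{2\pi}\cos^{n}\theta\,d\theta = \binom{n}{n/2}2^{-n}$ for $n$ even and $0$ for $n$ odd, yields
\[
\frac{1}{2\pi}\int_0^{2\pi}e^{iz\cos\theta}\,d\theta = \sum_{m\ge 0}\frac{(-1)^m(z/2)^{2m}}{(m!)^2} = J_0(z),
\]
which is precisely the series normalization of $J_0$ given in the statement. Substituting $z = 2\,|\sigma_{t,j}^{(k)}|\,y$ and unfolding the definition of $\sigma_{t,j}^{(k)}$ produces exactly the claimed expression for $\widehat{\mu}_k(y)$.

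The argument is essentially routine once Theorem~\ref{intro_thm:kfree_limdist} is available; the only points needing care are the justification of the factorization across the index $j$, which rests on the structure of $G$ as a disjoint union of $g$-tori with independent coordinates from Lemma~\ref{lem:kfree_torus} (and hence on the Linear Independence hypothesis), and the verification that the integral representation of $J_0$ agrees with the series definition in the statement, which I expect to be the most technical — though still elementary — step.
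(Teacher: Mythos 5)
Your proposal is correct and is precisely the routine calculation that the paper itself omits, deferring instead to the proof of Theorem 3.4 in \cite{Cha08}: push $\nu_k$ forward from the $k$ disjoint $g$-tori of Lemma \ref{lem:kfree_torus}, use translation invariance to reduce to independent uniform angles, factor by Fubini, and identify $\frac{1}{2\pi}\int_0^{2\pi}e^{iz\cos\theta}\,d\theta$ with $J_0(z)$. All steps check out, including the term-by-term integration giving the stated series for $J_0$.
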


\begin{proof}
The proof of this proposition is a routine calculation. See, for example, the proof of \cite[Theorem 3.4]{Cha08}, which is in turn inspired by the analogous proof in \cite{RS94}.
\end{proof}

\subsection{Global Behavior of the Error Term on Families of Hyperelliptic Curves}\label{subsec:global_kfree}

In this subsection, we will analyze how the error bound \[B_k(C/\mathbb{F}_q) := \limsup_{X \to \infty} |\ET(X)|\] behaves on average over a certain class of function fields, specifically the function fields corresponding to a family of hyperelliptic curves. First, we find an explicit expression for $B_k(C/\mathbb{F}_q)$ under the assumption that $C/\mathbb{F}_q$ satisfies the Linear Independence hypothesis.

For $a = 0,\ldots,k-1$ and $j = 1,\ldots,2g$, define \[c_{j, a} := \frac{1}{k}\sum_{\ell = 0}^{k - 1} \frac{Z(\gamma_{j, \ell}^{-1})}{\gamma_{j, \ell} - 1}e^{2\pi i \ell a/k},\] so that \[\ET_k(X) = -\sum_{j = 1}^{2g} \frac{c_{j, a}\gamma_j}{Z'(\gamma_j^{-1})}e^{iX\theta(\gamma_j)/k} + O_{q,g}(q^{-X/2k})\] whenever $X \equiv a \pmod{k}$.

\begin{lem}\label{lem:kfree_modk_dense}
    Suppose that $C/\F_q$ satisfies the Linear Independence hypothesis. Moreover, fix any $a= 0,\ldots,k-1$. The set \[\left\{\left(e^{iX\theta(\gamma_1)/k}, \ldots, e^{iX\theta(\gamma_g)/k}\right)\right\}_{X \equiv a \pmod{k}}\] is dense in the $g$-torus $\T^g$.
\end{lem}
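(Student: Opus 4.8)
The plan is to derive this from the Kronecker--Weyl theorem applied to the arithmetic progression $X \equiv a \pmod k$, just as one would for the full sequence $X \in \Z$, but keeping track of the fixed residue. Write $X = a + kY$ with $Y$ ranging over $\Z$. Then $e^{iX\theta(\gamma_j)/k} = e^{ia\theta(\gamma_j)/k}\cdot e^{iY\theta(\gamma_j)}$, so the point in question is the fixed rotation by $(e^{ia\theta(\gamma_1)/k},\ldots,e^{ia\theta(\gamma_g)/k})$ of the point $(e^{iY\theta(\gamma_1)},\ldots,e^{iY\theta(\gamma_g)})$. Since a translate of a dense set is dense, it suffices to show that $\{(e^{iY\theta(\gamma_1)},\ldots,e^{iY\theta(\gamma_g)})\}_{Y\in\Z}$ is dense in $\T^g$.

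The key step is then the standard consequence of the Kronecker--Weyl theorem: the closure of $\{(e^{iY\theta(\gamma_1)},\ldots,e^{iY\theta(\gamma_g)})\}_{Y\in\Z}$ in $\T^g$ is the connected component of the closed subgroup generated, which is all of $\T^g$ precisely when $1, \theta(\gamma_1)/2\pi,\ldots,\theta(\gamma_g)/2\pi$ are linearly independent over $\Q$. The Linear Independence hypothesis asserts exactly that $\{\theta(\gamma_1),\ldots,\theta(\gamma_g),\pi\}$ is linearly independent over $\Q$, which is equivalent to the linear independence of $\{\theta(\gamma_1)/2\pi,\ldots,\theta(\gamma_g)/2\pi,1\}$ over $\Q$. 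Hence the orbit is dense in $\T^g$, and translating by the fixed vector above yields the claim for the progression $X \equiv a \pmod k$.

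I expect no serious obstacle here: this is essentially a bookkeeping exercise wrapping the Kronecker--Weyl theorem, and the only thing to be careful about is confirming that restricting to an arithmetic progression in $X$ does not shrink the closure — which is immediate once one factors out the constant rotation, since $Y$ still ranges over all of $\Z$. One should cite the same reference for the Kronecker--Weyl theorem used earlier in the section (e.g.\ \cite{Bai21}) and note that the hypothesis is applied with $g$ angles rather than the $kg$-tuple appearing in Lemma~\ref{lem:kfree_torus}, so the relevant torus here is genuinely $\T^g$ and there is no subtlety about roots of unity.
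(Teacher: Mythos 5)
Your proposal is correct and follows essentially the same route as the paper: the paper's proof also writes $X = a + kY$, factors out the fixed rotation by $\bigl(e^{ia\theta(\gamma_1)/k},\ldots,e^{ia\theta(\gamma_g)/k}\bigr)$, and invokes the Kronecker--Weyl theorem to approximate the (back-translated) target point by the orbit $\bigl(e^{iY\theta(\gamma_j)}\bigr)_{j}$. Your phrasing via ``a translate of a dense set is dense'' is just a cleaner packaging of the same argument, and your remark that Linear Independence of $\{\theta(\gamma_1),\ldots,\theta(\gamma_g),\pi\}$ is exactly the hypothesis needed for density in $\T^g$ is accurate.
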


\begin{proof}
    Fix any set of angles $(\theta_1,\ldots,\theta_g)$. Under the assumption of the Linear Independence hypothesis, the Kronecker--Weyl theorem tells us that there exists a sequence of integers $(Y_\ell)_{\ell=1}^\infty$ such that  \[\left(e^{iY_\ell\theta(\gamma_1)}, \ldots, e^{iY_\ell\theta(\gamma_g)}\right) \to \left(e^{i\theta_1 - ia\theta(\gamma_1)/k}, \ldots, e^{i\theta_g - ia\theta(\gamma_1)/k}\right). \] Then, for each positive integer $\ell$, set $X_\ell := Y_\ell k + a$, so that $X_\ell \equiv a \pmod{k}$ and \[\left(e^{iX_\ell\theta(\gamma_1)/k}, \ldots, e^{iX_\ell\theta(\gamma_g)/k}\right) \to \left(e^{i\theta_1}, \ldots, e^{i\theta_g }\right),\] proving the claim.
\end{proof}

With this lemma, we can deduce the following expression for $B_k(C/\F_q)$ by picking integers $X$ so that the terms $e^{iX\theta(\gamma_j)/k}$ nearly cancel the arguments of the coefficients $c_{j,a}\gamma_j/Z'(\gamma_j^{-1})$, in the style of the proof in Humphries \cite[Theorem 2.6]{Hum12}. 

\begin{prop}\label{prop:kfree_bcfq}
    Suppose that $C/\F_q$ satisfies the Linear Independence hypothesis. Then, \[B_k(C/\mathbb{F}_q) = \max_{0 \leq a \leq k - 1} \sum_{j = 1}^{2g} \left|\frac{c_{j, a}\gamma_j}{Z'(\gamma_j^{-1})}\right|.\] 
\end{prop}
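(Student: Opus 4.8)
The plan is to establish the two inequalities $B_k(C/\F_q) \le \max_a \sum_j |c_{j,a}\gamma_j/Z'(\gamma_j^{-1})|$ and $B_k(C/\F_q) \ge \max_a \sum_j |c_{j,a}\gamma_j/Z'(\gamma_j^{-1})|$ separately. Throughout we use the expression $\ET_k(X) = -\sum_{j=1}^{2g} \frac{c_{j,a}\gamma_j}{Z'(\gamma_j^{-1})}e^{iX\theta(\gamma_j)/k} + O_{q,g}(q^{-X/2k})$ valid for $X \equiv a \pmod k$, together with the fact that the inverse zeros come in conjugate pairs $\gamma_{j+g} = \overline{\gamma_j}$; one should first check that $c_{j+g,a} = \overline{c_{j,a}}$ and $Z'(\gamma_{j+g}^{-1}) = \overline{Z'(\gamma_j^{-1})}$, so that the sum over $j = 1,\ldots,2g$ is $2\re$ of the sum over $j = 1,\ldots,g$, i.e. $\ET_k(X) = -\sum_{j=1}^g 2\re\!\left(\frac{c_{j,a}\gamma_j}{Z'(\gamma_j^{-1})}e^{iX\theta(\gamma_j)/k}\right) + O_{q,g}(q^{-X/2k})$.

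For the upper bound, fix a residue class $a$ and a sequence $X_m \equiv a \pmod k$ with $X_m \to \infty$; by the triangle inequality, $|\ET_k(X_m)| \le \sum_{j=1}^g 2\left|\frac{c_{j,a}\gamma_j}{Z'(\gamma_j^{-1})}\right| + o(1) = \sum_{j=1}^{2g}\left|\frac{c_{j,a}\gamma_j}{Z'(\gamma_j^{-1})}\right| + o(1)$, using $|\gamma_j| = |\gamma_{j+g}|$ and $|c_{j,a}| = |c_{j+g,a}|$ to pair terms. Taking $\limsup$ over all $X$ (which splits as the maximum of the $\limsup$s over the $k$ residue classes) gives $B_k(C/\F_q) \le \max_{0\le a\le k-1}\sum_{j=1}^{2g}\left|\frac{c_{j,a}\gamma_j}{Z'(\gamma_j^{-1})}\right|$.

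For the lower bound, fix the residue class $a^*$ achieving the maximum. Write $\frac{c_{j,a^*}\gamma_j}{Z'(\gamma_j^{-1})} = \rho_j e^{i\psi_j}$ with $\rho_j \ge 0$. We want to choose $X \equiv a^* \pmod k$ with $e^{iX\theta(\gamma_j)/k}$ close to $-e^{-i\psi_j}$ for every $j = 1,\ldots,g$ simultaneously, so that each of the $g$ summands $2\re(\cdots)$ is close to $2\rho_j$ and they add up constructively. This is exactly what Lemma \ref{lem:kfree_modk_dense} provides: the set $\{(e^{iX\theta(\gamma_1)/k},\ldots,e^{iX\theta(\gamma_g)/k})\}_{X \equiv a^* \pmod k}$ is dense in $\T^g$, so we can find a sequence $X_m \equiv a^* \pmod k$, $X_m \to \infty$, along which $\ET_k(X_m) \to \sum_{j=1}^g 2\rho_j = \sum_{j=1}^{2g}\left|\frac{c_{j,a^*}\gamma_j}{Z'(\gamma_j^{-1})}\right|$ (the error term $O(q^{-X_m/2k})$ vanishes in the limit). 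Hence $B_k(C/\F_q) \ge \sum_{j=1}^{2g}\left|\frac{c_{j,a^*}\gamma_j}{Z'(\gamma_j^{-1})}\right| = \max_a\sum_{j=1}^{2g}\left|\frac{c_{j,a}\gamma_j}{Z'(\gamma_j^{-1})}\right|$, and combining with the upper bound finishes the proof.

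The routine but slightly delicate point to get right is the bookkeeping with the conjugate-pair indexing: one must verify $c_{j+g,a} = \overline{c_{j,a}}$ (which follows from $\gamma_{j+g,\ell} = \overline{\gamma_{j,\ell'}}$ for an appropriate re-indexing of $\ell$, since the $k$th roots of $\overline{\gamma_j}$ are the conjugates of the $k$th roots of $\gamma_j$) and likewise for $Z'(\gamma_j^{-1})$, so that $\sum_{j=1}^{2g}|\cdots| = 2\sum_{j=1}^g|\cdots|$ and the reduction to a $2\re$ of a $g$-fold sum is legitimate; this is the main place an argument could go wrong, but it is not conceptually hard. The genuinely substantive input — the simultaneous approximation making all $g$ terms add in phase — is entirely handled by Lemma \ref{lem:kfree_modk_dense}, so there is no serious obstacle beyond careful indexing.
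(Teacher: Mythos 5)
Your proof is correct and follows essentially the same route as the paper: reduce the $2g$-fold sum to $2\re$ of a $g$-fold sum via the conjugate pairing, get the upper bound from the triangle inequality, and get the lower bound by using Lemma \ref{lem:kfree_modk_dense} to choose $X \equiv a \pmod{k}$ making all $g$ terms align in phase. Your extra care in verifying $c_{j+g,a} = \overline{c_{j,a}}$ and $Z'(\gamma_{j+g}^{-1}) = \overline{Z'(\gamma_j^{-1})}$ (via $Z$ having real coefficients and the re-indexing $\ell \mapsto k-\ell$ of the $k$th roots) is a detail the paper leaves implicit, and it checks out.
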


\begin{proof}
    Since the inverse zeros of the zeta function come in conjugate pairs, note that \[-\sum_{j=1}^{2g} \frac{c_{j,a}\gamma_j}{Z'(\gamma_j^{-1})}e^{iX\theta(\gamma_j)/k} = -2\sum_{j=1}^{g}\Re\left(\frac{c_{j,a}\gamma_j}{Z'(\gamma_j^{-1})}e^{iX\theta(\gamma_j)/k}\right).\] For each $a = 0,\ldots,k-1$, there exists a sequence $X_{\ell}^{(a)}$ satisfying $X_{\ell}^{(a)} \equiv a \pmod{k}$ for each $\ell$ (by Lemma \ref{lem:kfree_modk_dense}) such that \[-2\sum_{j=1}^{g}\Re\left(\frac{c_{j,a}\gamma_j}{Z'(\gamma_j^{-1})}e^{iX_\ell^{(a)}\theta(\gamma_j)/k}\right) \to 2\sum_{j=1}^g \left|\frac{c_{j,a}\gamma_j}{Z'(\gamma_j)^{-1}}\right|,\] so it follows that \[\limsup_{X \to \infty} \ET_k(X) = \limsup_{X \to \infty}\left(-2\sum_{j=1}^{g}\Re\left(\frac{c_{j,a}\gamma_j}{Z'(\gamma_j^{-1})}e^{iX\theta(\gamma_j)/k}\right)\right) \geq \max_{0 \leq a\leq k-1}\sum_{j=1}^{2g}\left|\frac{c_{j,a}\gamma_j}{Z'(\gamma_j)^{-1}}\right|.\] The other direction of the inequality follows from the triangle inequality. Similarly, \[\liminf_{X \to \infty} \ET_k(X) = -\left(\max_{0 \leq a\leq k-1}\sum_{j=1}^{2g}\left|\frac{c_{j,a}\gamma_j}{Z'(\gamma_j)^{-1}}\right|\right),\] concluding the proof.
\end{proof}

We now relate the function field $C/\mathbb{F}_q$ to a unitary symplectic matrix, and use arguments from random matrix theory. Define $\mathrm{USp}_{2g}(\mathbb{C})$ to be the space of $2g \times 2g$ unitary symplectic matrices, i.e., matrices which satisfy the conditions 
\begin{center}
    $U^\dagger U = I_{2g}$\quad and \quad $U^\top JU = J$, where $J = \begin{pmatrix} 0 & I_g \\ -I_g & 0 \end{pmatrix}$,
\end{center}
where $U^\dagger$ is the conjugate transpose of $U$ and $U^\top$ is the transpose of $U$. 

 The eigenvalues of any $U \in \mathrm{USp}_{2g}(\mathbb{C})$ have absolute value $1$ and occur in complex conjugate pairs; conversely, for any angles $\theta_1$, \ldots, $\theta_{2g}$ with $\theta_{j + g} = -\theta_j$ for all $j = 1, \ldots, g$, the diagonal matrix with diagonal entries $e^{i\theta_1}$, \ldots, $e^{i\theta_{2g}}$ is in $\mathrm{USp}_{2g}(\mathbb{C})$. 

\begin{defn}
    Fix a function field $C/\mathbb{F}_q$, and let $\gamma_1$, \ldots, $\gamma_{2g}$ be the inverse zeros of $Z(u)$, indexed so that $0 \leq \theta(\gamma_j) \leq \pi$ and $\gamma_{j + g} = \overline{\gamma_j}$ for all $j = 1, \ldots, g$.
    Define its \emph{unitarized Frobenius conjugacy class} $\vartheta(C/\mathbb{F}_q)$ to be the conjugacy class of $\mathrm{USp}_{2g}(\mathbb{C})$ containing the diagonal matrix with diagonal $e^{i\theta(\gamma_1)}$, $e^{i\theta(\gamma_2)}$, \ldots, $e^{i\theta(\gamma_{2g})}$. 
\end{defn}

\begin{defn}
    For a matrix $U \in \mathrm{USp}_{2g}(\mathbb{C})$ with eigenvalues $e^{i\theta_1}$, \ldots, $e^{i\theta_{2g}}$, define its \emph{characteristic polynomial} as \[\mathcal{Z}_U(\theta) = \prod_{i = 1}^{2g}(1 - e^{i(\theta_j - \theta)}).\] 
\end{defn}

In particular, if the function field $C/\mathbb{F}_q$ has zeta function $Z(u) = \frac{L(u)}{(1 - u)(1 - qu)}$, the characteristic polynomial of $\vartheta(C/\mathbb{F}_q)$ can be written in terms of the polynomial $L$, as \[\mathcal{Z}_{\vartheta(C/\mathbb{F}_q)}(\theta) = L(q^{-1/2}e^{-i\theta}).\] 

After fixing an odd prime power $q$ and a genus $g$, define $\mathcal{H}_{2g + 1, q^n}$ to be the set of hyperelliptic curves defined by the affine model $y^2 = f(x)$ for a polynomial $f \in \mathbb{F}_{q^n}[x]$ of degree $2g + 1$. We will study the distribution of $B_k(C/\mathbb{F}_{q^n})$ for curves $C \in \mathcal{H}_{2g + 1, q^n}$, in the limit as $n$ tends to infinity. The reason to work with such curves is that the two following facts about them are known.

\begin{prop}[Chavdarov \cite{Cha97}, Kowalski \cite{Kowalski08}, see {\cite[Theorem 3.1]{Cha17}}]\label{LI usually holds} 
    For a fixed odd prime power $q$ and fixed $g \geq 1$, \[\lim_{n \to \infty} \frac{\#\{C \in \mathcal{H}_{2g + 1, q^n} \mid \text{$C$ satisfies Linear Independence}\}}{\#\mathcal{H}_{2g + 1, q^n}} = 1.\] 
\end{prop}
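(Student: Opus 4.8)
The plan is to isolate an algebraic condition on $C$ that forces the Linear Independence hypothesis, and then to invoke the equidistribution results of Chavdarov and Kowalski to show that this condition holds for a proportion of $\mathcal{H}_{2g + 1, q^n}$ tending to $1$ as $n \to \infty$. The condition I would use is that the numerator $L_{C/\F_{q^n}}(u) = \prod_{j=1}^{2g}(1 - \gamma_j u)$ has $2g$ distinct roots and that the splitting field of $\prod_{j=1}^{2g}(u - \gamma_j)$ over $\Q$ has Galois group as large as possible, namely the hyperoctahedral group $W_g \cong (\Z/2\Z)^g \rtimes S_g$ of order $2^g g!$ (the Weyl group of $\mathrm{Sp}_{2g}$). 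This is the largest group that can occur, since the functional equation pairs $\gamma_j$ with $\gamma_{j+g} = q^n/\gamma_j$, a pairing that is Galois-stable because $q^n \in \Q$, so the Galois group embeds into the group of permutations of $\{1,\dots,2g\}$ commuting with the involution $j \leftrightarrow j+g$.

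The first step is to prove that this condition implies Linear Independence. A nontrivial relation $\sum_{j=1}^g a_j\,\theta(\gamma_j) + a_0\pi = 0$ with $a_j \in \Z$ and $(a_1,\dots,a_g) \neq 0$ exponentiates, using $|\gamma_j|^2 = q^n$, to the multiplicative relation $\prod_{j=1}^g (\gamma_j/\gamma_{j+g})^{a_j} = 1$, in which $q^n$ has disappeared. The subgroup $R \subseteq \Z^g$ of all exponent vectors realizing such a relation is stable under the Galois group, which acts on $\Z^g$ through the standard reflection representation of $W_g$: permuting the $g$ coordinates and, for the generator swapping $\gamma_j \leftrightarrow \gamma_{j+g}$, negating the $j$-th coordinate. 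Since this representation is irreducible over $\Q$, $R \otimes \Q$ is either $0$ or all of $\Q^g$; in the latter case $R$ has finite index in $\Z^g$, so $(\gamma_j/\gamma_{j+g})^N = 1$ for some $N \geq 1$ and all $j$, whence $\gamma_j^{2N} = q^{nN}$ and $\Q(\gamma_1,\dots,\gamma_{2g}) \subseteq \Q(\zeta_{2N},\sqrt{q^n})$ is abelian over $\Q$, contradicting that its Galois group is the nonabelian group $W_g$ (for $g \geq 2$; the case $g = 1$ reduces to the elementary fact that only finitely many values of the Frobenius trace make $\theta(\gamma_1)$ a rational multiple of $\pi$, and is absorbed into the density estimate below). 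So $R = 0$, the relation cannot exist, and $C$ satisfies Linear Independence.

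The second step is to cite that the proportion of $C \in \mathcal{H}_{2g + 1, q^n}$ for which $\prod(u - \gamma_j)$ is separable with Galois group $W_g$ tends to $1$; this is exactly Chavdarov's theorem, recorded in the form needed as \cite[Theorem 3.1]{Cha17}. Its engine is that the geometric monodromy group of the universal family of hyperelliptic curves of genus $g$ is the full symplectic group $\mathrm{Sp}_{2g}$, with mod-$\ell$ monodromy $\mathrm{Sp}_{2g}(\F_\ell)$ for all $\ell \neq p$; Deligne's equidistribution theorem, made effective through Kowalski's large sieve for Frobenius, then forces the reductions of the characteristic polynomials modulo many primes $\ell$ to look like those of uniformly random matrices in $\mathrm{Sp}_{2g}(\F_\ell)$. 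Separability over $\Q$ follows from separability at one good prime, and irreducibility together with the existence of elements of every needed cycle type (which rules out each proper subgroup of $W_g$ by a Chebotarev/Jordan argument) follows from combining information at sufficiently many primes. Together with the first step, this yields the proposition.

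The main obstacle is concentrated entirely in the second step: the determination of the monodromy group and the effective form of Deligne's equidistribution theorem packaged in the large sieve are the substantive inputs of Chavdarov and Kowalski, which one would quote rather than reprove. The part one would actually carry out — the reflection-representation argument of the first step — is elementary, and its only delicate point is correctly accounting for the trivial relations coming from the functional equation and for the extra term $\pi$ in the definition of Linear Independence, so that $\Q$-irreducibility of the standard $W_g$-representation is precisely what annihilates the relation lattice $R$.
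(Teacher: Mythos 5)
The paper does not prove this proposition at all: it is imported verbatim as a black-box citation of Chavdarov and Kowalski (in the packaged form of \cite[Theorem 3.1]{Cha17}), and is then only \emph{used}, in Lemma \ref{Bk and haar}, to discard the vanishing proportion of curves where Linear Independence fails. So there is no proof in the paper to compare against; what you have written is a reconstruction of the argument living in the cited references. As such a reconstruction it is essentially correct. Your first step --- failure of LI gives a nonzero vector in the Galois-stable relation lattice $R \subseteq \Z^g$ via $e^{2i\theta(\gamma_j)} = \gamma_j/\gamma_{j+g}$, $\Q$-irreducibility of the reflection representation of $W_g$ forces $R$ to be trivial or of finite index, and finite index would make the splitting field abelian, contradicting $W_g$ nonabelian for $g \geq 2$ --- is the standard deduction of LI from maximal Galois group, and you have correctly flagged the two delicate points (the $\pi$ term, which only contributes the trivial relation once $(a_1,\ldots,a_g)=0$, and the genus-one case, where $W_1$ is abelian and one instead bounds the finitely many admissible traces and appeals to equidistribution). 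Your second step honestly defers the substantive input --- big monodromy for the family $\mathcal{H}_{2g+1,q^n}$ plus the large sieve for Frobenius --- to Chavdarov and Kowalski, exactly as the paper does. The only caveat worth recording is that everything of arithmetic substance is concentrated in that quoted input, so your ``proof'' is really a proof of the implication (maximal Galois group $\Rightarrow$ LI) together with the same citation the paper already makes; that is the right division of labor, and it matches how the result is actually established in \cite{Cha17}.
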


\begin{prop}[Deligne's Equidistribution Theorem {\cite[Theorem 10.8.2]{KS99}}]\label{Deligne}
    Let $f$ be a continuous function on $\mathrm{USp}_{2g}(\mathbb{C})$, such that $f(U)$ is central (only dependent on the conjugacy class of $U$). Then for fixed genus $g \geq 1$, \[\lim_{n \to \infty} \frac{1}{\#\mathcal{H}_{2g + 1, q^n}}\sum_{C \in \mathcal{H}_{2g + 1, q^n}} f(\vartheta(C/\mathbb{F}_{q^n})) = \int_{\mathrm{USp}_{2g}(\mathbb{C})} f(U)\, d\mu_{\mathrm{Haar}}(U).\]
\end{prop}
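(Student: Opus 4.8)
This statement is Deligne's equidistribution theorem applied to the hyperelliptic family, and the plan is to reconstruct the proof along the lines of \cite{KS99}. The first step is to view $\mathcal{H}_{2g+1,q^n}$ as the set of $\mathbb{F}_{q^n}$-rational points of a single variety $U$ defined over $\mathbb{F}_q$: namely, $U$ is the open subscheme of an affine space parametrizing the squarefree polynomials $f$ of degree $2g+1$ that define the curves $y^2 = f(x)$. Over $U$ there is a smooth proper universal family $\pi\colon \mathcal{C} \to U$ of hyperelliptic curves, and one forms the lisse $\overline{\mathbb{Q}}_\ell$-sheaf $\mathcal{F} := R^1\pi_*\overline{\mathbb{Q}}_\ell$, of rank $2g$ and pure of weight $1$. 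Cup product supplies a perfect alternating pairing $\mathcal{F}\times\mathcal{F}\to\overline{\mathbb{Q}}_\ell(-1)$, so after a Tate half-twist the monodromy lands in $\mathrm{Sp}_{2g}$; moreover, for $f\in U(\mathbb{F}_{q^n})$ the characteristic polynomial of geometric Frobenius on $\mathcal{F}_f$ is (up to the standard normalization) $L_{C_f/\mathbb{F}_{q^n}}(u)$, so the conjugacy class of the unitarized Frobenius $q^{-n/2}\mathrm{Frob}_{q^n,f}$ is precisely $\vartheta(C_f/\mathbb{F}_{q^n})$, with eigenvalues $e^{i\theta(\gamma_j)}$.

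The essential geometric input is the \emph{big-monodromy} statement: the geometric monodromy group $G_{\mathrm{geom}}$ of $\mathcal{F}$ is the full group $\mathrm{Sp}_{2g}$ (see \cite[Chapter~10]{KS99}). Since $\mathrm{Sp}_{2g}$ is connected and semisimple, the arithmetic monodromy group is trapped between $G_{\mathrm{geom}}$ and its normalizer, and so it too equals $\mathrm{Sp}_{2g}$; its maximal compact subgroup is $\mathrm{USp}_{2g}(\mathbb{C})$, the ambient group in the proposition.

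With these inputs the equidistribution follows by the now-standard argument. By the Peter--Weyl theorem the statement is equivalent to showing, for each nontrivial irreducible representation $\Lambda$ of $\mathrm{USp}_{2g}(\mathbb{C})$, that
\[
\frac{1}{\#U(\mathbb{F}_{q^n})}\sum_{f\in U(\mathbb{F}_{q^n})}\operatorname{tr}\Lambda\bigl(\vartheta(C_f/\mathbb{F}_{q^n})\bigr)\;\longrightarrow\;0\qquad(n\to\infty).
\]
Applying $\Lambda$ to $\mathcal{F}$ (normalized to weight $0$) produces a lisse sheaf $\Lambda(\mathcal{F})$ whose Frobenius trace at $f$ over $\mathbb{F}_{q^n}$ equals $\operatorname{tr}\Lambda(\vartheta(C_f/\mathbb{F}_{q^n}))$, and the Grothendieck--Lefschetz trace formula rewrites the sum as an alternating sum of traces of $\mathrm{Frob}_{q^n}$ on the compactly supported cohomology $H^i_c(U_{\overline{\mathbb{F}_q}},\Lambda(\mathcal{F}))$. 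The top-degree group $H^{2d}_c$, with $d=\dim U$, is the space of $G_{\mathrm{geom}}$-coinvariants of $\Lambda(\mathcal{F})$, which vanishes because $\Lambda$ is a nontrivial irreducible and $G_{\mathrm{geom}}=\mathrm{Sp}_{2g}$; and by Deligne's purity theorem every $H^i_c$ with $i<2d$ is mixed of weight at most $i\le 2d-1$, so its Frobenius eigenvalues have absolute value at most $q^{n(2d-1)/2}$. Since the Betti numbers $\dim H^i_c$ are geometric and hence bounded uniformly in $n$, while $\#U(\mathbb{F}_{q^n})=q^{nd}(1+O(q^{-n/2}))$, the displayed ratio is $O(q^{-n/2})$, which proves the claim; translating $U(\mathbb{F}_{q^n})$ back into $\mathcal{H}_{2g+1,q^n}$ then yields the proposition.

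The one genuinely hard step is the big-monodromy input $G_{\mathrm{geom}}=\mathrm{Sp}_{2g}$; once it is granted, everything else is a formal consequence of the Grothendieck--Lefschetz formula together with Deligne's Weil~II estimates. In the body of the paper one simply invokes \cite[Theorem 10.8.2]{KS99}, which already packages all of this.
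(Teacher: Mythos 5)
The paper offers no proof of this proposition --- it is imported verbatim from Katz--Sarnak \cite[Theorem 10.8.2]{KS99} --- and your sketch is a correct reconstruction of the standard argument behind that citation (universal family of hyperelliptic curves, big geometric monodromy $G_{\mathrm{geom}}=\mathrm{Sp}_{2g}$, Peter--Weyl reduction to Weyl sums, Grothendieck--Lefschetz, and Weil II). The one slight imprecision is the claim that the arithmetic monodromy group is pinned down by lying between $G_{\mathrm{geom}}$ and its normalizer, which by itself only gives containment in $\mathrm{GSp}_{2g}$; what actually forces $G_{\mathrm{arith}}=\mathrm{Sp}_{2g}$ is the half-Tate-twisted symplectic pairing you already introduced, so the conclusion stands.
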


Proposition \ref{LI usually holds} will allow us to use the explicit formula given by Proposition \ref{prop:kfree_bcfq} to analyze $B_k(C/\mathbb{F}_{q^n})$, since the proportion of cases where the formula does not hold will go to $0$ in the large $n$ limit. Meanwhile, Proposition \ref{Deligne} allows us to rephrase questions about function fields corresponding to hyperelliptic curves in terms of the Haar measure on $\mathrm{USp}_{2g}(\mathbb{C})$. We can reformulate Deligne's Equidistribution Theorem to more readily apply to our situation, as follows. 

\begin{cor} \label{Deligne restatement}
    For any central Borel set $A \subset \mathrm{USp}_{2g}(\mathbb{C})$ whose boundary has Haar measure zero, we have \[\lim_{n \to \infty} \frac{\#\{C \in \mathcal{H}_{2g + 1, q^n} \mid \vartheta(C/\mathbb{F}_{q^n}) \subset A\}}{\#\mathcal{H}_{2g + 1, q^n}} = \mu_{\mathrm{Haar}}(A).\] 
\end{cor}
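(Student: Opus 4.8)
The plan is to read the corollary as a routine consequence of the Portmanteau theorem applied to the empirical measures supplied by Proposition~\ref{Deligne}. Let $X$ denote the space of conjugacy classes of $\mathrm{USp}_{2g}(\C)$; this is a compact metric space, and the central continuous functions on $\mathrm{USp}_{2g}(\C)$ are exactly the continuous functions on $X$. For each $n$, let $\mu_n$ be the probability measure on $X$ obtained as the distribution of $C \mapsto \vartheta(C/\F_{q^n})$ when $C$ is drawn uniformly from $\mathcal{H}_{2g+1,q^n}$, so that for a central Borel set $A$ one has $\mu_n(A) = \#\{C \in \mathcal{H}_{2g+1,q^n} \mid \vartheta(C/\F_{q^n}) \subset A\}/\#\mathcal{H}_{2g+1,q^n}$. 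In this language Proposition~\ref{Deligne} says precisely that $\int_X f\, d\mu_n \to \int_X f\, d\mu_{\mathrm{Haar}}$ for every $f \in C(X)$, i.e.\ $\mu_n \rightharpoonup \mu_{\mathrm{Haar}}$ weakly, and the assertion to be proved is exactly $\mu_n(A) \to \mu_{\mathrm{Haar}}(A)$.

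Given this, I would finish in a single step by the Portmanteau theorem \cite[Theorem 2.1]{Bil99} (already used elsewhere in the paper): weak convergence $\mu_n \rightharpoonup \mu_{\mathrm{Haar}}$ forces $\mu_n(A) \to \mu_{\mathrm{Haar}}(A)$ for every $\mu_{\mathrm{Haar}}$-continuity set, i.e.\ every Borel $A$ with $\mu_{\mathrm{Haar}}(\partial A) = 0$, which is our hypothesis. For a self-contained version one sandwiches $\indic{A}$ between continuous functions: since $\mu_{\mathrm{Haar}}(\partial A) = 0$ gives $\mu_{\mathrm{Haar}}(\overline{A}) = \mu_{\mathrm{Haar}}(\operatorname{int} A) = \mu_{\mathrm{Haar}}(A)$, regularity of the Radon measure $\mu_{\mathrm{Haar}}$ produces, for any $\varepsilon > 0$, an open $V \supseteq \overline{A}$ and a closed $K \subseteq \operatorname{int} A$ with $\mu_{\mathrm{Haar}}(V \setminus K) < \varepsilon$; Urysohn's lemma on the compact metric space $X$ then yields continuous $f_-, f_+ \colon X \to [0,1]$ with $\indic{K} \leq f_- \leq \indic{A} \leq f_+ \leq \indic{V}$, and applying Proposition~\ref{Deligne} to $f_\pm$ gives
\[
\mu_{\mathrm{Haar}}(A) - \varepsilon \;\leq\; \liminf_{n} \mu_n(A) \;\leq\; \limsup_{n} \mu_n(A) \;\leq\; \mu_{\mathrm{Haar}}(A) + \varepsilon .
\]
Letting $\varepsilon \to 0$ concludes the proof, and unwinding the definition of $\mu_n(A)$ recovers the stated count.

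There is essentially no obstacle; the only points deserving a line of justification are (i) that central functions on $\mathrm{USp}_{2g}(\C)$ descend to genuine continuous functions on the compact metric quotient $X$, so that the Portmanteau theorem applies verbatim, and (ii) the identification of the hypothesis ``$A$ central with $\mu_{\mathrm{Haar}}(\partial A) = 0$'' with the continuity-set condition in Portmanteau. One could instead avoid the quotient altogether by averaging Urysohn functions over the compact group $\mathrm{USp}_{2g}(\C)$ to make them central, but passing to $X$ is cleaner.
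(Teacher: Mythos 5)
Your proposal is correct and takes essentially the same route as the paper: both form the empirical measures attached to $\vartheta(C/\mathbb{F}_{q^n})$, interpret Proposition \ref{Deligne} as weak convergence to $\mu_{\mathrm{Haar}}$, and conclude by the Portmanteau theorem applied to the continuity set $A$. Your additional remarks --- that central continuous functions descend to the compact quotient of conjugacy classes, and the explicit Urysohn sandwich --- merely fill in details the paper leaves implicit.
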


\begin{proof}
    For each $n$, consider the probability measure $\operatorname{Prob}_{\mathcal{H}_{2g + 1, q^n}}$ on $\mathrm{USp}_{2g}(\mathbb{C})$ (which contains a point mass at each $\vartheta(C/\mathbb{F}_{q^n})$, scaled by $\frac{1}{\#\mathcal{H}_{2g + 1, q^n}}$). Then Deligne's Equidistribution Theorem states that the sequence $\operatorname{Prob}_{\mathcal{H}_{2g + 1, q^n}}$ converges weakly to $\mu_{\mathrm{Haar}}$. Applying the Portmanteau theorem gives the desired result.
\end{proof}

To analyze $B_k(C/\mathbb{F}_q)$ in terms of matrices $U \in \mathrm{USp}_{2g}(\mathbb{C})$, we must first express it in terms of $\vartheta(C/\mathbb{F}_q)$.
Define a function $\varphi : \mathrm{USp}_{2g}(\mathbb{C}) \to \mathbb{R}$, given by \[\varphi(U) = \sum_{j = 1}^{2g} \frac{1}{|\mathcal{Z}'(\theta_j)|},\] where the eigenvalues of $U$ are $e^{i\theta_1}$, \ldots, $e^{i\theta_{2g}}$. Note that $\varphi$ is central (only dependent on the conjugacy class of $U$). 

\begin{prop}\label{Bk to varphi}
    If $C/\mathbb{F}_q$ satisfies the Linear Independence hypothesis, then \[\frac{B_k(C/\mathbb{F}_q)}{q^{g - g/k - 1/2}} = \varphi(\vartheta(C/\mathbb{F}_q))(1 + O_g(q^{-1/2k})).\] 
\end{prop}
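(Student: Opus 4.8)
The plan is to start from the explicit formula for $B_k(C/\mathbb{F}_q)$ given by Proposition~\ref{prop:kfree_bcfq}, namely
\[
B_k(C/\mathbb{F}_q) = \max_{0 \leq a \leq k-1} \sum_{j=1}^{2g} \left|\frac{c_{j,a}\gamma_j}{Z'(\gamma_j^{-1})}\right|,
\]
and to extract the leading power of $q$ from each factor $|c_{j,a}|$, $|\gamma_j|$, and $|Z'(\gamma_j^{-1})|$. First I would handle $|\gamma_j| = \sqrt{q}$ directly from the Riemann hypothesis for function fields. Next I would analyze $|Z'(\gamma_j^{-1})|$: writing $Z(u) = L(u)/((1-u)(1-qu))$ with $L(u) = \prod_{i=1}^{2g}(1-\gamma_i u)$, the derivative at the simple zero $u = \gamma_j^{-1}$ is $Z'(\gamma_j^{-1}) = L'(\gamma_j^{-1})/((1-\gamma_j^{-1})(1-q\gamma_j^{-1}))$, and $L'(\gamma_j^{-1}) = -\gamma_j \prod_{i \neq j}(1 - \gamma_i\gamma_j^{-1})$. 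Since each $|\gamma_i| = \sqrt{q}$, the product $\prod_{i\neq j}(1-\gamma_i\gamma_j^{-1})$ is $O_g(1)$ and, generically, of size comparable to a constant; the denominator contributes $(1-\gamma_j^{-1})(1-q\gamma_j^{-1}) = -q\gamma_j^{-1}(1 + O(q^{-1/2}))$, so $|Z'(\gamma_j^{-1})|$ has leading order $q^{-1/2} \cdot \sqrt{q} / q \cdot (\text{size of the } L' \text{ product})$. The cleanest way to package this is to pass to the characteristic polynomial $\mathcal{Z}_{\vartheta(C/\mathbb{F}_q)}(\theta) = L(q^{-1/2}e^{-i\theta})$: a change-of-variables computation relates $Z'(\gamma_j^{-1})$ to $\mathcal{Z}'(\theta_j)$ up to an explicit power of $q$ and a factor $1 + O_g(q^{-1/2})$, which is exactly where the $|\mathcal{Z}'(\theta_j)|^{-1}$ appearing in $\varphi$ comes from.

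The second ingredient is the estimate for $|c_{j,a}|$, where $c_{j,a} = \frac1k \sum_{\ell=0}^{k-1} Z(\gamma_{j,\ell}^{-1})/(\gamma_{j,\ell}-1)$ and $\gamma_{j,\ell} = q^{1/2k} e^{i(\theta(\gamma_j)+2\pi\ell)/k}$. Here $|\gamma_{j,\ell}| = q^{1/2k}$, so $\gamma_{j,\ell} - 1 = \gamma_{j,\ell}(1 + O(q^{-1/2k}))$ and $1/(\gamma_{j,\ell}-1) = \gamma_{j,\ell}^{-1}(1 + O(q^{-1/2k}))$. For $Z(\gamma_{j,\ell}^{-1})$, note that $\gamma_{j,\ell}^{-1}$ has modulus $q^{-1/2k}$, which is far from all the poles $1, q^{-1}$ and all the inverse zeros (of modulus $q^{-1/2}$), so one expands $Z(u) = L(u)/((1-u)(1-qu))$ and reads off that $L(\gamma_{j,\ell}^{-1}) = \prod_i (1 - \gamma_i\gamma_{j,\ell}^{-1})$; since $|\gamma_i \gamma_{j,\ell}^{-1}| = q^{1/2 - 1/2k}$ grows with $q$, each such factor is $-\gamma_i\gamma_{j,\ell}^{-1}(1 + o(1))$, contributing $\prod_i \gamma_i \cdot \gamma_{j,\ell}^{-2g}$ times $1 + O(q^{-1/2+1/2k})$. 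Using $\prod_i \gamma_i = q^g$ (product of all inverse zeros, from $L(0)=1$ and the functional equation / constant term comparisons) and the denominator $(1-\gamma_{j,\ell}^{-1})(1-q\gamma_{j,\ell}^{-1}) = 1 \cdot (-q\gamma_{j,\ell}^{-1})(1+o(1))$, one gets $Z(\gamma_{j,\ell}^{-1}) = q^{g-1}\gamma_{j,\ell}^{1-2g}(1 + o(1))$ with an explicit power of $q$. The upshot is $c_{j,a} = q^{g-1} \cdot (\text{power of } q^{1/k}) \cdot (1 + O(q^{-1/2k}))$, and crucially the leading term is \emph{independent of $a$} (the $a$-dependence and the $\ell$-sum only affect lower-order terms or combine into a constant-order prefactor), so the outer $\max_a$ becomes irrelevant to leading order.

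Putting the three estimates together, $|c_{j,a}\gamma_j / Z'(\gamma_j^{-1})|$ becomes (explicit power of $q$) $\times\, |\mathcal{Z}'(\theta_j)|^{-1} \times (1 + O_g(q^{-1/2k}))$, and summing over $j = 1,\ldots,2g$ and dividing by $q^{g-g/k-1/2}$ yields $\varphi(\vartheta(C/\mathbb{F}_q))(1 + O_g(q^{-1/2k}))$ once the bookkeeping on the exponents checks out — one should verify $\tfrac12(g-1) + \tfrac1{2k}(\cdot) + \tfrac12 - (\text{exponent from } Z') = g - g/k - 1/2$, which is the routine part. \textbf{The main obstacle} is making the error terms genuinely uniform: the factors $(1 - \gamma_i\gamma_{j,\ell}^{-1})$ and $(1-\gamma_i\gamma_j^{-1})$ could in principle be small if two inverse zeros nearly coincide or if an argument conspires, so one must either argue that such near-degeneracies only worsen the implied constant by a factor depending on $g$ (not $q$) — which is fine since $g$ is fixed — or, more carefully, observe that we never divide by these quantities in a way that blows up because $Z'(\gamma_j^{-1})$ appears in the denominator and its small factors cancel against the correspondingly small $\mathcal{Z}'(\theta_j)$ on the right-hand side. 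In other words, the identity is an \emph{exact} leading-order match of the two sides with a multiplicative $(1+O_g(q^{-1/2k}))$ discrepancy, and the degenerate configurations affect both sides the same way; spelling this out cleanly, rather than term-by-term, is the delicate step.
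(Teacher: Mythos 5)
Your overall strategy matches the paper's: start from Proposition \ref{prop:kfree_bcfq}, estimate $|c_{j,a}|$ by expanding $Z(\gamma_{j,\ell}^{-1})/(\gamma_{j,\ell}-1)$, and convert $\gamma_j/Z'(\gamma_j^{-1})$ into $q^{1/2}/\mathcal{Z}'(\theta_j)$ up to $1+O_g(q^{-1/2})$. Your treatment of $Z'(\gamma_j^{-1})$ versus $\mathcal{Z}'(\theta_j)$ is correct, and your worry about near-degenerate zeros is not an issue for exactly the reason you give in passing: the possibly small product $\prod_{m\neq j}(1-\gamma_m\gamma_j^{-1})$ cancels exactly against $\mathcal{Z}'(\theta_j)$, and every other correction factor is multiplicatively $1+O_g(q^{-c})$ for some $c>0$.

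However, there is a genuine gap in your handling of the $\ell$-sum and the $\max_a$. You correctly compute that the leading term of $Z(\gamma_{j,\ell}^{-1})/(\gamma_{j,\ell}-1)$ is $q^{g-g/k-1}\gamma_{j,\ell}^{-2g}$ up to normalization, but $\gamma_{j,\ell}^{-2g}$ carries the $\ell$-dependent phase $e^{-2gi(\theta(\gamma_j)+2\pi\ell)/k}$, i.e.\ a factor $e^{-4\pi i g\ell/k}$. When you form $c_{j,a} = \frac{1}{k}\sum_{\ell} Z(\gamma_{j,\ell}^{-1})/(\gamma_{j,\ell}-1)\,e^{2\pi i\ell a/k}$, the leading terms therefore sum to $\frac{1}{k}\sum_{\ell} e^{2\pi i\ell(a-2g)/k}$ times a common factor, which is $1$ if $a\equiv 2g\pmod{k}$ and \emph{zero} otherwise. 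So your claim that ``the leading term is independent of $a$\dots so the outer $\max_a$ becomes irrelevant to leading order'' is false: for $a\not\equiv 2g\pmod{k}$ the leading term cancels completely and $|c_{j,a}|$ is smaller by at least a factor $q^{-1/2k}$. (This is precisely the phenomenon the paper exploits later in Proposition \ref{X mod k bound} and Theorem \ref{intro_thm:kfree_xmodk}, where different residue classes give genuinely different orders of magnitude.) The correct argument must identify $a\equiv 2g\pmod{k}$ as the residue class achieving the maximum for large $q$; the final formula happens to be unaffected because the maximum is what appears in $B_k(C/\mathbb{F}_q)$, but as written your reasoning both asserts a false intermediate statement and fails to justify which $a$ realizes the max.
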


\begin{proof}
    By Proposition \ref{prop:kfree_bcfq}, we have
    \[B_k(C/\mathbb{F}_q) = \max_{0 \leq a \leq k - 1} \sum_{j = 1}^{2g} \left|c_{j, a}\right|\cdot \left|\frac{\gamma_j}{Z'(\gamma_j^{-1})}\right|.\]

    First, we will analyze the terms $c_{j, a}$. Consider \[\frac{Z(\gamma_{j, \ell}^{-1})}{\gamma_{j, \ell} - 1} = \frac{\prod_{m = 1}^{2g}(1 - \gamma_m\gamma_{j, \ell}^{-1})}{(1 - \gamma_{j, \ell}^{-1})(1 - q\gamma_{j, \ell}^{-1})(\gamma_{j, \ell} - 1)}.\] Scaling each term in the product to be of the form $1 - x$ for $|x| < 1$, we can rewrite this as 
    \begin{align*}
        \frac{1}{q} \cdot \prod_{m = 1}^{2g} \gamma_m \cdot  \gamma_{j, \ell}^{-2g}& \cdot \frac{\prod_{m = 1}^{2g}(1 - \gamma_m^{-1}\gamma_{j, \ell})}{(1 - \gamma_{j, \ell}^{-1})^2(1 - q^{-1}\gamma_{j, \ell})} \\ & = q^{g - g/k - 1} e^{-2gi(\theta(\gamma_j) + 2\pi \ell)/k}\cdot \frac{\prod_{m = 1}^{2g} (1 - \gamma_m^{-1}\gamma_{j, \ell})}{(1 - \gamma_{j, \ell}^{-1})^2(1 - q^{-1}\gamma_{j, \ell})}.
    \end{align*}
    Now we have 
    \begin{align*}
        1 - \gamma_m^{-1}\gamma_{j, \ell} &= 1 + O_g(q^{-1/2 + 1/2k}), \\
        \frac{1}{1 - \gamma_{j, \ell}^{-1}} &= 1 + O_g(q^{-1/2k}),\\
        \frac{1}{1 - q^{-1}\gamma_{j, \ell}} &= 1 + O_g(q^{-1 + 1/2k}).
    \end{align*}

    This means \[\frac{Z(\gamma_{j, \ell}^{-1})}{\gamma_{j, \ell} - 1} = q^{g - g/k - 1}e^{-2gi(\theta(\gamma_j) + 2\pi \ell)/k}(1 + O_g(q^{-1/2k})),\] and therefore \[c_{j, a} = q^{g - g/k - 1}e^{-2gi\theta(\gamma_j)/k}\cdot \sum_{\ell = 0}^{k - 1}e^{2\pi i \ell(a - 2g)/k}(1 + O_g(q^{-1/2k})).\] 
    
    Summing the roots of unity using the geometric series formula, we have \[\frac{1}{k}\sum_{\ell = 0}^{k - 1}e^{2\pi i \ell(a - 2g)/k}(1 + O_g(q^{-1/2k})) = \begin{cases} 1 + O_g(q^{-1/2k}) & \text{if } k \mid a - 2g \\ O_g(q^{-1/2k}) & \text{otherwise}.\end{cases}\] Thus, for large $q$, the maximum value of each $|c_{j, a}|$ occurs when $a \equiv 2g \pmod{k}$, and we get \[B_k(C/\mathbb{F}_q) = q^{g - g/k - 1}\sum_{j = 1}^{2g} \left|\frac{\gamma_j}{Z'(\gamma_j^{-1})}\right|(1 + O_g(q^{-1/2k})).\] Finally, we have \[\frac{\gamma_j}{Z'(\gamma_j^{-1})} = \frac{\gamma_j(1 - \gamma_j^{-1})(1 - q\gamma_j^{-1})}{-\gamma_j\prod_{m \neq j}(1 - \gamma_m\gamma_j^{-1})} = q\gamma_j\cdot \frac{(1 - \gamma_j^{-1})(1 - q^{-1}\gamma_j)}{\mathcal{Z}_{\vartheta(C/\mathbb{F}_q)}'(\theta_j)}.\] We know $|q\gamma_j| = q^{1/2}$, while $(1 - \gamma_j^{-1})(1 - q^{-1}\gamma_j) = 1 + O_g(q^{-1/2})$, so summing over all $j$ gives \[B_k(C/\mathbb{F}_q) = q^{g - g/k - 1/2}\cdot \varphi(\vartheta(C/\mathbb{F}_q))\cdot (1 + O_g(q^{-1/2k})),\] as desired. 
\end{proof}

Since Proposition \ref{Bk to varphi} shows that $B_k(C/\mathbb{F}_q)$ is on the order of $q^{g - g/k - 1/2}$, we will instead work with the normalized bound \[\widetilde{B}_k(C/\mathbb{F}_q) := \frac{B_k(C/\mathbb{F}_q)}{q^{g - g/k - 1/2}}.\]
In a similar vein to the work of Humphries \cite{Hum13}, we analyze the average behavior of $\widetilde{B}_k(C/\mathbb{F}_{q^n})$ across the family of hyperelliptic curves by applying Deligne's Equidistribution Theorem to $\varphi$. 

First, we give an explicit description of the Haar measure on $\mathrm{USp}_{2g}(\mathbb{C})$. 

\begin{prop}[Weyl Integration Formula {\cite[Sec 5.0.4]{KS99}}]
    For any bounded, Borel-measurable central function $f : \mathrm{USp}_{2g}(\mathbb{C}) \to \mathbb{C}$, we have \[\int_{\mathrm{USp}_{2g}(\mathbb{C})} f(U)\, d\mu_{\mathrm{Haar}}(U) = \int_0^{\pi}\cdots \int_0^{\pi} f(\theta_1, \ldots,\theta_g)d\mu_{\mathrm{USp}}(\theta_1, \ldots, \theta_g),\] where \[d\mu_{\mathrm{USp}}(\theta_1, \ldots, \theta_g) = \frac{2^{g^2}}{g!\pi^g}\prod_{1 \leq m < n \leq g} (\cos \theta_n - \cos \theta_m)^2\prod_{\ell = 1}^g \sin^2 \theta_\ell\, d\theta_1\cdots d\theta_g.\] 
\end{prop}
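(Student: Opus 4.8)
The plan is to deduce the formula from the general Weyl integration formula for a compact connected Lie group, specialized to $\mathrm{USp}_{2g}(\mathbb{C})$, which is the compact form of $\mathrm{Sp}_{2g}$ and hence a group of type $C_g$. Recall the general statement: if $G$ is a compact connected Lie group with maximal torus $T$ and Weyl group $W = N_G(T)/T$, then for every central integrable function $f$,
\[
\int_G f(U)\, d\mu_{\mathrm{Haar}}(U) = \frac{1}{|W|}\int_T f(t)\,\Bigl|\prod_{\alpha > 0}\bigl(1 - e^{-\alpha}(t)\bigr)\Bigr|^2 dt,
\]
where the product is over the positive roots of $(G,T)$ and $dt$ is normalized Haar measure on $T$; this is proved by computing the Jacobian of $G/T \times T \to G$, $(gT, t) \mapsto gtg^{-1}$, which at $(eT, t)$ equals $\prod_{\alpha>0}|1 - e^{-\alpha}(t)|^2$ acting on $\mathfrak{g}/\mathfrak{t}$. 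One may simply invoke this (it is the content of \cite[Sec 5.0.4]{KS99}); below I indicate how to extract the explicit constant $\frac{2^{g^2}}{g!\pi^g}$.

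First I would record the structure of $\mathrm{USp}_{2g}(\mathbb{C})$. Its maximal torus $T$ consists of the matrices conjugate to $\operatorname{diag}(e^{i\theta_1}, \ldots, e^{i\theta_g}, e^{-i\theta_1}, \ldots, e^{-i\theta_g})$, so $T \cong (\mathbb{R}/2\pi\mathbb{Z})^g$ with coordinates $(\theta_1, \ldots, \theta_g)$; the Weyl group is the group of signed permutations, with $|W| = 2^g g!$; and the positive roots are $\theta_m - \theta_n$ and $\theta_m + \theta_n$ for $1 \leq m < n \leq g$, together with $2\theta_\ell$ for $1 \leq \ell \leq g$ (a total of $g^2$ roots). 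Next I would simplify the Weyl denominator: using $|1 - e^{i\alpha}|^2 = 4\sin^2(\alpha/2)$ and the product-to-sum identity $\sin\tfrac{\theta_n - \theta_m}{2}\sin\tfrac{\theta_n + \theta_m}{2} = \tfrac12(\cos\theta_m - \cos\theta_n)$, one gets, for each pair $m < n$,
\[
|1 - e^{i(\theta_n - \theta_m)}|^2\,|1 - e^{i(\theta_n + \theta_m)}|^2 = 4(\cos\theta_m - \cos\theta_n)^2,
\]
and $|1 - e^{2i\theta_\ell}|^2 = 4\sin^2\theta_\ell$ for each $\ell$. Multiplying over all positive roots (there are $\binom{g}{2}$ pairs and $g$ long roots, giving $\binom{g}{2} + g = \tfrac12(g^2 + g)$ factors of $4$) yields
\[
\prod_{\alpha > 0}\bigl|1 - e^{-\alpha}(t)\bigr|^2 = 2^{g^2 + g}\prod_{1 \leq m < n \leq g}(\cos\theta_m - \cos\theta_n)^2\prod_{\ell = 1}^g \sin^2\theta_\ell.
\]

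Finally I would assemble the constant. Since a central $f$ and the Weyl denominator are both invariant under the sign changes $\theta_\ell \mapsto -\theta_\ell$, folding $(\mathbb{R}/2\pi\mathbb{Z})^g$ onto $[0,\pi]^g$ replaces $\int_T (\cdots)\,\tfrac{d\theta_1 \cdots d\theta_g}{(2\pi)^g}$ by $\tfrac{2^g}{(2\pi)^g}\int_{[0,\pi]^g}(\cdots)\,d\theta_1 \cdots d\theta_g$; the permutation part of $W$ is \emph{not} folded away, leaving the surviving $g!$. Combining the prefactor $\tfrac1{|W|} = \tfrac1{2^g g!}$, this folding, and the value of the Weyl denominator,
\[
\int_{\mathrm{USp}_{2g}(\mathbb{C})} f\, d\mu_{\mathrm{Haar}} = \frac{1}{2^g g!}\cdot\frac{2^g}{(2\pi)^g}\cdot 2^{g^2 + g}\int_{[0,\pi]^g} f \prod_{m<n}(\cos\theta_m - \cos\theta_n)^2\prod_\ell \sin^2\theta_\ell\, d\theta = \frac{2^{g^2}}{g!\,\pi^g}\int_{[0,\pi]^g}(\cdots),
\]
using $(2\pi)^g = 2^g\pi^g$, which is exactly the asserted $d\mu_{\mathrm{USp}}$. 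The only genuinely error-prone step is the bookkeeping of powers of $2$ — reconciling the $2^{g^2 + g}$ from the Weyl denominator with the $2^g$ from the torus folding and the $2^g$ inside $|W|$ — so I expect that, rather than the (standard) Weyl integration input or the routine trigonometry, to be the spot to double-check.
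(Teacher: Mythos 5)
Your derivation is correct: the paper gives no proof of this proposition, simply importing it from Katz--Sarnak \cite[Sec 5.0.4]{KS99}, and your specialization of the general Weyl integration formula to the type $C_g$ root system is the standard argument behind that reference. The bookkeeping you flagged checks out --- the Weyl denominator contributes $2^{g^2+g}$, the folding of $(\mathbb{R}/2\pi\mathbb{Z})^g$ onto $[0,\pi]^g$ contributes $2^g$ against the $(2\pi)^g$ normalization of Haar measure on $T$, and dividing by $|W| = 2^g g!$ leaves exactly $\frac{2^{g^2}}{g!\,\pi^g}$.
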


Using this, we can prove the following fact about $\varphi$, which will allow us to use the restatement of Deligne's Equidistribution Theorem in Corollary \ref{Deligne restatement} to rephrase our problem about the average behavior of $\widetilde{B}_k(C/\mathbb{F}_{q^n})$ to a question about the Haar measure on $\mathrm{USp}_{2g}(\mathbb{C})$. 

For an interval $A \subset \mathbb{R}$, we will use the notation \[\{\varphi(U) \in A\} := \{U \in \mathrm{USp}_{2g}(\mathbb{C}) \mid \varphi(U) \in A\}.\] 
\begin{lem}[{\cite[Lemma 2.8]{Hum13}}]
    For an interval $A \subset \mathbb{R}$, the boundary of the set $\{\varphi(U) \in A\}$ 
    has Haar measure zero. 
\end{lem}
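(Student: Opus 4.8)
The plan is to reduce the claim to showing that the function $\varphi$ is real-analytic on a dense open subset of $\mathrm{USp}_{2g}(\mathbb{C})$ (identified, via eigenvalue angles, with a subset of $[0,\pi]^g$) and that $\varphi^{-1}(\partial A)$ is therefore a measure-zero set. First I would pass to coordinates: by the Weyl Integration Formula, a central set $S \subset \mathrm{USp}_{2g}(\mathbb{C})$ has Haar measure zero if and only if the corresponding set of angle tuples $(\theta_1,\ldots,\theta_g) \in [0,\pi]^g$ has measure zero with respect to $d\mu_{\mathrm{USp}}$, and since the density $\frac{2^{g^2}}{g!\pi^g}\prod_{m<n}(\cos\theta_n - \cos\theta_m)^2 \prod_\ell \sin^2\theta_\ell$ is bounded, it suffices to show the angle set has zero Lebesgue measure on $[0,\pi]^g$. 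So the goal becomes: the boundary of $\{(\theta_1,\ldots,\theta_g) : \varphi(\theta_1,\ldots,\theta_g) \in A\}$ has Lebesgue measure zero in $[0,\pi]^g$.

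Next I would get an explicit handle on $\varphi$. Writing $\mathcal{Z}_U(\theta) = \prod_{i=1}^{2g}(1 - e^{i(\theta_i - \theta)})$ with $\theta_{j+g} = -\theta_j$, one computes $\mathcal{Z}_U'(\theta_j)$ explicitly: differentiating and evaluating at $\theta = \theta_j$ kills all but the factor $(1 - e^{i(\theta_j - \theta)})$, giving $\mathcal{Z}_U'(\theta_j) = i e^{i\theta_j}\prod_{i \neq j}(1 - e^{i(\theta_i - \theta_j)})$, so $|\mathcal{Z}_U'(\theta_j)| = \prod_{i \neq j}|1 - e^{i(\theta_i - \theta_j)}| = \prod_{i\neq j} 2|\sin((\theta_i - \theta_j)/2)|$. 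Hence $\varphi(\theta_1,\ldots,\theta_g) = \sum_{j=1}^{2g} \bigl(\prod_{i\neq j} 2|\sin((\theta_i - \theta_j)/2)|\bigr)^{-1}$, which (away from the locus where two eigenvalue angles coincide, i.e. away from the walls $\theta_i = \pm\theta_j$ and $\theta_i = 0,\pi$) is real-analytic, being a finite sum of reciprocals of products of nonvanishing real-analytic functions. The boundary hyperplanes themselves already form a measure-zero set, so it is enough to work on the open dense chamber $\Omega = \{0 < \theta_1 < \cdots < \theta_g < \pi\}$ where $\varphi$ is real-analytic.

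Then I would invoke the standard fact that for a real-analytic function $h$ on a connected open set $\Omega \subset \mathbb{R}^g$ which is not identically equal to a constant $c$, the level set $h^{-1}(c)$ has Lebesgue measure zero (its complement is open and dense, and real-analytic functions have nowhere-dense zero sets unless identically zero — apply this to $h - c$). The boundary of $\{\varphi \in A\}$ is contained in the union of $\varphi^{-1}(\{\inf A\})$ and $\varphi^{-1}(\{\sup A\})$ (together with the excluded walls), so it suffices to check that $\varphi$ is nonconstant on $\Omega$; this is clear since, e.g., $\varphi \to \infty$ as $\theta_1 \to 0^+$ (or as two angles approach each other), while $\varphi$ takes finite values in the interior — alternatively one evaluates $\varphi$ at two generic points and observes they differ. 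Combining, $\{\varphi(U) \in A\}$ has boundary of measure zero in angle-coordinates, hence of Haar measure zero by the Weyl Integration Formula, as claimed.

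The main obstacle is the careful bookkeeping around the non-smooth locus: $\varphi$ genuinely fails to be smooth (indeed, blows up) on the walls where eigenvalue angles collide, so one must confirm that the topological boundary of $\{\varphi \in A\}$ picks up nothing from these walls beyond a measure-zero contribution — which follows because the walls are themselves a finite union of lower-dimensional real-analytic subvarieties — and that on each open chamber $\varphi$ really is real-analytic (so that the level-set argument applies). Everything else is routine.
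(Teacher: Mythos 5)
Your proposal is correct and follows essentially the same route as the paper: reduce via the Weyl Integration Formula to angle coordinates, discard the measure-zero walls where eigenvalue angles collide, and use that $\varphi$ is real-analytic and nonconstant on each open chamber so its level sets (which contain the boundary of $\{\varphi \in A\}$ away from the walls) have Lebesgue, hence $\mu_{\mathrm{USp}}$, measure zero. The only quibble is the stray factor $e^{i\theta_j}$ in your formula for $\mathcal{Z}_U'(\theta_j)$ (it should be $i\prod_{i\neq j}(1-e^{i(\theta_i-\theta_j)})$), which is harmless since only the absolute value is used.
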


\begin{proof}
    Using the Weyl Integration Formula, it suffices to show that the boundary of \[\{(\theta_1, \ldots, \theta_g) \in [0, \pi]^g \mid \varphi(\theta_1, \ldots, \theta_g) \in A\}\] has $\mu_{\mathrm{USp}}$-measure zero. 
    
    First, the set of $(\theta_1, \ldots, \theta_g) \in [0, \pi]^g$ where two angles are equal has Lebesgue measure zero, and therefore $\mu_{\mathrm{USp}}$-measure zero as well (since $\mu_{\mathrm{USp}}$ is absolutely continuous with respect to the Lebesgue measure). Thus, it suffices to consider sets of $(\theta_1, \ldots, \theta_g) \in [0, \pi]^g$ for which the $\theta_j$ are distinct and occur in a fixed permutation. On such a set, $\varphi$ is continuous, real analytic, and non-uniformly constant. Therefore, the subset with $\varphi(\theta_1, \ldots, \theta_g) = a$ has Lebesgue measure zero, and $\mu_{\mathrm{USp}}$-measure zero as well. 
\end{proof}

We now introduce the explicit correspondence between the behavior of the bound $\widetilde{B}_k(C/\mathbb{F}_{q^n})$ and the Haar measure on $\mathrm{USp}_{2g}(\mathbb{C})$, through a careful application of Deligne's Equidistribution Theorem to the function $\varphi$.

\begin{lem}\label{Bk and haar}
    For any $\beta > 0$, we have \[\lim_{n \to \infty} \frac{\#\{C \in \mathcal{H}_{2g + 1, q^n} \mid \widetilde{B}_k(C/\mathbb{F}_{q^n}) \leq \beta\}}{\# \mathcal{H}_{2g + 1, q^n}} = \mu_{\mathrm{Haar}}(\{\varphi(U) \leq \beta\}).\] 
\end{lem}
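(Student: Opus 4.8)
The plan is to combine the asymptotic formula from Proposition \ref{Bk to varphi} with the reformulation of Deligne's Equidistribution Theorem in Corollary \ref{Deligne restatement}, handling the curves that fail Linear Independence via Proposition \ref{LI usually holds}. First, fix $\beta > 0$. For curves $C \in \mathcal{H}_{2g+1,q^n}$ satisfying the Linear Independence hypothesis, Proposition \ref{Bk to varphi} gives
\[
\widetilde{B}_k(C/\mathbb{F}_{q^n}) = \varphi(\vartheta(C/\mathbb{F}_{q^n}))\bigl(1 + O_g(q^{-n/2k})\bigr),
\]
so for $n$ large the condition $\widetilde{B}_k(C/\mathbb{F}_{q^n}) \leq \beta$ is sandwiched between $\varphi(\vartheta(C/\mathbb{F}_{q^n})) \leq \beta(1 - \varepsilon_n)$ and $\varphi(\vartheta(C/\mathbb{F}_{q^n})) \leq \beta(1 + \varepsilon_n)$ for some $\varepsilon_n \to 0$ depending only on $q$ and $g$. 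Since the curves failing Linear Independence contribute a vanishing proportion by Proposition \ref{LI usually holds}, the ratio in the statement differs from
\[
\frac{\#\{C \in \mathcal{H}_{2g+1,q^n} \mid \varphi(\vartheta(C/\mathbb{F}_{q^n})) \leq \beta(1 \pm \varepsilon_n)\}}{\#\mathcal{H}_{2g+1,q^n}}
\]
by $o(1)$, where the sign is chosen to give upper and lower bounds respectively.

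Next I would apply Corollary \ref{Deligne restatement} with the central Borel set $A_c := \{\varphi(U) \leq c\}$, whose boundary has Haar measure zero by the preceding lemma (for any $c > 0$); this yields
\[
\lim_{n \to \infty} \frac{\#\{C \in \mathcal{H}_{2g+1,q^n} \mid \vartheta(C/\mathbb{F}_{q^n}) \subset A_c\}}{\#\mathcal{H}_{2g+1,q^n}} = \mu_{\mathrm{Haar}}(\{\varphi(U) \leq c\}).
\]
Strictly speaking one must be slightly careful because the threshold $\beta(1 \pm \varepsilon_n)$ moves with $n$; the clean way around this is to fix $\delta > 0$, note that for $n$ large enough $\varepsilon_n < \delta$, apply Deligne with the two fixed thresholds $\beta(1-\delta)$ and $\beta(1+\delta)$, take $n \to \infty$, and then let $\delta \to 0$, using that $c \mapsto \mu_{\mathrm{Haar}}(\{\varphi(U) \leq c\})$ is continuous at $c = \beta$. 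That continuity follows from the previous lemma applied to the interval $A = \{\beta\}$: the set $\{\varphi(U) = \beta\}$ has Haar measure zero, so the distribution function of $\varphi$ under $\mu_{\mathrm{Haar}}$ has no jump at $\beta$. Combining the upper and lower bounds gives
\[
\lim_{n \to \infty} \frac{\#\{C \in \mathcal{H}_{2g+1,q^n} \mid \widetilde{B}_k(C/\mathbb{F}_{q^n}) \leq \beta\}}{\#\mathcal{H}_{2g+1,q^n}} = \mu_{\mathrm{Haar}}(\{\varphi(U) \leq \beta\}),
\]
as desired.

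The main obstacle is bookkeeping rather than conceptual: one has to carefully interleave three limiting operations — the $O_g(q^{-n/2k})$ error in Proposition \ref{Bk to varphi}, the $n \to \infty$ equidistribution limit, and the $\delta \to 0$ squeeze — and make sure the vanishing proportion of non-Linear-Independence curves is absorbed at the right stage. The only genuinely substantive input beyond the cited results is the continuity of the distribution function of $\varphi$ at $\beta$, which as noted is immediate from the measure-zero level set lemma; everything else is a standard $\varepsilon$–$\delta$ argument around a weak-convergence statement.
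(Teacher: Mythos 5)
Your proposal is correct and follows essentially the same route as the paper: the paper likewise combines Proposition \ref{Bk to varphi}, Proposition \ref{LI usually holds}, and Deligne's Equidistribution Theorem, disposing of the threshold ambiguity by showing that the symmetric difference of $\{\widetilde{B}_k(C/\mathbb{F}_{q^n}) \leq \beta\}$ and $\{\varphi(\vartheta(C/\mathbb{F}_{q^n})) \leq \beta\}$ is eventually contained in the non--Linear-Independence curves together with $\{\varphi(\vartheta(C/\mathbb{F}_{q^n})) \in (\beta - \varepsilon, \beta + \varepsilon)\}$, whose limiting proportion shrinks to $\mu_{\mathrm{Haar}}(\varphi(U) = \beta) = 0$. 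Your sandwich with thresholds $\beta(1 \pm \delta)$ and the continuity of the distribution function of $\varphi$ at $\beta$ is just a repackaging of that same four-case argument.
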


\begin{proof}
    We follow the proof in Humphries \cite[Proposition 3.1]{Hum13}. Consider \[\lim_{n \to \infty}\left(\frac{\#\{C \in \mathcal{H}_{2g + 1, q^n} \mid \widetilde{B}_k(C/\mathbb{F}_{q^n}) \leq \beta\}}{\#\mathcal{H}_{2g + 1, q^n}} - \frac{\#\{C \in \mathcal{H}_{2g + 1, q^n} \mid \varphi(\vartheta(C/\mathbb{F}_{q^n})) \leq \beta\}}{\#\mathcal{H}_{2g + 1, q^n}}\right).\] The limit of the second term is $\mu_{\mathrm{Haar}}(\varphi(U) \leq \beta)$ by Deligne's Equidistribution Theorem, so it suffices to show that the limit of the difference is $0$. 

    Fix some $\varepsilon > 0$. Then it suffices to show that the proportion of curves $C \in \mathcal{H}_{2g + 1, q^n}$ for which exactly one of $\widetilde{B}_k(C/\mathbb{F}_{q^n}) \leq \beta$ or $\varphi(\vartheta(C/\mathbb{F}_{q^n})) \leq \beta$ holds goes to $0$ in the limit. 
    Any such curve must belong to one of the following four cases:
    \begin{enumerate}
        \item $C/\mathbb{F}_{q^n}$ does not satisfy the Linear Independence hypothesis. 
        \item $\varphi(\vartheta(C/\mathbb{F}_{q^n})) \in (\beta - \varepsilon, \beta + \varepsilon)$.
        \item $C/\mathbb{F}_{q^n}$ satisfies the Linear Independence hypothesis, $\widetilde{B}_k(C/\mathbb{F}_{q^n}) \leq \beta$, and $\varphi(\vartheta(C/\mathbb{F}_{q^n})) \geq \beta + \varepsilon$. 
        \item $C/\mathbb{F}_{q^n}$ satisfies the Linear Independence hypothesis, $\widetilde{B}_k(C/\mathbb{F}_{q^n}) > \beta$, and $\varphi(\vartheta(C/\mathbb{F}_{q^n})) \leq \beta - \varepsilon$. 
    \end{enumerate}
    In the limit of large $n$, the proportion of curves in case (1) goes to zero by Proposition \ref{LI usually holds}. For cases (3) and (4), by Proposition \ref{Bk to varphi}, there is a constant $c$ such that \[(1 - cq^{-n/2k})\varphi(\vartheta(C/\mathbb{F}_{q^n})) \leq \widetilde{B}_k(C/\mathbb{F}_{q^n}) \leq (1 + cq^{-n/2k})\varphi(\vartheta(C/\mathbb{F}_{q^n})).\] In case (3) we would have \[(1 - cq^{-n/2k})(\beta + \varepsilon) \leq \beta,\] which is false for all sufficiently large $n$; similarly, in case (4) we would have \[\beta \leq (1 + cq^{-n/2k})(\beta - \varepsilon),\] again false for all sufficiently large $n$. Therefore, the proportion of curves belonging to cases (3) and (4) vanish as $n$ tends to infinity. 

    Finally, the limiting proportion of curves belonging to case (2) is given by
    \[\lim_{n \to \infty} \frac{\#\{C \in \mathcal{H}_{2g + 1, q^n} \mid \varphi(\vartheta(C/\mathbb{F}_{q^n})) \in (\beta - \varepsilon, \beta + \varepsilon)\}}{\#\mathcal{H}_{2g + 1, q^n}} = \mu_{\mathrm{Haar}}(\varphi(U) \in (\beta - \varepsilon, \beta + \varepsilon)).\] In the limit of small $\varepsilon$, this proportion then approaches $\mu_{\mathrm{Haar}}(\varphi(U) = \beta) = 0$.
\end{proof}

To draw conclusions from this correspondence, we have the following result about the size of $\varphi(U)$, which we can then translate into a result about the size of $\widetilde{B}_k(C/\mathbb{F}_q)$ via Lemma \ref{Bk and haar}.

\begin{prop}[{\cite[Proposition 3.2]{Hum13}}]\label{varphi ineq}
    For all $U \in \mathrm{USp}_{2g}(\mathbb{C})$, we have $\varphi(U) \geq 1$. Equality holds if and only if $(\theta_1, \ldots, \theta_g)$ is a permutation of $(\frac{\pi}{2g}, \frac{3\pi}{2g}, \ldots, \frac{(2g - 1)\pi}{2g})$. 
\end{prop}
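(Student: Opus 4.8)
The plan is to bound $\varphi(U)$ from below by rewriting it in terms of the characteristic polynomial $f(w) := \prod_{i=1}^{2g}(w - e^{i\theta_i})$ of $U$, then applying a single Lagrange-interpolation identity together with the triangle inequality; the equality case is then extracted from the fact that $f$ is self-reciprocal.

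First I would identify $|\mathcal{Z}_U'(\theta_j)|$ with $|f'(e^{i\theta_j})|$. Writing $w = e^{i\theta}$ one has $\mathcal{Z}_U(\theta) = \prod_i(1 - e^{i\theta_i}/w) = w^{-2g}f(w)$; since $\tfrac{d}{d\theta} = iw\tfrac{d}{dw}$ and $f(e^{i\theta_j}) = 0$, evaluating at $\theta = \theta_j$ gives $\mathcal{Z}_U'(\theta_j) = i\,e^{-i(2g-1)\theta_j}f'(e^{i\theta_j})$, so $|\mathcal{Z}_U'(\theta_j)| = |f'(e^{i\theta_j})|$ and $\varphi(U) = \sum_{j=1}^{2g}|a_j|$ where $a_j := e^{i(2g-1)\theta_j}/f'(e^{i\theta_j})$. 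If two eigenvalue angles coincide then some $f'(e^{i\theta_j})$ vanishes and $\varphi(U) = +\infty > 1$, so I may assume the $e^{i\theta_j}$ are distinct. Then $f$ is monic of degree $2g$ with simple roots, and the partial fraction decomposition of $w^{2g-1}/f(w)$ reads
\[
\frac{w^{2g-1}}{f(w)} \;=\; \sum_{j=1}^{2g} \frac{e^{i(2g-1)\theta_j}}{f'(e^{i\theta_j})}\cdot\frac{1}{w - e^{i\theta_j}} \;=\; \sum_{j=1}^{2g} \frac{a_j}{w - e^{i\theta_j}};
\]
comparing the coefficient of $w^{-1}$ as $w \to \infty$ (equivalently, the residue at infinity) gives $\sum_{j=1}^{2g} a_j = 1$, and the triangle inequality now yields $1 = \left|\sum_j a_j\right| \le \sum_j |a_j| = \varphi(U)$.

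For the equality case, equality in the triangle inequality combined with $\sum_j a_j = 1 > 0$ forces every $a_j$ to be a nonnegative real, i.e. $\arg f'(e^{i\theta_j}) \equiv (2g-1)\theta_j \pmod{2\pi}$. To pin down $\arg f'(e^{i\theta_j})$ I would use that $f$ is palindromic: since $U$ is symplectic its eigenvalue multiset is invariant under $\lambda \mapsto \lambda^{-1}$, so $w^{2g}f(1/w) = f(w)$; differentiating and using $f(e^{-i\theta_j}) = 0$ together with $f'(e^{-i\theta_j}) = \overline{f'(e^{i\theta_j})}$ (as $f$ has real coefficients) gives $f'(e^{i\theta_j}) = -e^{i(2g-2)\theta_j}\,\overline{f'(e^{i\theta_j})}$, whence $\arg f'(e^{i\theta_j}) \equiv (g-1)\theta_j + \tfrac{\pi}{2} \pmod{\pi}$. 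Combining the two congruences gives $g\theta_j \equiv \tfrac{\pi}{2} \pmod{\pi}$, so $\theta_j = \tfrac{(2m_j+1)\pi}{2g}$ for some integer $m_j$. The endpoints $\theta_j \in \{0,\pi\}$ are impossible, since they would force the eigenvalue $\pm 1$ (paired with itself) to be repeated; hence $\theta_j \in (0,\pi)$, forcing $m_j \in \{0,1,\dots,g-1\}$. Since the $g$ distinct angles $\theta_1,\dots,\theta_g$ then lie in the $g$-element set $\{\tfrac{(2m+1)\pi}{2g} : 0 \le m \le g-1\}$, they form a permutation of it. Conversely, for this choice the eigenvalues of $U$ are exactly the $2g$ roots of $w^{2g}+1$, so $f(w) = w^{2g}+1$, $f'(e^{i\theta_j}) = 2g\,e^{i(2g-1)\theta_j}$, and $\varphi(U) = \sum_j \tfrac{1}{2g} = 1$, establishing equality.

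I expect the equality analysis to be the only genuine obstacle: the inequality is immediate once $|\mathcal{Z}_U'(\theta_j)|$ is recognized as $|f'(e^{i\theta_j})|$, but identifying exactly which tuples achieve equality requires the self-reciprocal functional equation of $f$, and one must track the congruences carefully (modulo $\pi$ from the functional equation, modulo $2\pi$ from the equality condition) and exclude the edge cases $\theta_j \in \{0,\pi\}$. Alternatively, since this proposition is modeled on \cite[Proposition 3.2]{Hum13}, one could simply cite it.
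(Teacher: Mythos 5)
Your proof is correct. Note that the paper itself gives no argument for this proposition --- it is imported verbatim as a citation of \cite[Proposition 3.2]{Hum13} --- so there is no in-paper proof to compare against; your write-up supplies a complete, self-contained proof along the same lines as Humphries' original (the lower bound via the Lagrange-interpolation/partial-fraction identity $\sum_j e^{i(2g-1)\theta_j}/f'(e^{i\theta_j}) = 1$ and the triangle inequality, the equality case via the self-reciprocal functional equation $w^{2g}f(1/w)=f(w)$). All the individual steps check out: the change of variables giving $|\mathcal{Z}_U'(\theta_j)| = |f'(e^{i\theta_j})|$, the residue-at-infinity computation, the argument congruences, and the converse via $f(w)=w^{2g}+1$. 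One small simplification: your exclusion of the edge cases $\theta_j \in \{0,\pi\}$ is unnecessary, since the congruence $g\theta_j \equiv \pi/2 \pmod{\pi}$ already rules out $\theta_j = 0$ and $\theta_j = \pi$ (both would give $g\theta_j \equiv 0 \pmod{\pi}$); the repeated-eigenvalue argument you give there is correct but moot.
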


 {
\renewcommand{\thethm}{\ref{intro_thm:kfree_global}}
\begin{thm}
    For any fixed $\beta > 0$, the quantity \[\lim_{n \to \infty} \frac{\#\{C \in \mathcal{H}_{2g + 1, q^n} \mid \widetilde{B}_k(C/\mathbb{F}_{q^n}) \leq \beta\}}{\#\mathcal{H}_{2g + 1, q^n}} \] is $0$ if $0 < \beta \leq 1$, and strictly between $0$ and $1$ if $\beta > 1$. 
\end{thm}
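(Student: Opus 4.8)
The plan is to push the entire statement through Lemma \ref{Bk and haar}, which identifies the limiting proportion in question with the Haar measure $\mu_{\mathrm{Haar}}(\{\varphi(U) \le \beta\})$ of a sublevel set of $\varphi$ on $\mathrm{USp}_{2g}(\mathbb{C})$. So it suffices to prove that this measure equals $0$ when $0 < \beta \le 1$ and lies strictly between $0$ and $1$ when $\beta > 1$, and everything reduces to understanding the distribution of the central function $\varphi$.

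For $0 < \beta < 1$ I would simply invoke Proposition \ref{varphi ineq}, which gives $\varphi(U) \ge 1 > \beta$ for every $U$; the sublevel set is then empty and its measure is $0$. For the boundary value $\beta = 1$, I would combine the equality clause of Proposition \ref{varphi ineq} with the Weyl Integration Formula: the set $\{\varphi(U) \le 1\} = \{\varphi(U) = 1\}$ corresponds, in eigenangle coordinates, to the tuples $(\theta_1,\ldots,\theta_g) \in [0,\pi]^g$ that are permutations of $(\tfrac{\pi}{2g}, \tfrac{3\pi}{2g}, \ldots, \tfrac{(2g-1)\pi}{2g})$ --- a finite set of $g!$ points, hence of Lebesgue measure zero, hence of $\mu_{\mathrm{USp}}$-measure zero since $\mu_{\mathrm{USp}}$ is absolutely continuous with respect to Lebesgue measure. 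This settles the regime $0 < \beta \le 1$.

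Now fix $\beta > 1$; here I need both a strictly positive lower bound and a bound strictly below $1$. I would first record the qualitative behavior of $\varphi$: writing $\varphi(U) = \sum_{j=1}^{2g} |\mathcal{Z}_U'(\theta_j)|^{-1}$, the function is finite and continuous exactly on the open set $\mathcal{G} \subset [0,\pi]^g$ where $\theta_1,\ldots,\theta_g$ are pairwise distinct and all lie in $(0,\pi)$ --- on $\mathcal{G}$ the $2g$ eigenangles are distinct modulo $2\pi$, so each is a simple root of $\mathcal{Z}_U$ and $\mathcal{Z}_U'(\theta_j) \ne 0$, whereas off $\mathcal{G}$ the polynomial $\mathcal{Z}_U$ has a repeated root and $\varphi = +\infty$. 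On $\mathcal{G}$ the $\mu_{\mathrm{USp}}$-density $\tfrac{2^{g^2}}{g!\pi^g}\prod_{m<n}(\cos\theta_n-\cos\theta_m)^2\prod_\ell \sin^2\theta_\ell$ is strictly positive, so every nonempty open subset of $\mathcal{G}$ has positive measure. For the lower bound, the minimizer $\Theta_0 := (\tfrac{\pi}{2g},\ldots,\tfrac{(2g-1)\pi}{2g})$ lies in $\mathcal{G}$ with $\varphi(\Theta_0) = 1 < \beta$, so by continuity $\{\varphi < \beta\}$ contains a neighborhood of $\Theta_0$ in $\mathcal{G}$, forcing $\mu_{\mathrm{Haar}}(\{\varphi(U) \le \beta\}) > 0$. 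For the upper bound, $\varphi$ is unbounded above on $\mathcal{G}$: holding all but two angles at generic distinct values in $(0,\pi)$ and letting $\theta_2 \to \theta_1$, the polynomial $\mathcal{Z}_U$ develops a near-double root, so $\mathcal{Z}_U'(\theta_1), \mathcal{Z}_U'(\theta_2) \to 0$ and $\varphi \to +\infty$; picking such a point of $\mathcal{G}$ with $\varphi > \beta$ and using continuity again, $\{\varphi > \beta\}$ contains a nonempty open subset of $\mathcal{G}$, hence has positive measure, so $\mu_{\mathrm{Haar}}(\{\varphi(U) \le \beta\}) < 1$.

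Essentially all the substantive work has been front-loaded into Lemma \ref{Bk and haar} and Proposition \ref{varphi ineq}, so I expect no genuine obstacle. The only points requiring care are the two positivity-of-measure arguments, and for these the single fact to pin down is the qualitative behavior of $\varphi$ --- continuity on $\mathcal{G}$ and blow-up along the collision locus --- which follows immediately from $\mathcal{Z}_U$ having a zero of order at least $2$ at a repeated eigenangle; the mild subtlety is only ensuring that the neighborhoods one selects stay inside $\mathcal{G}$, where the $\mu_{\mathrm{USp}}$-density is positive.
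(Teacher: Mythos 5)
Your proposal is correct and follows essentially the same route as the paper: reduce via Lemma \ref{Bk and haar} to the Haar measure of $\{\varphi(U)\leq\beta\}$, apply Proposition \ref{varphi ineq} for $\beta\leq 1$, and for $\beta>1$ exhibit positive-measure open sets where $\varphi<\beta$ (near the minimizer) and where $\varphi>\beta$ (near the collision locus). You simply spell out in more detail the measure-zero claim at $\beta=1$ and the continuity/blow-up facts that the paper leaves implicit.
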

\addtocounter{thm}{-1}
}

\begin{proof}
    We follow the proof in Humphries \cite[Theorem 1.4]{Hum13}.
    
    By Lemma \ref{Bk and haar}, we have \[\lim_{n \to \infty} \frac{\#\{C \in \mathcal{H}_{2g + 1, q^n} \mid \widetilde{B}_k(C/\mathbb{F}_{q^n})\leq \beta\}}{\#\mathcal{H}_{2g + 1, q^n}} = \mu_{\mathrm{Haar}}(\varphi(U) \leq \beta).\] By Lemma \ref{varphi ineq}, we have $\varphi(U) \geq 1$ for all $U \in \mathrm{USp}_{2g}(\mathbb{C})$, and the set of $U$ for which $\varphi(U) = 1$ has Haar measure zero. This immediately implies that if $\beta \leq 1$, then \[\mu_{\mathrm{Haar}}(\varphi(U) \leq \beta) = 0.\] 
    Meanwhile, if $\beta > 1$, then we can find an open neighborhood (around a point with $\varphi(U) = 1$) for which $\varphi(U) \leq \beta$, which must have positive measure. Conversely, we can also find an open neighborhood with positive measure (around a point where two angles are equal) for which $\varphi(U) > \beta$. Therefore, we conclude $0 < \mu_{\mathrm{Haar}}(\varphi(U) \leq \beta) < 1$. 
\end{proof}

Finally, we have seen that for a function field $C/\mathbb{F}_q$ with $q$ large, the bound on $\ET_k(X)$ is on the order of $q^{g - g/k - 1/2}$. However, as suggested by the proof of Proposition \ref{Bk to varphi}, the bounds on $\ET_k(X)$ will be significantly greater for certain residue classes $X$ modulo $k$ --- different residue classes will have different fluctuations in their normalized error term. We proceed to study the variations in the error bounds across different residue classes modulo $k$. 

For each $0 \leq a \leq k-1$, define \[B_k(C/\mathbb{F}_q, a) := \limsup_{Y \to \infty}|\ET(a + kY)|.\] Then, by the same argument as in Proposition \ref{prop:kfree_bcfq}, we have \[B_k(C/\mathbb{F}_q, a) = \sum_{j = 1}^{2g}|c_{j, a}|\cdot \left|\frac{\gamma_j}{Z'(\gamma_j^{-1})}\right|.\] 
\begin{prop}\label{X mod k bound}
    Suppose $C/\mathbb{F}_q$ satisfies the Linear Independence hypothesis. Then if $a - 2g \equiv b \pmod{k}$ for some $0 \leq b \leq k - 2$, we have \[\frac{B_k(C/\mathbb{F}_q, a)}{(b + 1)q^{g - g/k - 1/2 - b/2k}} = \varphi(\vartheta(C/\mathbb{F}_q))(1 + O_g(q^{-1/2k})).\] 
\end{prop}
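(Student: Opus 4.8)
The plan is to refine the computation in the proof of Proposition \ref{Bk to varphi} by tracking one further term in the Laurent expansion of the coefficients $c_{j,a}$. We start from the formula $B_k(C/\mathbb{F}_q,a)=\sum_{j=1}^{2g}|c_{j,a}|\cdot\bigl|\gamma_j/Z'(\gamma_j^{-1})\bigr|$ recorded just above the statement, which is valid under Linear Independence by the argument proving Proposition \ref{prop:kfree_bcfq}. The second factor was already handled inside the proof of Proposition \ref{Bk to varphi}: writing $\gamma_j/Z'(\gamma_j^{-1})$ in terms of $\mathcal{Z}_{\vartheta(C/\mathbb{F}_q)}'$ and invoking the Riemann hypothesis for function fields gives $\bigl|\gamma_j/Z'(\gamma_j^{-1})\bigr| = q^{1/2}\bigl|\mathcal{Z}_{\vartheta(C/\mathbb{F}_q)}'(\theta_j)\bigr|^{-1}(1+O_g(q^{-1/2}))$, hence $\sum_{j=1}^{2g}\bigl|\gamma_j/Z'(\gamma_j^{-1})\bigr| = q^{1/2}\varphi(\vartheta(C/\mathbb{F}_q))(1+O_g(q^{-1/2}))$. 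So the whole proposition reduces to showing that, when $a-2g\equiv b\pmod k$ with $0\le b\le k-2$, one has $|c_{j,a}| = (b+1)\,q^{g-g/k-1-b/2k}(1+O_g(q^{-1/2}))$ with the error uniform in $j$; substituting the two estimates into the formula for $B_k(C/\mathbb{F}_q,a)$ and dividing by $(b+1)q^{g-g/k-1/2-b/2k}$ then gives the result (the error $O_g(q^{-1/2})$ being even stronger than the claimed $O_g(q^{-1/2k})$).

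To estimate $c_{j,a}=\tfrac1k\sum_{\ell=0}^{k-1}\tfrac{Z(\gamma_{j,\ell}^{-1})}{\gamma_{j,\ell}-1}e^{2\pi i\ell a/k}$, I reuse the algebraic manipulation from the proof of Proposition \ref{Bk to varphi}: using $\prod_m\gamma_m=q^g$ and pulling out monomials so that every factor has the form $1-x$ with $|x|<1$ gives
\[
\frac{Z(\gamma_{j,\ell}^{-1})}{\gamma_{j,\ell}-1} = -\,q^{g-g/k-1}\,e^{-2gi(\theta(\gamma_j)+2\pi\ell)/k}\cdot\frac{\prod_{m=1}^{2g}(1-\gamma_m^{-1}\gamma_{j,\ell})}{(1-\gamma_{j,\ell}^{-1})^2(1-q^{-1}\gamma_{j,\ell})}.
\]
Since $|\gamma_m^{-1}\gamma_{j,\ell}|,\ |q^{-1}\gamma_{j,\ell}|,\ |\gamma_{j,\ell}^{-1}|$ are all $<1$, the rational factor is a convergent Laurent series $\sum_n e_n\gamma_{j,\ell}^{\,n}$ in $\gamma_{j,\ell}$: the negative powers come from $(1-\gamma_{j,\ell}^{-1})^{-2}=\sum_{r\ge0}(r+1)\gamma_{j,\ell}^{-r}$, the positive powers from the polynomial $\prod_m(1-\gamma_m^{-1}\gamma_{j,\ell})$ and from $(1-q^{-1}\gamma_{j,\ell})^{-1}$. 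Substituting $\gamma_{j,\ell}=q^{1/2k}e^{i(\theta(\gamma_j)+2\pi\ell)/k}$ and averaging over $\ell$ against $e^{2\pi i\ell a/k}$, orthogonality of $k$th roots of unity retains exactly the indices with $n-2g+a\equiv0\pmod k$, i.e.\ $n\equiv-b\pmod k$, so that
\[
c_{j,a} = -\,q^{g-g/k-1}\,e^{-2gi\theta(\gamma_j)/k}\sum_{n\,\equiv\,-b\,(\mathrm{mod}\,k)} e_n\,q^{n/2k}\,e^{in\theta(\gamma_j)/k}.
\]

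A direct estimate of the Laurent coefficients, using that each $\gamma_m^{-1}$ contributes a factor of absolute value $q^{-1/2}$, gives $e_{-r}=(r+1)(1+O_g(q^{-1/2}))$ for $r\ge0$ and $e_m=O_g(q^{-1})$ for $m\ge2$. Among the indices $n\equiv-b\pmod k$, the term $n=-b$ has size $(b+1)q^{-b/2k}$, and every other such $n$ contributes $O_g(q^{-1/2})$ times this leading term; this is exactly where the hypothesis $b\le k-2$ is used, since it forces the nearest competitor $n=-b+k$ to satisfy $-b+k\ge2$, hence $e_{-b+k}=O_g(q^{-1})$ rather than merely $O_g(q^{-1/2})$ (for $b=k-1$ the index $n=1$ appears, whose coefficient is only $O_g(q^{-1/2})$, so it is of the same order as the main term — the excluded and genuinely more delicate case). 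Therefore $c_{j,a}=-(b+1)q^{g-g/k-1-b/2k}e^{-i(2g+b)\theta(\gamma_j)/k}(1+O_g(q^{-1/2}))$, so $|c_{j,a}|=(b+1)q^{g-g/k-1-b/2k}(1+O_g(q^{-1/2}))$ uniformly in $j$; combining with the estimate for $\sum_j\bigl|\gamma_j/Z'(\gamma_j^{-1})\bigr|$ in $B_k(C/\mathbb{F}_q,a)=\sum_j|c_{j,a}|\,|\gamma_j/Z'(\gamma_j^{-1})|$ and dividing through finishes the argument. The only place demanding care is this last size analysis — confirming that $n=-b$ is the uniquely dominant index once one weighs the Laurent coefficients $e_n$ against the powers $q^{n/2k}$, and pinning down why $b\le k-2$ is precisely the right hypothesis; everything else is a rerun of estimates already made for Proposition \ref{Bk to varphi}.
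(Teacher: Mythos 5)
Your argument is correct and follows essentially the same route as the paper: both reduce to estimating $|c_{j,a}|$ via the same rescaled expansion of $Z(\gamma_{j,\ell}^{-1})/(\gamma_{j,\ell}-1)$ from Proposition \ref{Bk to varphi}, apply orthogonality of the $k$th roots of unity in the average over $\ell$, and identify the index $n=-b$ (the paper's $r^b$ term with $r=q^{-1/2k}$) as the unique dominant survivor, with $b\le k-2$ ruling out the competitor at $n=-b+k$ exactly as the paper rules out the $r^{k-1}$ coefficient. Your Laurent-series bookkeeping in $\gamma_{j,\ell}$ is just a reparametrization of the paper's power series in $r$, and in fact delivers the slightly stronger relative error $O_g(q^{-1/2})$.
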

    
\begin{proof}
    Following the proof of Proposition \ref{Bk to varphi}, it suffices to analyze \[c_{j, a} = \frac{1}{k}\sum_{\ell = 0}^{k - 1} \frac{Z(\gamma_{j, \ell}^{-1})}{\gamma_{j, \ell} - 1}\cdot e^{2\pi i \ell a/k}.\] Again, expand \[\frac{Z(\gamma_{j, \ell}^{-1})}{\gamma_{j, \ell} - 1} = q^{g - g/k - 1}e^{-2gi(\theta(\gamma_j) + 2\pi \ell)/k}\cdot \frac{\prod_{m = 1}^{2g}(1 - \gamma_m^{-1}\gamma_{j, \ell})}{(1 - \gamma_{j, \ell}^{-1})^2(1 - q^{-1}\gamma_{j, \ell})}.\] Then, it suffices to show that \[\frac{1}{k}\sum_{\ell = 0}^{k - 1} e^{2 \pi i(a - 2g)\ell/k} \cdot \frac{\prod_{m = 1}^{2g}(1 - \gamma_m^{-1}\gamma_{j, \ell})}{(1 - \gamma_{j, \ell}^{-1})^2(1 - q^{-1}\gamma_{j, \ell})} = (b + 1)q^{g - g/k - 1/2 - b/2k}(1 + O(q^{- 1/2k})).\] 
    We study each summand. Let $r = q^{-1/2k}$, so that \[\gamma_m^{-1}\gamma_{j, \ell} = r^{k - 1}\omega_m\] where $\omega_m := e^{i(-\theta(\gamma_m) + (\theta(\gamma_j) + 2\pi \ell)/k)}$ has absolute value $1$ for all $1 \leq m \leq 2g$, \[1 - \gamma_{j, \ell}^{-1} = 1 - r\omega_0\] where $\omega_0 := e^{-i(\theta(\gamma_j) + 2\pi \ell)/k}$ has absolute value $1$, and \[1 - q^{-1}\gamma_{j, \ell} = 1 - r^{2k - 1}\omega_{2g + 1}\] where $\omega_{2g + 1} := e^{i(\theta(\gamma_j) + 2\pi \ell)/k}$ has absolute value $1$ as well. Using geometric series expansion for the terms in the denominator, we have \[\frac{\prod_{m = 1}^{2g}(1 - \gamma_m^{-1}\gamma_{j, \ell})}{(1 - \gamma_{j, \ell}^{-1})^2(1 - q^{-1}\gamma_{j, \ell})} = \sum_{d = 0}^{k - 2} (d + 1)\omega_0^dr^d + \left(k\omega_0^{k - 1} - \sum_{m = 1}^{2g} \omega_m\right)r^{k - 1} + O(r^k).\] 
    
    The coefficient of the $r^d$ term in \[e^{2\pi i (a - 2g)\ell/k}\cdot \frac{\prod_{m = 1}^{2g}(1 - \gamma_m^{-1}\gamma_{j, \ell})}{(1 - \gamma_{j, \ell}^{-1})^2(1 - q^{-1}\gamma_{j, \ell})}\] is then \[(d + 1)e^{-i(d\theta(\gamma_j) + 2\pi \ell (d + 2g - a))/k}\] for $0 \leq d \leq k - 2$.
    Now, we sum across all $\ell$ and use the fact that \[\frac{1}{k}\sum_{\ell = 0}^{k - 1} e^{2\pi i \ell t/k} = \begin{cases} 1 & \text{if $k \mid t$} \\ 0 & \text{otherwise}.\end{cases}\] The first term that does not vanish will have $d = b$ (as we need $k \mid d + 2g - a$), which gives the desired result.  
\end{proof}

Then, the same analysis that we performed earlier on the maximal bound $B_k(C/\mathbb{F}_{q^n})$ on hyperelliptic curves can be used to study $B_k(C/\mathbb{F}_{q^n}, a)$ in the limit of large $n$, with a normalization given by Proposition \ref{X mod k bound}. For brevity and for $0 \leq b\leq k-2$, we define \[\widetilde{B}_k(C/\mathbb{F}_{q}, b + 2g) := \limsup_{Y \to \infty} \frac{\ET(b+2g+kY)}{(b + 1)q^{(g - g/k - 1/2 - b/2k)}},\] giving us the following theorem.

{
\renewcommand{\thethm}{\ref{intro_thm:kfree_xmodk}}
\begin{thm}
    For any $0 \leq b \leq k - 2$, we have \[\lim_{n \to \infty} \frac{\#\{C \in \mathcal{H}_{2g + 1, q^n} \mid \widetilde{B}_k(C/\mathbb{F}_{q^n}, b + 2g) \leq \beta\}}{\#\mathcal{H}_{2g + 1, q^n}}\] is $0$ if $0 < \beta \leq 1$, and strictly between $0$ and $1$ if $\beta > 1$. 
\end{thm}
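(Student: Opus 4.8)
The plan is to run the proof of Theorem~\ref{intro_thm:kfree_global} essentially verbatim, with Proposition~\ref{X mod k bound} playing the role of Proposition~\ref{Bk to varphi}. Fix $0 \le b \le k-2$ and recall that $\widetilde{B}_k(C/\mathbb{F}_q, b+2g) = B_k(C/\mathbb{F}_q, b+2g)/\bigl((b+1)q^{g-g/k-1/2-b/2k}\bigr)$, where $B_k(C/\mathbb{F}_q, a) = \limsup_{Y\to\infty} |\ET(a+kY)|$. Applying Proposition~\ref{X mod k bound} over the base field $\mathbb{F}_{q^n}$ (so that $O_g(q^{-1/2k})$ becomes $O_g(q^{-n/2k})$) gives, for some constant $c = c(g)$ and every $C \in \mathcal{H}_{2g + 1, q^n}$ satisfying the Linear Independence hypothesis,
\[
(1 - cq^{-n/2k})\,\varphi(\vartheta(C/\mathbb{F}_{q^n})) \;\le\; \widetilde{B}_k(C/\mathbb{F}_{q^n}, b+2g) \;\le\; (1 + cq^{-n/2k})\,\varphi(\vartheta(C/\mathbb{F}_{q^n})).
\]

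First I would prove the analogue of Lemma~\ref{Bk and haar}, namely that
\[
\lim_{n\to\infty} \frac{\#\{C \in \mathcal{H}_{2g + 1, q^n} \mid \widetilde{B}_k(C/\mathbb{F}_{q^n}, b+2g) \le \beta\}}{\#\mathcal{H}_{2g + 1, q^n}} = \mu_{\mathrm{Haar}}(\{\varphi(U) \le \beta\}).
\]
The argument is the same four-case split as in the proof of Lemma~\ref{Bk and haar}: a curve for which exactly one of $\widetilde{B}_k(C/\mathbb{F}_{q^n}, b+2g) \le \beta$ and $\varphi(\vartheta(C/\mathbb{F}_{q^n})) \le \beta$ holds either (1) fails Linear Independence, a proportion tending to $0$ by Proposition~\ref{LI usually holds}; (2) has $\varphi(\vartheta(C/\mathbb{F}_{q^n})) \in (\beta-\varepsilon,\beta+\varepsilon)$, a proportion tending by Corollary~\ref{Deligne restatement} to $\mu_{\mathrm{Haar}}(\varphi(U) \in (\beta-\varepsilon,\beta+\varepsilon))$ (using that $\{\varphi(U) \in A\}$ has Haar-null boundary for any interval $A$), which in turn tends to $\mu_{\mathrm{Haar}}(\varphi(U) = \beta) = 0$ as $\varepsilon \to 0$; or (3), (4) has $\widetilde{B}_k$ and $\varphi(\vartheta)$ on opposite sides of $\beta$ separated by more than $\varepsilon$, which the displayed sandwich rules out once $n$ is large. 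Since $\#\{C \mid \varphi(\vartheta(C/\mathbb{F}_{q^n})) \le \beta\}/\#\mathcal{H}_{2g + 1, q^n} \to \mu_{\mathrm{Haar}}(\{\varphi(U) \le \beta\})$ by Deligne's Equidistribution Theorem (via Corollary~\ref{Deligne restatement}), the claim follows.

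It then remains only to evaluate $\mu_{\mathrm{Haar}}(\{\varphi(U) \le \beta\})$, and this is identical to the endgame of the proof of Theorem~\ref{intro_thm:kfree_global}: by Proposition~\ref{varphi ineq}, $\varphi(U) \ge 1$ with equality on a Haar-null set, so the measure is $0$ for $0 < \beta \le 1$; and for $\beta > 1$, an open neighborhood of a minimizing configuration $(\tfrac{\pi}{2g}, \tfrac{3\pi}{2g}, \ldots, \tfrac{(2g-1)\pi}{2g})$ lies in $\{\varphi(U) < \beta\}$, while an open neighborhood of a configuration with two equal angles (where $\varphi \to \infty$) lies in $\{\varphi(U) > \beta\}$, and both have positive measure, so the measure lies strictly between $0$ and $1$.

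The only step requiring genuine care — though it is not a serious obstacle, since Proposition~\ref{X mod k bound} already does the work — is confirming that the normalization $(b+1)q^{g-g/k-1/2-b/2k}$ yields a sandwich with multiplicative error $1 + O_g(q^{-n/2k})$ uniformly over $\mathcal{H}_{2g + 1, q^n}$, so that cases (3) and (4) above become impossible for large $n$; here the geometric-series bookkeeping over the $k$th roots of unity appearing in $c_{j,a}$ singles out the term $d = b$ as the first surviving one, with coefficient $(b+1)$. This is also precisely why the range is restricted to $0 \le b \le k-2$: for $b = k-1$ all of the $d \le k-2$ terms cancel after summing over $\ell$, so the leading behavior is instead governed by the $r^{k-1}$ coefficient $k\omega_0^{k-1} - \sum_m \omega_m$, whose size and argument are more delicate and are not treated here.
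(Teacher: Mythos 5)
Your proposal is correct and follows essentially the same route as the paper, which simply invokes Proposition \ref{X mod k bound} as the replacement for Proposition \ref{Bk to varphi} and then reruns the argument of Lemma \ref{Bk and haar} and Theorem \ref{intro_thm:kfree_global} verbatim. Your closing remarks on the role of the $(b+1)q^{g-g/k-1/2-b/2k}$ normalization and on why $b=k-1$ is excluded also match the paper's discussion.
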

\addtocounter{thm}{-1}
}

On the other hand, the case $b = k - 1$ behaves differently --- in the proof of Proposition \ref{X mod k bound}, the first term that does not vanish is still $r^{k - 1}$. However, in this case, its coefficient is \[k\omega_0^{k - 1} - \sum_{m = 1}^{2g}\omega_m.\] Using the definitions of $\omega_0$ and $\omega_m$, we can calculate that this coefficient is \[ke^{-i(k - 1)\theta(\gamma_j)/k} - xe^{i(\theta(\gamma_j)/k},\] where $x = \sum_{m = 1}^{2g} e^{-i\theta(\gamma_m)}$ is real. Since its absolute value $|k - xe^{i\theta(\gamma_j)}|$ depends on $\theta(\gamma_j)$ as well as the curve $C$, the bound $B_k(C/\mathbb{F}_q, k - 1 + 2g)$ does not directly relate to the function $\varphi$. It is possible to define a different function \[\varphi^*(U) = \sum_{j = 1}^{2g} \frac{\left|k - e^{i\theta_j}\sum_{m = 1}^{2g} e^{-i \theta_m}\right|}{|\mathcal{Z}'(\theta_j)|}\] and analyze its properties on $\mathrm{USp}_{2g}(\mathbb{C})$ instead. We will not perform this analysis, but it does suggest that the error term $B_k(C/\mathbb{F}_q, k - 1 + 2g)$ is still on the order of $q^{g - g/k - 1/2 - (k - 1)/2k}$.

\section{Summatory Function of the Totient Function}\label{sec:totient}

In this section, we study the limiting behavior of the summatory function \[F_\Phi(X) := \sum_{\substack{D \geq 0\\ 0 \leq \deg(D) < X}} \Phi(D).\] Similar to our results on $k$-free divisors in the previous section, we begin in Section \ref{subsec:error_term_totient} by writing $F_\Phi(X)$ in terms of a \textit{main term} $\mathrm{MT}_\Phi(X)$ and an \textit{error term} $R_\Phi(X)$. In the case where $\zeta_{C/\F_q}(s)$ has simple zeros, we proceed to derive an explicit expression for the \textit{normalized} error term $\widetilde{R}_\Phi(X)$ in terms of these zeros. Interestingly, as mentioned in Section \ref{sec:intro}, the analogous error term for the summatory function of the Euler totient function in the \textit{classical} setting cannot be written in terms of a sum of zeros of the classical Riemann zeta function: instead, the classical error term splits into a sum of an \textit{analytic} error term involving the zeros of the zeta function and an \textit{arithmetic} error term \cite{KW10}. Therefore, our work in this section provides an example where the properties of analogues of arithmetic functions over function fields differ from their number theoretic properties in the classical setting. In Section \ref{subsec:multiplezeros_totient}, we prove an analogous result to the main result of Section \ref{subsec:multiplezeros_kfree} by demonstrating that \[R_\Phi(X) = O(X^{r-1}q^{X/2}), \quad \limsup_{X \to \infty} \frac{|R_k(X)|}{X^{r-1}q^{X/2}} > 0,\] where $r$ is the maximal order of a zero of $\zeta_{C/\F_q}(s)$. In Section \ref{subsec:limiting_distribution_totient}, we parallel the work in Section \ref{subsec:limiting_distribution_kfree} to show that $\ET_\Phi(X)$ has a limiting distribution whenever the zeros of $\zeta_{C/\F_q}(s)$ are simple. Under the stronger assumption of Linear Independence, we can explicitly determine the natural density of the subsets \[\mathcal{S}_{\Phi}(\beta) := \{X \in \Z^+ \ |\ |\ET_\Phi(X)| \leq \beta\}\] for any $\beta \in \R^+$. Finally, in Section \ref{subsec:global_totient}, we derive an explicit bound on $\ET_\Phi(X)$ when $C/\F_q$ satisfies the Linear Independence hypothesis and proceed to study the behavior of $R_\Phi(X)$ on function fields of hyperelliptic curves over $\F_{q^n}$ in the limit of large $n$, as in Section \ref{subsec:global_kfree}. The proofs of many results in this section are simplified versions of their analogues in Section \ref{sec:kfree}, so they will either be omitted or sketched.

\subsection{Computation of the Error Term}\label{subsec:error_term_totient} In this subsection, we find an expression for $F_{\Phi}(X)$ by similar methods to those in Section \ref{subsec:error_term_kfree}. As before, we study the coefficients of the Dirichlet series \[D_{\Phi}(s) := \sum_{D \geq 0} \frac{\Phi(D)}{\mathcal{N}D^s}\] by comparing them to an expression in terms of the zeta function $\zeta_{C/\F_q}(s)$. The proof of the following lemma is identical to the proof of Lemma \ref{lem:kfree_dirichlet_zeta} and is therefore omitted.

\begin{lem}\label{lem:totient_dirichlet_zeta}
For a function field $C/\F_q$ with associated zeta function $\zeta_{C/\F_q}(s) = Z_{C/\F_q}(u)$, the Dirichlet series associated with $\Phi$ is given by \[D_\Phi(s) = \frac{\zeta_{C/\F_q}(s-1)}{\zeta_{C/\F_q}(s)} = \frac{Z_{C/\F_q}(qu)}{Z_{C/\F_q}(u)}.\]
\end{lem}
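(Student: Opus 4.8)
The plan is to mirror the proof of Lemma \ref{lem:kfree_dirichlet_zeta}: use the multiplicativity of $\Phi$ to write $D_\Phi(s)$ as an Euler product over the prime divisors of $C/\F_q$, evaluate each local factor by summing a geometric series, and then recognize the resulting product as a ratio of zeta values. First I would observe that, by the uniqueness of analytic continuation, it suffices to prove the identity on the half-plane $\Re(s) > 2$, where both $D_\Phi(s)$ (since $\Phi(D) \leq \mathcal{N}D$) and $\zeta_{C/\F_q}(s-1)$ (which needs $\Re(s-1) > 1$) converge absolutely. On this region, multiplicativity of $\Phi$ gives
\[
D_\Phi(s) = \prod_{P \text{ prime}} \sum_{n=0}^\infty \frac{\Phi(P^n)}{\mathcal{N}P^{ns}}.
\]

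Next I would compute each local factor. Using $\Phi(P^0) = 1$ and $\Phi(P^n) = \mathcal{N}P^n\bigl(1 - \mathcal{N}P^{-1}\bigr)$ for $n \geq 1$, the factor at $P$ becomes
\[
1 + \bigl(1 - \mathcal{N}P^{-1}\bigr)\sum_{n=1}^\infty \mathcal{N}P^{n(1-s)} = 1 + \bigl(1 - \mathcal{N}P^{-1}\bigr)\frac{\mathcal{N}P^{1-s}}{1 - \mathcal{N}P^{1-s}} = \frac{1 - \mathcal{N}P^{-s}}{1 - \mathcal{N}P^{1-s}},
\]
where the last equality is the short algebraic simplification in which the numerator $1 - \mathcal{N}P^{1-s} + (1 - \mathcal{N}P^{-1})\mathcal{N}P^{1-s}$ collapses to $1 - \mathcal{N}P^{-s}$. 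Taking the product over all primes $P$ and comparing with the Euler product $\zeta_{C/\F_q}(s) = \prod_P (1 - \mathcal{N}P^{-s})^{-1}$ identifies $D_\Phi(s)$ with $\zeta_{C/\F_q}(s-1)/\zeta_{C/\F_q}(s)$.

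Finally, I would pass to the variable $u = q^{-s}$: since every $\mathcal{N}P$ is a power of $q$, we have $\zeta_{C/\F_q}(s-1) = Z_{C/\F_q}(q^{-(s-1)}) = Z_{C/\F_q}(qu)$, which yields the second equality $D_\Phi(s) = Z_{C/\F_q}(qu)/Z_{C/\F_q}(u)$. I do not anticipate any genuine obstacle here; the only points meriting care are the bookkeeping of the convergence region (so that the asserted identity of meromorphic functions on all of $\C$ is legitimately obtained by analytic continuation from $\Re(s) > 2$) and the telescoping simplification of the local factor, both of which are routine.
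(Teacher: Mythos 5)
Your proposal is correct and follows exactly the route the paper intends: the paper omits this proof, stating it is identical to that of Lemma \ref{lem:kfree_dirichlet_zeta}, namely an Euler product from multiplicativity followed by evaluating each local factor, which is precisely what you do (and your local computation $1 + (1-\mathcal{N}P^{-1})\frac{\mathcal{N}P^{1-s}}{1-\mathcal{N}P^{1-s}} = \frac{1-\mathcal{N}P^{-s}}{1-\mathcal{N}P^{1-s}}$ is right). The convergence bookkeeping on $\Re(s)>2$ is a welcome extra precision consistent with the paper's Lemma \ref{lem:genus_zero_zeta}.
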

As in Section \ref{subsec:error_term_kfree}, we begin our work in the genus zero case (i.e., $C/\F_q = \F_q(T)$), where we will instead study the summatory function \[F_{\Phi,0}(X) := \sum_{\substack{f \text{ monic}\\ \deg(f) < X}} \Phi(f).\] By similar arguments to those made in Section \ref{subsec:error_term_kfree}, we derive the following results:

\begin{lem} \label{lem:genus_zero_zeta}
When $\Re(s) > 2$, we have the equality \[\sum_{f ~\text{monic}} \frac{\Phi(f)}{|f|^s} = \frac{\zeta_0(s - 1)}{\zeta_0(s)} = \frac{Z_0(qu)}{Z_0(u)}.\]
\end{lem}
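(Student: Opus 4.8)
The plan is to mimic the Euler product computation behind Lemma \ref{lem:totient_dirichlet_zeta} (equivalently Lemma \ref{lem:kfree_dirichlet_zeta}), now carried out in the polynomial ring $\F_q[T]$. By the uniqueness of analytic continuation it suffices to establish the identity on the half-plane $\Re(s) > 2$, where all the relevant series converge absolutely: since $\Phi(f) \le |f|$ for every monic $f$, the Dirichlet series $\sum_f \Phi(f)|f|^{-s}$ is dominated by $\sum_f |f|^{1-s} = \zeta_0(s-1)$, which converges when $\Re(s) > 2$. This is precisely why the hypothesis here is $\Re(s) > 2$ rather than $\Re(s) > 1$.

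First I would use the multiplicativity of $\Phi$ to factor the Dirichlet series as an Euler product over the monic irreducible polynomials of $\F_q[T]$:
\[\sum_{f ~\text{monic}} \frac{\Phi(f)}{|f|^s} = \prod_{P ~\text{monic irreducible}} \sum_{n=0}^\infty \frac{\Phi(P^n)}{|P|^{ns}}.\]
Next I would evaluate the local factor at each $P$. Using $\Phi(1) = 1$ together with $\Phi(P^n) = |P|^n - |P|^{n-1}$ for $n \ge 1$, summing the resulting geometric series (which is legitimate since $|P|^{1-s}$ has modulus less than $1$ when $\Re(s) > 2$) gives
\[\sum_{n=0}^\infty \frac{\Phi(P^n)}{|P|^{ns}} = 1 + \left(1 - \frac{1}{|P|}\right)\frac{|P|^{1-s}}{1 - |P|^{1-s}} = \frac{1 - |P|^{-s}}{1 - |P|^{1-s}}.\]

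Finally I would recombine the local factors: taking the product over all $P$ and recognizing the Euler product expansions of $\zeta_0(s)$ and $\zeta_0(s-1)$ yields
\[\prod_{P} \frac{1 - |P|^{-s}}{1 - |P|^{-(s-1)}} = \frac{\zeta_0(s-1)}{\zeta_0(s)}.\]
Substituting $u = q^{-s}$, so that $\zeta_0(s) = Z_0(u)$ and $\zeta_0(s-1) = Z_0\!\left(q^{-(s-1)}\right) = Z_0(qu)$, then gives the right-hand side $Z_0(qu)/Z_0(u)$, completing the proof. There is no genuine obstacle here — the argument is a routine local computation — so the only points requiring care are tracking the region of absolute convergence and correctly incorporating the $n = 0$ term of each local factor.
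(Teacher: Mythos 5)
Your proof is correct and is essentially the argument the paper intends: the paper omits the proof of this lemma, deferring to the Euler-product computation behind Lemmas \ref{lem:kfree_dirichlet_zeta} and \ref{lem:totient_dirichlet_zeta}, which is exactly the multiplicativity-plus-local-factor calculation you carry out (your local factor and convergence bound are both right). The only superfluous remark is the appeal to analytic continuation, which is unnecessary since the identity is only asserted on $\Re(s) > 2$.
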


\begin{prop} \label{prop:genus_zero_explicit_totient} 
For any positive integer $X$, we have \[F_{\Phi,0}(X) = \frac{q^{2X - 1} + 1}{q + 1}.\]
\end{prop}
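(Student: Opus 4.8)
The plan is to mimic the proof of Proposition \ref{prop:genus_zero_kfree}: extract the Dirichlet series coefficients from the closed-form rational expression supplied by Lemma \ref{lem:genus_zero_zeta}, and then sum them. First I would substitute the closed form $Z_0(u) = \frac{1}{1-qu}$ into Lemma \ref{lem:genus_zero_zeta} to obtain
\[
D_{\Phi,0}(s) = \sum_{f \text{ monic}} \frac{\Phi(f)}{|f|^s} = \frac{Z_0(qu)}{Z_0(u)} = \frac{1-qu}{1-q^2u},
\]
valid for $\Re(s) > 2$, i.e. for $|u| < q^{-2}$, where as usual $u = q^{-s}$.

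Next I would expand the right-hand side as a power series in $u$. Writing $\frac{1-qu}{1-q^2u} = (1-qu)\sum_{n \geq 0} q^{2n} u^n$, one reads off that the coefficient of $u^0$ equals $1$, while for $n \geq 1$ the coefficient of $u^n$ equals $q^{2n} - q^{2n-1} = q^{2n-1}(q-1)$. On the other hand, $D_{\Phi,0}(s) = \sum_{n \geq 0} b_n u^n$ with $b_n = \sum_{\deg(f) = n}\Phi(f)$, and comparing coefficients (legitimate inside the common disk of convergence) yields $b_0 = 1$ and $b_n = q^{2n-1}(q-1)$ for all $n \geq 1$.

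Finally I would sum over the relevant range and evaluate the resulting geometric series:
\[
F_{\Phi,0}(X) = \sum_{n=0}^{X-1} b_n = 1 + (q-1)\sum_{n=1}^{X-1} q^{2n-1} = 1 + \frac{q(q-1)(q^{2X-2}-1)}{q^2-1} = \frac{q^{2X-1}+1}{q+1},
\]
where the last equality is elementary algebra; one checks the edge case $X=1$ directly, since then the sum is empty and both sides equal $1$. This step is purely routine bookkeeping, so there is essentially no obstacle. The only point that requires a moment's care is the separate treatment of the constant term $n=0$, which does not fit the formula $b_n = q^{2n-1}(q-1)$ and is responsible for the extra $+1$ in the numerator of the final answer.
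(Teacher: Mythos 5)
Your proof is correct and follows exactly the route the paper intends: the paper omits the proof of this proposition, remarking only that it follows by the same arguments as Proposition \ref{prop:genus_zero_kfree}, and your argument is precisely that adaptation (substitute $Z_0(u) = \frac{1}{1-qu}$ into Lemma \ref{lem:genus_zero_zeta}, expand $\frac{1-qu}{1-q^2u}$ as a power series, compare coefficients, and sum the geometric series). The computations, including the separate handling of the $n=0$ term and the edge case $X=1$, all check out.
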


We now return to the general case where $C/\F_q$ has genus $g \geq 1$ and consider the summatory function $F_\Phi(X)$ across all divisors of $C/\F_q$. We analyze this function in the same manner as we did for $Q_k(X)$ in Section \ref{subsec:error_term_kfree}. In particular, under the assumption that $\zeta_{C/\F_q}$ has only simple zeros, we apply Cauchy's residue theorem and Lemma \ref{lem:totient_dirichlet_zeta} to find an expression for $F_\Phi(X)$ in terms of a main term and an error term, similar to the methods used by \cite{Cha17} and \cite{Hum12} in their studies of the summatory function of the M\"obius function. As before, we let $\zeta(s) = Z(u)$ denote the zeta function of $C/\F_q$ for brevity. Moreover, index the inverse zeros of $Z(u)$ so that $\gamma_{j+g} = \overline{\gamma_j}$ for $j = 1,\ldots,g$.

\begin{prop}\label{prop:simple_zeros_phi} Let $g \geq 1$, and suppose the zeros of $Z(u)$ are all simple. Then as $X$ tends to infinity, 
\[
\ET_\Phi(X) := \frac{F_{\Phi}(X) - \MT_\Phi(X)}{q^{X/2}}= - \sum_{j=1}^{2g} \frac{Z(q\gamma_j^{-1})}{Z'(\gamma_j^{-1})} \frac{\gamma_j}{\gamma_j - 1} e^{i \theta(\gamma_j)X} + O_{q,g}\left( \frac{1}{q^{X/2}} \right),
\]
where 
\[\MT_\Phi(X) = \frac{q^{1-g} h}{\zeta(2)(1 - q^{-1})(q^2 - 1)}q^{2X}
\]
and $h$ denotes the class number of the function field $C/\mathbb{F}_q$.
\end{prop}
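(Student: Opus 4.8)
The proof is a simpler variant of that of Proposition \ref{prop:kfree_simple_zeros}. By Lemma \ref{lem:totient_dirichlet_zeta} we have $D_\Phi(s)=Z(qu)/Z(u)$, and since $\mathcal{N}D=q^{\deg D}$ the coefficient of $u^N$ in the power series $Z(qu)/Z(u)$ equals $\sum_{\deg D = N}\Phi(D)$, so $F_\Phi(X)=\sum_{N=0}^{X-1}[u^N]\bigl(Z(qu)/Z(u)\bigr)$. To extract these coefficients I would fix $\rho>1$, take the counterclockwise circle $C_\rho=\{|z|=\rho\}$, and evaluate
\[
\frac{1}{2\pi i}\oint_{C_\rho}\frac{1}{u^{N+1}}\frac{Z(qu)}{Z(u)}\,du
\]
by Cauchy's residue theorem.

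The first point is the pole structure. Writing $Z(u)=L(u)/((1-u)(1-qu))$ gives
\[
\frac{Z(qu)}{Z(u)}=\frac{L(qu)(1-u)}{(1-q^2u)\,L(u)},
\]
so the apparent pole at $u=q^{-1}$ cancels, and under the hypothesis that the zeros of $Z$ are simple the only singularities of the integrand inside $C_\rho$ are a pole of order $N+1$ at $u=0$, a simple pole at $u=q^{-2}$, and simple poles at each $u=\gamma_j^{-1}$ (all of which satisfy $|u|\le q^{-1/2}<1$). The residue at $u=0$ is $\sum_{\deg D=N}\Phi(D)$; the residue at $u=\gamma_j^{-1}$ is $Z(q\gamma_j^{-1})\gamma_j^{N+1}/Z'(\gamma_j^{-1})$, and one may rewrite $q\gamma_j^{-1}=\overline{\gamma_j}$ using $|\gamma_j|=\sqrt q$; the residue at $u=q^{-2}$ yields the main term $\MT_\Phi(X)$ after substituting $Z(q^{-2})=\zeta(2)$ and $L(q^{-1})=q^{-g}h$ (the latter obtained by evaluating the functional equation $L(q^{-1}u^{-1})=q^{-g}u^{-2g}L(u)$ at $u=1$).

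Summing the resulting expression for $\sum_{\deg D=N}\Phi(D)$ over $0\le N\le X-1$ and evaluating the geometric series $\sum q^{2N}$ and $\sum\gamma_j^{N}$ gives
\[
F_\Phi(X)=\MT_\Phi(X)-\sum_{j=1}^{2g}\frac{Z(q\gamma_j^{-1})}{Z'(\gamma_j^{-1})}\frac{\gamma_j^{X+1}}{\gamma_j-1}+\varepsilon_\Phi(X),
\]
where $\varepsilon_\Phi(X)$ collects an $X$-independent constant together with $\frac{1}{2\pi i}\sum_{N=0}^{X-1}\oint_{C_\rho}u^{-N-1}Z(qu)/Z(u)\,du$. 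Since the integrand has all its singularities inside $|u|<1$, the integral is independent of $\rho>1$; estimating it as in (\ref{eqn:kfree_inequality}), using $|L(u)|\ge(\sqrt q\,\rho-1)^{2g}$ from the Riemann hypothesis for function fields together with crude bounds on the remaining factors, shows it is $O_{q,g}(\rho^{-N})$, which tends to $0$ as $\rho\to\infty$ whenever $N\ge 1$, so only the $N=0$ term of the sum survives. Consequently $\varepsilon_\Phi(X)$ is an $X$-independent constant, which may be pinned down by setting $X=1$ and using $F_\Phi(1)=\Phi(0)=1$. Dividing through by $q^{X/2}$ and writing $\gamma_j=\sqrt q\,e^{i\theta(\gamma_j)}$, so that $\gamma_j^{X+1}/\bigl((\gamma_j-1)q^{X/2}\bigr)=\frac{\gamma_j}{\gamma_j-1}e^{i\theta(\gamma_j)X}$, gives the claimed formula for $\ET_\Phi(X)$ with error $O_{q,g}(q^{-X/2})$.

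I do not expect a serious obstacle: every step is a streamlined version of the corresponding step in Proposition \ref{prop:kfree_simple_zeros}, and the contour estimate is in fact easier here because there are no $k$th roots of the inverse zeros to track. The points that require genuine care are checking that the pole at $u=q^{-1}$ really cancels, keeping the powers of $q$ and the factor $1-q^{-1}$ straight in the residue at $u=q^{-2}$, and noting that --- unlike in the $k$-free case --- the $N=0$ contour integral need not vanish, so that the constant $\varepsilon_\Phi(X)$ must be determined from the boundary value $F_\Phi(1)=1$.
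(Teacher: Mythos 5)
Your proposal follows essentially the same route as the paper's proof: the same contour integral $\frac{1}{2\pi i}\oint_{C_\rho}u^{-N-1}Z(qu)/Z(u)\,du$, the same residues at $u=0$, $u=q^{-2}$, and $u=\gamma_j^{-1}$ (with the pole at $u=q^{-1}$ cancelling), the same summation over $0\le N\le X-1$ and $\rho\to\infty$ estimate showing only the $N=0$ integral survives, and the same determination of the constant from $F_\Phi(1)=1$. The argument is correct and complete.
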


\begin{proof}
Similar to the proof of Proposition \ref{prop:kfree_simple_zeros}, we study the contour integral \[\frac{1}{2\pi i}\int_{C_\rho} \frac{Z(qu)}{Z(u)u^{N+1}} \, du,\] where $N$ is a nonnegative integer, and $C_\rho$ is the circle of radius $\rho > 1$ oriented counterclockwise. Note that the integrand has a pole of order $N+1$ at $u = 0$, simple poles at each zero $\gamma^{-1}$ of $Z(u)$, and a simple pole at $u = q^{-2}$. By summing over the residues of the integrand at these poles and then summing across $0 \leq N < X$, we deduce that \begin{align*}
F_{\Phi}(X) = \frac{q^{1-g}h}{\zeta(2)(1-q^{-1})(q^2-1)}q^{2X}  - \sum_{j=1}^{2g} \frac{Z(q \gamma_j^{-1})\gamma_j}{Z'(\gamma_j^{-1})} \frac{\gamma_j^X}{\gamma_j - 1} + \varepsilon_{\Phi}(X),
\end{align*}

where 
\[
\varepsilon_\Phi(X) = - \frac{q^{1-g}h}{\zeta(2)(1-q^{-1})(q^2-1)} + \sum_{j=1}^{2g} \frac{Z(q \gamma_j^{-1})}{Z'(\gamma_j^{-1})} \frac{\gamma_j}{\gamma_j - 1} + \frac{1}{2 \pi i} \sum_{N = 0}^{X-1}\oint_{C_\rho} \frac{1}{u^{N+1}}\frac{Z(qu)}{Z(u)} \,du.
\] When $N \geq 1$, the integral \[\oint_{C_\rho} \frac{1}{u^{N+1}}\frac{Z(qu)}{Z(u)} \,du\] vanishes as $\rho$ approaches infinity. This implies $\varepsilon_\Phi(X)$ is a constant (not dependent on $X$), which can be calculated by setting $X = 1$ and using the fact that $F_\Phi(1) = 1$. 
The desired result follows upon dividing through by $q^{X/2}$. 
\end{proof}

As noted earlier, the normalized error term $\ET_\Phi(X)$ can be expressed (up to a small correction term) in terms of a sum of zeros of $Z(u)$. This result is markedly different from the case of the summatory function of the classical Euler totient function. In the classical setting, the error term has two components: an analytic error term that is expressed in terms of zeros of the Riemann zeta function, and an arithmetic error term \cite{KW10}. On the other hand, our result in Proposition \ref{prop:simple_zeros_phi} demonstrates that the arithmetic error term does \textit{not} appear in the function field setting.

We will now proceed to study the case where $Z(u)$ has zeros of multiple order. Similar to Section \ref{subsec:multiplezeros_kfree}, we will show that \[R_\Phi(X) := F_\Phi(X) - \mathrm{MT}_\Phi(X)\] 
has order of growth $X^{r-1}q^{X/2}$, where $r$ is the maximal order of a zero of $Z(u)$.

\subsection{Zeros of Multiple Order}\label{subsec:multiplezeros_totient}

Now that we have computed the error term under the assumption of simple zeros of $Z(u)$, it remains to consider the case of zeros of multiple order. As in Section \ref{subsec:multiplezeros_kfree}, we will show that the error term $R_\Phi(X)$ is bounded above and below by $X^{r-1}q^{X/2}$ whenever the maximal order of a zero of $\zeta_{C/\F_q}(s)$ is $r$. The appearance of the $q^{X/2}$ term in place of $q^{X/2k}$ results from the fact that the inverse zeros of $Z(u)$ have absolute value $q^{1/2}$, while the inverse zeros of $Z(u^k)$ have absolute value $q^{1/2k}$.

We first deal with the upper bound. As in Section \ref{subsec:multiplezeros_kfree}, the upper bound follows easily from the residue calculation in Lemma \ref{lem:residue_calculation}. 

\begin{prop}\label{prop:totient_bound_above} 
    If all zeros of $Z(u)$ have order at most $r$, then
    \[|R_\Phi(X)| = O(X^{r - 1}q^{X/2}).\] 
\end{prop}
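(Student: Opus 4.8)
The plan is to mirror the proof of Proposition~\ref{prop:kfree_bound_above} exactly, since the only structural difference between the $k$-free setting and the totient setting is the Dirichlet series whose contour integral produces the error term. First I would recall from the proof of Proposition~\ref{prop:simple_zeros_phi} that $R_\Phi(X) = F_\Phi(X) - \MT_\Phi(X)$ arises by summing, over $0 \le N < X$, the contour integral
\[
\frac{1}{2\pi i}\oint_{C_\rho} \frac{Z(qu)}{Z(u)}\frac{1}{u^{N+1}}\,du,
\]
and that $\MT_\Phi(X)$ is precisely the contribution of the residue at $u = q^{-2}$. Hence $R_\Phi(X)$ is (up to the vanishing integral over $C_\rho$ as $\rho \to \infty$, for $N \ge 1$) the negative sum of the residues of the integrand at the inverse zeros of $Z(u)$, i.e.\ at the points $\gamma_j^{-1}$, which are exactly the points where $Z(u)$ vanishes. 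This identification does not require the zeros to be simple; it is just Cauchy's residue theorem applied to the poles inside $C_\rho$.

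Next I would invoke Lemma~\ref{lem:residue_calculation}: if $\gamma$ is an inverse zero of $Z(u)$ of order at most $r$, then near $u = \gamma^{-1}$ we may write $\tfrac{Z(qu)}{Z(u)} = \tfrac{f(u)}{(u-\gamma^{-1})^r}$ for some $f$ holomorphic in a neighborhood of $\gamma^{-1}$ (absorbing the factor $(u - \gamma^{-1})^{r - \operatorname{ord}_\gamma}$ and the holomorphic, nonvanishing factor $Z(qu)$ into $f$), so the residue of $\tfrac{Z(qu)}{Z(u)u^{N+1}}$ at $u = \gamma^{-1}$ is $O(N^{r-1}|\gamma|^N) = O(N^{r-1}q^{N/2})$ by the Riemann hypothesis for function fields. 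Summing this bound over the finitely many inverse zeros $\gamma$ and then over $0 \le N < X$ gives
\[
|R_\Phi(X)| = O\!\left(\sum_{N < X} N^{r-1} q^{N/2}\right) \le O\!\left(X^{r-1}\sum_{N<X} q^{N/2}\right) = O(X^{r-1}q^{X/2}),
\]
where the geometric sum $\sum_{N<X}q^{N/2}$ contributes only an $O(q^{X/2})$ factor since $q \ge 2$.

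The one point needing a brief check, exactly as in Proposition~\ref{prop:simple_zeros_phi}, is that the contour integral over $C_\rho$ itself vanishes as $\rho \to \infty$ for $N \ge 1$; this follows from bounding $|Z(qu)/Z(u)|$ on $|u| = \rho$ using the product form of $Z$ and the Riemann hypothesis, exactly as in inequality~(\ref{eqn:kfree_inequality}). I expect no real obstacle here --- the argument is genuinely a routine transcription of Proposition~\ref{prop:kfree_bound_above}, the only bookkeeping difference being the replacement of the inverse zeros of $Z(u^k)$ (of absolute value $q^{1/2k}$) by the inverse zeros of $Z(u)$ (of absolute value $q^{1/2}$), which is why $q^{X/2}$ appears in place of $q^{X/2k}$. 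The proof can therefore be given in a few lines, and I would write it in that compressed style, referencing Lemma~\ref{lem:residue_calculation} and the proof of Proposition~\ref{prop:simple_zeros_phi} rather than repeating the contour setup.
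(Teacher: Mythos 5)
Your proposal is correct and follows the same route as the paper, which itself proves Proposition \ref{prop:totient_bound_above} by declaring the argument identical to that of Proposition \ref{prop:kfree_bound_above}: apply Lemma \ref{lem:residue_calculation} at each inverse zero $\gamma_j^{-1}$ of $Z(u)$ (now of absolute value $q^{-1/2}$ rather than $q^{-1/2k}$) and sum the resulting $O(N^{r-1}q^{N/2})$ bounds over $N < X$. Your extra remarks — that the main term is the residue at $u=q^{-2}$ and that the contour integral vanishes as $\rho\to\infty$ for $N\ge 1$ — are exactly the checks implicit in the paper's reference back to Proposition \ref{prop:simple_zeros_phi}.
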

\begin{proof}
    The proof is identical to the proof of Proposition \ref{prop:kfree_bound_above}.
\end{proof}

For the lower bound, we again consider two cases depending on whether $\sqrt{q}$ is an inverse zero of $Z(u)$. The following lemma is the analogue of Lemma \ref{cor2.9analogue} in Section \ref{subsec:multiplezeros_kfree} and is used in the proof of both cases.

\begin{lem}\label{cor2.9analogue_totient}
    For $|u| < q^{-2}$,
    \[
    \sum_{X=1}^\infty F_\Phi(X) u^{X-1} = \frac{Z(qu)}{Z(u)(1-u)}.
    \]
\end{lem}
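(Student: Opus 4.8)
The plan is to mimic the proof of Lemma~\ref{cor2.9analogue} exactly, the only differences being the radius of convergence and the two instances of the growth bound and Dirichlet series that need to be replaced by their totient analogues. First I would form the partial sum
\[
\sum_{X=1}^{Y} F_\Phi(X) u^{X-1}(1-u) = \sum_{X=0}^{Y-1}\bigl(F_\Phi(X+1) - F_\Phi(X)\bigr) u^X - F_\Phi(Y)u^Y,
\]
which is just Abel summation (the $X=0$ term vanishes since $F_\Phi(0) = 0$). By definition $F_\Phi(X+1) - F_\Phi(X) = \sum_{\deg D = X}\Phi(D)$, so the first sum on the right telescopes toward the Dirichlet series $D_\Phi(u) = \sum_{D\geq 0}\Phi(D)u^{\deg D}$.

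Next I would control the boundary term $F_\Phi(Y)u^Y$. From the proof of Proposition~\ref{prop:simple_zeros_phi} (or simply from the trivial bound $\Phi(D)\le \mathcal N D = q^{\deg D}$ together with the fact that there are $O(q^{n})$ effective divisors of degree $n$), we get $F_\Phi(Y) = O(q^{2Y})$, so $|F_\Phi(Y)u^Y| \to 0$ provided $|u| < q^{-2}$. This is precisely the reason the stated radius of convergence is $q^{-2}$ rather than $q^{-1}$ as in Lemma~\ref{cor2.9analogue}; it reflects the extra factor of $q$ coming from $\Phi(D)\asymp \mathcal N D$ rather than $\mu_k(D)\asymp 1$.

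Letting $Y\to\infty$, the partial sums converge for $|u| < q^{-2}$ to
\[
\sum_{X=1}^\infty F_\Phi(X) u^{X-1}(1-u) = D_\Phi(u) = \frac{Z(qu)}{Z(u)},
\]
where the last equality is Lemma~\ref{lem:totient_dirichlet_zeta} rewritten in the variable $u = q^{-s}$ (valid for $|u|<q^{-2}$, i.e. $\Re(s) > 2$, which is where that identity was established, and extended to the punctured disk by analytic continuation). Dividing both sides by $1-u$, which is nonzero on $|u| < q^{-2}$, gives the claimed identity. There is essentially no obstacle here: the only point requiring care is making sure the radius of convergence $q^{-2}$ is compatible with both the vanishing of the boundary term and the region of validity of Lemma~\ref{lem:totient_dirichlet_zeta}, and both hold on $|u| < q^{-2}$.
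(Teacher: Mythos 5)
Your proof is correct and is exactly the argument the paper intends: the lemma is stated without proof as the totient analogue of Lemma~\ref{cor2.9analogue}, and your adaptation (Abel summation, the bound $F_\Phi(Y)=O(q^{2Y})$ forcing the radius $q^{-2}$, and Lemma~\ref{lem:totient_dirichlet_zeta} to identify the limit) is the direct transcription of that proof. No issues.
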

The lower bound can be obtained using this together with Lemmas \ref{Vivanti--Pringsheim} and \ref{combinatorial_lem}. We first consider the case where $\sqrt{q}$ is an inverse zero of $Z(u)$.

\begin{prop}\label{prop:totient_sqrtqcase}
    Let $g \geq 1$, and suppose that $\sqrt{q}$ is an inverse zero of $Z(u)$ with order $r \geq 2$. Then 
    \[
    \limsup_{X \to \infty} \frac{|R_\Phi(X)|}{X^{r - 1}q^{X/2}} > 0.
    \] 
\end{prop}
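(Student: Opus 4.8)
The plan is to mirror the proof of Proposition~\ref{prop:kfree_sqrtqcase} almost verbatim, the only structural change being that the Dirichlet series denominator is now $Z(u)$ rather than $Z(u^k)$, so the relevant pole is the one at $u=q^{-1/2}$ (the inverse zero $\sqrt q$ of $Z$) instead of at $q^{-1/2k}$, and the main term is $\MT_\Phi(X)=dq^{2X}$ with $d:=\frac{q^{1-g}h}{\zeta(2)(1-q^{-1})(q^2-1)}$. By Proposition~\ref{prop:totient_bound_above}, $R_\Phi(X)=O(X^{r-1}q^{X/2})$, so first I would fix a constant $c\ge 0$ and an integer $X_0$ with $|R_\Phi(X)|\le cX^{r-1}q^{X/2}$ for all $X\ge X_0$.

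Next I would examine the power series $\sum_{X=1}^\infty\bigl(F_\Phi(X)-\MT_\Phi(X)+cX^{r-1}q^{X/2}\bigr)u^{X-1}$, whose coefficients $R_\Phi(X)+cX^{r-1}q^{X/2}$ are nonnegative for $X\ge X_0$ and which converges for $|u|<q^{-1/2}$ by the bound just fixed. Using $\sum_{X=1}^\infty\MT_\Phi(X)u^{X-1}=\frac{dq^2}{1-q^2u}$ (valid for $|u|<q^{-2}$) together with Lemmas~\ref{cor2.9analogue_totient}, \ref{combinatorial_lem}, and~\ref{Vivanti--Pringsheim}, exactly as in Proposition~\ref{prop:kfree_sqrtqcase}, this series equals
\[
\frac{Z(qu)}{Z(u)(1-u)}-\frac{dq^2}{1-q^2u}+\frac{c}{u(1-q^{1/2}u)^r}\sum_{\ell=0}^{r-1}A(r-1,\ell)q^{(\ell+1)/2}u^{\ell+1}
\]
on a punctured neighbourhood of $0$. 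Since the left-hand side converges on $|u|<q^{-1/2}$, which contains the point $u=q^{-2}$, the pole at $q^{-2}$ on the right (contributed by the pole of $Z$ at $q^{-1}$ and by the main term) must cancel; hence the right-hand side is holomorphic on $|u|<q^{-1/2}$ and has a pole of order $r$ at the boundary point $u=q^{-1/2}$, so the series converges to a holomorphic function $G_\Phi(u)$ on $|u|<q^{-1/2}$ with a singularity at $q^{-1/2}$.

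Then I would multiply through by $(1-q^{1/2}u)^r$ and let $u\to q^{-1/2}$ through real values from below. As in Proposition~\ref{prop:kfree_sqrtqcase} the limit must be nonnegative: otherwise $G_\Phi(u)\to-\infty$, which is impossible since $G_\Phi(u)$ differs by a quantity bounded near $q^{-1/2}$ from the nonnegative tail $\sum_{X\ge X_0}\bigl(R_\Phi(X)+cX^{r-1}q^{X/2}\bigr)u^{X-1}$. Writing $1-q^{1/2}u=-q^{1/2}(u-q^{-1/2})$ and using the order-$r$ vanishing of $Z$ at $q^{-1/2}$, the limit of the right-hand side evaluates, via $\sum_{\ell}A(r-1,\ell)=r!$, to
\[
\frac{(-1)^rq^{r/2}\,r!\;Z(q^{1/2})}{Z^{(r)}(q^{-1/2})(1-q^{-1/2})}+c\,q^{1/2}\,r!\;\ge\;0.
\]
A key point, exactly as implicitly in the $k$-free case, is that $Z(q^{1/2})\ne 0$: one has $Z(q^{1/2})=L(q^{1/2})/\bigl((1-q^{1/2})(1-q^{3/2})\bigr)$ with $L(q^{1/2})=\prod_{j=1}^{2g}(1-\gamma_j q^{1/2})\ne 0$, since $|\gamma_j|=\sqrt q\ne q^{-1/2}$. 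Hence the first summand above is a nonzero real number, so the quantity $\tfrac{(-1)^rZ(q^{1/2})}{Z^{(r)}(q^{-1/2})(1-q^{-1/2})}$ is either strictly positive or strictly negative.

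If it is negative, the displayed inequality forces $c\ge c_0$ for \emph{every} constant $c$ with $R_\Phi(X)\ge -cX^{r-1}q^{X/2}$ eventually, where $c_0:=\bigl|\tfrac{q^{r/2}Z(q^{1/2})}{Z^{(r)}(q^{-1/2})(q^{1/2}-1)}\bigr|>0$, and hence $\liminf_{X\to\infty}\tfrac{R_\Phi(X)}{X^{r-1}q^{X/2}}\le -c_0<0$. If it is positive, running the identical argument with $-cX^{r-1}q^{X/2}$ in place of $+cX^{r-1}q^{X/2}$ (so the coefficients are eventually nonpositive and the limit must be nonpositive) gives $\limsup_{X\to\infty}\tfrac{R_\Phi(X)}{X^{r-1}q^{X/2}}\ge c_0>0$. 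Either way $\limsup_{X\to\infty}\tfrac{|R_\Phi(X)|}{X^{r-1}q^{X/2}}>0$. I do not expect a genuine obstacle: the argument is a direct transcription of Proposition~\ref{prop:kfree_sqrtqcase}, slightly simplified by the absence of the substitution $u\mapsto u^k$. The only places needing care are the bookkeeping in the generating-function identity---in particular checking that the pole at $u=q^{-2}$ coming from $\MT_\Phi$ cancels exactly---and the verification that $Z(q^{1/2})\ne 0$, which is what ensures the pole at $u=q^{-1/2}$ genuinely has order $r$ and hence that one of the two sign cases applies.
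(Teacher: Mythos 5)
Your argument is correct and is precisely the proof the paper intends: its own proof of this proposition is only a sketch deferring to Proposition~\ref{prop:kfree_sqrtqcase}, built on the same power series $\sum_{X\ge1}(R_\Phi(X)+cX^{r-1}q^{X/2})u^{X-1}$, the same use of Lemmas~\ref{Vivanti--Pringsheim}, \ref{cor2.9analogue_totient}, and~\ref{combinatorial_lem}, and the same sign condition (\ref{totient:signcondition}). Your explicit check that $Z(q^{1/2})\neq 0$ (so that the sign dichotomy is exhaustive) is a detail the paper leaves implicit but is indeed needed.
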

\begin{proof}
    The proof is very similar to the proof of Proposition \ref{prop:kfree_sqrtqcase}. The two cases depend on the sign of 
    \begin{align}\label{totient:signcondition}
        \frac{(-1)^rZ(q^{1/2})}{Z^{(r)}(q^{-1/2})(1 - q^{-1/2})}. 
    \end{align}
    The main idea is to consider the power series
    \[\sum_{X \geq 1} (R_\Phi(X) + cX^{r - 1}q^{X/2})u^{X - 1}\]
    and use Lemmas \ref{combinatorial_lem} and \ref{cor2.9analogue_totient} to find an expression equal to the power series which converges to a holomorphic function on $|u| < q^{-1/2}$ by Lemma \ref{Vivanti--Pringsheim}. The sign condition on (\ref{totient:signcondition}) is then used to obtain bounds on $c$ and therefore on the limit supremum. 
\end{proof}

Again, the case where $\sqrt{q}$ is not an inverse zero of maximal order follows a similar (though slightly more complicated) argument. 

\begin{prop}\label{prop:totient_rmax}
    Let $g \geq 1$. Suppose that $\gamma = \sqrt{q}e^{i \theta(\gamma_j)}$ is an inverse zero of $Z(u)$ of order $r \geq 2$, and that the order of the inverse zero at $\sqrt{q}$ is strictly less than $r$ (it may be 0). Then 
    \[
    \limsup_{X \to \infty} \frac{|R_\Phi(X)|}{X^{r - 1}q^{X/2}} > 0.
    \] 
\end{prop}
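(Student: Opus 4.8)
The plan is to mirror the proof of Proposition \ref{prop:kfree_rmax} (the analogue for $k$-free divisors), with $Z(u^k)$ replaced throughout by $Z(u)$ and the exponent $1/2k$ replaced by $1/2$. Write $\gamma = \sqrt{q}e^{i\theta(\gamma_j)}$ for the inverse zero of order $r \geq 2$ at which $\sqrt{q}$ (if it is a zero at all) has order strictly less than $r$; set $\theta = \theta(\gamma_j)$. Since $R_\Phi(X) = O(X^{r-1}q^{X/2})$ by Proposition \ref{prop:totient_bound_above}, there is a constant $c \geq 0$ and an integer $X_0$ with $|R_\Phi(X)| \leq cX^{r-1}q^{X/2}$ for $X \geq X_0$. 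Using Lemma \ref{cor2.9analogue_totient} together with Lemma \ref{combinatorial_lem} and the closed form $\sum_{X \geq 1}\MT_\Phi(X)u^{X-1} = \frac{d'q^2}{1-q^2u}$ (where $d' := \frac{q^{1-g}h}{\zeta(2)(1-q^{-1})(q^2-1)}$), one checks that the generating function
\[
\sum_{X=1}^\infty \bigl(R_\Phi(X) + cX^{r-1}q^{X/2}\bigr)u^{X-1}
\]
equals an expression whose only singularity in $|u| \leq q^{-1/2}$ is at $u = q^{-1/2}$ (the pole at $q^{-2}$ is removable), hence is holomorphic on $|u| < q^{-1/2}$; call this function $G(u)$, exactly as in the $k$-free case.

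Next I would form the averaged series
\[
\sum_{X=1}^\infty \bigl(R_\Phi(X) + cX^{r-1}q^{X/2}\bigr)\bigl(1 - \cos(\phi - (X-1)\theta)\bigr)u^{X-1},
\]
with the phase
\[
\phi = \pi - \arg\left(\frac{(-1)^r\gamma^r Z(q\gamma^{-1})}{Z^{(r)}(\gamma^{-1})(1-\gamma^{-1})}\right)
\]
chosen, as before, so that the contributions of the two rotated copies of $G$ reinforce rather than cancel. Writing $\cos(\phi-(X-1)\theta)u^{X-1}$ in terms of $(e^{\mp i\theta}u)^{X-1}$, this series equals $G(u) + \tfrac{e^{i\phi}}{2}G(ue^{-i\theta}) + \tfrac{e^{-i\phi}}{2}G(ue^{i\theta})$ for $|u| < q^{-1/2}$. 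Multiply through by $(1-q^{1/2}u)^r$ and let $u \to q^{-1/2}$ from the left along the reals. The first term contributes $cq^{1/2}r!$; for the second, substitute $v := ue^{-i\theta}$ so that $1 - q^{1/2}u = -\gamma(v - \gamma^{-1})$ and compute
\[
\lim_{v \to \gamma^{-1}} \frac{Z(qv)\bigl(-\gamma(v-\gamma^{-1})\bigr)^r}{Z(v)(1-v)} = \frac{(-1)^r\gamma^r Z(q\gamma^{-1})r!}{Z^{(r)}(\gamma^{-1})(1-\gamma^{-1})};
\]
by the choice of $\phi$ this term (after multiplying by $e^{i\phi}/2$) tends to $-\tfrac12\bigl|\tfrac{\gamma^r Z(q\gamma^{-1})r!}{Z^{(r)}(\gamma^{-1})(1-\gamma^{-1})}\bigr|$, and the conjugate-symmetry $\overline{Z(u)} = Z(\bar u)$ makes the third term contribute the same value. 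Since the averaged series has only finitely many negative terms (those with $X < X_0$) and a nonnegative tail, the Vivanti--Pringsheim theorem (Lemma \ref{Vivanti--Pringsheim}) forces the limit to be nonnegative, yielding
\[
cq^{1/2}r! \geq \left|\frac{\gamma^r Z(q\gamma^{-1})r!}{Z^{(r)}(\gamma^{-1})(1-\gamma^{-1})}\right|,
\]
hence $c \geq q^{-1/2}\bigl|\tfrac{\gamma^r Z(q\gamma^{-1})}{Z^{(r)}(\gamma^{-1})(1-\gamma^{-1})}\bigr| > 0$; the strict positivity uses $Z(q\gamma^{-1}) \neq 0$, which holds because $q\gamma^{-1} = \sqrt{q}e^{-i\theta}$ has the right modulus to be an inverse-zero reciprocal only in controlled cases — and since $\gamma$ itself is the zero of order $r$, $Z(q\gamma^{-1})$ is the value of $Z$ at a point that is a zero of $Z$ of order strictly less than $r$ (the order of the zero at $\sqrt{q}$), so in particular it is the appropriate nonzero quantity appearing in the leading Laurent coefficient. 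Running the same argument with $1 + \cos(\phi - (X-1)\theta)$ (equivalently, the opposite sign of $\phi$) gives $\limsup_{X\to\infty} R_\Phi(X)/(X^{r-1}q^{X/2}) > 0$, completing the proof.

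The main obstacle, as in Proposition \ref{prop:kfree_rmax}, is bookkeeping the phases so that the three rotated copies of $G$ genuinely add up to a divergent contribution at $u = q^{-1/2}$ rather than cancelling; this is exactly what the specific value of $\phi$ arranges, and the only substantive check beyond the $k$-free case is that the leading Laurent coefficient of $Z(qv)/\bigl(Z(v)(1-v)\bigr)$ at $v = \gamma^{-1}$ is nonzero — which follows since $\gamma$ is assumed to be a zero of $Z$ of order exactly $r$ and neither $1$ nor (by the hypothesis separating this case from Proposition \ref{prop:totient_sqrtqcase}) does $q\gamma^{-1}$ coincide with a zero of order $\geq r$ that would spoil the argument. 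All remaining steps are the routine translations of the $k$-free computations with $k = 1$ in the exponents.
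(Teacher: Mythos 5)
Your proposal is correct and follows exactly the route the paper intends: it omits the proof of this proposition precisely because it is the totient-analogue of the argument for Proposition \ref{prop:kfree_rmax}, with $Z(u^k)$ replaced by $Z(u)$, the numerator $Z(u)$ replaced by $Z(qu)$, and $q^{X/2k}$ replaced by $q^{X/2}$, which is what you carry out. One small remark: your justification that $Z(q\gamma^{-1}) \neq 0$ is muddled (you speak of it as a value at a zero of $Z$), but the fact itself is immediate since $q\gamma^{-1} = \overline{\gamma}$ has modulus $\sqrt{q}$ while every zero of $Z(u)$ has modulus $q^{-1/2}$ and its poles are at $1$ and $q^{-1}$.
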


\subsection{Limiting Distribution of the Error
Term}\label{subsec:limiting_distribution_totient}

In this subsection, we will compute the natural density in the positive integers of the set \[\mathcal{S}_{\Phi}(\beta) := \{X \in \mathbb{Z}^+ \ |\ |\ET_\Phi(X)| \leq \beta\}\] for a real constant $\beta > 0$. Many of the proofs in this section are identical, if not simpler, to the analogous results in Section \ref{subsec:limiting_distribution_kfree}, so they will be omitted for brevity. As before, let $\gamma_1,\ldots,\gamma_{2g}$ be the inverse zeros of $Z(u)$, assumed to be simple and ordered such that $\gamma_{j+g} = \overline{\gamma_j}$ for $j =1,\ldots, g$ and the argument of $\gamma_j$ lies in $[0,\pi]$ for $j= 1,\ldots,g$. As we saw in Section \ref{subsec:error_term_totient}, we can write \[\ET_{\Phi}(X) = E_{M,\Phi}(X) + O_{q,g}(q^{-X/2}),\] where \[E_{M,\Phi}(X) := -\sum_{j=1}^{2g} \frac{Z(q\gamma_j^{-1})}{Z'(\gamma_j^{-1})}\frac{\gamma_j}{\gamma_j-1}e^{i X \theta(\gamma_j)}.\] As before, we follow the methods employed by Humphries \cite{Hum12}.

Consider the group homomorphism $\varphi: \Z \to \T^g$ given by $\varphi(X) = (\exp(iX\theta(\gamma_j)))_{j=1}^{g}$. If we let $G$ denote the topological closure of the subgroup $\varphi(\Z)$ in $\T^g$, then by the Kronecker--Weyl theorem, $G$ is a closed subgroup of $\T^g$.

\begin{prop}\label{prop:totient_main_error_distribution}
Let $C/\mathbb{F}_q$ be a function field of genus $g \geq 1$, and suppose that all zeros of $Z(u)$ are simple. Consider the function $\alpha_\Phi: \T^g \to \R$ defined by \[\alpha_\Phi(z_1,\ldots,z_j) = -\sum_{j=1}^{g} \left(\frac{Z(q\gamma_j^{-1})}{Z'(\gamma_j^{-1})}\frac{\gamma_jz_j}{\gamma_j-1} + \frac{Z\left(q\overline{\gamma_j^{-1}}\right)}{Z'\left(\overline{\gamma_j^{-1}}\right)}\frac{\overline{\gamma_jz_j}}{\overline{\gamma_j}-1}\right)\] and the pushforward probability measure $\nu_\Phi := (\alpha_\Phi)_*\mu_G$, where $\mu_G$ is the Haar measure on $G$. For every continuous function $f: \R \to \R$, we have \[\lim_{N \to \infty} \frac{1}{N}\sum_{X=1}^N f(E_{M,\Phi}(X)) = \int_\R f(x) \, d\nu_\Phi(x),\] i.e., $\nu_\Phi$ is the limiting distribution for $E_{M,\Phi}(X)$.
\end{prop}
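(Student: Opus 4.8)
The plan is to mirror the proof of Proposition~\ref{prop:kfree_main_error_distribution}, the only genuinely new ingredient being a short bookkeeping step that folds the $2g$ inverse zeros into the $g$ conjugate pairs. First I would record the identity $\alpha_\Phi \circ \varphi(X) = E_{M,\Phi}(X)$ for every positive integer $X$. Since $L_{C/\F_q}(u) \in \Z[u]$ has real coefficients, $Z(\bar w) = \overline{Z(w)}$ and $Z'(\bar w) = \overline{Z'(w)}$ for all $w$; combining this with $\gamma_{j+g} = \overline{\gamma_j}$ — so that $\theta(\gamma_{j+g}) = -\theta(\gamma_j)$, $q\gamma_{j+g}^{-1} = \overline{q\gamma_j^{-1}}$, and $\gamma_{j+g}/(\gamma_{j+g}-1) = \overline{\gamma_j/(\gamma_j-1)}$ — shows that the $(j+g)$-th summand of $E_{M,\Phi}(X)$ is the complex conjugate of the $j$-th summand. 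Hence
\[
E_{M,\Phi}(X) = -\sum_{j=1}^g 2\,\re\!\left(\frac{Z(q\gamma_j^{-1})}{Z'(\gamma_j^{-1})}\frac{\gamma_j}{\gamma_j-1}\,e^{iX\theta(\gamma_j)}\right),
\]
and a parallel computation (using again $Z(q\overline{\gamma_j^{-1}}) = \overline{Z(q\gamma_j^{-1})}$, etc.) shows that the right-hand side is exactly $\alpha_\Phi$ evaluated at $\varphi(X) = (e^{iX\theta(\gamma_j)})_{j=1}^g$, the second group of terms in the definition of $\alpha_\Phi$ supplying the real part.

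Next I would observe that $\alpha_\Phi : \T^g \to \R$ is continuous and that its image $\alpha_\Phi(\T^g)$ is compact, being the continuous image of the compact torus. Therefore, for any continuous $f : \R \to \R$, the composite $f \circ \alpha_\Phi$ is a bounded continuous function on $\T^g$, and its restriction to the closed subgroup $G = \overline{\varphi(\Z)}$ is $\mu_G$-integrable. The Kronecker--Weyl theorem then asserts precisely that the sequence $\{\varphi(X)\}_{X \geq 1}$ is equidistributed in $G$ with respect to the Haar measure $\mu_G$, so that
\[
\lim_{N\to\infty} \frac1N \sum_{X=1}^N (f\circ\alpha_\Phi)(\varphi(X)) = \int_{G} (f\circ\alpha_\Phi)(x)\, d\mu_G(x).
\]
By the first step the left-hand side equals $\lim_{N\to\infty} \frac1N \sum_{X=1}^N f(E_{M,\Phi}(X))$, and by the defining change-of-variables property of the pushforward measure $\nu_\Phi = (\alpha_\Phi)_*\mu_G$ the right-hand side equals $\int_\R f(x)\,d\nu_\Phi(x)$; this is the asserted identity.

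I do not anticipate any real obstacle: the argument is a direct transcription of the $k$-free case, in fact simpler, since here $G$ is a single closed subgroup of $\T^g$ rather than the union of $k$ copies of $\T^g$ that arises under Linear Independence in Lemma~\ref{lem:kfree_torus}, and one need not even identify $G$ explicitly. The conjugate-pairing identity above is the only point requiring verification, and the one mild subtlety is that $f$ is assumed merely continuous rather than bounded or Lipschitz — this is harmless because $E_{M,\Phi}$ takes values in the compact set $\alpha_\Phi(\T^g)$, on which $f$ is automatically bounded, so no truncation or Portmanteau-type step is needed here (those enter only later, in the analogue of Proposition~\ref{prop:kfree_limiting_distribution}, when passing from $E_{M,\Phi}$ to the full error term $\ET_\Phi$).
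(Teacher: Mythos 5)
Your proposal is correct and follows essentially the same route as the paper: the paper's proof also consists of observing the identity $\alpha_\Phi \circ \varphi(X) = E_{M,\Phi}(X)$ (which your conjugate-pairing computation verifies in more detail) and then invoking the Kronecker--Weyl theorem together with the change-of-variables property of the pushforward measure, exactly as in Proposition \ref{prop:kfree_main_error_distribution}. Your added remark that continuity of $f$ suffices because $\alpha_\Phi(\T^g)$ is compact is a harmless and accurate elaboration of a point the paper leaves implicit.
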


\begin{proof}
Notice that $\alpha_\Phi \circ \varphi(X) = E_{M, \Phi}(X)$ for each positive integer $X$. The rest of the proof follows similarly to the proof of Proposition \ref{prop:kfree_main_error_distribution}.
\end{proof}

\begin{prop}\label{prop:totient_limiting_distribution}
Let $C/\mathbb{F}_q$ be a function field of genus $g \geq 1$, and suppose that all zeros of $Z(u)$ are simple. The overall error term $\ET_\Phi(X)$ has limiting distribution $\nu_\Phi = (\alpha_\Phi)_*\mu_G$. That is, for every continuous function $f: \R \to \R$, we have \[\lim_{N \to \infty} \frac{1}{N}\sum_{X=1}^N f(\ET_\Phi(X)) = \int_\R f(x) \, d\nu_\Phi(x).\] 
\end{prop}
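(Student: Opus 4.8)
The plan is to follow the proof of Proposition \ref{prop:kfree_limiting_distribution} almost verbatim, replacing the $k$-free data by the totient data; the argument uses only that $\ET_\Phi(X) = E_{M,\Phi}(X) + O_{q,g}(q^{-X/2})$ where $E_{M,\Phi}$ is a finite exponential sum whose limiting distribution is already identified in Proposition \ref{prop:totient_main_error_distribution}. Concretely, for each positive integer $n$ let $\widetilde{\nu}_n$ denote the discrete uniform probability measure on $\{1,\ldots,n\}$ and set $\lambda_n := (\ET_\Phi)_*\widetilde{\nu}_n$, so that $\frac1n\sum_{X=1}^n f(\ET_\Phi(X)) = \int_\R f\,d\lambda_n$ for every continuous $f \colon \R \to \R$. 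It then suffices to prove that $\lambda_n \Rightarrow \nu_\Phi$ weakly.

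First I would observe that $\ET_\Phi$ is bounded: the $2g$ summands of $E_{M,\Phi}(X)$ all have fixed modulus and the remainder is $O_{q,g}(1)$, so there is a fixed compact interval $K \subset \R$ containing the range of $\ET_\Phi$, and hence the support of every $\lambda_n$ and of $\nu_\Phi$. In particular $\{\widetilde{\nu}_n\}$ is tight, so by Prohorov's theorem \cite[Theorem 5.1]{Bil99} every subsequence of $\{\lambda_n\}$ has a weakly convergent subsubsequence, say $\lambda_{n_{\ell_t}} \Rightarrow \xi$. To identify $\xi$, fix a bounded Lipschitz function $f \colon \R \to \R$ with Lipschitz constant $c_f$; then
\[
\Bigl|\tfrac{1}{n_{\ell_t}}\sum_{X=1}^{n_{\ell_t}} f(\ET_\Phi(X)) - \tfrac{1}{n_{\ell_t}}\sum_{X=1}^{n_{\ell_t}} f(E_{M,\Phi}(X))\Bigr| \le \frac{c_f}{n_{\ell_t}} \sum_{X=1}^{n_{\ell_t}} \bigl|O_{q,g}(q^{-X/2})\bigr|,
\]
and the right-hand side tends to $0$ as $t \to \infty$ because $\sum_{X \ge 1} q^{-X/2} < \infty$. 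By Proposition \ref{prop:totient_main_error_distribution} the second average converges to $\int_\R f\,d\nu_\Phi$, so by the Portmanteau theorem \cite[Theorem 2.1]{Bil99} we get $\int_\R f\,d\xi = \int_\R f\,d\nu_\Phi$ for all bounded Lipschitz $f$, whence $\xi = \nu_\Phi$. Since every subsequence of $\{\lambda_n\}$ has a subsubsequence converging weakly to this common limit, we conclude $\lambda_n \Rightarrow \nu_\Phi$.

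Finally, to upgrade from bounded Lipschitz test functions to an arbitrary continuous $f \colon \R \to \R$, I would exploit that all of $\lambda_n$ and $\nu_\Phi$ are supported in the fixed compact set $K$: on $K$ the function $f$ is bounded and uniformly continuous and can be uniformly approximated by Lipschitz functions, which together with $\lambda_n \Rightarrow \nu_\Phi$ gives $\int_\R f\,d\lambda_n \to \int_\R f\,d\nu_\Phi$, i.e.\ $\lim_{N\to\infty}\frac1N\sum_{X=1}^N f(\ET_\Phi(X)) = \int_\R f\,d\nu_\Phi$. I do not anticipate any real obstacle here: the only structural inputs are the decomposition of $\ET_\Phi$ into a finite almost-periodic part plus a summable error and Proposition \ref{prop:totient_main_error_distribution}, and the compactness of the support — a feature special to the function-field setting — is precisely what makes the passage to general continuous $f$ routine. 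The one point requiring minor care is making sure the tail estimate $\sum_X q^{-X/2} < \infty$ is applied uniformly along the subsubsequence, exactly as in the proof of Proposition \ref{prop:kfree_limiting_distribution}.
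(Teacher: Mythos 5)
Your proposal is correct and follows essentially the same route as the paper, which simply defers to the proof of Proposition \ref{prop:kfree_limiting_distribution}: the same decomposition $\ET_\Phi = E_{M,\Phi} + O_{q,g}(q^{-X/2})$, the same tightness/Prohorov/Portmanteau argument, and the same identification of every subsequential limit with $\nu_\Phi$ via bounded Lipschitz test functions. Your explicit final step upgrading from Lipschitz to arbitrary continuous $f$ using the common compact support is a small point the paper leaves implicit, but it does not change the argument.
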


\begin{proof}
The proof is very similar to the proof of Proposition \ref{prop:kfree_limiting_distribution}.
\end{proof}
We will now explicitly compute the natural density of
\[\mathcal{S}_{\Phi}(\beta) := \{X \in \mathbb{Z}^+ \ |\ |\ET_{\Phi}(X)| \leq \beta\}\] in the set of positive integers. As before, we will need to assume the Linear Independence hypothesis for our result in order to apply the Kronecker--Weyl theorem.

{
\renewcommand{\thethm}{\ref{intro_thm:totient_lim_dist}}
\begin{thm}
Assume that $C/\F_q$ satisfies the Linear Independence hypothesis. The natural density of the set $\mathcal{S}_\Phi(\beta)$, denoted $\delta(\mathcal{S}_\Phi(\beta))$, exists and is equal to \[m\left(\sum_{j=1}^g 2\left|\frac{Z(q\gamma_j^{-1})}{Z'(\gamma_j^{-1})}\frac{\gamma_j}{\gamma_j-1}\right|\cos(\theta_j) \in [-\beta,\beta]\right) ,\]
where $m$ is the Lebesgue measure on $[0,2\pi)^g$.
\end{thm}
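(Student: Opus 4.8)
The plan is to follow the argument for Theorem \ref{intro_thm:kfree_limdist} in the simpler setting where the roots-of-unity bookkeeping disappears entirely. By Proposition \ref{prop:totient_limiting_distribution}, the normalized error term $\ET_\Phi(X)$ has limiting distribution $\nu_\Phi = (\alpha_\Phi)_*\mu_G$, where $G$ is the topological closure of $\varphi(\Z) = \{(e^{iX\theta(\gamma_1)},\dots,e^{iX\theta(\gamma_g)}) : X\in\Z\}$ in $\T^g$ and $\mu_G$ is its Haar measure. The first step is to identify $G$: under the Linear Independence hypothesis the set $\{\theta(\gamma_1),\dots,\theta(\gamma_g),\pi\}$ is $\Q$-linearly independent, hence so is $\{\theta(\gamma_1),\dots,\theta(\gamma_g),2\pi\}$, and the Kronecker--Weyl theorem gives $G = \T^g$ with $\mu_G$ equal to the normalized Lebesgue measure. (This is the analogue of Lemma \ref{lem:kfree_torus}, but here $G$ is simply the full $g$-torus rather than a disjoint union of $k$ copies of $\T^g$.)

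Next I would make the pushforward explicit. Identifying $\T^g$ with $[0,2\pi)^g$ via coordinates $(\theta_1,\dots,\theta_g)$ and pairing conjugate inverse zeros, the function $\alpha_\Phi$ becomes
\[
\alpha_\Phi(\theta_1,\dots,\theta_g) = -\sum_{j=1}^g 2\,\re\!\left(\frac{Z(q\gamma_j^{-1})}{Z'(\gamma_j^{-1})}\frac{\gamma_j}{\gamma_j-1}\,e^{i\theta_j}\right),
\]
where we have used $|\gamma_j|^2 = q$, so that $q\gamma_j^{-1} = \overline{\gamma_j}$. Writing $c_j := \frac{Z(q\gamma_j^{-1})}{Z'(\gamma_j^{-1})}\frac{\gamma_j}{\gamma_j-1}$ and performing the substitution $\theta_j \mapsto \theta_j - (\arg c_j - \pi)$, the translation invariance of the Lebesgue measure on $\T^g$ shows that for every Borel set $B\subset\R$,
\[
\nu_\Phi(B) = m\!\left(\sum_{j=1}^g 2|c_j|\cos(\theta_j)\in B\right).
\]

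Finally I would pass from the limiting distribution to the natural density exactly as in the proof of Theorem \ref{intro_thm:kfree_limdist}. The map $(\theta_1,\dots,\theta_g)\mapsto \sum_{j=1}^g 2|c_j|\cos\theta_j$ is real analytic and not uniformly constant on $[0,2\pi)^g$, so each of its level sets has Lebesgue measure zero; in particular $\nu_\Phi(\{-\beta\}\cup\{\beta\}) = 0$. Since the empirical measures $\lambda_n = (\ET_\Phi)_*\widetilde{\nu}_n$ converge weakly to $\nu_\Phi$ by (the proof of) Proposition \ref{prop:totient_limiting_distribution}, the Portmanteau theorem yields $\delta(\mc{S}_\Phi(\beta)) = \lim_{n\to\infty}\lambda_n([-\beta,\beta]) = \nu_\Phi([-\beta,\beta])$, which is the claimed formula upon taking $B = [-\beta,\beta]$. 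There is no serious obstacle here beyond those already handled in Section \ref{subsec:limiting_distribution_kfree}; the only steps needing any care are the clean extraction of $G = \T^g$ from the Linear Independence hypothesis and the verification that the relevant level sets are null, which is what makes the boundary term in the Portmanteau step vanish.
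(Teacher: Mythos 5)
Your proposal is correct and follows essentially the same route as the paper: identify $G = \T^g$ via the Kronecker--Weyl theorem under the Linear Independence hypothesis (so the Haar measure is the normalized Lebesgue measure on the full torus), then run the translation-invariance and Portmanteau argument exactly as in the proof of Theorem \ref{intro_thm:kfree_limdist}. The paper's own proof is just a terse pointer to that earlier argument, and your write-up fills in the same details (including the shift $\theta_j \mapsto \theta_j - (\arg c_j - \pi)$ and the null level-set verification) that the paper carries out in the $k$-free case.
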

\addtocounter{thm}{-1}
}

\begin{proof}
Due to the Linear Independence hypothesis, it follows by the Kronecker--Weyl theorem that the topological closure of $G$ in $\mathbb{T}^g$ is the entirety of $\mathbb{T}^g$. Thus, the normalized Haar measure on $G$ is the Lebesgue measure on the $g$-torus. The remainder of the proof proceeds as in the proof of Theorem \ref{intro_thm:kfree_limdist} in Section \ref{subsec:limiting_distribution_kfree}.
\end{proof}

By the same arguments that we made for Corollary \ref{intro_cor:kfree_no_bias}, we deduce that the normalized error term $\ET_\Phi(X)$ is unbiased.

{
\renewcommand{\thethm}{\ref{intro_cor:totient_no_bias}}
\begin{cor}
    Let $C/\mathbb{F}_q$ be a function field of genus $g \geq 1$, and suppose that $C$ satisfies the Linear Independence hypothesis. Then the natural densities of the sets \[S_\Phi^+ = \left\{X \in \Z^+ \ |\ \ET_\Phi(X) > 0 \right\} \quad \text{and} \quad S_\Phi^- = \left\{X \in \Z^+\ |\ \ET_\Phi(X) < 0 \right\}\] exist and are given by \[\delta(S_\Phi^+) = \delta(S_\Phi^-) = \frac12.\]
\end{cor}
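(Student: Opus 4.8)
The plan is to mirror the proof of Corollary~\ref{intro_cor:kfree_no_bias} almost verbatim; the only simplification is that under the Linear Independence hypothesis the relevant subgroup $G$ is all of $\T^g$ rather than a disjoint union of subtori. First I would extract from the proof of Theorem~\ref{intro_thm:totient_lim_dist} (via Proposition~\ref{prop:totient_limiting_distribution} and the Portmanteau theorem) the fact that the pushforward measures $\lambda_n := (\ET_\Phi)_*\widetilde{\nu}_n$ converge weakly to $\nu_\Phi$, so that
\[
\delta(\{X \in \Z^+ \mid \ET_\Phi(X) \in B\}) = \nu_\Phi(B) = m\left(\sum_{j=1}^g 2\left|\frac{Z(q\gamma_j^{-1})}{Z'(\gamma_j^{-1})}\frac{\gamma_j}{\gamma_j-1}\right|\cos\theta_j \in B\right)
\]
for every Borel set $B \subset \R$ whose boundary is $\nu_\Phi$-null. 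This applies in particular to $B = (-\infty,0)$ and $B = (0,\infty)$, whose common boundary $\{0\}$ is a level set of the map $(\theta_1,\dots,\theta_g) \mapsto \sum_j 2|c_j|\cos\theta_j$, where $c_j := \frac{Z(q\gamma_j^{-1})}{Z'(\gamma_j^{-1})}\frac{\gamma_j}{\gamma_j-1}$. This map is real analytic and not uniformly constant, since each $c_j \ne 0$: the Riemann hypothesis for function fields gives $|q\gamma_j^{-1}| = \sqrt q$, so $q\gamma_j^{-1}=\overline{\gamma_j}$ is neither a zero nor a pole of $Z$, whence $Z(q\gamma_j^{-1}) \ne 0$, while $Z'(\gamma_j^{-1})\ne 0$ by simplicity of the zeros and $\gamma_j \ne 1$ since $|\gamma_j|=\sqrt q>1$. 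Hence the level set has Lebesgue measure zero.

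Next I would invoke the reflection/translation invariance of the Lebesgue measure via the substitution $\theta_j \mapsto \pi - \theta_j$ in each coordinate, exactly as in the proof of Corollary~\ref{intro_cor:kfree_no_bias}: since $\cos(\pi - \theta_j) = -\cos\theta_j$, this substitution identifies the event $\{\sum_j 2|c_j|\cos\theta_j \in (-\infty,0)\}$ with $\{\sum_j 2|c_j|\cos\theta_j \in (0,\infty)\}$, and therefore $\delta(S_\Phi^-) = \nu_\Phi((-\infty,0)) = \nu_\Phi((0,\infty)) = \delta(S_\Phi^+)$. Finally, finite additivity of the natural density on the partition $\Z^+ = S_\Phi^+ \sqcup S_\Phi^- \sqcup \{X \mid \ET_\Phi(X) = 0\}$, together with $\delta(\{X \mid \ET_\Phi(X) = 0\}) = \nu_\Phi(\{0\}) = 0$ (the same real-analyticity argument), forces $\delta(S_\Phi^+) = \delta(S_\Phi^-) = \tfrac12$.

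There is essentially no genuine obstacle here — this really is a corollary, and the argument is structurally identical to the $k$-free case. The one point that deserves to be stated with a little care, and the only place where the totient computation differs in substance from the $k$-free one, is the verification that the coefficients $c_j$ do not all vanish, so that $\nu_\Phi$ carries no atom at $0$ and the ``equally often'' assertion is not vacuous; as indicated above this is immediate from the Riemann hypothesis, which pins down $|q\gamma_j^{-1}| = \sqrt q$ and hence $Z(q\gamma_j^{-1}) \ne 0$.
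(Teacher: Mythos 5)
Your proposal is correct and follows essentially the same route as the paper, which proves this corollary by citing the argument for Corollary \ref{intro_cor:kfree_no_bias} (Portmanteau applied to $(-\infty,0)$ and $(0,\infty)$, then the reflection $\theta_j \mapsto \pi - \theta_j$), with the totient case being the simpler one since $G = \T^g$. Your explicit verification that each coefficient $\frac{Z(q\gamma_j^{-1})}{Z'(\gamma_j^{-1})}\frac{\gamma_j}{\gamma_j-1}$ is nonzero (via $q\gamma_j^{-1} = \overline{\gamma_j}$ and the Riemann hypothesis) is a detail the paper leaves implicit in its ``not uniformly constant'' claim, and is a welcome addition rather than a deviation.
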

\addtocounter{thm}{-1}
}

Similar to Section \ref{subsec:limiting_distribution_kfree}, we conclude by calculating the Fourier transform of the limiting distribution $\nu_\Phi$. As in the case of Proposition \ref{prop:fourier_kfree}, the proof of the following proposition is a routine calculation and is therefore omitted.

\begin{prop}\label{prop:fourier_totient}
Assume that $C/\F_q$ satisfies the Linear Independence hypothesis. The Fourier transform of $\nu_\Phi$ exists and is given by \[\widehat{\mu}_\Phi(y) = \prod_{j=1}^g J_0\left(2\left|\frac{\gamma_j}{\gamma_j - 1}\right|\left|\frac{Z(q\gamma_j^{-1})}{Z'(\gamma_j^{-1})}\right|y\right),\] where \[J_0(z) = \sum_{m=0}^\infty \frac{(-1)^m(z/2)^{2m}}{(m!)^2}\] is the Bessel function of the first kind.
\end{prop}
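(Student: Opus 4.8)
The plan is to reduce the computation of $\widehat{\nu}_\Phi$ to a product of independent random-variable characteristic functions, exploiting the structure established in Theorem~\ref{intro_thm:totient_lim_dist}. Under the Linear Independence hypothesis, the Kronecker--Weyl theorem gives that $G = \T^g$ and the Haar measure $\mu_G$ is the normalized Lebesgue measure on $[0,2\pi)^g$. By translation invariance of this measure (as in the proof of Theorem~\ref{intro_thm:totient_lim_dist}), the limiting distribution $\nu_\Phi$ is the law of the random variable $\sum_{j=1}^g 2\left|\frac{Z(q\gamma_j^{-1})}{Z'(\gamma_j^{-1})}\frac{\gamma_j}{\gamma_j-1}\right|\cos(\theta_j)$, where $\theta_1,\ldots,\theta_g$ are independent and uniformly distributed on $[0,2\pi)$. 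Writing $a_j := 2\left|\frac{Z(q\gamma_j^{-1})}{Z'(\gamma_j^{-1})}\frac{\gamma_j}{\gamma_j-1}\right|$, the Fourier transform factors as
\[
\widehat{\nu}_\Phi(y) = \int_{[0,2\pi)^g} e^{iy\sum_{j=1}^g a_j \cos\theta_j}\,\frac{d\theta_1\cdots d\theta_g}{(2\pi)^g} = \prod_{j=1}^g \left(\frac{1}{2\pi}\int_0^{2\pi} e^{iy a_j\cos\theta_j}\,d\theta_j\right).
\]

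The key step is then the classical integral identity $\frac{1}{2\pi}\int_0^{2\pi} e^{iz\cos\theta}\,d\theta = J_0(z)$, which I would verify by expanding $e^{iz\cos\theta}$ as a power series in $z$, interchanging sum and integral (justified by uniform convergence on the compact interval), and using $\frac{1}{2\pi}\int_0^{2\pi}\cos^{2m}\theta\,d\theta = \binom{2m}{m}/4^m$ together with $\frac{1}{2\pi}\int_0^{2\pi}\cos^{2m+1}\theta\,d\theta = 0$. Collecting terms yields exactly $\sum_{m\geq 0} \frac{(-1)^m(z/2)^{2m}}{(m!)^2} = J_0(z)$. Applying this with $z = a_j y$ for each $j$ and substituting back into the product gives
\[
\widehat{\nu}_\Phi(y) = \prod_{j=1}^g J_0(a_j y) = \prod_{j=1}^g J_0\left(2\left|\frac{\gamma_j}{\gamma_j-1}\right|\left|\frac{Z(q\gamma_j^{-1})}{Z'(\gamma_j^{-1})}\right|y\right),
\]
which is the claimed formula (noting $\widehat{\mu}_\Phi$ in the statement is a typo for $\widehat{\nu}_\Phi$).

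There is essentially no serious obstacle here: the only subtlety is confirming that the random variable representation from Theorem~\ref{intro_thm:totient_lim_dist} correctly identifies the distribution whose Fourier transform we want, i.e., that $\nu_\Phi$ is genuinely the pushforward of Lebesgue measure under the map $(\theta_j) \mapsto \sum_j a_j\cos\theta_j$ rather than the map involving the complex coefficients $\frac{Z(q\gamma_j^{-1})}{Z'(\gamma_j^{-1})}\frac{\gamma_j}{\gamma_j-1}$ with their phases. This is handled precisely by the translation-invariance argument already carried out in the proof of Theorem~\ref{intro_thm:totient_lim_dist}: replacing $\theta_j$ by $\theta_j - \arg\!\left(\frac{Z(q\gamma_j^{-1})}{Z'(\gamma_j^{-1})}\frac{\gamma_j}{\gamma_j-1}\right)$ turns $2\Re\!\left(\frac{Z(q\gamma_j^{-1})}{Z'(\gamma_j^{-1})}\frac{\gamma_j}{\gamma_j-1}e^{i\theta_j}\right)$ into $a_j\cos\theta_j$, and this change of variables preserves Lebesgue measure. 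Since this reduction and the Bessel integral are both standard, the proof can be stated briefly, as in \cite[Theorem 3.4]{Cha08} and the analogous computation in \cite{RS94}; indeed, it is the totient analogue of Proposition~\ref{prop:fourier_kfree} with the inner sum over $k$th roots of unity collapsed to a single term.
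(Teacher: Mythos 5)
Your proof is correct and is exactly the ``routine calculation'' the paper has in mind: it omits the argument and points to \cite[Theorem 3.4]{Cha08} and \cite{RS94}, which proceed precisely as you do --- Kronecker--Weyl to identify $\mu_G$ with normalized Lebesgue measure on $\T^g$, translation invariance to strip the phases of the coefficients, factorization of the integral over independent angles, and the classical identity $\frac{1}{2\pi}\int_0^{2\pi} e^{iz\cos\theta}\,d\theta = J_0(z)$. The one cosmetic point you glossed over (the overall minus sign in $\alpha_\Phi$, which the paper absorbs via the shift $\theta_j \mapsto \theta_j - (\arg(\cdot)-\pi)$) is harmless since $J_0$ is even.
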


\subsection{Global Behavior of the Error Term on Families of Hyperelliptic Curves}\label{subsec:global_totient}
In the same vein as our work in Section \ref{subsec:global_kfree}, we will now analyze how the bound \[B_{\Phi}(C/\F_q) := \limsup_{X \to \infty} |\ET_\Phi(X)|\] behaves on average for function fields corresponding to hyperelliptic curves $C \in \mathcal{H}_{2g + 1, q^n}$ for fixed $g$ and $q$, in the limit $n$ approaches $\infty$. 

Under the assumption of the Linear Independence hypothesis, we first derive an explicit expression for $B_{\Phi}(C/\F_q)$. By using the fact that the set \[\left\{\left(e^{iX\theta(\gamma_1)},\ldots,e^{iX\theta(\gamma_g)}\right)\right\}_{X \in \Z}\] is dense in the $g$-torus $\T^g$ (thanks to the Kronecker--Weyl theorem), we can deduce the following proposition. Since its proof is quite similar in spirit to the proof of Proposition \ref{prop:kfree_bcfq} and the proof of \cite[Theorem 2.6]{Hum12}, we state the result without proof. 

\begin{prop}\label{prop:totient_bcfq}
Assume that $C/\F_q$ satisfies the Linear Independence hypothesis. Then, \[B_\Phi(C/\F_q) = \sum_{j=1}^{2g}\left|\frac{Z(q\gamma_j^{-1})}{Z'(\gamma_j^{-1})}\frac{\gamma_j}{\gamma_j-1}\right|.\]
\end{prop}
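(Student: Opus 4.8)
The plan is to mirror the proof of Proposition~\ref{prop:kfree_bcfq}, which is itself modelled on \cite[Theorem 2.6]{Hum12}; the totient case is in fact simpler since there is no sum over residue classes modulo $k$. Starting from the explicit formula for $\ET_\Phi(X)$ in Proposition~\ref{prop:simple_zeros_phi}, I would first use that the inverse zeros come in conjugate pairs $\gamma_{j+g} = \overline{\gamma_j}$ together with $\overline{Z(u)} = Z(\overline{u})$ to rewrite the main term as
\[
\ET_\Phi(X) = -2\sum_{j=1}^{g}\operatorname{Re}\left(\frac{Z(q\gamma_j^{-1})}{Z'(\gamma_j^{-1})}\frac{\gamma_j}{\gamma_j-1}e^{iX\theta(\gamma_j)}\right) + O_{q,g}(q^{-X/2}).
\]
The upper bound $B_\Phi(C/\F_q) \le \sum_{j=1}^{2g}\left|\frac{Z(q\gamma_j^{-1})}{Z'(\gamma_j^{-1})}\frac{\gamma_j}{\gamma_j-1}\right|$ is then immediate from the triangle inequality applied to the original $2g$-term sum, since each $|e^{iX\theta(\gamma_j)}| = 1$ and the error term tends to $0$ as $X \to \infty$.

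For the matching lower bound I would invoke the Kronecker--Weyl theorem under the Linear Independence hypothesis exactly as in Lemma~\ref{lem:kfree_modk_dense} specialized to $k = 1$: the set $\{(e^{iX\theta(\gamma_1)},\ldots,e^{iX\theta(\gamma_g)}) \mid X \in \Z\}$ is dense in $\T^g$. Hence there is a sequence $X_\ell \to \infty$ along which $e^{iX_\ell\theta(\gamma_j)} \to e^{i\psi_j}$ simultaneously for all $j = 1,\ldots,g$, where $\psi_j := \pi - \arg\left(\frac{Z(q\gamma_j^{-1})}{Z'(\gamma_j^{-1})}\frac{\gamma_j}{\gamma_j-1}\right)$ is chosen precisely so that $-\frac{Z(q\gamma_j^{-1})}{Z'(\gamma_j^{-1})}\frac{\gamma_j}{\gamma_j-1}e^{i\psi_j}$ is a nonnegative real number equal to $\left|\frac{Z(q\gamma_j^{-1})}{Z'(\gamma_j^{-1})}\frac{\gamma_j}{\gamma_j-1}\right|$. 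Along this sequence each of the $g$ summands in the displayed formula converges to its modulus, so
\[
\limsup_{X\to\infty} \ET_\Phi(X) \ge 2\sum_{j=1}^{g}\left|\frac{Z(q\gamma_j^{-1})}{Z'(\gamma_j^{-1})}\frac{\gamma_j}{\gamma_j-1}\right| = \sum_{j=1}^{2g}\left|\frac{Z(q\gamma_j^{-1})}{Z'(\gamma_j^{-1})}\frac{\gamma_j}{\gamma_j-1}\right|,
\]
and combining with the upper bound yields the claimed equality. (The same argument with $\psi_j$ replaced by $-\arg(\cdots)$ gives $\liminf_{X\to\infty}\ET_\Phi(X) = -\sum_{j=1}^{2g}|\cdots|$, the totient analogue of the final line of Proposition~\ref{prop:kfree_bcfq}, although this is not needed for the statement.)

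There is no genuine obstacle here. The only two points meriting a word of care are that the $O_{q,g}(q^{-X/2})$ term really does vanish along the chosen subsequence, so it does not perturb the $\limsup$, and that choosing $e^{iX\theta(\gamma_j)}$ to align the $j$-th summand automatically aligns the conjugate $(j+g)$-th summand, so one is optimizing over the $g$ free angles $\theta(\gamma_1),\ldots,\theta(\gamma_g)$ rather than all $2g$ of them; both are handled exactly as in the $k$-free case, which is why the paper states this proposition without a detailed proof.
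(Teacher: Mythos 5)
Your proposal is correct and is essentially the proof the paper has in mind: it is the direct specialization of the argument for Proposition \ref{prop:kfree_bcfq} (upper bound by the triangle inequality, lower bound by using Kronecker--Weyl under Linear Independence to align the phases $e^{iX\theta(\gamma_j)}$ with the arguments of the coefficients), which is exactly why the paper states Proposition \ref{prop:totient_bcfq} without proof. Your two closing remarks --- that the $O_{q,g}(q^{-X/2})$ term vanishes along the subsequence and that aligning the $j$-th summand automatically aligns its conjugate --- are the right points of care and are handled as in the $k$-free case.
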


As before, we will work with the function \[\varphi(U) = \sum_{j = 0}^{2g} \frac{1}{|\mathcal{Z}'(\theta_j)|}\] to study the global behavior of $B_{\Phi}(C/\F_q)$ on families of hyperelliptic curves. Recall that $\vartheta(C/\mathbb{F}_q)$ denotes the unitarized Frobenius conjugacy class of $C/\mathbb{F}_q$. 
\begin{lem}\label{Bk to varphi totient}
    If $C/\F_q$ satisfies the Linear Independence hypothesis, then \[\frac{B_\Phi(C/\mathbb{F}_q)}{q^{2g - 2}} = \varphi(\vartheta(C/\mathbb{F}_q))(1 + O_g(q^{-1/2})).\] 
\end{lem}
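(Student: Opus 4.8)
The plan is to start from the explicit formula for $B_\Phi(C/\F_q)$ supplied by Proposition \ref{prop:totient_bcfq} and estimate the size of each summand $\left|\frac{Z(q\gamma_j^{-1})}{Z'(\gamma_j^{-1})}\frac{\gamma_j}{\gamma_j-1}\right|$ in the large-$q$ limit, following the strategy of the proof of Proposition \ref{Bk to varphi}. Throughout one uses the Riemann hypothesis for function fields (so $|\gamma_m| = \sqrt q$ for all $m$) together with the functional equation for $L$, which gives $\prod_{m=1}^{2g}\gamma_m = q^g$ and the identity $\mathcal{Z}_{\vartheta(C/\F_q)}(\theta) = L(q^{-1/2}e^{-i\theta})$ recorded earlier.

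First I would estimate $Z(q\gamma_j^{-1})$. Writing $Z(u) = \frac{L(u)}{(1-u)(1-qu)}$ with $L(u) = \prod_{m}(1-\gamma_m u)$, one pulls out the dominant factor from each of the roughly $2g+2$ linear factors: $1-\gamma_m q\gamma_j^{-1} = -\gamma_m q\gamma_j^{-1}\bigl(1 - q^{-1}\gamma_j\gamma_m^{-1}\bigr)$ with $|q^{-1}\gamma_j\gamma_m^{-1}| = q^{-1}$, and likewise $1 - q\gamma_j^{-1} = -q\gamma_j^{-1}(1-q^{-1}\gamma_j)$ and $1 - q^2\gamma_j^{-1} = -q^2\gamma_j^{-1}(1-q^{-2}\gamma_j)$. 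The product of the pulled-out constants telescopes using $\prod_m\gamma_m = q^g$, the surviving factors are all $1 + O_g(q^{-1/2})$, and one obtains $Z(q\gamma_j^{-1}) = q^{3g-3}\gamma_j^{2-2g}\bigl(1 + O_g(q^{-1/2})\bigr)$, hence $\bigl|Z(q\gamma_j^{-1})\bigr| = q^{2g-2}\bigl(1+O_g(q^{-1/2})\bigr)$.

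Next I would handle $\frac{\gamma_j}{Z'(\gamma_j^{-1})}$ exactly as in the proof of Proposition \ref{Bk to varphi}: since $L(\gamma_j^{-1}) = 0$ one gets $Z'(\gamma_j^{-1}) = \frac{L'(\gamma_j^{-1})}{(1-\gamma_j^{-1})(1-q\gamma_j^{-1})}$ with $L'(\gamma_j^{-1}) = -\gamma_j\prod_{m\neq j}(1-\gamma_m\gamma_j^{-1})$, and differentiating $\mathcal{Z}_{\vartheta(C/\F_q)}(\theta) = L(q^{-1/2}e^{-i\theta})$ at $\theta = \theta(\gamma_j)$ identifies $\bigl|\prod_{m\neq j}(1-\gamma_m\gamma_j^{-1})\bigr| = \bigl|\mathcal{Z}'_{\vartheta(C/\F_q)}(\theta(\gamma_j))\bigr|$. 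Combining this with $|1-\gamma_j^{-1}| = 1+O(q^{-1/2})$ and $|1-q\gamma_j^{-1}| = \sqrt q\,(1+O(q^{-1/2}))$ gives $\left|\frac{\gamma_j}{Z'(\gamma_j^{-1})}\right| = \frac{\sqrt q}{|\mathcal{Z}'_{\vartheta(C/\F_q)}(\theta(\gamma_j))|}\bigl(1+O_g(q^{-1/2})\bigr)$, while $|\gamma_j - 1| = \sqrt q\,(1+O(q^{-1/2}))$ gives $\left|\frac{1}{\gamma_j-1}\right| = \frac{1}{\sqrt q}\bigl(1+O(q^{-1/2})\bigr)$. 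Multiplying the three estimates, the powers of $\sqrt q$ cancel and each summand equals $\frac{q^{2g-2}}{|\mathcal{Z}'_{\vartheta(C/\F_q)}(\theta(\gamma_j))|}\bigl(1+O_g(q^{-1/2})\bigr)$; summing over $j = 1,\ldots,2g$ and recalling the definition of $\varphi$ yields $\frac{B_\Phi(C/\F_q)}{q^{2g-2}} = \varphi(\vartheta(C/\F_q))\bigl(1+O_g(q^{-1/2})\bigr)$.

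The main obstacle is bookkeeping rather than anything conceptual: one must pull out the correct dominant factor from every linear term in $Z(q\gamma_j^{-1})$ so that the accumulated error is genuinely $O_g(q^{-1/2})$ (several individual factors are in fact closer, e.g. $O_g(q^{-1})$ or $O_g(q^{-3/2})$), and, crucially, verify that the main terms do not cancel — that is, that the telescoping really produces $\prod_m\gamma_m = q^g$ and hence leading order $q^{2g-2}$. As in the $k$-free case, one also notes that only the modulus of $\frac{\gamma_j}{Z'(\gamma_j^{-1})}$ enters $B_\Phi$, so the argument of this quantity may be discarded throughout.
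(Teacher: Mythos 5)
Your proposal is correct and follows essentially the same route as the paper's proof: it starts from Proposition \ref{prop:totient_bcfq}, factors the dominant terms out of each linear factor of $Z(q\gamma_j^{-1})$ to get $\bigl|Z(q\gamma_j^{-1})\bigr| = q^{2g-2}\bigl(1+O_g(q^{-1/2})\bigr)$, and identifies $\bigl|\prod_{m\neq j}(1-\gamma_m\gamma_j^{-1})\bigr|$ with $\bigl|\mathcal{Z}'_{\vartheta(C/\mathbb{F}_q)}(\theta(\gamma_j))\bigr|$ exactly as the paper does. The only cosmetic difference is that you treat $\gamma_j/Z'(\gamma_j^{-1})$ and $1/(\gamma_j-1)$ separately where the paper combines them, and you invoke the functional equation for the exact value $\prod_m \gamma_m = q^g$ where the paper only needs its modulus from the Riemann hypothesis.
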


\begin{proof}
    By Proposition \ref{prop:totient_bcfq}, we have \[B_\Phi(C/\mathbb{F}_q) = \sum_{j=1}^{2g}\left|\frac{Z(q\gamma_j^{-1})}{Z'(\gamma_j^{-1})}\frac{\gamma_j}{\gamma_j-1}\right|.\] Using the fact that $|\gamma_j| = q^{1/2}$, we can write
    \begin{align*}
        Z(q\gamma_j^{-1}) &= \frac{\prod_{m = 1}^{2g} (1 - q\gamma_j^{-1}\gamma_m)}{(1 - q\gamma_j^{-1})(1 - q^2\gamma_j^{-1})} \\
        &= q^{2g - 3} \gamma_j^{-2g + 2}\prod_{m = 1}^{2g} \gamma_m \cdot \left(\frac{\prod_{m = 1}^{2g} (1 - q^{-1}\gamma_j\gamma_m^{-1})}{(1 - q^{-1}\gamma_j)(1 - q^{-2}\gamma_j)}\right). 
    \end{align*}
    Since $|q^{-1}\gamma\gamma_j^{-1}| = q^{-1}$, $|q^{-1}\gamma| = q^{-1/2}$, and $|q^{-2}\gamma| = q^{-3/2}$, we have \[\frac{\prod_{m = 1}^{2g} (1 - q^{-1}\gamma_j\gamma_m^{-1})}{(1 - q^{-1}\gamma_j)(1 - q^{-2}\gamma_j)} = 1 + O_g(q^{-1/2}).\] Meanwhile, $|\gamma_j| = |\gamma_m| = q^{1/2}$, so we have \[|Z(q\gamma_j^{-1})| = q^{2g - 2}(1 + O_g(q^{-1/2})).\] On the other hand, \[\frac{\gamma_j}{Z'(\gamma_j^{-1})(\gamma_j - 1)} = -\frac{(1 - q\gamma_j^{-1})}{\gamma_j \prod_{m \neq j} (1 - \gamma_j^{-1}\gamma_m)} = \frac{q\gamma_j^{-2} - \gamma_j^{-1}}{\mathcal{Z}'(\theta(\gamma_j))}\] has absolute value $\frac{1}{|\mathcal{Z}'(\theta(\gamma_j))|}(1 + O_g(q^{-1/2}))$, so every term contributes \[\left|\frac{Z(q\gamma_j^{-1})}{Z'(\gamma_j^{-1})}\cdot \frac{\gamma_j}{\gamma_j - 1}\right| = \frac{q^{2g - 2}}{|\mathcal{Z}'(\theta(\gamma_j))|}(1 + O_g(q^{-1/2})).\] Summing over all inverse zeros gives the desired result. 
\end{proof}

In light of Proposition \ref{Bk to varphi totient}, we normalize the error bound to \[\widetilde{B}_\Phi(C/\mathbb{F}_q) := \frac{B_\Phi(C/\mathbb{F}_q)}{q^{2g - 2}}.\] Using the same facts about the behaviour of $\varphi$ and the correspondence between $\vartheta(C/\mathbb{F}_q)$ and $U \in \mathrm{USp}_{2g}(\mathbb{C})$ as that we established in Subsection \ref{subsec:global_kfree}, we have the following result:

\begin{lem}
    For any $\beta > 0$, we have \[\lim_{n \to \infty} \frac{\#\{C \in \mathcal{H}_{2g + 1, q^n} \mid \widetilde{B}_\Phi(C/\mathbb{F}_{q^n}) \leq \beta\}}{\#\mathcal{H}_{2g + 1, q^n}} = \mu_{\mathrm{Haar}}(\{\varphi(U) \leq \beta\}).\] 
\end{lem}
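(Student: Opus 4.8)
The plan is to follow the proof of Lemma~\ref{Bk and haar} essentially verbatim, substituting the totient-specific comparison estimate of Lemma~\ref{Bk to varphi totient} for the $k$-free estimate of Proposition~\ref{Bk to varphi}. First I would write
\[
\frac{\#\{C \in \mathcal{H}_{2g+1,q^n} \mid \widetilde{B}_\Phi(C/\mathbb{F}_{q^n}) \leq \beta\}}{\#\mathcal{H}_{2g+1,q^n}}
= \frac{\#\{C \in \mathcal{H}_{2g+1,q^n} \mid \varphi(\vartheta(C/\mathbb{F}_{q^n})) \leq \beta\}}{\#\mathcal{H}_{2g+1,q^n}} + \Delta_n,
\]
where $\Delta_n$ collects the curves for which exactly one of the two conditions $\widetilde{B}_\Phi(C/\mathbb{F}_{q^n}) \leq \beta$ and $\varphi(\vartheta(C/\mathbb{F}_{q^n})) \leq \beta$ holds. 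The first term on the right converges to $\mu_{\mathrm{Haar}}(\{\varphi(U) \leq \beta\})$ by Deligne's Equidistribution Theorem in the form of Corollary~\ref{Deligne restatement}, applied to the central set $\{\varphi(U) \leq \beta\}$, whose boundary has Haar measure zero by \cite[Lemma~2.8]{Hum13} (stated above, and valid here since $\varphi$ denotes the same function $\varphi(U) = \sum_{j=1}^{2g} |\mathcal{Z}'(\theta_j)|^{-1}$ as in Subsection~\ref{subsec:global_kfree}). So it remains to show $\Delta_n \to 0$.

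Next, I would fix $\varepsilon > 0$ and partition the curves contributing to $\Delta_n$ into four classes exactly as in the proof of Lemma~\ref{Bk and haar}: (1) those for which $C/\mathbb{F}_{q^n}$ fails the Linear Independence hypothesis; (2) those with $\varphi(\vartheta(C/\mathbb{F}_{q^n})) \in (\beta - \varepsilon, \beta + \varepsilon)$; (3) those satisfying Linear Independence with $\widetilde{B}_\Phi(C/\mathbb{F}_{q^n}) \leq \beta$ but $\varphi(\vartheta(C/\mathbb{F}_{q^n})) \geq \beta + \varepsilon$; and (4) those satisfying Linear Independence with $\widetilde{B}_\Phi(C/\mathbb{F}_{q^n}) > \beta$ but $\varphi(\vartheta(C/\mathbb{F}_{q^n})) \leq \beta - \varepsilon$. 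Class (1) has vanishing proportion as $n \to \infty$ by Proposition~\ref{LI usually holds}. For classes (3) and (4), Lemma~\ref{Bk to varphi totient} supplies a constant $c$ depending only on $g$ such that, whenever Linear Independence holds,
\[
(1 - cq^{-n/2})\,\varphi(\vartheta(C/\mathbb{F}_{q^n})) \leq \widetilde{B}_\Phi(C/\mathbb{F}_{q^n}) \leq (1 + cq^{-n/2})\,\varphi(\vartheta(C/\mathbb{F}_{q^n})).
\]
Thus class (3) would force $(1 - cq^{-n/2})(\beta + \varepsilon) \leq \beta$ and class (4) would force $\beta \leq (1 + cq^{-n/2})(\beta - \varepsilon)$, both of which fail for all sufficiently large $n$; hence classes (3) and (4) are empty for large $n$. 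Finally, the proportion from class (2) converges to $\mu_{\mathrm{Haar}}(\{\varphi(U) \in (\beta - \varepsilon, \beta + \varepsilon)\})$ by Corollary~\ref{Deligne restatement}, which tends to $\mu_{\mathrm{Haar}}(\{\varphi(U) = \beta\}) = 0$ as $\varepsilon \to 0$ (again by \cite[Lemma~2.8]{Hum13}). Letting $n \to \infty$ and then $\varepsilon \to 0$ yields $\Delta_n \to 0$, which completes the argument.

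I do not anticipate any genuine obstacle: the proof is a direct transcription of the $k$-free case, the only change being that the multiplicative error in the comparison between $\widetilde{B}_\Phi$ and $\varphi(\vartheta(C/\mathbb{F}_{q^n}))$ is $O_g(q^{-1/2})$ (from Lemma~\ref{Bk to varphi totient}) rather than $O_g(q^{-1/2k})$, which is immaterial since only its vanishing as $n \to \infty$ is used. The single point worth verifying carefully is that the function $\varphi$ appearing in Lemma~\ref{Bk to varphi totient} and in the statement here is literally the same $\varphi$ for which \cite[Lemma~2.8]{Hum13} and Proposition~\ref{varphi ineq} were established in Subsection~\ref{subsec:global_kfree}, so that all the measure-theoretic facts about $\varphi$ carry over without change.
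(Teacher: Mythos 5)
Your proposal is correct and is exactly the argument the paper intends: the paper's proof of this lemma simply says it is the same as that of Lemma \ref{Bk and haar}, and your write-up is a faithful transcription of that proof with Lemma \ref{Bk to varphi totient} supplying the comparison estimate (the change from $O_g(q^{-1/2k})$ to $O_g(q^{-1/2})$ being, as you note, immaterial). Your closing check that $\varphi$ is literally the same function as in Subsection \ref{subsec:global_kfree} is the right thing to verify, and it holds.
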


\begin{proof}
    The proof is the same as that of Lemma \ref{Bk and haar}. 
\end{proof}

Following the same logic as in Subsection \ref{subsec:global_kfree}, we deduce the following result.

{
\renewcommand{\thethm}{\ref{intro_thm:totient_global}}
\begin{thm}
    For any fixed $\beta > 0$, the quantity \[\lim_{n \to \infty} \frac{\#\{C \in \mathcal{H}_{2g + 1, q^n} \mid \widetilde{B}_\Phi(C/\mathbb{F}_{q^n}) \leq \beta\}}{\#\mathcal{H}_{2g + 1, q^n}}\] is $0$ if $0 < \beta \leq 1$, and strictly between $0$ and $1$ if $\beta > 1$. 
\end{thm}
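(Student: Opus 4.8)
The plan is to follow the proof of Theorem~\ref{intro_thm:kfree_global} essentially verbatim, the only structural change being that Lemma~\ref{Bk to varphi totient} plays the role that Proposition~\ref{Bk to varphi} played in the $k$-free case. First I would invoke the lemma stated immediately above (the totient analogue of Lemma~\ref{Bk and haar}, proved in the same way) to replace the quantity under consideration with $\mu_{\mathrm{Haar}}(\{\varphi(U) \leq \beta\})$. The two ingredients of that reduction are: (i) the boundary of $\{\varphi(U) \leq \beta\}$ is $\mu_{\mathrm{Haar}}$-null --- this is the cited lemma from \cite{Hum13}, whose proof uses the Weyl Integration Formula together with the real-analyticity and non-constancy of $\varphi$ on each chamber where the angles are distinct and appear in a fixed order --- so that Deligne's Equidistribution Theorem via Corollary~\ref{Deligne restatement} applies; and (ii) the squeeze $(1 - c q^{-n/2})\varphi(\vartheta(C/\mathbb{F}_{q^n})) \leq \widetilde{B}_\Phi(C/\mathbb{F}_{q^n}) \leq (1 + c q^{-n/2})\varphi(\vartheta(C/\mathbb{F}_{q^n}))$ coming from Lemma~\ref{Bk to varphi totient}, which reconciles the event $\widetilde{B}_\Phi \leq \beta$ with the event $\varphi(\vartheta) \leq \beta$ through exactly the four-case analysis used for the $k$-free version (Linear Independence failing; $\varphi(\vartheta)$ landing in $(\beta - \varepsilon, \beta + \varepsilon)$; and the two ``near miss'' cases, each impossible for large $n$ because the multiplicative error $1 \pm c q^{-n/2}$ cannot bridge the gap $\varepsilon$).

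Next I would apply Proposition~\ref{varphi ineq}: one has $\varphi(U) \geq 1$ on all of $\mathrm{USp}_{2g}(\mathbb{C})$, with equality only when $(\theta_1, \ldots, \theta_g)$ is a permutation of $\left(\frac{\pi}{2g}, \frac{3\pi}{2g}, \ldots, \frac{(2g-1)\pi}{2g}\right)$, a finite and hence Haar-null set. Thus for $0 < \beta \leq 1$ the set $\{\varphi(U) \leq \beta\}$ is either empty (if $\beta < 1$) or contained in that null set (if $\beta = 1$), so $\mu_{\mathrm{Haar}}(\{\varphi(U) \leq \beta\}) = 0$, giving the first claim.

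For $\beta > 1$, I would establish $0 < \mu_{\mathrm{Haar}}(\{\varphi(U) \leq \beta\}) < 1$ by the two halves separately. Positivity: $\varphi$ is continuous near the minimizing configuration (where all angles are distinct), so $\{\varphi(U) < \beta\}$ contains a nonempty open neighborhood of that point, which has positive Weyl measure by the explicit density in the Weyl Integration Formula. Strict upper bound: as two angles $\theta_m, \theta_n$ collide, the factor $\mathcal{Z}'(\theta_m)$ in the denominator of a summand of $\varphi$ tends to $0$, so $\varphi(U) \to \infty$; hence $\{\varphi(U) > \beta\}$ also contains a nonempty open set of positive measure. Combining gives the second claim.

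I do not expect a genuine obstacle here: the argument is a transcription of the $k$-free case, and the only point worth a glance is that the error $O_g(q^{-n/2})$ in Lemma~\ref{Bk to varphi totient} (in fact sharper than the $O_g(q^{-n/2k})$ of Proposition~\ref{Bk to varphi}) still defeats the gap $\beta \pm \varepsilon$ for all sufficiently large $n$, which it plainly does. The one genuinely separate input is Lemma~\ref{Bk to varphi totient} itself, which has already been proved above.
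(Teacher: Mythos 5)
Your proposal is correct and follows exactly the route the paper takes: reduce via the totient analogue of Lemma \ref{Bk and haar} (itself proved identically to the $k$-free case) to the Haar measure of $\{\varphi(U) \leq \beta\}$, then conclude from Proposition \ref{varphi ineq} together with the open-neighborhood arguments for $\beta > 1$. The paper states this proof only as ``following the same logic as in Subsection \ref{subsec:global_kfree}''; your write-up supplies the same details, including the correct observation that the sharper error $O_g(q^{-n/2})$ poses no issue.
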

\addtocounter{thm}{-1}
}

\nocite{*}
\bibliography{summatorykfreetotientfunctionfields}
\bibliographystyle{alpha}

\end{document}